\newlist{steps}{enumerate}{1}
\setlist[steps, 1]{label = Step \arabic*:}
\DeclareRobustCommand\widecheck[1]{{\mathpalette\@widecheck{#1}}}
\def\@widecheck#1#2{%
   \setbox\z@\hbox{\m@th$#1#2$}%
   \setbox\tw@\hbox{\m@th$#1%
      \widehat{%
         \vrule\@width\z@\@height\ht\z@
         \vrule\@height\z@\@width\wd\z@}$}%
   \dp\tw@-\ht\z@
   \@tempdima\ht\z@ \advance\@tempdima2\ht\tw@ \divide\@tempdima\thr@@
   \setbox\tw@\hbox{%
      \raise\@tempdima\hbox{\scalebox{1}[-1]{\lower\@tempdima\box\tw@}}}%
   {\ooalign{\box\tw@ \cr \box\z@}}}
\theoremstyle{plain}
\newtheorem{thm}{Theorem}[section]
\crefname{thm}{Theorem}{Theorems}
\Crefname{thm}{Theorem}{Theorems}
\newtheorem{prop}[thm]{Proposition}
\crefname{prop}{Proposition}{Propositions}
\Crefname{prop}{Proposition}{Propositions}
\newtheorem{lem}[thm]{Lemma}
\crefname{lem}{Lemma}{Lemmas}
\Crefname{lem}{Lemma}{Lemmas}
\newtheorem{cor}[thm]{Corollary}
\crefname{cor}{Corollary}{Corollaries}
\Crefname{cor}{Corollary}{Corollaries}
\crefname{claim}{Claim}{Claims}
\Crefname{claim}{Claim}{Claims}
\crefname{property}{Property}{Properties}
\Crefname{property}{Property}{Properties}
\crefname{problem}{Problem}{Problems}
\Crefname{problem}{Problem}{Problems}
\newtheorem{ques}[thm]{Question}
\crefname{ques}{Question}{Questions}
\Crefname{ques}{Question}{Questions}
\theoremstyle{definition}
\newtheorem{defn}[thm]{Definition}
\crefname{defn}{Definition}{Definitions}
\Crefname{defn}{Definition}{Definitions}
\crefname{notation}{Notation}{Notations}
\Crefname{notation}{Notation}{Notations}
\crefname{convention}{Convention}{Conventions}
\Crefname{convention}{Convention}{Conventions}
\crefname{cond}{Condition}{Conditions}
\Crefname{cond}{Condition}{Conditions}
\crefname{assum}{Assumption}{Assumptions}
\Crefname{assum}{Assumption}{Assumptions}
\newtheorem{conj}[thm]{Conjecture}
\crefname{conj}{Conjecture}{Conjectures}
\Crefname{conj}{Conjecture}{Conjectures}
\theoremstyle{remark}
\newtheorem{rem}[thm]{Remark}
\crefname{rem}{Remark}{Remarks}
\Crefname{rem}{Remark}{Remarks}
\newtheorem{ex}[thm]{Example}
\crefname{ex}{Example}{Examples}
\Crefname{ex}{Example}{Examples}
\crefname{section}{Section}{Sections}
\Crefname{section}{Section}{Sections}
\crefname{subsection}{Subsection}{Subsections}
\Crefname{subsection}{Subsection}{Subsections}
\crefname{figure}{Figure}{Figures}
\Crefname{figure}{Figure}{Figures}
\newtheorem*{acknowledgement}{Acknowledgement}
\newcommand{\Z}{\mathbb{Z}}
\newcommand{\Q}{\mathbb{Q}}
\newcommand{\Ker}{\mathop{\mathrm{Ker}}\nolimits}
\newcommand{\Coker}{\mathop{\mathrm{Coker}}\nolimits}
\newcommand{\im}{\operatorname{Im}}
\newcommand{\rank}{\mathop{\mathrm{rank}}\nolimits}
\newcommand{\Hom}{\mathop{\mathrm{Hom}}\nolimits}
\newcommand{\id}{\mathrm{id}}
\newcommand{\ind}{\mathop{\mathrm{ind}}\nolimits}
\newcommand{\G}{\mathcal G}
\newcommand{\C}{\mathbb{C}}
\newcommand{\s}{\mathfrak{s}}
\newcommand{\wh}{\widehat}
\newcommand{\pr}{\text{pr}}
\newcommand{\al}{\alpha}
\def\om{\omega}
\def\Om{\Omega}
\newcommand{\R}{\mathbb R}
\newcommand{\ctext}[1]{\raise0.2ex\hbox{\textcircled{\scriptsize{#1}}}}
\def\ker{\operatorname{Ker}}
\def\dim{\operatorname{dim}}
\def\rank{\operatorname{rank}}
\def\Hom{\operatorname{Hom}}
\def\Con{\mathcal{C}}
\def\id{\operatorname{Id}}
\def\ind{\operatorname{ind}}
\newcommand{\mbar}[1]{{\ooalign{\hfil#1\hfil\crcr\raise.167ex\hbox{--}}}}
\def\cV{\mathcal{V}}
\def\cF{\mathcal{F}}
\def\cU{\mathcal{U}}
\def\wt{\widetilde}
\def\H{\mathbb{H}}
\title{Seiberg-Witten Floer homotopy contact invariant}
\author{Nobuo Iida}
\address{Graduate School of Mathematical Sciences, the University of Tokyo, 3-8-1 Komaba, Meguro, Tokyo 153-8914, Japan}
\email{iida@ms.u-tokyo.ac.jp}
\author{Masaki Taniguchi}
\address{2-1 Hirosawa, Wako, Saitama 351-0198, Japan}
\email{masaki.taniguchi@riken.jp}
\date{}
\begin{document}
\begin{abstract}
We introduce a Floer homotopy version of the contact invariant introduced by Kronheimer-Mrowka-Ozv\'ath-Szab\'o. 
Moreover, we prove a gluing formula relating our invariant with the first author's Bauer-Furuta type invariant, which refines Kronheimer-Mrowka's invariant for 4-manifolds with contact boundary. As an application, we give a constraint for a certain class of symplectic fillings using equivariant KO-cohomology. 
\end{abstract}

\maketitle

\tableofcontents

\section{Introduction} 
\subsection{Main theorems} 
In the past twenty years, the topology of symplectic fillings of contact three-manifolds has been a central topic of research at the intersection of symplectic geometry, gauge theory, and Heegaard Floer theory (\cite{LM97}, \cite{KM97},\cite{OS04},  \cite{OS05}, \cite{KMOS07}, \cite{BS16}). 
One of the origins is the study of Kronheimer-Mrowka (\cite{KM97}). They developed the analysis on $4$-manifolds with cone-like ends and gave an invariant 
\begin{align} \label{KM}
\mathfrak{m}(X, \s_{X, \xi}, \xi ) \in \Z / \{\pm 1\} 
\end{align}
of any $4$-manifold $X$ equipped with a contact structure $\xi$ on its boundary and a compatible $Spin^c$-structure $s_{X, \xi}$. This is a variant of the Seiberg-Witten invariant(\cite{W94}). 

In gauge theory, the framework of Floer homology groups gives a cut-and-paste method of $4$-manifold-invariant. In the Seiberg-Witten side, Kronheimer-Mrowka constructed the monopole Floer homology group with three flavors in \cite{KM07}.  As a relative version of \eqref{KM}, Kronheimer-Mrowka-Ozv\'ath-Szab\'o (\cite{KMOS07}) defined a monopole-Floer-homology-valued invariant of a contact structure $\xi$ on a closed 3-manifold $Y$
\begin{align}\label{KMOS}
\psi(Y, \xi) \in \widecheck{HM}_\bullet(-Y), 
\end{align}
which gives subtle information of contact structures such as fillability or overtwistedness. 
As one of applications of such invariants, it is proved that any strong symplectic filling $(X, \om)$ of any L-space has $b^+(X)=0$(\cite{Li98}, \cite{OS04}, \cite{E20}). 
This result was originally proved by Ozv\'ath-Szab\'o using a Heegaard Floer counterpart of \eqref{KMOS} in \cite{OS04}. F.Lin (\cite{L20}) used $Pin(2)$-monopole Floer homology to give a topological constraint of some indefinite Stein fillings.

In this paper, we follow their methods to obtain topological constraints of fillings. In addition, we use a Floer homotopy theoretic viewpoint. More preciously, we construct a Floer homotopy version of \eqref{KMOS}. In order to explain what we mean by {\it Floer homotopy version}, we review the Seiberg-Witten homotopy type, which is constructed by a method called {\it finite dimensional approximation}.

Originally, Furuta (\cite{F01}) introduced the method of finite dimensional approximation of the Seiberg-Witten map and proved 10/8-theorem for closed spin 4-manifolds. 
Later, Bauer-Furuta (\cite{BF04}, \cite{B04})  used this method to construct a cohomotopy refinement of the Seiberg-Witten invariant called Bauer-Furuta invariant, which is an $S^1$-stable homotopy class of an $S^1$-equivariant map. In \cite{Man03}, as a TQFT like extension of the {\it Bauer-Furuta invariant}, Manolescu constructed the {\it Seiberg-Witten Floer homotopy type} for rational homology 3-spheres and the relative Bauer-Furuta invariant for a certain class of 4-manifolds with boundary. 

We construct a homotopy refinement of \eqref{KMOS}, which is a stable homotopy class of a map whose codomain is the Manolescu's Floer homotopy type: 
\begin{thm} \label{mainthm1}
Let $Y$ be a rational homology $3$-sphere equipped with a contact structure $\xi$.
We denote by $d_3(Y, [\xi])$ the homotopy invariant of 2-plane filed introduced Gompf \cite{Go98} with the convention 
\[
d_3(Y, [\xi]) =\frac{1}{4} ( c_1 (X)^2 - 2 \chi (X) - 3\sigma(X) )
\]
where $X$ is a compact almost complex bounding of $(Y, \xi)$. 

Then we can associate a well-defined homotopy class of a non-equivariant pointed map 
\begin{align}\label{ourinv}
\Psi(Y ,\xi ) : S^0 \to \Sigma^{\frac{1}{2}-d_3(-Y, [\xi]) } SWF (-Y, \s_{\xi}) 
\end{align}
up to suspension and sign, where $\s_{\xi}$ is the $Spin^c$ structure induced from $\xi$.
\end{thm}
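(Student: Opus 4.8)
The plan is to construct $\Psi(Y,\xi)$ as a relative Bauer--Furuta-type invariant of the \emph{conical cap} of $(Y,\xi)$, obtained by feeding Kronheimer--Mrowka's analysis of the perturbed \SW equations on cone-like ends into Manolescu's finite-dimensional approximation scheme.

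First I would fix a contact form $\alpha$ for $\xi$ and build the non-compact $4$-manifold
\[
\hat N \;=\; \bigl((-\infty,0]\times Y\bigr)\ \cup_{\{0\}\times Y}\ \bigl([0,\infty)\times Y\bigr),
\]
putting the product cylindrical metric on the left end and, on the right end, a metric growing like $dt^{2}+t^{2}g_{Y}$ near $t=+\infty$ together with the symplectic form $\om_\alpha=\tfrac12 d(t^{2}\alpha)$ scaled by a large parameter $r$; this is the symplectization cap of $(Y,\xi)$ with its $Y$-boundary pushed out to an incoming cylindrical end. On the conical end the $r$-perturbed \SW equations carry the distinguished \emph{canonical} solution $\frakc_{0}$ coming from the almost complex structure, and Kronheimer--Mrowka's a priori estimates pin every finite-energy solution near $\frakc_{0}$ along the conical end. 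Choosing weighted Sobolev norms adapted to this asymptotics on the cone and to exponential decay onto Chern--Simons--Dirac critical points on the cylinder, the \SW map becomes a section of a Hilbert bundle whose linearization is Fredholm, and whose zero set consists of the \SW trajectories on $\hat N$ asymptotic to $\frakc_{0}$ and to a critical point of $-Y$.

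Next I would run finite-dimensional approximation as in Manolescu: restricting to finite-dimensional subspaces $V$ of the domain, $W$ of the Coulomb slice of $-Y$, and $V'$ of the relevant cokernel directions, the approximated equations yield a continuous pointed map from a Thom space into the smash product of the Conley index of the approximated finite flow on the $-Y$-slice with $S^{V'}$; taking the colimit over $(V,W,V')$ with the usual index corrections (with no extra $b^{+}$-summand, since $b^{+}(\hat N)=b_{1}(\hat N)=0$) produces a stable pointed map $S^{0}\to\Sigma^{k}SWF(-Y,\s_\xi)$, equivalently a class in the stable homotopy of $SWF(-Y,\s_\xi)$ in keeping with $\psi(Y,\xi)\in\widecheck{HM}_\bullet(-Y)$. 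The shift $k$ is the index of the linearization on $\hat N$ in these weighted spaces; I would compute it by the excision principle, gluing $\hat N$ to a compact almost complex bounding $X$ of $(Y,\xi)$ to get an almost complex $4$-manifold with a single conical end, and then combining Kronheimer--Mrowka's index formula on the cone, the Atiyah--Patodi--Singer boundary term, and the value of Manolescu's normalization constant $n(-Y,\s_\xi,g)$; the net answer is $k=\tfrac12-d_{3}(-Y,[\xi])$, with $d_{3}$ entering through the Gompf formula $\tfrac14\bigl(c_{1}(X)^{2}-2\chi(X)-3\sigma(X)\bigr)$ and the extra $\tfrac12$ accounting for the contribution of the non-reducible endpoint $\frakc_{0}$ on the conical end.

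It remains to prove well-definedness. Independence of the interpolating metric, of the cutoff profiles, and of $r\gg0$ follows from continuation arguments giving homotopies of the approximated maps exactly as in Manolescu; independence of the contact form inside $[\xi]$ follows by joining two contact forms through a path, producing a family of conical caps and hence a homotopy of stable maps; and the dependence on $V,W,V'$ is precisely the ``up to suspension'' ambiguity. The sign ambiguity is the standard one from the orientation of the finite-dimensional approximation data, matching that $\mathfrak m$ takes values in $\Z/\{\pm1\}$; and since the canonical solution $\frakc_{0}$ is not an $S^{1}$-fixed point, no equivariance survives, which is why \eqref{ourinv} is a non-equivariant map. I expect the analytic core of the first two steps to be the main obstacle: proving the uniform boundedness that Conley-index theory requires for the approximated flows, by reconciling Kronheimer--Mrowka's near-holomorphic estimates on the conical end with the cylindrical-end estimates underlying the construction of $SWF(-Y)$ itself within one weighted functional-analytic framework, together with the careful bookkeeping that makes the grading shift come out to exactly $\tfrac12-d_{3}(-Y,[\xi])$.
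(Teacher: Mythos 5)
Your proposal is essentially the paper's construction: the symplectization cone with the canonical pair $(A_0,\Phi_0)$, Kronheimer--Mrowka's cone-end estimates combined with Manolescu--Khandhawit finite-dimensional approximation and the Conley index of the approximated flow on $-Y$, the grading shift $\tfrac{1}{2}-d_3(-Y,[\xi])$ obtained by excision against a compact almost complex bounding together with the APS term and Manolescu's correction $n(-Y,\s_\xi,g)$, and continuation arguments for well-definedness. The only substantive deviations are on the cylindrical side: the paper does not impose weighted decay onto Chern--Simons--Dirac critical points along an attached end (taken literally this would not give a single linear Hilbert-space domain on which a ``linear Fredholm plus compact'' map can be approximated), but instead keeps $N^+$ as a manifold with boundary, uses Khandhawit's double Coulomb slice together with the spectral condition $p^0_{-\infty}\circ r$ for the Fredholm theory, and lets the half-infinite cylinder and finite-type trajectories enter only in the uniform boundedness theorem; moreover no large perturbation parameter $r$ is introduced, since the conical metric of the symplectization already plays that role exactly as in Kronheimer--Mrowka's setting.
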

Moreover, our invariant $\Psi(Y, \xi)$ can be regarded as a relative version of the first author's Bauer-Furuta type invariant (\cite{I19}) 
\begin{align}\label{Iida1}
\Psi(X, \s_{X, \xi}, \xi) : S^{\langle e(S^+, \Phi_0), [(X, \partial X)]\rangle } \to S^0, 
\end{align}
which refines \eqref{KM}, where 
\[
\langle e(S^+, \Phi_0), [(X, \partial X)]\rangle
\]
 is the relative Euler number of the pair $(S^+, \Phi_0)$ of the spinor bundle and its canonical non-vanishing section. 
The following table provides relations between the invariants explained above. 
\begin{table}[htb] \tiny
  \begin{tabular}{|l|c|c|}\hline
  & Counting   & Finite dimensional approximation  \\ \hline
  closed 4-manifolds  &SW-invariant $\in \Z$& BF-invariant 
    \\ 
     & & $\Psi(X):(\R^m \oplus  \C^n)^+ \to (\R^{m'} \oplus  \C^{n'})^+$ \\ \hline
   4-manifolds with & KM-invariant $\in \Z/ \{\pm 1\}$  &  BF-type invariant  \eqref{Iida1} \\ 
  contact boundary   & &$\Psi(X, \xi): (\R^M)^+ \to (\R^{M'} )^+$  \\ \hline
   closed 3-manifolds &monopole Floer homology group &SW Floer homotopy type  \\ 
    & "${HM}_\bullet(Y)$" & $SWF(Y)$ \\ \hline
  4-manifolds with   & relative SW invariant  & relative BF invariant \\  
 boundary &"$\psi(X) \in {HM}_\bullet(\partial X)$" & $\Psi(X): (\R^m \oplus  \C^n)^+ \to  SWF(\partial X)$ \\ \hline
    contact 3-manifolds & contact invariant  & homotopy contact invariant \\ 
     &$\psi(Y, \xi) \in \widecheck{HM}_\bullet(-Y)$ &
     $\Psi(Y, \xi): (\R^M)^+ \to SWF(-Y)$ \\ \hline
  \end{tabular}
\end{table}

Moreover, we prove a gluing relation between \eqref{Iida1} and 
\eqref{ourinv}. 
Let 
\[
\eta : SWF (Y, \s_{\xi}) \wedge SWF (-Y, \s_{\xi})  \to S^0
\]
 be the duality morphism introduced in \cite{Man03} and \cite{Man07}. 
\begin{thm} \label{gluing}Let $X$ be a compact oriented $Spin^c$ 4-manifold with a contact boundary $(Y, \xi)$ and $\s_X$ a $Spin^c$ structure whose restriction on the boundary is compatible with the $Spin^c$ structure induced by $\xi$. Suppose $b_1(X)=0$. 
Then 
\[
\eta \circ ( \Psi(X, \s_X)  \wedge \Psi(Y, \xi)  ) = \Psi(X, \s_{X, \xi}, \xi)
\]
holds. 
\end{thm}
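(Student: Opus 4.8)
The plan is to realize both sides of the claimed identity as stable homotopy classes coming from finite-dimensional approximations of the Seiberg--Witten equations on the same closed manifold, namely the manifold obtained by capping off $(Y,\xi)$ with the cone-like end used in Kronheimer--Mrowka's construction of $\mathfrak{m}(X,\s_{X,\xi},\xi)$. Concretely, I would fix a Riemannian metric on $X$ that is cylindrical near $Y$, choose a sequence of finite-dimensional subspaces of the relevant configuration and obstruction spaces that are adapted to the spectral decomposition of the linearized operator on the cylinder, and write down the approximate monopole maps for the three pieces: the compact piece $X$, the half-infinite cylinder (which produces $SWF(-Y,\s_\xi)$ together with the canonical class $\Psi(Y,\xi)$), and the cone-like end carrying the canonical reducible solution with its non-vanishing spinor $\Phi_0$. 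The first step is therefore to set up a single neck-stretching family interpolating between ``$X$ with contact end'' on one side and ``$X \cup_Y (\text{cylinder}) \cup_Y (\text{cone})$'' on the other, and to check that the finite-dimensional approximations can be chosen compatibly along this family.

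The second step is the gluing/composition argument proper. Having arranged compatible approximations, the relative Bauer--Furuta invariant $\Psi(X,\s_X)\colon (\R^m\oplus\C^n)^+ \to SWF(\partial X)$ and the homotopy contact class $\Psi(Y,\xi)\colon (\R^M)^+ \to SWF(-Y,\s_\xi)$ are, by construction, the approximate monopole maps of the two halves, and the duality morphism $\eta$ is (again by Manolescu's construction) induced by the approximate monopole map on the double cylinder $Y\times\R$. Standard gluing for finite-dimensional approximations — of the type used by Manolescu in \cite{Man03}, \cite{Man07} and by the first author in \cite{I19} — then identifies $\eta\circ(\Psi(X,\s_X)\wedge\Psi(Y,\xi))$ with the approximate monopole map of the glued-up manifold, which is exactly the finite-dimensional approximation computing $\Psi(X,\s_{X,\xi},\xi)$. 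The bookkeeping here is the relative Euler number: one must verify that the suspension spheres match, i.e.\ that $\langle e(S^+,\Phi_0),[(X,\partial X)]\rangle$ is recovered as the net shift coming from the cone piece plus $d_3$-correction from $SWF(-Y)$, which is precisely the index/dimension computation underlying the degree conventions in Theorem~\ref{mainthm1}.

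The main obstacle I expect is analytic rather than homotopy-theoretic: one needs uniform exponential-decay and compactness estimates for approximate monopoles on the stretched neck that are strong enough to survive finite-dimensional truncation, so that the approximate maps on the pieces literally glue to the approximate map on the total space (no spurious contributions from the neck region, and the canonical reducible on the cone end genuinely isolates the relevant component of the moduli space). This is the standard ``neck-stretching for Bauer--Furuta'' package, but in the presence of the cone-like end of \cite{KM97} it requires re-deriving the a~priori estimates in that asymptotic geometry and checking their compatibility with the cylindrical estimates on the $Y\times[0,\infty)$ side; once these estimates are in hand, the homotopy-theoretic identity follows formally. A secondary point to handle carefully is well-definedness up to suspension and sign: since both $\Psi(Y,\xi)$ and $\Psi(X,\s_{X,\xi},\xi)$ are only defined up to stabilization and $\pm1$, the equality in Theorem~\ref{gluing} should be read in that quotient, and I would check that the choices (metric on the neck, approximation subspaces, perturbations) do not affect the glued class beyond these ambiguities, using the invariance statements already established for the two factors.
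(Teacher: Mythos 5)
Your overall strategy---decompose $X^+=X\cup_Y N^+$, realize both sides as finite-dimensional approximations of monopole maps on the pieces, and match the index bookkeeping---is the right frame, but the mechanism you propose is not the one that works, and the step you call ``standard gluing'' is precisely the content of the theorem, so invoking it is circular. The paper does not stretch a metric neck: the metric and the decomposition are fixed, and the cylinder enters only through trajectory compactness and the Conley index, never through a degenerating family of metrics. A literal neck-stretching family is problematic here because the spectral subspaces $V^\mu_\lambda(Y)$, the Conley indices, and the chosen finite-dimensional approximations are tied to a fixed metric on $Y$ and do not obviously vary continuously under such a degeneration; the known gluing theorems (\cite{Man07}, \cite{KLS'18}) are designed exactly to avoid this. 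Relatedly, your description of $\eta$ as ``induced by the approximate monopole map on the double cylinder $Y\times\R$'' is not accurate: $\eta$ is Manolescu's duality morphism built from Conley-index duality, and the first genuine step of the proof is to deform the composite $\eta\circ(\Psi(X,\s_X)\wedge\Psi(Y,\xi))$ into an explicit map on $\overline{B}(\cU_{X,n})\wedge\overline{B}(\cU_{N^+,n})$ whose third component is the boundary-matching term $r x_1-r x_2$ (the counterpart of \cite[Proposition~6.10]{KLS'18}); without this reformulation there is nothing concrete to compare with the approximation of $\widehat{\mathcal F}_{X^+}$.

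From that point the paper proceeds not by gluing moduli spaces or solutions but by a chain of stable c-homotopies of ``BF pairs'' (the analogue of SWC triples): gluing the weighted Sobolev domains and targets as fiber products over boundary restrictions, deforming the gauge-fixing boundary conditions (Steps~5--6), which requires the global slice and Laplacian results of \cref{Slice} in the conical-end, weighted-Sobolev setting, and finally a uniform boundedness estimate for approximate solutions subject to the spectral matching conditions (the counterpart of \cite[Lemma~1]{Man07}), which rests on \cref{bounded}. Your proposal correctly anticipates that a priori estimates on the cone end are needed, but it omits the duality-pairing deformation, the BF-pair formalism, and the gauge-fixing homotopies, and these are not routine adaptations: one side of the pairing is a non-equivariant map, one piece is noncompact with a conical end, and the double Coulomb slice must be recovered at the end. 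So as written the argument has a gap exactly where the theorem lives; to repair it you should abandon the metric-stretching picture and instead follow the deformation scheme of \cite{KLS'18} adapted to $N^+$, as the paper does.
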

\cref{gluing} implies the following non-triviality of \eqref{ourinv}. 
\begin{cor}
Let $Y$ be a rational homology $3$-sphere equipped with a contact structure $\xi$. If $\xi$ has a symplectic filling with $b_1=0$, then \eqref{ourinv} has a non-equivariant stable homotopy left inverse. In particular, \eqref{ourinv} is not stably null-homotopic. 
Moreover, a left inverse is given by the dual of the relative Bauer-Furuta invariant for the filling. 
\end{cor}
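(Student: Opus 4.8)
The plan is to deduce this corollary directly from the gluing formula in \cref{gluing}, combined with Kronheimer--Mrowka's evaluation of $\mathfrak{m}$ on symplectic fillings. Let $(X,\omega)$ be a symplectic filling of $(Y,\xi)$ with $b_1(X)=0$, and take $\s_X=\s_{X,\xi}$ to be the canonical $Spin^c$ structure of $\omega$, whose restriction to $\partial X=Y$ is compatible with $\xi$. Then \cref{gluing} applies and yields
\[
\eta\circ\bigl(\Psi(X,\s_X)\wedge\Psi(Y,\xi)\bigr)=\Psi(X,\s_{X,\xi},\xi)
\]
in the non-equivariant stable homotopy category; since \eqref{ourinv} is already a non-equivariant map, passing to the non-equivariant category costs nothing.

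First I would read the left-hand side as a factorization through $\Psi(Y,\xi)$. Writing $\Psi(X,\s_X)\wedge\Psi(Y,\xi)=(\Psi(X,\s_X)\wedge\id)\circ(\id\wedge\Psi(Y,\xi))$ and keeping track of the suspension coordinates in \cref{gluing}, the identity becomes
\[
r\circ\Psi(Y,\xi)=\Psi(X,\s_{X,\xi},\xi),\qquad r:=\eta\circ\bigl(\Psi(X,\s_X)\wedge\id_{SWF(-Y,\s_\xi)}\bigr),
\]
where the source of $r$ is $\Sigma^{\frac{1}{2}-d_3(-Y,[\xi])}SWF(-Y,\s_\xi)$; the hypothesis $b_1(X)=0$ is exactly what makes both the relative Bauer--Furuta invariant $\Psi(X,\s_X)$ and \cref{gluing} available here. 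Next I would check that for a symplectic filling the map $\Psi(X,\s_{X,\xi},\xi)$ is a stable self-equivalence of $S^0$: since $\omega$ is symplectic on all of $X$, the canonical section $\Phi_0$ of $S^+$ extends over $X$ with no zeros, so the relative Euler number $\langle e(S^+,\Phi_0),[(X,\partial X)]\rangle$ vanishes and \eqref{Iida1} is genuinely a pointed self-map of $S^0$; by \cite{I19} its image under the degree map $\pi_0^s(S^0)\cong\Z\to\Z/\{\pm1\}$ equals $\mathfrak{m}(X,\s_{X,\xi},\xi)$, which is $\pm1$ for a symplectic filling by \cite{KM97}. Hence $\Psi(X,\s_{X,\xi},\xi)$ is itself $\pm\id_{S^0}$ up to the ambient sign ambiguity, and post-composing $r$ with its stable inverse produces a left inverse of $\Psi(Y,\xi)$, up to suspension and sign, establishing the first claim.

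For the final sentence I would observe that, because $\eta$ is the Spanier--Whitehead duality pairing between $SWF(Y,\s_\xi)$ and $SWF(-Y,\s_\xi)$ of \cite{Man03}, \cite{Man07}, the map $r=\eta\circ(\Psi(X,\s_X)\wedge\id)$ is by construction (a suspension of) the Spanier--Whitehead dual of $\Psi(X,\s_X)$; thus the left inverse obtained above is $\pm$ this dual, as asserted. Finally, if \eqref{ourinv} were stably null-homotopic then so would be $r\circ\Psi(Y,\xi)=\pm\id_{S^0}$, contradicting $1\neq0$ in $\pi_0^s(S^0)\cong\Z$. \textbf{Main difficulty.} The only non-routine input is the evaluation in the previous paragraph: one needs that the canonical $Spin^c$ structure of a symplectic filling admits a globally nonvanishing extension of $\Phi_0$ (so that \eqref{Iida1} lands in $\pi_0^s$ and not in a higher stem) and that the associated Seiberg--Witten count, perturbed in the manner of Taubes as in \cite{KM97}, is exactly $\pm1$ rather than merely nonzero, since a unit is needed for an honest left inverse. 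Everything else is bookkeeping of suspension coordinates inside the stable category, dictated by \cref{gluing}.
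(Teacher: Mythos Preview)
Your proof is correct and follows essentially the same route as the paper: both invoke \cref{gluing} to identify the composition of $\Psi(Y,\xi)$ with $\eta\circ(\Psi(X,\s_X)\wedge\id)$ as the Bauer--Furuta type invariant $\Psi(X,\s_{X,\xi},\xi)$, and then use that the latter is a stable homotopy equivalence for a symplectic filling (citing \cite{I19} and \cite{KM97}). Your write-up is in fact slightly more explicit than the paper's in justifying why the relative Euler number vanishes and why the degree is $\pm1$, and in spelling out the Spanier--Whitehead interpretation of the left inverse as the dual of $\Psi(X,\s_X)$.
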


\subsection{KO theoretic obstruction}
When a $4$-manifold is spin, the $S^1$-symmetry of the Seiberg-Witten equation is extended to $Pin(2)$-symmetry, where 
\[
Pin(2):= S^1 \cup j S^1 \subset Sp(1).
\]
 This symmetry has been used in several situations including 10/8-inequality(\cite{F01}), Manolescu's triangulation conjecture(\cite{Man16}) and 10/8-inequality for spin 4-manifolds with boundary(\cite{Ma14}).  In the context of contact topology, F.Lin used $Pin(2)$-symmetry in \cite{L20}.
By the use of \cref{gluing} and $Pin(2)$-equivariant KO-theory, we obstruct a certain class of spin symplectic fillings of contact structures. 

For a contact rational homology $3$-sphere $(Y, \xi)$ with $c_1(\s_\xi)=0$ and a pair 
$
(m, n) \in \Z \times \mathbb{Q}$ with $n + \frac{\sigma(W)}{16} \in \Z$ for a spin bounding $ W$ of $(Y, \s)$, we have two groups 
\[
KOM^{-m,-n}_{Pin(2)}  (-Y, \s_\xi ) :=   \wt{KO}_{Pin(2)} (\Sigma^{  m\wt{\R} \oplus n {\H}   } SWF(-Y, \s_\xi )  )
\]
and its reducible part 
\[
\overline{KOM}^{-m}_{Pin(2)} (-Y, \s_\xi ) :=   \wt{KO}_{Pin(2)} ((\Sigma^{ m \wt{\R} } SWF(-Y, \s_\xi ) )^{S^1} ),  
\]
where $Pin(2)$-action on $\wt{\R}$ and on ${\H}$ are given as the multiplication via $j \mapsto -1$ and restriction of the action of $Sp(1)$.
By the Bott periodicity for the equivariant KO-group, it is sufficient to consider the case that $(m ,n )$ satisfies
\[
(m, n) \in \left\{ (0, l_0), (1, l_1) , (2, l_2 ),  (3, l_3 )  \middle|  l_i \in \left\{0,  \frac{1}{16}, \cdots, \frac{31}{16} \right\} ,~  l_i +  \frac{\sigma (W ) } {16} \in \Z   \right\}. 
\]
We associate a homomorphism 
\[
i^*_{m,n}:KOM^{-m,-n}_{Pin(2)}  (-Y, \s_\xi ) \to \overline{KOM}^{-m}_{Pin(2)} (-Y, \s_\xi )
\]
and 
\[
\varphi_{m} : \overline{KOM}^{-m}_{Pin(2)} (-Y, \s_\xi ) \to \Z
\]
where $i_{m,n}$ is the inclusion map $(\Sigma^{m\wt{\R} } SWF(-Y))^{S^1} \to \Sigma^{m\wt{\R} \oplus n {\H}  } SWF(-Y)$ and the map $\varphi_{m}$ is introduced by Jianfeng Lin in \cite[Definition~5.1]{Lin15}. 
\begin{thm}\label{KO}

We impose either of the following two conditions. 
\begin{itemize}
\item[(i)] 
When 
\[
-d_3(Y, [\xi])-\frac{1}{2}+m+4n\equiv 0,  4\quad\text{ mod }8 
\] 
for $(m, n) \in \Z \times \Q$ with $n +  \frac{\sigma(W)}{16} \in \Z$ for a spin bounding $ W$ of $(Y, \s)$, suppose that the following induced map from $\varphi_{m} \circ i^*_{m,n}$
\[
 (KOM^{-m,-n}_{G}  (-Y, \s_\xi )/\operatorname{Torsion}) \otimes \Z_2 
  \to  \Z_2
\]
is injective. 

\item[(ii)] 
When 
\[
-d_3(Y, [\xi])-\frac{1}{2}+m+4n\equiv 1,  2\quad\text{ mod }8
\] 
for $(m, n) \in \Z \times \Q$ with $n + \frac{\sigma(W)}{16} \in \Z$ for a spin bounding $ W$ of $(Y, \s)$, suppose that the following induced map from $\varphi_{m} \circ i^*_{m,n}$
\[
 KOM^{-m,-n}_{G}  (-Y, \s_\xi ) \otimes \Z_2 
  \to  \Z_2
\]
is injective. 

\end{itemize}

Then any symplectic filling $(X, \om)$ of $(Y, \xi)$ satisfying $ \s_{\om}$ is spin and $b_1(X)=0$ satisfies   
\[
b^+ (X) \leq \mathfrak{e} (m), 
\]
where 
\[
 \mathfrak{e} (m)=
 \begin{cases}
 0&m\equiv 0, 1, 2, 4\text{ mod } 8\\
 1&m\equiv 3, 7\text{ mod } 8\\
 2&m\equiv 6\text{ mod } 8\\
 3&m\equiv 5\text{ mod } 8.
 \end{cases}
\]
In particular, 
\[
b^+ (X) \leq 3. 
\]
\end{thm}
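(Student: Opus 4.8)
The strategy is to argue by contradiction, combining the gluing formula \cref{gluing} with the Bauer--Furuta-type invariant \eqref{Iida1} and the structure of the $Pin(2)$-equivariant $KO$-theory of the Floer homotopy type. Suppose $(X,\om)$ is a symplectic filling of $(Y,\xi)$ with $\s_\om$ spin and $b_1(X)=0$, and suppose for contradiction that $b^+(X) > \mathfrak{e}(m)$. The filling gives a relative Bauer--Furuta invariant $\Psi(X,\s_X)\colon (\R^a\oplus\H^b)^+\to SWF(Y,\s_\xi)$ in the $Pin(2)$-equivariant category (using that $\s_\om$ is spin, the $S^1$-symmetry upgrades to $Pin(2)$), and by \cref{gluing} the composite $\eta\circ(\Psi(X,\s_X)\wedge\Psi(Y,\xi))$ equals the closed-up invariant $\Psi(X,\s_{X,\xi},\xi)$ of \eqref{Iida1}. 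The first step is to record, via the duality morphism $\eta$, that $\Psi(Y,\xi)$ admits a stable left inverse built from $\Psi(X,\s_X)$ — this is the content of the Corollary — and crucially that this left inverse is $Pin(2)$-equivariant, so that applying the functor $\wt{KO}_{Pin(2)}$ produces a \emph{split surjection} on $KOM$-groups with $SWF(-Y,\s_\xi)$ in the target.

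Second, I would run this splitting through the maps $i^*_{m,n}$ and $\varphi_m$. The point is that $\Psi(X,\s_X)$, as a map out of a $Pin(2)$-representation sphere $(\R^a\oplus\H^b)^+$, records the value $\langle e(S^+,\Phi_0),[(X,\partial X)]\rangle$ in its source dimensions, and the number $b$ of quaternionic coordinates is governed by $b^+(X)$ together with the index-theoretic data; dualizing, the left inverse of $\Psi(Y,\xi)$ lands in (a suspension of) $S^0$ with precisely $b^+(X)$ many $\H$-summands appearing. Restricting to $S^1$-fixed points and applying $\varphi_m$ (Jianfeng Lin's numerical invariant from \cite[Definition~5.1]{Lin15}), the existence of this left inverse forces $\varphi_m\circ i^*_{m,n}$ to hit a nonzero class in $\Z_2$ — under hypothesis (i) after tensoring the torsion-free quotient with $\Z_2$, under hypothesis (ii) after tensoring with $\Z_2$ directly — which contradicts the injectivity hypothesis once $b^+(X)$ exceeds $\mathfrak{e}(m)$. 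The congruence conditions on $-d_3(Y,[\xi])-\tfrac12+m+4n$ mod $8$ are exactly the conditions under which the relevant $\wt{KO}_{Pin(2)}$-group of the representation sphere is detected by a single $\Z_2$ (the two cases distinguishing whether the generator is torsion-free or $2$-torsion in $KO^*(\pt)$), so that injectivity of $\varphi_m\circ i^*_{m,n}$ is the right hypothesis to exploit.

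Third, to extract the explicit bound $b^+(X)\le\mathfrak{e}(m)$ I would invoke the known $KO$-theoretic constraint for $Pin(2)$-equivariant maps between representation spheres: a $Pin(2)$-map $(\R^{a}\oplus\H^{b})^+\to(\R^{a'}\oplus\H^{b'})^+$ that is nontrivial in $\wt{KO}_{Pin(2)}$ after the above reductions can exist only when the quaternionic defect $b-b'$ is bounded in terms of the $KO$-degree, and this bound is precisely the function $\mathfrak{e}(m)$ appearing in the statement — this is the place where Bott periodicity for $\wt{KO}_{Pin(2)}$ is used to reduce to $(m,n)$ in the stated finite list, and where the case division $m\equiv 0,1,2,4$ versus $3,7$ versus $6$ versus $5$ mod $8$ enters through the $KO^*(\pt)$-module structure of $\wt{KO}_{Pin(2)}$ of a point. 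Finally, the universal bound $b^+(X)\le 3$ follows since $\mathfrak{e}(m)\le 3$ for every $m$, and one checks that the hypotheses of either (i) or (ii) can always be arranged by a suitable choice of $n$ subject to $n+\sigma(W)/16\in\Z$.

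The main obstacle I anticipate is the bookkeeping of \emph{equivariance and dimension shifts}: one must verify that the left inverse produced from the filling is genuinely $Pin(2)$-equivariant (not merely $S^1$-equivariant), that the suspension parameters $\tfrac12-d_3(-Y,[\xi])$ in \eqref{ourinv} convert correctly into the $\R$- and $\H$-gradings on $KOM^{-m,-n}_{Pin(2)}$, and that the relative Euler number $\langle e(S^+,\Phi_0),[(X,\partial X)]\rangle$ translates into exactly $b^+(X)$ quaternionic coordinates after the gluing — getting any of these index computations off by a summand would shift $\mathfrak{e}(m)$. A secondary difficulty is confirming that $\varphi_m$ from \cite{Lin15} is compatible with the stable maps in play (i.e.\ that it is natural with respect to the left-inverse construction), which requires tracing through Lin's definition on the $S^1$-fixed-point level.
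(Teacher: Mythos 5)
Your skeleton (contradiction, the gluing theorem turning the filling's relative Bauer--Furuta invariant into a stable splitting of $\Psi(Y,\xi)$, then detection through $i^*_{m,n}$ and $\varphi_m$) matches the paper's strategy, but two of your key steps do not work as stated. First, the splitting cannot be promoted to a split surjection in $\wt{KO}_{Pin(2)}$: the contact invariant $\Psi(Y,\xi)$ is only a \emph{non-equivariant} map, so the composite $\Psi(X_\dagger,\s_\om)\circ\Psi(Y,\xi)\simeq \Psi(X,\xi,\s_\om)$ is a homotopy equivalence only non-equivariantly, and applying the equivariant $KO$-functor to it is meaningless. The paper instead pulls back a carefully chosen equivariant generator $e_k\in\wt{KO}_G\bigl((\tilde{\R}^{b^+(X)+m}\oplus\H^{\sigma(X)/16+n})^+\bigr)$ (for $k\equiv 0,1,2,4 \bmod 8$, which is where the congruence hypotheses enter, since the non-equivariant group is $\Z$ in the cases of (i) and $\Z/2$ in the cases of (ii)) along the equivariant map $\Psi(X_\dagger,\s_\om)^*$, and only then uses the forgetful map together with the non-equivariant $\Psi(Y,\xi)^*$ to conclude that the resulting class $x$ is not divisible by $2$ (mod torsion in case (i)); combined with the injectivity hypothesis this makes $\varphi_m\circ i^*_{m,n}(x)$ odd.

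Second, and more seriously, your mechanism for the bound $b^+(X)\le\mathfrak{e}(m)$ is wrong: $b^+(X)$ does not contribute quaternionic summands after dualizing, it contributes $\tilde{\R}$-summands (the target of $\Psi(X_\dagger,\s_\om)$ is $(\tilde{\R}^{b^+(X)+m}\oplus\H^{\sigma(X)/16+n})^+$), and there is no ``quaternionic defect'' constraint in play. The actual source of $\mathfrak{e}(m)$ is Jianfeng Lin's computation on the $S^1$-fixed-point level: the square
relating $\varphi_{b^+(X)+m}$ and $\varphi_m$ through $(\Psi(X_\dagger,\s_\om)^{S^1})^*$ commutes up to multiplication by $2^{\alpha_{m+1}+\cdots+\alpha_{m+b^+(X)}}$, with $\alpha_i=1$ exactly for $i\equiv 1,2,3,5 \bmod 8$. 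If $b^+(X)>\mathfrak{e}(m)$ this exponent is at least $1$, forcing $\varphi_m\circ i^*_{m,n}(x)$ to be even and contradicting the oddness above; $\mathfrak{e}(m)$ is by definition the largest $b$ with $\alpha_{m+1}+\cdots+\alpha_{m+b}=0$. Without this fixed-point divisibility input (and the explicit generators $e_k$ from Schmidt's and Lin's descriptions of $\wt{KO}_G((\tilde{\R}^{k})^+)$), your argument does not produce the function $\mathfrak{e}(m)$, and the ``In particular $b^+(X)\le 3$'' clause is just the observation that $\mathfrak{e}(m)\le 3$, not something requiring a further choice of $n$.
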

In \cite{OO99}, it is proved that any weak symplectic filling can be modified to a strong symplectic filling. Thus, we do not care about the difference between them.

For example, $-\Sigma(2, 3, 11)$ satisfies the assumption of \cref{KO}. 
 Then for any symplectic filling of a contact structure of $-\Sigma(2, 3, 11)$ such that $\s_\omega$ is spin and $b_1(X)=0$, we have 
 \[
 b^+(X)=1.
 \]
 For the case of Stein fillings of $-\Sigma(2, 3, 11)$, a similar result was proved in \cite{St02}. 
 F.Lin's (\cite{L20}) result is a generalization of the result of $-\Sigma(2,3,11)$ written in \cite{St02}. Note that the result for $-\Sigma(2, 3, 11)$ can be also proved by the argument in \cite[Theorem 3]{L20}.



 

\subsection{Conjecture}
At the end of this section, we write a conjecture related to our invariant. 
\begin{conj}Let 
$\Phi$ be the homomophism 
\[
H_0 (S^0) \to  \widecheck{HM}_{[\xi] }(-Y, \s_\xi) 
\]
obtained as the composition of the following three maps: 
\begin{itemize}
\item[(1)] the map 
\[
\Psi(Y ,\xi )_*  : H_0(S^0) \to  H_0 (\Sigma^{\frac{1}{2}-d_3(-Y, [\xi]) } SWF (-Y, \s_{\xi}) )
\]
induced by $\Psi(Y ,\xi ): S^0 \to \Sigma^{\frac{1}{2}-d_3(-Y, [\xi]) } SWF (-Y, \s_{\xi})$, 
\item[(2)] the map 
\[
H_0 (\Sigma^{\frac{1}{2}-d_3(-Y, [\xi]) } SWF (-Y, \s_{\xi}) )\to  H^{S^1}_0 (\Sigma^{\frac{1}{2}-d_3(-Y, [\xi]) } SWF (-Y, \s_{\xi}) )
\]
induced by 
\[
\Sigma^{\frac{1}{2}-d_3(-Y, [\xi]) } SWF (-Y, \s_{\xi}) ) \wedge ES^1 \to  \Sigma^{\frac{1}{2}-d_3(-Y, [\xi]) } SWF (-Y, \s_{\xi}) ) \wedge_{S^1} ES^1,
\]
 and
\item[(3)] an isomorphism constructed by Lidman-Manolescu(\cite{LM18})
\[
 H^{S^1}_0 (\Sigma^{\frac{1}{2}-d_3(-Y, [\xi]) } SWF (-Y, \s_{\xi}) ) \cong  \widecheck{HM}_{[\xi] }( -Y, \s_\xi) . 
 \]
\end{itemize}
Then 
\[
\Phi (1) = \psi(Y, \xi) \in  \widecheck{HM}_{[\xi]}(-Y, \s_\xi) 
\]
up to sign. 
\end{conj}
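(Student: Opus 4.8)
\emph{Towards a proof.}
The plan is to exhibit both $\Phi(1)$ and $\psi(Y,\xi)$ as the monopole invariant of one and the same Seiberg--Witten problem, namely the almost complex $4$-manifold $Z$ with a single cone-like contact end modeled on $[0,\infty)\times Y$ that enters the construction of $\Psi(Y,\xi)$ in \cref{mainthm1}. On the homotopy side, $\Psi(Y,\xi)$ is the relative Bauer--Furuta invariant of $Z$ obtained by finite dimensional approximation; on the Floer-homology side, $\psi(Y,\xi)$ is, by its definition in \cite{KMOS07}, the relative monopole Floer invariant of $Z$ computed with the large $\omega$-perturbation on its end. So the conjecture should follow from a \emph{naturality principle}: for a $Spin^{c}$ $4$-manifold $Z$ with a single cylindrical (here cone-like) end, oriented boundary $N$, and an admissible perturbation, the homomorphism induced on $S^{1}$-Borel homology by the relative Bauer--Furuta invariant $BF(Z)\colon (\R^{M})^{+}\to SWF(N)$ coincides, under the Lidman--Manolescu isomorphism $H^{S^{1}}_{\bullet}(SWF(N))\cong\widecheck{HM}_{\bullet}(N)$ of \cite{LM18}, with the relative monopole Floer invariant of $Z$ in $\widecheck{HM}_{\bullet}(N)$. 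This is the analogue, for $4$-manifolds with one cylindrical end, of the naturality of the Lidman--Manolescu isomorphism under cobordism maps; applied with $N=-Y$ it gives the conjecture once one also checks that the perturbation used to define $\Psi(Y,\xi)$ is joined to the large-$\omega$ perturbation of \cite{KMOS07} by a path of admissible perturbations, so that the two relative invariants of $Z$ agree on the nose.

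To carry this out I would first make the naturality principle precise at the chain level. The Lidman--Manolescu isomorphism arises from a zig-zag comparing the Conley-index / Morse--Bott model of $SWF(N)$ with the monopole Floer chain complex of Kronheimer--Mrowka; pushing through this zig-zag the $0$-cycle representing $\Phi(1)$ --- the image of the fundamental class of a point under $\Psi(Y,\xi)$ followed by the Borel projection $(\,\cdot\,)\wedge ES^{1}\to(\,\cdot\,)\wedge_{S^{1}}ES^{1}$ --- should produce a cycle of the form $\Phi(1)=\sum_{\mathfrak a}n_{\mathfrak a}\,[\mathfrak a]$ in the $\widecheck{HM}$ complex, summed over the irreducible and boundary-stable critical points $\mathfrak a$ of the perturbed Chern--Simons--Dirac functional, with $n_{\mathfrak a}$ the signed count of the finite dimensional approximated moduli space on $Z$ asymptotic to $\mathfrak a$. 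The second step is to identify this cycle with the one \cite{KMOS07} use to define $\psi(Y,\xi)$, using that finite dimensional approximation is transverse and proper in the sense of \cite{Man03}, so that on the zero-dimensional stratum the approximated counts equal the honest Seiberg--Witten counts on $Z$; the contribution of the distinguished reducible near the contact end has to be tracked here, since it is responsible both for the $\frac{1}{2}-d_{3}(-Y,[\xi])$ grading shift in \cref{mainthm1} and for the ``check'' flavour $\widecheck{HM}$ --- rather than $\widehat{HM}$ or $\overline{HM}$ --- appearing as the target.

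An independent and more elementary partial check comes from \cref{gluing}. Applying $H^{S^{1}}_{0}$ and the Lidman--Manolescu isomorphism to the identity $\eta\circ(\Psi(X,\s_{X})\wedge\Psi(Y,\xi))=\Psi(X,\s_{X,\xi},\xi)$, and using that $\eta$ corresponds under \cite{LM18} to the monopole Floer duality pairing of \cite{KM07}, the left-hand side becomes the pairing of $\Phi(1)$ with the relative monopole invariant of the filling $X$, while the right-hand side becomes the degree of Iida's invariant \eqref{Iida1}, which equals the Kronheimer--Mrowka number $\mathfrak m(X,\s_{X,\xi},\xi)$ and hence, by the Kronheimer--Mrowka--Ozv\'ath--Szab\'o gluing formula, the pairing of $\psi(Y,\xi)$ with the same relative invariant. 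Thus $\Phi(1)$ and $\psi(Y,\xi)$ pair identically against the relative monopole invariants of all symplectic fillings of $(Y,\xi)$ with $b_{1}=0$; whenever these span the relevant part of the dual --- for instance when $(Y,\xi)$ has sufficiently many Stein fillings --- this already forces $\Phi(1)=\psi(Y,\xi)$ up to sign.

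I expect the main obstacle to be the naturality principle for \emph{non-compact, cone-like-end} $4$-manifolds: it forces one to reconcile the global Coulomb-slice finite dimensional approximation underlying $\Psi(Y,\xi)$ with the large-$\omega$, Taubes-type compactness used in \cite{KMOS07}, and then to carry out the orientation and grading bookkeeping needed to match the $\frac{1}{2}-d_{3}$ suspension, the signs ``up to $\pm 1$'', and the identification of the flavour. A secondary difficulty, since the conjectured identity is an equality of homology \emph{classes}, is to control the indeterminacy stemming from the choice of the auxiliary point in the domain of $\Psi(Y,\xi)$ and from the stabilization maps; I expect this to reduce to the invariance properties already built into $\Psi(Y,\xi)$, matched against the corresponding chain-level invariance in the monopole Floer complex.
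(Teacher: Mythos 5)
The statement you are addressing is stated in the paper as a \emph{conjecture}: the authors give no proof of it, so your proposal has to stand on its own, and as written it is a strategy outline rather than a proof. Its load-bearing step is the ``naturality principle'': that under the Lidman--Manolescu equivalence of \cite{LM18} the map induced on $S^1$-Borel homology by a relative Bauer--Furuta invariant of a $4$-manifold with one (here cone-like) end agrees with the corresponding relative invariant in $\widecheck{HM}_\bullet$. This is not something you can quote: \cite{LM18} produces an isomorphism of groups $H^{S^1}_\bullet(SWF(N))\cong \widecheck{HM}_\bullet(N)$, and its compatibility with cobordism maps or relative Bauer--Furuta invariants is not established there, even for \emph{compact} cobordisms between rational homology spheres; extending such a comparison to the non-compact, cone-like-end setting of \cite{KM97}, with the global Coulomb slice and weighted Sobolev spaces used to define $\Psi(Y,\xi)$ on one side and the Kronheimer--Mrowka--Ozsv\'ath--Szab\'o moduli counts on the other, is precisely the hard content of the conjecture. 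Your second paragraph describes what such a chain-level comparison ``should'' produce, but gives no mechanism for pushing a Conley-index $0$-cycle through the zig-zag of \cite{LM18} and identifying the resulting coefficients $n_{\mathfrak a}$ with the counts of \cite{KMOS07}; so the proposal assumes, rather than proves, the essential point. You flag this yourself as ``the main obstacle,'' which is accurate, but it means the argument has a genuine gap rather than a deferred technicality.

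The ``independent partial check'' via \cref{gluing} also does not close the gap. Applying $H^{S^1}_0$ to the gluing identity only shows that $\Phi(1)$ and $\psi(Y,\xi)$ have equal pairings against the relative invariants of symplectic fillings with $b_1=0$ (and even this step silently uses a compatibility of $\eta$ with the monopole duality pairing under \cite{LM18}, which again is not available off the shelf). Equality of such pairings determines the class only if those relative invariants span the relevant dual space, which is a strong hypothesis you do not verify and which fails in general: many fillable $(Y,\xi)$ have very few fillings, and the conjecture is asserted for all contact rational homology spheres, including non-fillable ones where this check says nothing. So the fallback argument establishes at best a consistency check in special cases, not the conjectured identity. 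In short: the overall plan (compare both sides as invariants of the same cone-like-end Seiberg--Witten problem, via a naturality statement for the Lidman--Manolescu isomorphism) is a reasonable way one might eventually attack the conjecture, but the two key inputs you rely on are themselves unproven, and the paper offers no proof to compare against.
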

\begin{rem}
Although $\psi(Y, \xi)$ is in the $S^1$-equivariant monopole Floer homology $\widecheck{HM}_{[\xi] }(-Y, \s_\xi) $, our invariant is not an $S^1$-equivariant map. This can be understood by the following way: 
We can explicitly give an element $\wt{\psi}(Y, \xi) \in \wt{HM}_{[\xi]}(-Y, \s_\xi)$ such that a natural map 
$\wt{HM}_*(-Y, \s) \to \widecheck{HM}_{* }(-Y, \s)$ sends $\wt{\psi}(Y, \xi)$ to $\psi(Y, \xi)$, where $\wt{HM}_{[\xi]}(-Y, \s_\xi)$ is a flavor of monopole Floer homology introduced in \cite{Bl11}.  In particular, we see that $\psi(Y, \xi)$ is contained in $\ker U \subset \widecheck{HM}_{[\xi] }(-Y, \s_\xi) $ using exact sequence 
\[
\cdots \to \wt{HM}_*(-Y, \s) \to \widecheck{HM}_{* }(-Y, \s) \xrightarrow{U} \widecheck{HM}_{*-2 }(-Y, \s) \to \cdots.
\]
Conjectually, our invariant corresponds to 
\[
\wt{\psi}(Y, \xi) \in  \wt{HM}_{[\xi]} (-Y, \s_\xi) \cong \wt{H}_0(\Sigma^{\frac{1}{2}-d_3(-Y, [\xi]) } SWF(-Y, \s_\xi)) .
\] 
\end{rem}

\subsection{Outline}Here is an outline of the contents of the remainder of this paper: In \cref{Pre},  we first review  Manolescu's Floer homotopy type. In \cref{SWFHC}, we prove a certain boundedness result for the Seiberg-Witten equation in our situation. As a consequence, we define a Seiberg-Witten Floer homotopy contact invariant. We also calculate several Fredholm indices of operators in our situation. 
In \cref{Gluing result}, we prove the gluing theorem of our invariants. We follow the gluing method developed by Manolescu(\cite{Man07}) and Khandhawit-Lin-Sasahira (\cite{KLS18},\cite{KLS'18}). Using the gluing theorem, we give several calculations of our invariants. In \cref{Applications to symplectic fillings}, by the use of the gluing theorem and our invariant, we prove \cref{KO}.

\begin{acknowledgement}
The authors would like to express their deep gratitude to Hirohumi Sasahira for answering their many questions on the paper(\cite{KLS'18}). The authors also wish to thank Mariano Echeverria for answering some questions on his work(\cite{E20}).  The authors also thank Takahiro Oba for discussing examples of symplectic fillings with us. The authors also appreciate Anubhav Mukherjee's helpful comments.  
The first author was supported by JSPS KAKENHI Grant Number 19J23048 and the Program for Leading Graduate Schools, MEXT, Japan.
The second author was supported by JSPS KAKENHI Grant Number 17H06461 and 20K22319 and RIKEN iTHEMS Program.
\end{acknowledgement}

\section{Preliminaries} \label{Pre}

\subsection{Seiberg-Witten Floer homotopy type}
In this subsection, we review Manplescu's construction of the Seiberg-Witten Floer homotopy type. For the detail, see \cite{Man03}. 

Let $Y$ be a rational homology $3$-sphere equipped a $Spin^c$-structure $\s$ and $g$ a Riemann metric on $ Y$.
The spinor bundle with respect to $\s$ is denoted by $S$. When $\s$ is spin, we can regard $S$ as an $Sp(1)$-bundle. 

The map $\rho : \Lambda^*_Y\otimes \C  \to \operatorname{End} (S) $ denotes the Clifford multiplication induced by $\s$. The notation $B_0$ denotes a fixed flat $Spin^c$-connection. 
Then the set of $Spin^c$ connections can be identified with $i\Om^1 (Y)$. 
Then the {\it configuration space} is defined by
\[
\Con_{k-\frac{1}{2}} (Y) := L^2_{k-\frac{1}{2}} (i\Lambda^1_Y) \oplus  L^2_{k-\frac{1}{2}} (S) ,
\]
here $L^2_{k-\frac{1}{2}}$ denotes the completion with respect to $L^2_{k-\frac{1}{2}}$-norm.
In the spin case, we consider the following additional $Pin(2)$-action on $\Con_{k-\frac{1}{2}} (Y)$, where $Pin(2)$ is the subgroup of $Sp(1)$ written as $U(1) \cup j U(1)$: 
\begin{itemize}
\item[(i)] the group $Pin(2)$ acts on $i\Lambda^1_Y$ via the nontrivial homomorphism
$Pin(2) \to O(1)$ and 
\item[(ii)] the group $Pin(2)$ acts on $S$ by the restriction of the natural action of $Sp(1)$.
\end{itemize}

We have the {\it Chern-Simons-Dirac functional }
\begin{align}
CSD: \Con_{k-\frac{1}{2}} (Y) \to \R 
\end{align}
defined by
\[
CSD( b, \psi) := -\frac{1}{2} \int_Y b \wedge db + \frac{1}{2} \int_Y \langle \psi, D_{B_0+ b} \psi \rangle d \operatorname{vol} , 
\]
where $D_{B_0+ b}$ is the $Spin^c$-Dirac operator with respect to the $Spin^c$-connection $B_0+ b$.
The {\it gauge group} 
\[
\G_{k+\frac{1}{2}} (Y) :=\left\{  e^\xi  \middle| \xi \in  L^2_{k+\frac{1}{2}} (Y; i\R )\right\} 
\]
 acts on $\Con_{k-\frac{1}{2}} (Y)$ by
\[
u \cdot (b, \psi) := (b- u^{-1} d u, u \psi). 
\]

 Since the normalized gauge group 
\[
\G^0_{k+ \frac{1}{2} } (Y) :=\left\{  e^\xi  \in\G_{k+\frac{1}{2}} (Y)   \middle|  \int_Y \xi  d\operatorname{vol} =0 \right\}
\] 
freely acts on $\Con_{k-\frac{1}{2}} (Y)$, one can take a slice. The slice is given by 
\[
V_{k-\frac{1}{2}} (Y) :=\ker   \left(d^{*}: L^2_{k-\frac{1}{2}}(i\Lambda^1_Y) \to L^2_{k-\frac{3}{2}} (i\Lambda^0_Y) \right) \oplus L^2_{k-\frac{1}{2}} (S). 
\]
 The formal gradient field of the Chen-Simons Dirac functional with respect to a norm induced by Manolescu is the sum 
 \[
 l+ c:  V_{k-\frac{1}{2}}(Y)\to V_{k-\frac{3}{2} } (Y),
 \]
  where 
  \[
  l(b, \psi) = (*db , D_{B_0} \psi)
  \]
   and 
   \[
   c(b, \psi) = (\pr_{\ker d^{*}}  \rho^{-1} ((\psi \psi^*)_0) , \rho (b) \psi- \xi (\psi) \psi). 
   \]
Here $\xi (\psi ) \in i \Om^0(Y)$ is determined by the conditions 
 \[
 d \xi (\psi) = ( 1- \pr_{\ker d^{*}} ) \circ \rho^{-1} ((\psi \psi^*)_0) \text{ and }  \ \int_Y \xi (\psi) =0. 
 \]
 Note that $l+c$ is $S^1$-equivariant, where the $S^1$-action is coming from 
 \[
 S^1 = \G_{k+\frac{1}{2}} (Y) /  \G^0_{k+\frac{1}{2}} (Y). 
 \]
When $\s$ is spin, we have an additional $Pin(2)$-symmetry. 
 For a subset $I \subset \R$, a map $x= ( b, \psi) : I \to V_{k-\frac{1}{2}} (Y)$ is called a {\it Seiberg-Witten trajectory} if 
 \begin{align}\label{grad}
 \frac{\partial}{\partial t} x(t) = - (l+c) (x(t)). 
 \end{align}
 \begin{defn}
 A Seiberg-Witten trajectory $x = ( b, \psi) : I \to V_{k-\frac{1}{2}} (Y)$ is {\it finite type} if 
 \[
 \sup_{ t\in I} \|\psi(t) \|_{Y}  <\infty \text{ and }\sup_{ t\in I} |CSD (x(t)) |<\infty. 
 \]
 \end{defn}
 We consider subspaces
 $V^\mu_\lambda(Y)$ defined as the direct sums of eigenspaces whose eigenvalues of $l$ are in $(\lambda, \mu]$ for $\lambda< 0 <\mu$ and denote $L^2$-projection from $V_{k-\frac{1}{2}} (Y)$ to $V^\mu_\lambda(Y) $ by $ p^\mu_\lambda$.
  Then the finite dimensional approximation of \eqref{grad} is given by 
  \begin{align}\label{flow}
  \frac{\partial}{\partial t} x(t) = - (l + p^\mu_\lambda c )(x(t)) , 
  \end{align}
  where $x$ is a map from $I \subset \R$  to $V^{\mu}_\lambda (Y)$. 
  Manolescu(\cite{Man03}) proved the following result: 
  \begin{thm}The following results hold. 
  \begin{itemize}
  \item There exists $R>0$ such that all finite type trajectories $x: \R \to V_{k-\frac{1}{2}}(Y)$ are contained in $\overset{\circ}{B}(R;V_{k-\frac{1}{2}}(Y) )$, where $\overset{\circ}{B}(R; V_{k-\frac{1}{2}}(Y))$ is the open ball with radius $R$ in $V_{k-\frac{1}{2}}(Y)$. 
  \item 
  For sufficiently large $ \mu$ and $-\lambda$ and the vector field 
  \[
  \beta (l + p^\mu_\lambda c )
  \]
   on $V^\mu_\lambda(Y)$, $\overset{\circ}{B}(2R;V^\mu_\lambda(Y)) $ is an isolating neighborhood, where $\beta$ is $S^1$-invariant bump function such that $\beta|_{\overset{\circ}{B}(3R)^c}=0$ and  $\beta|_{\overset{\circ}{B}(2R)}=1$. When $\s$ is spin, we take $\beta$ as a $Pin(2)$-invariant function. 
  \end{itemize}
  \end{thm}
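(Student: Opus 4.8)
The plan is to reproduce Manolescu's argument in \cite{Man03}, handling the two bullets in turn. The key structural facts we rely on are that $l$ is linear, self-adjoint, and Fredholm with spectrum accumulating only at $\pm\infty$, that $c$ is a compact (lower-order) nonlinearity, that $l+c$ is the $CSD$-gradient for Manolescu's norm (so $CSD$ is monotone along trajectories), and that all of $l$, $c$, $p^\mu_\lambda$ and $\beta$ are $S^1$-equivariant, hence $Pin(2)$-equivariant in the spin case; so the equivariance claims require no extra work once the non-equivariant statements are established.

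\textbf{A priori bound.} A finite type trajectory $x=(b,\psi):\R\to V_{k-\frac12}(Y)$ is, after passing to temporal gauge, a solution of the four–dimensional Seiberg--Witten equations on the cylinder $\R\times Y$. I would first bound the spinor in $C^0$: applying the Weitzenböck formula to the function $|\psi|^2$ on $\R\times Y$ and using the maximum principle — valid because a finite type trajectory converges to critical points as $t\to\pm\infty$, so $|\psi|$ attains its supremum — gives $\sup_{\R\times Y}|\psi|^2\le \max\!\big(0,-\min_Y s_g\big)$, a bound depending only on the metric. Interior elliptic bootstrapping for the (gauge-fixed) four-dimensional equations then upgrades this to uniform $C^\infty_{loc}$ bounds on $(b,\psi)$ with constants independent of the time window, which restricted to a slice $\{t\}\times Y$ yield $\sup_t\|x(t)\|_{L^2_{k-1/2}}<\infty$ once the finite-type bound on $CSD$ and the energy identity $\int_\R\|\dot x(t)\|^2\,dt = \lim_{T\to\infty}\big(CSD(x(-T))-CSD(x(T))\big)$ are invoked to control the $L^1_t L^2_Y$ variation. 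Choosing $R$ strictly larger than this uniform bound gives the first bullet.

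\textbf{Isolating neighborhood.} Since $\beta\equiv 1$ on $\overline{B(2R)}$, on that set the modified vector field $\beta(l+p^\mu_\lambda c)$ coincides with the genuine finite-dimensional approximation $l+p^\mu_\lambda c$. I argue by contradiction: if $\overset{\circ}{B}(2R;V^\mu_\lambda(Y))$ failed to be isolating for a sequence $\mu_n\to\infty$, $-\lambda_n\to\infty$, there would exist, for each $n$, a point of the maximal invariant set of the modified flow in $V^{\mu_n}_{\lambda_n}(Y)$ lying on the sphere $\|\cdot\|=2R$; the trajectory $x_n:\R\to V^{\mu_n}_{\lambda_n}(Y)$ through it stays in $\overline{B(2R)}$ for all time, hence is an honest trajectory of $l+p^{\mu_n}_{\lambda_n}c$ which is bounded (so, being in a finite-dimensional ball, automatically of finite type). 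Now I invoke Manolescu's compactness statement for finite-dimensional approximations: using the uniform bounds together with the convergence of $p^\mu_\lambda c$ to $c$, a subsequence of $(x_n)$ converges in $C^\infty_{loc}(\R,V_{k-1/2}(Y))$ to a finite type trajectory $x$ of $l+c$ with $\|x(0)\|=2R>R$, contradicting the first bullet. This closes the argument, and all the spaces and flows involved being $Pin(2)$-invariant in the spin case, the same reasoning gives the equivariant statement.

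\textbf{Main obstacle.} The technical heart is the compactness step for approximate trajectories: one must control the error $c-p^\mu_\lambda c$ uniformly along trajectories confined to a fixed ball, then run the elliptic bootstrap carefully to promote the $C^0_{loc}$ (or $L^2_{loc}$) convergence of a subsequence to $C^\infty_{loc}$ convergence, while keeping track of the passage between the $3$-dimensional slice picture and the $4$-dimensional cylinder picture and of the gauge fixing. The $C^0$ estimate on $\psi$ also needs the maximum-principle argument set up with care on the noncompact cylinder. Both are exactly the estimates carried out in \cite{Man03}, which I would cite rather than redo in detail.
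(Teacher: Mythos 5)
Your proposal is correct and follows essentially the same route as the paper's source: the paper gives no proof of this theorem, quoting it from Manolescu \cite{Man03}, and your sketch is exactly Manolescu's argument — a uniform a priori bound for finite type trajectories via the four-dimensional reinterpretation on the cylinder (pointwise spinor bound, $CSD$/energy control, elliptic bootstrap), followed by the contradiction-plus-compactness argument (approximate trajectories confined to $\overline{B}(2R)$ converging to an honest finite type trajectory of norm $2R>R$) to show $\overset{\circ}{B}(2R)$ is isolating, with the $S^1$- and $Pin(2)$-equivariance coming for free. The one blemish is your justification of the maximum-principle step: deducing attainment of $\sup|\psi|$ from convergence to critical points is mildly circular, since that asymptotic behaviour is normally established only after the uniform bounds; in \cite{Man03} and \cite{KM07} the universal $C^0$ bound is instead extracted from the finite-type hypothesis by a local maximum-principle (almost-maximum) argument on the noncompact cylinder, which is the fix you should cite, and strong (not merely weak) convergence at $t=0$ in the $V_{k-\frac{1}{2}}$-norm is what guarantees the limit really has norm $2R$.
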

  Then an $S^1$-equivariant Conley index $I^\mu_\lambda$ depending on $V^\mu_\lambda(Y) $, the flow \eqref{flow}, an isolating neighborhood $\overset{\circ}{B}(2R)$ and its isolated invariant set is defined. When $\s$ is spin, we take a $Pin(2)$-equivariant Conley index. 
   Then the {\it Seiberg-Witten Floer homotopy type} is defined by 
  \[
 SWF(Y, \s) :=  \Sigma^{-n(Y, \s, g) \C -V^0_\lambda}  I^\mu_\lambda, 
  \]
 as a stable homotopy type of a pointed $S^1$-space,  where $n(Y, \s, g)$ is given by 
  \[
  n(Y, \s, g) := \ind_\C^{APS} (D^+_A ) - \frac{c^2_1( \s_X )-\sigma(X)}{8} . 
  \]
   Here $(X, \s_X)$ is a compact $Spin^c$ bounding of $(Y, \s)$, the used Riemann metric of $X$ is product near the boundary, $\ind_\C^{APS} (D^+_A )$ is the Atiyah-Patodi-Singer index of the operator $D^+_A$ and a $Spin^c$ connection $A$ is a $Spin^c$ connection on X which is an extension of $B_0$. 
  For the meaning of formal desuspensions, see \cite{Man03}. 
  When $\s$ is spin, we set 
\[
 SWF(Y, \s) :=  \Sigma^{-\frac{n(Y, \s, g)}{2} \H -V^0_\lambda}  I^\mu_\lambda, 
  \]
  as a stable homotopy type of a pointed $Pin(2)$-space.

\section{Seiberg-Witten Floer homotopy contact invariant}\label{SWFHC}
 \subsection{Contact structure and conical metric} \label{contact}
 In this subsection, we review geometric settings for constructions of our invariant. 
Let $Y$ be an oriented rational homology $3$-sphere and $\xi$ a positive contact structure on $Y$.
Take a contact $1$-form $\theta$ which is positive on the positively oriented normal field to $\xi$ and a complex structure $J$ on $\xi$ compatible with the orientation. Then we can associate the Riemann metric 
\[
g_1= \theta \otimes \theta + \frac{1}{2} d\theta (\cdot, J \cdot)|_\xi  
\]
 on $Y$. 
On $\R^{\geq 1}  \times Y$, we consider a Riemann metric 
\[
g_0 := ds^2 + s^2 g_1, 
\]
and a symplectic form
\[
\om_0 := \frac{1}{2} d(s^2 \theta), 
\]
where $s$ is the coordinate of $\R^{\geq 1}$. 
This gives an almost K\"ahler structure on $\R^{\geq 1}  \times Y$. 
We consider a metric on 
\[
N^+:= \R^{\geq 0}  \times Y
\]
 which is an extension of $g_0$ and product on $[0,\frac{1}{2}] \times Y$. We also call this metric $g_0$ again.
 The Riemann manifold $N^+$ is what we mainly consider to define our invariant. 
We extend $\om_0$ to a self-dual $2$-form with $|\om_0(s, y)|= \sqrt{2}$ which is translation invariant on $[0, \frac{1}{2}] \times Y$. 
 Then a pair $(g_0, \om_0)$ determines an almost complex structure $J$ on $N^+$. This defines a $Spin^c$ structure 
 \[
 \s:= ( S^+ = \Lambda_{N^+}^{0,0} \oplus  \Lambda_{N^+}^{0,2}, S^-_{N^+} = \Lambda_{N^+}^{0,1} , \rho : T^*N^+ \to \Hom (S^+_{N^+} , S^-_{N^+} ) ),
 \]
  where  
\[
 \rho  = \sqrt{2} \operatorname{Symbol} (\overline{\partial} + \overline{\partial}^* ) . 
 \]
 (See Lemma 2.1 in \cite{KM97}.)
 The notation $\Phi_0$ denotes 
 \[
 (1,0) \in \Om_{\R^{\geq1} \times Y}^{0,0} \oplus  \Om_{\R^{\geq1} \times Y}^{0,2}= \Gamma (S^+|_{\R^{\geq1} \times Y}).
 \]
  We extend $\Phi_0$ to a section of $S^+$ which is zero on $[0, \frac{1}{2}] \times Y$.
 Then the {\it canonical $Spin^c$ connection} $A_0$ on $\s$ is defined by the equation 
 \begin{align}\label{A0}
 D^+_{A_0} \Phi_0= 0
 \end{align}
 on $\R^{\geq 1} \times Y$.
 We also extend $A_0$ to a $Spin^c$ connection which is translation invariant on $[0, \frac{1}{2}] \times Y$. 

 \subsection{Seberg-Witten map}\label{Seberg-Witten map} 
We introduce configuration spaces and gauge groups for 4-manifolds with conical end. We combine Kronheimer-Mrowka's asymptotic condition \cite{KM07} on the conical end of $N^+$ and Khandhawit's double Coulomb slice condition \cite{Khan15} on $\partial N^+$. A technical point is that we use weighted Sobolev spaces to define the double Coulomb slice. First, we define weighted Sobolev spaces.

\subsubsection{Weighted Sobolev norms} 

In this subsection, we give definitions and properties of weighted Sobolev norm on manifolds with conical ends which are also considered in \cite{I19}. 
\begin{defn}  
  A non-compact Riemannian $4$-manifold $(X^+, g_{ X^+})$ (possibly with boundary) is called 
  {\it 4-manifold with a conical end} if $(X^+, g_{ X^+})$  is equipped with a compact subset $K$ in $X^+$ and an isometry between $X^+ \setminus \operatorname{int}K $ and 
  \begin{align}\label{conemetric}
  (\R^{\geq 1}  \times Y, ds^2 + s^2g_Y), 
  \end{align}
   where $Y$ is a closed connected Riemannian $3$-manifold $(Y,g_Y)$. 
  \end{defn}
  We fix an extension $\sigma: X^+ \to \R_{>0}$ of the $s$-coordinate. 
The function $\sigma$ is called {\it a radius function}. 
Let $k$ be a positive integer and $\alpha$ a positive real number.
Let $E$ be a real (complex) vector bundle with an inner product on an oriented $4$-manifold with a conical end $X^+$ and $A$ be a connection on $E$. Then, we use the following family of inner products on $\Gamma_c(E)$: 
\begin{defn}
For any compact support section $s$ of $E$, we define 
\begin{align}\label{inner}
\langle s_1, s_2\rangle_{L^2_{k, \alpha, A} } := \sum_{i=0}^k \int_{X^+} e^{2\alpha \sigma} \langle \nabla^i_A s_1, \nabla^i_A s_2  \rangle  \operatorname{dvol}_{X^+}, 
\end{align}
where the connection $\nabla^i_A$ is the induced connection from $A$ and the Levi-Civita connection. 
 
\end{defn}
The space $L^2_{k, \alpha, A} (E)$ is defined as the completion of $\Gamma_c(E)$ with respect to \eqref{inner}. 
We use the following estimate proved in \cite{I19}. 
\begin{lem}\label{multiplication and compact embedding}
Let $(E_1, |\cdot|_1, A_1), (E_2, |\cdot|_2, A_2)$ be two normed vector bundles equipped with a unitary connection on $X^+$.
Set $W_n=\sigma^{-1}([2^{n-1}, 2^n]) \subset X^+$.
Denote by $\varphi_n: W_1\to W_n$ the diffeomorphism $(t, y)\mapsto (2^{n-1}t,  y)$.
For $i=1, 2$, suppose isomorphisms
\[
(\varphi^*_n E_i)|_{W_1} \cong  E_i|_{W_1}
\]
are given and
there exist constants $a_1, a_2$ such that
\[
|\varphi_n^*s|_{(t, y)}=2^{a_i n}|s|_{\varphi_n(t, y)}
\]
\[
|\nabla^j \varphi^*_n s|_{(t, y)}=2^{(a_i-j)n}|\nabla^j s|_{\varphi_n(t, y)}
\]
for $s \in \Gamma(E_i)$, where we regard $\varphi_n^*s$, $\nabla^j \varphi^*_n $ as  sections of $E_i|_{W_1}$, $(E_i\otimes (T^*X^+)^{\otimes j})|_{W_1}$ respectively by the isomorphism above.
(For example, if $E_1$ is $\Lambda^k$, $|\cdot|_1$ is the norm induced by the Riemannian metric $g_0$, and $A_1$ is the connection induced by the Levi-Civita connection, then an isomorphism $(\varphi^*_n E_1)|_{W_1} \cong  E_1|_{W_1}$ can be given by regarding  $W_n=W_1$ as a manifold (but the metrics are different) and $a_1=-k$ satisfies the condition.)
\begin{enumerate}
\item (Multiplication)\\
For $\alpha_1,\alpha_2 \in \R$, $l\in \Z^{>2}, \varepsilon \in \R^{>0}$, the multiplication 
\[
L^2_{l, A_1, \alpha_1}(E_1)\times L^2_{l, A_2, \alpha_2}(E_2)\to L^2_{l,A_1\otimes A_2, \alpha_1+\alpha_2-\varepsilon}(E_1\otimes E_2)
\]
is continuous.
\item (Compact embedding)\\
For $l \in \Z^{\geq 1},
\alpha'<\alpha$,
the inclusion 
\[
L^2_{l, A_1, 
\alpha}(E_1)\to L^2_{l-1, A_1, 
\alpha'}(E_1)
\] 
is compact.
\end{enumerate}
\end{lem}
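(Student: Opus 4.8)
The plan is to reduce both statements to a uniform family of estimates on the single annulus $W_1 = \sigma^{-1}([1,2])$, using the diffeomorphisms $\varphi_n$ to transport everything back to $W_1$ and tracking the powers of $2$ that appear from the scaling hypotheses. First I would set up the decomposition $X^+ = K' \cup \bigcup_{n\geq 1} W_n$ (where $K'$ is a compact piece), observe that on $K'$ everything is the standard theory on a compact manifold with boundary, and concentrate on the conical end. For a section $s$ over $W_n$, write $s_n := \varphi_n^* s$, a section over $W_1$; the scaling hypotheses on $|\varphi_n^* s|$ and $|\nabla^j \varphi_n^* s|$ together with the explicit form of the metric $g_0 = ds^2 + s^2 g_Y$ on $W_n$ (so $\dvol_{X^+}$ scales by $2^{4n}$ relative to $\dvol$ on $W_1$, and the weight $e^{2\alpha\sigma}$ is comparable to $e^{2\alpha 2^n}$ up to a fixed constant on $W_n$) give
\[
\|s\|_{L^2_{l,A,\alpha}(W_n)}^2 \;\asymp\; e^{2\alpha 2^n}\, 2^{2(a+2)n}\,\sum_{j=0}^l \|\nabla^j s_n\|_{L^2(W_1)}^2,
\]
with constants independent of $n$. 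This is the one computation I would actually carry out carefully; everything else is bookkeeping.

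For part (1), the pointwise inequality $|\sigma_1 \sigma_2| \leq |\sigma_1||\sigma_2|$ plus the Sobolev multiplication theorem on the \emph{fixed} compact manifold $W_1$ (valid since $l > 2 = \dim/2$ for a $4$-manifold) gives, on each $W_n$, a bound of the form $\|(s_1 s_2)_n\|_{L^2_l(W_1)} \lesssim \|(s_1)_n\|_{L^2_l(W_1)}\|(s_2)_n\|_{L^2_l(W_1)}$ with $n$-independent constant. Multiplying through by the scaling factors from the displayed formula, the product lands in $L^2_{l,\alpha_1+\alpha_2}$ weight on $W_n$ up to the factor $2^{(a_1+a_2+2)n}/(2^{(a_1+2)n}2^{(a_2+2)n}) = 2^{-(a_1+a_2+2)n}\cdot 2^{(a_1+a_2+2)n}$ — wait, one must check the weights compose correctly: the extra $e^{2\alpha_i 2^n}$ factors multiply to $e^{2(\alpha_1+\alpha_2)2^n}$, which is \emph{too large} by $e^{2\alpha 2^n}$ worth of weight relative to what a single $L^2_{l,\alpha_1+\alpha_2}$-norm sees on $W_n$. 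This is exactly why the $\varepsilon$-loss is needed: the genuine $\ell^2$-over-$n$ summation of the annular pieces requires absorbing $\sum_n e^{-2\varepsilon 2^n} < \infty$, i.e. one estimates $\|s_1 s_2\|_{L^2_{l,\alpha_1+\alpha_2-\varepsilon}}^2 = \sum_n e^{-2\varepsilon 2^n}(\text{annular terms})$ by $\sup_n(\cdots) \cdot \sum_n e^{-2\varepsilon 2^n}$, and the sup is controlled by $\|s_1\|_{L^2_{l,\alpha_1}}\|s_2\|_{L^2_{l,\alpha_2}}$ since each annular factor is dominated by the full weighted norm. I would present this absorption argument as the crux of (1).

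For part (2), the compact embedding, I would combine two ingredients: (a) on each fixed annulus $W_1$, the Rellich lemma gives that $L^2_l(W_1) \hookrightarrow L^2_{l-1}(W_1)$ is compact; (b) the weight gap $\alpha' < \alpha$ gives a summable tail, since on $W_n$ the ratio of the $\alpha'$-weight to the $\alpha$-weight is $\asymp e^{-2(\alpha-\alpha')2^n} \to 0$. Concretely, given a bounded sequence in $L^2_{l,A,\alpha}$, for each fixed $N$ a diagonal/Rellich argument extracts a subsequence converging in $L^2_{l-1}$ on $\sigma^{-1}([1,2^N]) \cup K'$; the contribution from $\sigma^{-1}([2^N,\infty))$ in the $L^2_{l-1,\alpha'}$-norm is bounded by $\big(\sup_{n>N} e^{-2(\alpha-\alpha')2^n}\big)$ times the uniform $L^2_{l,\alpha}$-bound, hence $\to 0$ as $N\to\infty$; a standard $\varepsilon/2$ argument then upgrades the subsequence to a Cauchy, hence convergent, sequence in $L^2_{l-1,A,\alpha'}(E_1)$. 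The main obstacle in both parts is purely the weight bookkeeping — making sure the scaling exponents $a_i$ and the metric Jacobian combine so that a single universal constant on $W_1$ survives, and that the weight differential ($\varepsilon$ in part (1), $\alpha-\alpha'$ in part (2)) is what produces the convergent geometric-type series $\sum_n e^{-c2^n}$. Since this is Lemma 2.x of \cite{I19}, I would in practice cite it and only sketch the above; but the reduction-to-$W_1$ plus tail-summation scheme is the complete proof skeleton.
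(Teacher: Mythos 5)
The decisive step of your plan fails for part (1): on the dyadic annulus $W_n=\sigma^{-1}([2^{n-1},2^n])$ the weight $e^{2\alpha\sigma}$ is \emph{not} comparable to the constant $e^{2\alpha 2^n}$ with constants independent of $n$, because the weight here is exponential in $\sigma$ (not polynomial), and its oscillation across $W_n$ is $e^{2|\alpha|2^{n-1}}$, which is unbounded in $n$. Consequently your central displayed equivalence $\|s\|^2_{L^2_{l,A,\alpha}(W_n)}\asymp e^{2\alpha 2^n}2^{2(a+2)n}\sum_j\|\nabla^j s_n\|^2_{L^2(W_1)}$ is false, and the bookkeeping for the multiplication estimate genuinely breaks: bounding the weight from above by its maximum on $W_n$ on the left-hand side and from below by its minimum on the right-hand side leaves a per-annulus factor of size $\exp\bigl(2^{n}\bigl((\alpha_1+\alpha_2)-2\varepsilon\bigr)\bigr)$ (times harmless powers of $2^n$), which blows up whenever $\varepsilon<(\alpha_1+\alpha_2)/2$; since the lemma is asserted for \emph{every} $\varepsilon>0$, the $e^{-\varepsilon\sigma}$ gain cannot absorb it. Relatedly, your parenthetical worry that the frozen weights are ``too large by $e^{2\alpha 2^n}$'' is not the real issue (constant weights on $W_n$ would multiply to exactly the $\alpha_1+\alpha_2$ weight); the $\varepsilon$-loss is not there merely to make $\sum_n e^{-2\varepsilon 2^n}$ converge, but to absorb genuinely $n$-dependent constants that a correct localization produces. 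Dyadic self-similar rescaling is the right tool for polynomial weights $\sigma^{2\alpha}$; for the exponential weights $e^{2\alpha\sigma}$ used in this paper one must instead localize to pieces of bounded $\sigma$-width (unit-scale balls, or annuli $\sigma^{-1}([n,n+1])$), on which the weight really is uniformly comparable to a constant and uniform Sobolev and multiplication constants follow from the bounded geometry of the cone, or equivalently conjugate by $e^{\alpha\sigma}$ (using that $|d\sigma|$ and $\nabla d\sigma$ are bounded on the end) to reduce to unweighted Sobolev spaces and argue there; this is the cylindrical-end argument of \cite[Theorem 13.2.2]{KM07} that the paper points to, adapted to the cone.

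Your part (2) is essentially sound as written: there you never need frozen-weight comparability on annuli, only the pointwise ratio $e^{2\alpha'\sigma}\le e^{-2(\alpha-\alpha')R}\,e^{2\alpha\sigma}$ on $\{\sigma\ge R\}$ together with Rellich on compact pieces and a diagonal argument, which is in substance the argument of \cite[Theorem 3.12]{Lo87} that the paper cites. Note that the paper itself offers no rescaling proof of either part — it refers to \cite{KM07} for the multiplication and to \cite{Lo87} (and \cite{I19}) for the embedding — so the repair of part (1) along the bounded-geometry/conjugation lines above is needed for your proposal to stand.
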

The proof of Sobolev multiplication is similar to \cite[Theeorem 13.2.2]{KM07}. 
The proof of Sobolev embedding is the essentially same as the proof of \cite[Theorem 3.12]{Lo87}.

\subsubsection{Seiberg-Witten equation on 4-manifolds with conical end}

Let $Y$ be a rational homology $3$-sphere with a contact structure $\xi$. 
\begin{defn}Let $k$ be a positive integer with $k \geq 4$ and $\alpha$ a positive real number.
We first define the {\it configuration space } $\mathcal{C}_{k, \alpha} (N^+)$ by
\[
\mathcal{C}_{k, \alpha} (N^+) 
:= (A_0, \Phi_0) +  L^2_{k, \alpha}( i\Lambda_{N^+}^1 ) \oplus L^2_{k, \alpha} ( S^+_{N^+}), 
\]
where $L^2_{k, \alpha}( i\Lambda_{N^+}^1 )$ and $L^2_{k, \alpha} ( S^+_{N^+})$ are the completions of the inner products with respect to $L^2_{k, \alpha, \nabla_{LC}}( i\Lambda_{N^+}^1 )  $ and $L^2_{k, \alpha, A_0} ( S^+_{N^+})$.

The gauge group 
$\G_{k+1, \alpha}(N^+)$
is given by 
\begin{align}\label{gauge}
\G_{k+1, \alpha}(N^+) := \left\{ u : N^+ \to \C ~ \middle|~ |u(x)|= 1 ~ \forall x ,~  1-u \in L^2_{k+1, \alpha} (\underline{\C} ) \right\}. 
\end{align}
The action of $\G_{k+1, \alpha}(N^+)$ on $\mathcal{C}_{k, \alpha} (N^+)$  by 
\[
u \cdot (A, \Phi) := (A- u^{-1} du , u \Phi   ). 
\]
We also define the {\it double Coulomb slice} by
\[
\mathcal{U}_{k, \alpha}(N^+)  :=    L^2_{k, \alpha}( i\Lambda_{N^+}^1 )_{CC} \oplus L^2_{k, \alpha} ( S^+_{N^+}) , 
\]
where 
\[
L^2_{k, \alpha}( i\Lambda_{N^+}^1 )_{CC} := \{ a  \in L^2_{k, \alpha}( i\Lambda_{N^+}^1) | d^{*_{\alpha}}  a=0, d^{*} {\bf t}a=0 \} , 
\]
where ${\bf t}$ is the restriction of $1$-forms as differential forms and $d^{*_{\alpha}}$ is the formal adjoint of $d$ with respect to $L^2_{\alpha}$.
\end{defn}
Since $\G_{k+1, \alpha}(N^+) $ can be embedded into $C^0(N^+ , S^1) $, we define the group structure on $\G_{k+1, \alpha}(N^+)$ by multiplication. 

On $N^+$, one can define the {\it Seiberg-Witten map}

\begin{align}
\mathcal{F}_{N^+}  :\mathcal{C}_{k, \alpha} (N^+)  \to  L^2_{k-1, \alpha}( i\Lambda_{N^+}^+ \oplus S^-_{N^+}) 
 \end{align}
by 
\begin{align}\label{SW}
\mathcal{F}_{N^+} (A,  \Phi ) := \left( \frac{1}{2} F^+_{A^t}-\rho^{-1} ( \Phi \Phi^*)_0 - (\frac{1}{2}  F^+_{A^t_0}- \rho^{-1}( \Phi_0 \Phi^*_0)_0 ), D^+_A \Phi  \right)  
\end{align}
where $A_0$ is introduced in \eqref{A0} and $\Phi_0$ is the canonical section and $L^2_{k-1, \alpha}( i\Lambda_{N^+}^+ \oplus S^-_{N^+}) $ is induced by the connection $A_0$. We often omit the Clifford multiplication in our notations. 
When we write $(a, \phi) = (A, \Phi) - (A_0, \Phi_0)$, we can decompose the Seiberg-Witten map $\mathcal{F}_{N^+}$ as the sum of the linear part 
\begin{align}\label{L}
L_{N^+}(a, \phi) := \left( d^+a-(\Phi_0 \phi^*)_0- (\phi \Phi_0^*)_0    , D^+_{A_0} \phi+ \rho(a) \Phi_0   \right),
\end{align}
the quadratic part $C_{N^+}(a, \phi) := (-(\phi \phi^*)_0,  \rho (a)\phi  )$ and 
the constant part $(0, D^+_{A_0} \Phi_0 , 0 )$. 
We sometimes regard $L_{N^+} $ as an operator from $\mathcal{U}_{k, \alpha}(N^+)$  to $L^2_{k-1, \alpha}(i \Lambda_{N^+}^+ ) \oplus L^2_{k-1, \alpha }(S^-_{N^+})$ by the restriction. 
Moreover, the quadratic part is compact by \cref{multiplication and compact embedding}. 
The differential equation 
\begin{align}\label{SW eq}
\mathcal{F}_{N^+} (A,  \Phi )=0 
\end{align}
is called the {\it Seiberg-Witten equation} for $N^+$. The linearlization of $\mathcal{F}_{N^+}$ is given by $
L_{N^+}$.  

In some situations in remaining sections, we also consider 4-manifolds with conical end without boundary. 
We take a compact $Spin^c$ bound $X$ of $Y$. Then we have a glued non-compact manifold 
\[
 X^+ := X \cup_Y N^+
 \]
 without boundary. We use this manifold $X^+$ when we calculate Fredholm indices of elliptic differential operators and prove the gluing theorem.  
Similarly, we have the configuration space 
written by 
\[
\mathcal{C}_{k, \alpha} (X^+) 
:= (A_0, \Phi_0) +  L^2_{k, \alpha}(i \Lambda_{X^+}^1 ) \oplus L^2_{k, \alpha} ( S^+_{X^+}). 
\]
Here a pair $(A_0, \Phi_0)$ on $X^+$ is an extension of $(A_0, \Phi_0)$ on $N^+$. 
We also define the {\it Coulomb slice} by
\[
\mathcal{U}_{k, \alpha}(X^+)  :=    \ker ( d^{*_\alpha} : L^2_{k, \alpha}(i \Lambda_{X^+}^1 ) \to L^2_{k-1, \alpha}(i \Lambda_{X^+}^0 ) ) \oplus L^2_{k, \alpha} ( S^+_{X^+}) . 
\]
 On $X^+$, one can define the {\it Seiberg-Witten map}
\begin{align}
\mathcal{F}_{X^+}  : \mathcal{C}_{k, \alpha} (X^+)  \to L^2_{k-1, \alpha }( i\Lambda^+_{X^+}) \oplus L^2_{k-1, \alpha }( S^-_{X^+})
 \end{align}
by 
\begin{align}\label{SWX}
\mathcal{F}_{X^+} (A,  \Phi ) := \left( \frac{1}{2} F^+_{A^t}- \rho^{-1} ( \Phi \Phi^*)_0 - (\frac{1}{2} F^+_{A^t_0} -\rho^{-1} ( \Phi_0 \Phi^*_0)_0 ), D^+_A \Phi \right) , 
\end{align}
where $d^{*_\alpha}$ denotes the formal adjoint of $d$ with respect to the $L^2_\alpha$-inner product.

\subsection{Hodge decomposition for the double Coulomb subspace}\label{Hodge decomposition for the double Coulomb subspace} 
In this section, we mainly use the Riemann manifold $(N^+,g_0)$ defined in \cref{contact}. 
Note that $N^+$ has a boundary and a conical end. 
We recall the
double Coulomb subspace
\begin{align}
L^2_{k, \alpha} (i\Lambda^1_{N^+} )_{CC} = \{ a \in L^2_{k, \alpha} (i\Lambda^1_{N^+} )  | d^{*_\alpha}a=0, \  d^{*}{\bf t} a=0 \}, 
\end{align}
where ${\bf t}$ is the pull-back as a differential form by the inclusion map $\{0\} \times Y \to N^+$.
We take a compact $Spin^c$ bound $X$ of $Y$. Then we have a glued non-compact manifold 
\[
 X^+ := X \cup_Y N^+. 
 \]
The following proposition is a key lemma to prove the global slice theorem: 
\begin{prop}\label{Slice}
There exists a small positive number $\alpha_0$ depending on a contact form $\theta$ and a complex structure $J$ on $\xi$ such that for any positive real number $\alpha \leq \alpha_0$ satisfying the following conditions: 
\begin{itemize}
\item[(i)] $d^{*_{\alpha}}: L^2_{k, \alpha}(i \Lambda_{X^+}^1 ) \to L^2_{k-1, \alpha}(i \Lambda_{X^+}^0 )$ has closed range, 
\item[(ii)] $d: L^2_{k, \alpha}(i \Lambda_{X^+}^0 ) \to L^2_{k-1, \alpha}(i \Lambda_{X^+}^1 ) $ has closed range, 
\item[(iii)] $\Delta_\al :=d^{*_{\alpha}} \circ  d: L^2_{k, \alpha}(i \Lambda_{X^+}^0 ) \to L^2_{k-2, \alpha}(i \Lambda_{X^+}^0 )  $ is invertible and
\item[(iv)]
we have the following decomposition: 
\begin{align}
L^2_{k, \alpha} (i \Lambda^1_{N^+} ) = L^2_{k, \alpha} (i\Lambda^1_{N^+} )_{CC} \oplus d L^2_{k+1, \alpha} (i\Lambda^0_{N^+} ) . 
\end{align}
\end{itemize}
\end{prop}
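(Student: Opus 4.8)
The plan is to reduce everything to standard elliptic theory on the complete manifold $X^+$ with the weighted $L^2_\alpha$-inner products, and then peel off the $N^+$-piece by a boundary-value argument. First I would analyze the operator $d^{*_\alpha}$ on $X^+$. Conjugating the weighted Laplacian $\Delta_\alpha = d^{*_\alpha}d$ by the weight $e^{\alpha\sigma}$, one obtains (modulo lower-order terms supported on the compact part $K$) the operator $e^{\alpha\sigma}\Delta_\alpha e^{-\alpha\sigma}$, which on the conical end $\R^{\geq1}\times Y$ with metric $ds^2+s^2g_Y$ is a scalar elliptic operator whose indicial roots depend on the eigenvalues of $\Delta_{g_Y}$ on $Y$ and on $\alpha$. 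Since $Y$ is a rational homology sphere, $\ker \Delta_{g_Y}$ on $0$-forms consists only of constants, so the indicial root $0$ is isolated; choosing $\alpha_0$ small enough (depending only on $g_Y$, hence on $\theta$ and $J$) and then $0<\alpha\le\alpha_0$, no indicial root sits at the critical weight, so the scalar conical Laplacian is Fredholm in the weighted spaces. This gives (i), (ii): both $d^{*_\alpha}$ and $d$ have closed range, by the usual functional-analytic equivalence (closed range of $D$ iff closed range of $D^*$ iff $D^*D$ Fredholm with closed range). For (iii), invertibility of $\Delta_\alpha: L^2_{k,\alpha}(i\Lambda^0_{X^+})\to L^2_{k-2,\alpha}(i\Lambda^0_{X^+})$: injectivity follows because a weighted-$L^2$ harmonic function on $X^+$ must decay, and integration by parts (legal because of the exponential weight $e^{2\alpha\sigma}$ killing boundary terms at infinity and there being no boundary on $X^+$) forces $df=0$, hence $f=0$ as it decays; surjectivity then follows from Fredholmness together with the fact that the cokernel is identified with the kernel of the adjoint $\Delta_{-\alpha}$ on the dual weighted space, which by the same decay/integration-by-parts argument — now using that a $(-\alpha)$-weighted harmonic function, while not forced to decay, still has vanishing derivative and hence is constant, and a nonzero constant is not in the relevant space unless... — here one must be slightly careful, but the standard outcome (see e.g. Lockhart–McOwen) is that for $\alpha$ in the gap around $0$ and on the positive side, $\Delta_\alpha$ is an isomorphism. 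I would cite \cite{Lo87} and \cite{I19} for these conical-end facts rather than reprove them.

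Next, for (iv) I would argue on $N^+$ itself (which has boundary $Y=\partial N^+$). The claim is the topological direct sum decomposition
\[
L^2_{k,\alpha}(i\Lambda^1_{N^+}) = L^2_{k,\alpha}(i\Lambda^1_{N^+})_{CC}\;\oplus\; d\,L^2_{k+1,\alpha}(i\Lambda^0_{N^+}).
\]
Given $a\in L^2_{k,\alpha}(i\Lambda^1_{N^+})$, I want to solve $d^{*_\alpha}(a-df)=0$ with $d^{*}\mathbf t(a-df)=0$, i.e. $\Delta_\alpha f = d^{*_\alpha}a$ with the Neumann-type boundary condition $\mathbf t\,d^* df = \mathbf t\,d^*\! \ldots$ — more precisely $d^*\mathbf t\,df = d^*\mathbf t a$ on $Y$. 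This is a mixed boundary value problem for the scalar weighted Laplacian on $N^+$: an interior equation $\Delta_\alpha f=d^{*_\alpha}a$ together with a boundary condition involving only the tangential part of $df$ along $Y$. I would set this up as a Fredholm problem using the double Coulomb machinery of Khandhawit \cite{Khan15} adapted to the weighted setting (exactly as the paper says it is doing), solve it, and check the solution has the claimed regularity $L^2_{k+1,\alpha}$ by elliptic boundary regularity. Uniqueness of $f$ up to the constants (killed by taking $d$) gives that the sum is direct; that the two summands span follows from solvability. Alternatively, and perhaps cleaner, I would deduce (iv) from the $X^+$-statements (i)–(iii) by a doubling/extension argument: extend $a$ across $Y$ into the compact piece $X$, apply the Hodge-type decomposition on $X^+$ furnished by (i)–(iii) (there $L^2_{k,\alpha}(i\Lambda^1_{X^+}) = \ker d^{*_\alpha}\oplus \im d$ since there is no boundary), and then restrict back to $N^+$, correcting the tangential-Coulomb condition on $Y$ by solving one more scalar Neumann problem on $X$ (which is compact, so this is classical Hodge theory with boundary). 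Tracking that the correction term lands in $dL^2_{k+1,\alpha}(i\Lambda^0_{N^+})$ and that what remains lies in the double Coulomb slice is then bookkeeping.

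The main obstacle I expect is (iv), specifically the interaction between the \emph{two} Coulomb conditions — the weighted interior condition $d^{*_\alpha}a=0$ and the unweighted boundary condition $d^*\mathbf t a=0$ on $Y$ — which are imposed with respect to different inner products and so do not combine into a single self-adjoint elliptic boundary value problem in an obvious way. One must check that the relevant boundary value problem satisfies the Lopatinski–Shapiro (Agmon) ellipticity condition so that elliptic regularity and the Fredholm alternative apply; this is where Khandhawit's verification in the cylindrical-end case \cite{Khan15} has to be re-examined on the conical end, and where the choice of $\alpha\le\alpha_0$ must again be invoked to stay away from indicial roots. Items (i)–(iii) are, by contrast, essentially citations to the weighted-analysis framework of \cite{Lo87} and \cite{I19}, needing only the observation that $b_1(Y)=0$ makes $0$ an isolated indicial weight; I would state them as such and spend the real effort on the boundary decomposition.
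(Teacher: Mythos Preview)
Your route for (i)--(iii) via indicial-root analysis of the scalar conical Laplacian is viable in principle but differs from the paper's. There, closed range of $d^{*_\alpha}$ and of $d$ is obtained not from the Laplacian but by reading off the form components of the full linearized Seiberg--Witten operator $\widehat{L}'_{X^+}$ and its $L^2_\alpha$-adjoint, which are Fredholm for small $\alpha$ by \cite{KM97} together with openness of the Fredholm condition. Item (iii) then follows from (i), (ii) and integration by parts, with no separate indicial computation. Your approach would also work, but the spot where you hedge (the cokernel in (iii), ``here one must be slightly careful'') is exactly where the Lockhart--McOwen route on a cone requires genuine work; the paper's route sidesteps this entirely by borrowing Fredholmness from the coupled operator.

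For (iv) there is a real confusion that sends you down the wrong path. The boundary condition is \emph{Dirichlet}, not Neumann-type: since $d^*\mathbf{t}\,df = \Delta_{Y}(\mathbf{t}f)$, the requirement $d^*\mathbf{t}(a-df)=0$ becomes $\mathbf{t}f = G_{Y}(d^*\mathbf{t}a)$ up to a constant on the connected boundary $Y$. The relevant operator is therefore $\Delta_\alpha(N^+,\partial):f\mapsto(\Delta_\alpha f,\mathbf{t}f)$, whose boundary ellipticity is automatic, so your Lopatinski--Shapiro worry is a red herring. The paper establishes that this operator is Fredholm of index zero by an excision argument comparing the indices of the Laplacians on $N^+$, $X^+$, $-X$, and the double $X\cup_Y(-X)$, using (iii) for the $X^+$ piece; injectivity then comes from Green's formula and gives surjectivity. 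Separately, the paper proves $L^2_{k,\alpha}(i\Lambda^1_{N^+})_{CC}\cap dL^2_{k+1,\alpha}(i\Lambda^0_{N^+})=\{0\}$ by a direct two-step Green's-formula computation; your ``uniqueness of $f$ up to constants'' does not immediately yield this, since the decomposition is not $L^2_\alpha$-orthogonal and one must actually show $\|df\|^2_{L^2_\alpha}=0$ for $df$ in the intersection.
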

\begin{proof}
In order to prove (i), we consider the following operator: 
\begin{align}\label{Lhat'}
\widehat{L}'_{X^+}  :  L^2_{k, \alpha}( i\Lambda_{X^+}^1 \oplus S^+_{X^+}) 
 \to  L^2_{k-1, \alpha}(i\Lambda_{X^+}^0   \oplus i \Lambda_{X^+}^+  \oplus S^-_{X^+}), 
 \end{align}
 given by
  \[
 \widehat{L}'_{X^+} (a, \phi )= ( -d^{*_\alpha}a+ i\operatorname{Re}  \langle i\Phi_0,  \phi \rangle, d^+a-(\Phi_0 \phi^*)_0- (\phi \Phi_0^*)_0  , D^+_{A_0} \phi+ \rho(a) \Phi_0    ) . 
 \]
In \cite[Theorem 3.3]{KM97}, it is proved that $\widehat{L}'_{X^+}$ is Fredholm for $\alpha=0$. Since Fredholm property is an open condition, we can see that there exists a small positive number $\alpha_0$ such that for any positive real number $\alpha \leq \alpha_0$, $\widehat{L}'_{X^+}$ is also Fredholm. The positive number $\alpha_0$ depends only on $\widehat{L}'_{X^+}$ on the end because of the usual parametrix patching argument. Thus, $\alpha_0$ actually depends only on $\theta$ and $J$. Since any Fredholm operator sends a closed subspace to a closed subspace, if we put $\phi=0$, then we can conclude that $d^{*_{\alpha}}: L^2_{k, \alpha}(i \Lambda_{X^+}^1 ) \to L^2_{k-1, \alpha}(i \Lambda_{X^+}^0 )$ has closed range. 
In order to prove (ii), we consider the $L^2_{\alpha}$ formal adjoint 
\[
(\widehat{L}'_{X^+})^*: 
   L^2_{k, \alpha}(i\Lambda_{X^+}^0   \oplus  i \Lambda_{X^+}^+  \oplus S^-_{X^+}) \to L^2_{k-1, \alpha}( i\Lambda_{X^+}^1 \oplus S^+_{X^+}) 
   \]
   of $ \widehat{L}'_{X^+}$ described as  
\[
( \widehat{L}'_{X^+})^* (f, b, \psi ):= (- d f + 2i \operatorname{Im}\psi \otimes \Phi^*_0   ,  d^{*_\al} b , D_{A_0}^- \psi + f \Phi_0 ) , 
\]
where $D_{A_0}^-$ is the $L^2_{\al}$-formal adjoint of $D_{A_0}^+$. (Note that $D_{A_0}^-$ is not the $L^2$-formal adjoint of $D^+_{A_0}$. ) In \cite[Theorem 3.3]{KM97}, it is proved that $( \widehat{L}'_{X^+})^*$ is Fredholm when $\al=0$. Moreover, in \cite{I19}, it is checked that, for $\al \in [0, \al_0]$,  $( \widehat{L}'_{X^+})^*$ is also Fredholm. (If we need, we again take a small number $\al_0$.) This implies, for such a $\al_0$, $\operatorname{Im} d$ is closed. 

Because $\operatorname{Im} d$ is closed, we have the following $L^2_{\alpha}$-orthogonal decomposition: 
\[
L^2_{k, \alpha}(i \Lambda_{X^+}^1 ) = \ker d^{*_{\alpha}} \oplus d (L^2_{k, \alpha}(i \Lambda_{X^+}^0 )) . 
\]
So, $\Delta_\al : L^2_{k, \alpha}(i \Lambda_{X^+}^0 ) \to L^2_{k-2, \alpha}(i \Lambda_{X^+}^0 )  $ has a closed image since $\operatorname{Im}\Delta_\al = \operatorname{Im} d^{*_\alpha} $. Therefore, we also have  the following $L^2_{\alpha}$-orthogonal decomposition: 
\[
L^2_{k, \alpha}(i \Lambda_{X^+}^0 ) = \Delta_\al ( L^2_{k, \alpha}(i \Lambda_{X^+}^0 )) \oplus ( \Delta_\al ( L^2_{k, \alpha}(i \Lambda_{X^+}^0 )) )^{\perp_{L^2_{\alpha}}} 
\]
\[
= \Delta_\al ( L^2_{k, \alpha}(i \Lambda_{X^+}^0 )) \oplus \ker \Delta_\al . 
\]
Moreover, for any element $f \in \ker \Delta_\al$, we can see that
 \[
0=\langle \Delta_\al (f), f \rangle_{L^2_{\alpha}} = \| df \|_{L^2_{\alpha}}.  
\]
So, $f$ is constant and $f \in L^2$, we can conclude that $f=0$. This implies $\Delta_\al : L^2_{k, \alpha}(i \Lambda_{X^+}^0 ) \to L^2_{k-2, \alpha}(i \Lambda_{X^+}^0 )  $ is invertible for $0\leq \alpha \leq \alpha_0$.

Next, we will prove (iv). 

We fist prove 
\begin{align} \label{intersection}
L^2_{k, \alpha} (i\Lambda^1_{N^+} )_{CC}\cap  d L^2_{k+1, \alpha} (i\Lambda^0_{N^+} )  = \{0\}. 
\end{align}
Here we use the connectivity of $\partial N^+$. Let $a=df$ be an element in 
\[
L^2_{k, \alpha} (i\Lambda^1_{N^+} )_{CC}\cap  d L^2_{k+1, \alpha} (i\Lambda^0_{N^+} ).
\]
 Then Green's formula implies 
\begin{align}\label{green}
\langle d 1 , a \rangle_{L^2_\alpha}  - \langle 1 , d^{*_\alpha}a \rangle_{L^2_\alpha} = \int_{\partial N^+} {\bf t} 1 \wedge * {\bf n} a . 
\end{align}
By \eqref{green}, we conclude that 
\begin{align} \label{pre}
 0  = \int_{\partial N^+} * {\bf n} a. 
 \end{align}
 We again use Green's formula and obtain 
 \begin{align} \label{green2}
 \| df\|^2_{L^2_\alpha} - \langle f, d^{*_\alpha}d f \rangle_{L^2_\alpha} = \int_{\partial N^+} {\bf t} f \wedge * {\bf n} df.
 \end{align}
Since
 \[
 0 = d^{*} {\bf t} a =  d^{*} {\bf t} df =  d^{*}d {\bf t} f= \Delta_{\partial N^+} {\bf t} f
 \]
 and $\partial N^+$ is connected, we see that $ {\bf t}f$ is a constant $c$. Moreover, we have $d^{*_\alpha} d f = d^{*_\alpha}a =0$.
Then \eqref{green2} can be computed as 
\[
\| df \|_{L^2_\alpha}^2 =  \int_{\partial N^+} {\bf t} f \wedge * {\bf n} df = c \int_{\partial N^+} * {\bf n} a = 0, 
\]
here we used \eqref{pre}. So we have $a=df=0$. This completes the proof of \eqref{intersection}. 
Next, we will see 
\[
L^2_{k, \alpha} ( i\Lambda^1_{N^+} ) = L^2_{k, \alpha} (i\Lambda^1_{N^+} )_{CC} +  d L^2_{k+1, \alpha} (i\Lambda^0_{N^+} ) . 
\]
 We need to prove that, for any $\alpha \in L^2_{k, \alpha} (i \Lambda^1_{N^+} )$, there exists $\xi \in L^2_{k+1, \alpha} (i\Lambda^0_{N^+} )$ such that $\alpha- d \xi  \in L^2_{k, \alpha} (i\Lambda^1_{N^+} )_{CC}$, i.e. 
 \begin{align*} 
 d^{*_\alpha} d \xi  =  d^{*_\alpha}\alpha \\
 d^{*} {\bf t} d \xi  =  d^{*} {\bf t} \alpha 
 \end{align*}
 hold. These equations are equivalent to 
  \begin{align*} 
 \Delta_\alpha \xi  =  d^{*_\alpha}\alpha \\
 {\bf t}  \xi  =  G_{\partial N^+} d^{*} {\bf t} \alpha , 
 \end{align*}
where $G_{\partial N^+}$ is the Green operator on $\partial N^+$. 
Therefore, we need to prove surjectivity of the map 
\[
\Delta_\alpha(N^+, \partial ) : L^2_{k+1, \alpha} (i\Lambda^0_{N^+} )  \to L^2_{k-1, \alpha} (i\Lambda^0_{N^+} ) \oplus  L^2_{k+ \frac{1}{2}} (i \Lambda^0_{\partial N^+} ),  
\]
defined by 
\[
\Delta_\al(N^+, \partial ) \xi = ( \Delta_\al \xi , {\bf t}\xi ). 
\]
In order to prove this, we first use the excision principle and reduce the surjectivity of $ \Delta_\al(N^+, \partial )$ to calculations of indexes for several Laplacian operators. We follow a method of J.Lin (\cite[Appendix~A]{Lin19}). 

For the excision principle, we consider the double $X^{dbl}:= X \cup_Y (-X)$ of $X$, its Laplacian 
\[
\Delta(X^{dbl}): L^2_{k+1} (i \Lambda^0_{X^{dbl}} )  \to L^2_{k-1} (i \Lambda^0_{X^{dbl}} )
\]
and the Laplacian for $-X$ 
\[
\Delta(-X, \partial ) : L^2_{k+1} (i \Lambda^0_{-X} )  \to L^2_{k-1} (i \Lambda^0_{-X} ) \oplus  L^2_{k+ \frac{1}{2}} (i \Lambda^0_{\partial (-X) } ),  
\]
defined by 
\[
\Delta(-X, \partial ) \xi = ( \Delta \xi , {\bf t}\xi ). 
\]
We also treat the Laplacian for $X^+$ 
\[
\Delta_\al(X^+) := d^{*_\al} \circ d: L^2_{k+1, \al} (i \Lambda^0_{X^+} )  \to L^2_{k-1, \al } (i \Lambda^0_{X^+} ). 
\]
Then for the operators $\Delta_\al(N^+, \partial ) ,\Delta(X^{dbl}),  \Delta(-X, \partial )$ and $\Delta_\al (X^+)$, we have the following excision result: 
\begin{lem}For any $\al \in [0, \al_0]$, we have
\[
\ind \Delta_\al(N^+, \partial ) + \ind \Delta(X^{dbl}) = \ind \Delta_\al(X^+) + \ind \Delta(-X, \partial ). 
\]

\end{lem}
\begin{proof}This is standard excision principle. We omit the proof. For detail, see \cite{Be04} and \cite[Appendix~A]{Lin19}. 
\end{proof}

By (iii), we have 
\[
\ker \Delta_\al(X^+) = \{0\} \text{ and } \Coker \Delta_\al(X^+)=\{0\}. 
\]
Moreover, it is well-known that $\ind \Delta(-X, \partial )= \ind \Delta(X^{dbl})=0$ (\cite{Sch95}). 
This concludes that $\ind \Delta_\al(N^+, \partial ) =0$. Suppose $\Delta_\al(N^+, \partial ) (\xi ) =0 $. 
Green's formula implies that 
 \begin{align} \label{green4}
 \| d\xi\|^2_{L^2_\al} = \langle \xi, d^{*_\alpha}d \xi \rangle_{L^2_\al} =0 .
 \end{align}
So we have $\Ker \Delta_\al(N^+, \partial ) = \{0\}$. This completes the proof of $\Coker  \Delta_\al(N^+, \partial ) =\{0\}$.
This completes the proof of (iv). 
\end{proof}

\subsection{Fredholm theory}\label{Fredholm} 
In this subsection, we will prove the operator \eqref{L} with spectral boundary condition 
\[
L_{N^+} +p^0_{-\infty}\circ r : 
\mathcal{U}_{k, \alpha}(N^+) \to  L^2_{k-1, \alpha}( i\Lambda_{N^+}^+ \oplus S^-_{N^+})\oplus V^0_{-\infty} (\partial N_+) 
\]
is Fredholm for a certain class of weights. 
First we fix a $Spin^c$ bound $(X, \s_X)$ of $Y$ and consider a $Spin^c$ 4-manifold 
\[
X^+ := X \cup_{\partial (N^+)} N^+. 
\]

In order to prove the Fredholm property of $L_{N^+} +p^0_{-\infty}\circ r $, we introduce the following operator on $N^+$:
\begin{align}\label{Lhat}
\widehat{L}_{N^+} +  \widehat{p}^0_{-\infty} \circ  \widehat{r} :  L^2_{k, \alpha}(i \Lambda_{N^+}^1 \oplus S^+_{N^+}) 
 \to  L^2_{k-1, \alpha}(i\Lambda_{N^+}^0   \oplus \Lambda_{N^+}^+  \oplus S^-_{N^+})\oplus \widehat{V}^0_{-\infty}  (\partial N_+), 
 \end{align}
 given by
  \[
 \widehat{L}_{N^+}(a, \phi )= ( d^{*_\alpha}a , d^+a-(\Phi_0 \phi^*)_0- (\phi \Phi_0^*)_0  , D^+_{A_0} \phi+ \rho(a) \Phi_0    ), 
 \]
 where 
 \begin{itemize}
 \item[(i)] the space $\widehat{V}^0_{-\infty}(\partial N_+) $ is the $L^2_{k-\frac{1}{2}}$-completion of the negative eigenspaces of the operator 
 \[
\widehat{l}:= \begin{pmatrix}
0 & -d^{*} & 0  \\
-d & * d & 0 \\ 
0 & 0 & D_{B_0}  \\
\end{pmatrix} : \Om^0_{\partial N^+} \oplus \Om^1_{\partial N^+} \oplus \Gamma (S) \to \Om^0_{\partial N^+} \oplus \Om^1_{\partial N^+} \oplus \Gamma (S), 
\]
 \item[(ii)] the map $\widehat{r} : L^2_{k, \alpha}(i \Lambda_{N^+}^1 \oplus S^+_{N^+})  \to \Om^0_{\partial N^+} \oplus \Om^1_{\partial N^+} \oplus \Gamma (S) $ is the restriction, 
 \item [(iii)]the operator  
 \[
 \widehat{p}^0_{-\infty} : \Om^0_{\partial N^+} \oplus \Om^1_{\partial N^+} \to \widehat{V}^0_{-\infty}  (\partial N_+)
 \]
 is the $L^2$-projection to $\widehat{V}^0_{-\infty}  (\partial N_+)$. 
 \end{itemize}

\begin{lem}\label{fredd}
Suppose $0\leq \alpha \leq \alpha_0$, where $\alpha_0$ is the constant appeared in \cref{Slice}
Then the operator $\widehat{L}_{N^+} +  \widehat{p}^0_{-\infty} \circ  \widehat{r}$ defined in \eqref{Lhat}  is Fredholm for $k \geq 1$. 

\end{lem}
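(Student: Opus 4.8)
The plan is to deduce the Fredholm property of $\widehat{L}_{N^+} + \widehat{p}^0_{-\infty} \circ \widehat{r}$ on the manifold-with-boundary $N^+$ from the corresponding statement on the closed-at-infinity but boundaryless manifold $X^+ = X \cup_{\partial N^+} N^+$, together with known APS-type results on the compact piece $X$, via the excision principle. First I would recall that on $X^+$ the operator $\widehat{L}_{X^+}$ (i.e.\ the augmented linearized Seiberg--Witten operator with the $d^{*_\alpha}$ gauge-fixing component) is Fredholm for $\alpha \in [0, \alpha_0]$: this is essentially \cite[Theorem 3.3]{KM97} for $\alpha = 0$ combined with the openness of the Fredholm condition and the parametrix-patching argument on the conical end already invoked in the proof of \cref{Slice} (the constant $\alpha_0$ is the same, since it depends only on the operator on the end, hence only on $\theta$ and $J$). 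Here $\widehat{L}_{X^+}$ differs from the operator $\widehat{L}'_{X^+}$ of \cref{Slice} only by the lower-order zeroth-order terms coupling $a$ and $\phi$ in the $\Lambda^0$ component, which are compact by \cref{multiplication and compact embedding}, so Fredholmness transfers.

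Next I would set up the excision. Cut $X^+$ along $\partial N^+ = Y$ into the compact piece $X$ (with boundary $Y$) and the conical piece $N^+$ (with boundary $Y$ and a conical end). On the $X$ side, the operator $\widehat{L}_X$ with the APS spectral boundary condition $\widehat{p}^0_{-\infty}\circ \widehat{r}$ at $Y$ — where the relevant boundary operator is exactly the self-adjoint operator $\widehat{l}$ written in \eqref{Lhat} — is Fredholm by the classical Atiyah--Patodi--Singer theory for first-order elliptic operators on compact manifolds with boundary (see \cite{KM07}, \cite{Man03}); note that the $*d$ entry forces us to add the $0$-form component and the $d^{*}, d$ off-diagonal entries, but this is precisely the odd signature-type operator whose spectral projection appears in Manolescu's construction. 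The excision principle (as in \cite{Be04}, \cite[Appendix~A]{Lin19}, exactly as used for the Laplacian lemma just above) then says: gluing the Fredholm operator on $X$ (with APS condition at $Y$) to the operator on $N^+$ with the \emph{complementary} (i.e.\ $\widehat{p}^{+\infty}_{0}$, the nonnegative part) spectral condition produces the Fredholm operator $\widehat{L}_{X^+}$ on the closed manifold; equivalently, since $\widehat{L}_{X^+}$ is Fredholm and the $X$-side contribution is Fredholm, the operator on $N^+$ with the spectral boundary condition $\widehat{p}^0_{-\infty}\circ \widehat{r}$ must be Fredholm as well. The reversal of the boundary orientation turns the nonnegative-eigenspace projection into the negative-eigenspace projection $\widehat{p}^0_{-\infty}$, which is exactly the boundary condition in \eqref{Lhat}.

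Concretely the steps are: (1) identify $\widehat{L}_{N^+}$ as a first-order elliptic operator on $N^+$ that on the cylindrical collar $[0,\tfrac12]\times Y$ has the form $\partial_t + \widehat{l}$ (up to conjugation by the weight $e^{\alpha\sigma}$, which is harmless on the compact collar), so that $\widehat{V}^0_{-\infty}(\partial N^+)$ is genuinely the APS spectral subspace; (2) invoke Fredholmness of $\widehat{L}_{X^+}$ on the boundaryless manifold for $\alpha\le\alpha_0$, using \cref{Slice}'s analysis on the end plus compactness of the zeroth-order coupling terms; (3) invoke APS Fredholmness of $\widehat{L}_X$ with its spectral boundary condition on the compact piece; (4) apply the excision/gluing principle to conclude Fredholmness on $N^+$ with the stated boundary condition. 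The main obstacle I expect is bookkeeping around the weighted spaces: one must check that conjugating by $e^{\alpha\sigma}$ does not disturb the APS boundary operator (true, since $\sigma$ is constant near $\partial N^+$) and that the excision argument, which is cleanest for $\alpha = 0$, extends uniformly to $\alpha\in(0,\alpha_0]$ — but this is exactly the robustness already exploited in \cref{Slice} and in \cite{I19}, so it should go through verbatim; the genuinely new input over \cref{Slice} is merely carrying along the Dirac block and the full spinorial boundary operator $\widehat{l}$ rather than just the $0$-form Laplacian.
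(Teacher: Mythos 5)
Your inputs are the same ones the paper relies on---Fredholmness of the Kronheimer--Mrowka-type operator on $X^+$ for $0\leq\alpha\leq\alpha_0$ coming from \cref{Slice}, and APS theory for the boundary condition defined by $\widehat{l}$---but the step where you actually conclude is a gap as written. The excision principle you invoke (the statement used in the paper for the Laplacians, following \cite{Be04} and \cite[Appendix~A]{Lin19}) is an identity of indices among operators that are \emph{already known} to be Fredholm; it does not allow you to transfer Fredholmness from $\widehat{L}_{X^+}$ and the APS problem on the compact piece $X$ to the remaining piece. ``Glued operator Fredholm plus one side Fredholm implies the other side Fredholm'' is not a formal consequence of index excision: to justify it you must exhibit a parametrix for $\widehat{L}_{N^+}+\widehat{p}^0_{-\infty}\circ\widehat{r}$, and that is exactly what the paper does directly. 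It extracts from the Fredholmness of $\widehat{L}'_{X^+}$ (\cref{Slice}) a parametrix of the operator over the conical end $\R^{\geq 1}\times Y$, takes the inverse of the model operator with the spectral boundary condition on the half-cylinder $\R^{\leq 0}\times Y$ by the technique of \cite{APSI}, and patches the two to obtain a parametrix on $N^+$, which proves the Fredholm property of \eqref{Lhat}. Once the argument is run this way, your step (3)---APS Fredholmness of the operator on the compact piece $X$---is not needed at all: the well-posedness of the boundary condition is already carried by the half-cylinder inverse. So the repair of your step (4) is precisely the paper's parametrix patching, and the detour through $X$ is superfluous.

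A secondary point: your justification that $\widehat{L}_{X^+}$ and $\widehat{L}'_{X^+}$ differ by a compact operator ``by \cref{multiplication and compact embedding}'' does not hold as stated. The extra zeroth-order term $\phi\mapsto i\operatorname{Re}\langle i\Phi_0,\phi\rangle$ involves $\Phi_0$, which has unit length on the conical end, and the compact-embedding part of \cref{multiplication and compact embedding} requires strictly decreasing the weight; multiplication by a non-decaying section followed by the inclusion $L^2_{k,\alpha}\to L^2_{k-1,\alpha}$ with the same weight is merely bounded, not compact. The paper sidesteps this by using \cref{Slice} only to produce a parametrix over the end (where the cone analysis of \cite{KM97} applies to the operators with these $\Phi_0$-couplings), rather than by comparing the two global operators modulo compacts; if you reorganize your proof around parametrix patching as above, this issue does not arise.
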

\begin{proof}
In the proof of \cref{Slice}, we confirm that 
\begin{align}\label{Lhat'}
\widehat{L}'_{X^+}  :  L^2_{k, \alpha}(i \Lambda_{X^+}^1 \oplus S^+_{X^+}) 
 \to  L^2_{k-1, \alpha}(i\Lambda_{X^+}^0   \oplus i\Lambda_{X^+}^+  \oplus S^-_{X^+})
 \end{align}
 defined by 
  \[
 \widehat{L}'_{X^+} (a, \phi )= ( -d^{*_\alpha}a+ i\operatorname{Re}  \langle i\Phi_0,  \phi \rangle, d^+a-(\Phi_0 \phi^*)_0- (\phi \Phi_0^*)_0  , D^+_{A_0} \phi+ \rho(a) \Phi_0    ) 
 \]
 is Fredholm for $0 \leq \alpha \leq \al_0$.
Thus we obtain a parametrix of the operator $ \widehat{L}$ on $\R^{\geq 1} \times  Y$.
By the technique \cite{APSI}, one can take the inverse of the AHS operator with the spectral boundary condition on $\R^{\leq 0}\times Y$.  Then the standard patching argument gives a parametrix of $\widehat{L} $ on $N^+$.This proves the Fredholm property of  \eqref{Lhat}. 
This completes the proof. 
\end{proof}
Finally, we prove the Fredholmness result that will be needed to construct our invariant and present  its Fredholm index in terms of the following quantities:
 \[
2n (Y, g_Y, \s) := \ind_\R^{APS} ( D_{A_0}^+) - \frac{c_1^2 (S^+_X) -\sigma(X)}{4} 
 \]
 is  the quantity introduced by Manolescu(\cite{Man03})
 and
 \[
 d_3(Y, [\xi])=\frac{1}{4} (c_1^2 (S^+_X) -2\chi (X) -3\sigma(X))-\langle e (S^+_X, \Psi) , [ X, \partial X] \rangle
 \]
 is the $d_3$ invariant of the homotopy class of the plane field $\xi$, 
where $(X, S^\pm_X, \rho_X)$ is a $Spin^c$ bound of $(Y, \s)$, 
and $\Psi$ is a unit section of $S^+_X|_{Y}$ determined by $\xi$ under the correspondence  of Lemma 2.3 in \cite{KM97}.
Note that when the $Spin^c$ structure of $X$ comes from an almost complex structure $J$  with $\xi=JTY\cap TY$, we can extend $\Psi$ to a nowhere-vanishing section of $S^+_X$ on $X$ and thus $\langle e (S^+_X, \Psi) , [ X, \partial X] \rangle=0$.

\begin{prop}\label{Fredholmind}
For $0\leq \alpha \leq \alpha_0$, where $\alpha_0$ is the constant appeared in \cref{Slice}, $L_{N^+}+p^0_{-\infty}\circ r$ is Fredholm and its index is 
\[
\ind_\R(L_{N^+}\oplus p^0_{-\infty}\circ r)=-d_3(Y, [\xi])-\frac{1}{2}+2n(-Y, g_Y, \s)
\]
\end{prop}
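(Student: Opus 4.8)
\emph{The plan.} I would bootstrap from \cref{fredd} and \cref{Slice}, reducing the statement to an index computation on the closed-up conical manifold $X^+=X\cup_Y N^+$ together with the Atiyah--Patodi--Singer theorem. For Fredholmness: by \cref{Slice}(iv) the form-part of the domain of the hatted operator $\widehat L_{N^+}+\widehat p^0_{-\infty}\circ\widehat r$ splits $L^2_\al$-orthogonally as $L^2_{k,\al}(i\Lambda^1_{N^+})_{CC}\oplus dL^2_{k+1,\al}(i\Lambda^0_{N^+})$, and $\widehat V^0_{-\infty}(\partial N^+)$ contains $V^0_{-\infty}(\partial N^+)$ as the complement of its $\Om^0$-summand. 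With respect to these splittings the Fredholm operator of \cref{fredd} is, modulo a compact operator (coming from the zeroth-order term $\rho(d\xi)\Phi_0$ and from lower-order boundary interactions), block triangular: the exact summand $dL^2_{k+1,\al}(i\Lambda^0_{N^+})$ is carried onto $L^2_{k-1,\al}(i\Lambda^0_{N^+})$ together with the $\Om^0$-boundary data by the operator $\Delta_\al(N^+,\partial)$, shown invertible in the proof of \cref{Slice}, while the complementary block is precisely $L_{N^+}+p^0_{-\infty}\circ r$. Hence $L_{N^+}+p^0_{-\infty}\circ r$ is Fredholm, and $\ind_\R(L_{N^+}+p^0_{-\infty}\circ r)=\ind_\R(\widehat L_{N^+}+\widehat p^0_{-\infty}\circ\widehat r)-\ind_\R\Delta_\al(N^+,\partial)$, where the last term vanishes.

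\emph{Computing the index.} Next I would compute $\ind_\R(\widehat L_{N^+}+\widehat p^0_{-\infty}\circ\widehat r)$ by gluing along $Y$. Fixing the $Spin^c$ bound $(X,\s_X)$ and applying the excision principle (as in the excision lemma in the proof of \cref{Slice}, following \cite{Lin19}), one obtains
\[
\ind_\R\widehat L_{X^+}=\ind_\R(\widehat L_{N^+}+\widehat p^0_{-\infty}\circ\widehat r)+\ind_\R(\widehat L_X,P_{\ge 0}),
\]
where $\widehat L_{X^+}$ is the weighted linearised Seiberg--Witten operator on the closed-up conical manifold $X^+$ and $\widehat L_X$ carries the complementary spectral boundary condition on the compact piece $X$. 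The index $\ind_\R\widehat L_{X^+}$ is evaluated as in \cite{KM97} and \cite{I19}: it equals $2\,\ind^{APS}_\C(D^+_{A_0})$ plus the index of the weighted anti-self-duality complex on $X^+$, and these reorganise into the characteristic numbers $c_1^2(S^+_X),\ \sigma(X),\ \chi(X)$, the Betti-number contribution, the relative Euler number $\langle e(S^+_X,\Psi),[X,\partial X]\rangle$, and $\eta$-invariant terms of $Y$. The compact-piece index $\ind_\R(\widehat L_X,P_{\ge 0})$ is the APS index on a compact manifold with boundary and is arranged to absorb exactly the $X$-dependence, so that what survives depends only on $(Y,\xi,g_Y)$. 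Grouping the purely topological terms yields $-d_3(Y,[\xi])-\tfrac12$, the $-\tfrac12$ being the half-integer contribution of the constant $0$-form spanning $\ker\widehat l$ (equivalently, of the reducible configuration) under the spectral cut, while grouping the $\eta$-invariant terms, with orientations tracked so that the conical end enters with reversed orientation, produces $2n(-Y,g_Y,\s)$ in Manolescu's normalisation.

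\emph{Main obstacle.} The hard part will be precisely this last bookkeeping: matching the APS $\eta$-data with Manolescu's $n(-Y,\s,g)$ across the orientation reversal, extracting the $-\tfrac12$ from the zero mode of $\widehat l$ while respecting the $P_{<0}$ versus $P_{\le 0}$ convention implicit in the definition of $\widehat V^0_{-\infty}$, and confirming that every $X$-dependent characteristic number cancels so that the right-hand side is genuinely an invariant of $(Y,\xi)$; this forces one to track carefully the distinction between the $L^2$ and the $L^2_\al$ formal adjoints throughout. One can alternatively run the entire index computation directly for the operator underlying Iida's invariant \eqref{Iida1} on $X$ and transport it to $N^+$ by the same gluing, which makes the cancellation of the $X$-dependence manifest.
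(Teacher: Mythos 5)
Your overall architecture (reduce to the hatted operator of \cref{fredd}, then compute its index by excision against compact $Spin^c$ bounds, quoting \cite{KM97} and the APS formula) matches the paper, but the reduction step contains a genuine gap. You claim that with respect to the splitting $L^2_{k,\alpha}(i\Lambda^1_{N^+})=L^2_{k,\alpha}(i\Lambda^1_{N^+})_{CC}\oplus dL^2_{k+1,\alpha}(i\Lambda^0_{N^+})$ of \cref{Slice}(iv) the operator $\widehat L_{N^+}+\widehat p^0_{-\infty}\circ\widehat r$ is block triangular modulo compact operators, with diagonal blocks $L_{N^+}+p^0_{-\infty}\circ r$ and $\Delta_\alpha(N^+,\partial)$. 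The coupling terms are not compact: the map $f\mapsto\rho(df)\Phi_0$ is bounded from $L^2_{k+1,\alpha}$ into $L^2_{k,\alpha}$, but $|\Phi_0|\equiv 1$ on the conical end and the inclusion $L^2_{k,\alpha}\hookrightarrow L^2_{k-1,\alpha}$ with the \emph{same} weight is not compact on the noncompact $N^+$ (compactness in \cref{multiplication and compact embedding} requires strictly lowering the weight); a bump for $df$ translated down the cone gives a bounded sequence whose image has no convergent subsequence. Likewise the coupling of $a_{CC}$ into the $(V^\perp)^0_{-\infty}$-boundary condition through its normal trace is a restriction map, not a compact one. Moreover the $f$-block is not $\Delta_\alpha(N^+,\partial)$ with Dirichlet data: what the hatted operator imposes there is the APS projection to the nonpositive modes of $l^\perp$, which is only identified with $i\R^{b_0}\oplus dL^2_{k-1/2}(i\Lambda^0_{\partial N^+})$ after Khandhawit's isomorphism $\varpi$. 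As written, your argument yields at best Fredholmness of a Schur complement $A-BD^{-1}C$, not of $A=L_{N^+}+p^0_{-\infty}\circ r$, and the index identity $\ind A=\ind(\widehat L_{N^+}+\widehat p^0_{-\infty}\circ\widehat r)$ remains unproved. The paper avoids every compactness claim here: it composes the boundary condition with $\varpi$ (an isomorphism on $(V^\perp)^0_{-\infty}$, so Fredholmness and index are unchanged) and then applies the snake lemma to the short exact sequences determined by $d^{*_\alpha}\oplus(\varpi\circ\widehat r)$, whose kernel is exactly the double Coulomb slice; this simultaneously gives Fredholmness of $L_{N^+}\oplus p^0_{-\infty}\circ r$ and the index equality.

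For the index evaluation, your single-splitting additivity $\ind\widehat L_{X^+}=\ind(\widehat L_{N^+}+\widehat p^0_{-\infty}\circ\widehat r)+\ind(\widehat L_X,P_{\ge 0})$ forces exactly the bookkeeping you defer as ``the main obstacle'' (zero modes of $\widehat l$, the $P_{<0}$ versus $P_{\le 0}$ convention, matching $\eta$-data across the orientation reversal), so this part is not carried out. The paper's proof sidesteps it by excising against a \emph{second} compact bound $X'$ of $(-Y,\s)$: from $\ind(\widehat L_{N^+}\oplus\widehat p^0_{-\infty}\circ\widehat r)+\ind\widehat L_{X\cup_Y X'}=\ind\widehat L_{X^+}+\ind(\widehat L_{X'}\oplus\widehat p^0_{-\infty}\circ\widehat r)$ the unknown spectral corrections cancel, $\ind\widehat L_{X^+}$ is taken from \cite{KM97}, and the $X'$-term yields $-\frac{1}{2}+2n(-Y,g_Y,\s)$ directly from the APS index of $d^{*_\alpha}+d^+$ together with the definition of $n$, so no $\eta$-invariant ever has to be matched by hand. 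If you rework your proof along these two lines (the $\varpi$/snake-lemma reduction and the two-bound excision), it becomes the paper's argument.
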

\begin{proof}
This argument is similar to that of \cite{Khan15}, which deals with a compact 4-manifold with boundary instead of $N^+$.
First, by the choice of $\alpha$, \cref{fredd} implies that $\widehat{L}_{N^+}\oplus (\widehat{p}^0_{-\infty}\circ \widehat{r})$ is Fredholm. 
Consider an extra operator
\[
\widehat{L}_{N^+}\oplus ((p^0_{-\infty} \oplus \varpi)\circ \hat{r}): L^2_{k, \alpha}(i\Lambda^1_{N^+}\oplus S^+_{N^+})
\]
\[
\to L^2_{k-1, \alpha}(i\Lambda^0_{N^+} \oplus i\Lambda^+_{N^+}\oplus S^+_{N^+})\oplus V^0_{-\infty}(\partial N^+)\oplus i\R^{b_0(\partial N^+)}\oplus dL^2_{k-1/2}(i\Lambda^0_{\partial N^+})
\]
where
\[
\varpi: \widehat{V}(\partial N^+)\to  i\R^{b_0(\partial N^+)}\oplus dL^2_{k-1/2}(i\Lambda^0_{\partial N^+})
\]
is the  $L^2$-orthogonal projection.

We will show that $L_{N^+}\oplus (p^0_{-\infty}\circ r)$ and $\widehat{L}_{N^+}\oplus ((p^0_{-\infty} \oplus \varpi)\circ \hat{r}$ are Fredholm and 
\[
\ind(L_{N^+}\oplus (p^0_{-\infty}\circ r))=\ind(\widehat{L}_{N^+}\oplus ((p^0_{-\infty} \oplus \varpi)\circ \hat{r}))
\]
\[
=\ind(\widehat{L}_{N^+}\oplus (\widehat{p}^0_{-\infty}\circ \widehat{r}))=-d_3(Y, [\xi])-\frac{1}{2}+2n(-Y, g_Y, \s)
\]
\par
Let 
\[
V^\perp=i\Omega^0(\partial N^+)\oplus id\Omega^0(\partial N^+)
\]
\[
l^\perp: V^\perp\to V^\perp
\]
be the operator 
\[
l^\perp=\begin{bmatrix}
0& -d^{*}\\
-d& 0
\end{bmatrix}.
\]
We denote its $L^2_{k-1/2}$-completion  by the same notation.
Then 
\[
\widehat{V}=V\oplus V^\perp 
\]
and
\[
\widehat{l}=l\oplus l^\perp.
\]
Let
$(V^\perp)^0 _{-\infty}$ be the span of non-positive eigenvectors of $l^\perp$.

As shown in \cite{Khan15}, the map
\[
\varpi: (V^\perp)^0 _{-\infty} \to i\R^{b_0(\partial N^+)}\oplus d L^2_{k-1/2}(i\Lambda^0_{\partial N^+})=: W( \partial N^+ )
\]
is an isomorphism. 
Thus,  the commutative diagram
\[
  \begin{CD}
     L^2_{k, \alpha}(i\Lambda^1_{N^+}\oplus S^+_{N^+})@>{\widehat{L}_{N^+}\oplus \widehat{p}^{0}_{-\infty} \circ \widehat{r}}>> L^2_{k-1, \alpha}(i\Lambda^0_{N^+}\oplus i\Lambda^+_{N^+}\oplus S^-_{N^+})\oplus \widehat{V}^0_{-\infty} \\
@\vert  @V{id\oplus \varpi}V{\cong}V    \\
   L^2_{k, \alpha}(i\Lambda^1_{N^+}\oplus S^+_{N^+})  @>{\widehat{L}_{N^+}\oplus ((p^0_{-\infty} \oplus \varpi)\circ \hat{r})}>>  L^2_{k-1, \alpha}(i\Lambda^0_{N^+}\oplus i\Lambda^+_{N^+}\oplus S^-_{N^+})\oplus V^0_{-\infty}\oplus  W( \partial N^+ )
  \end{CD}
\]
implies $ \widehat{L}_{N^+}\oplus ((p^0_{-\infty} \oplus \varpi)\circ \hat{r}$ is Fredholm and
\[
\ind(\widehat{L}_{N^+}\oplus ((p^0_{-\infty} \oplus \varpi)\circ \hat{r}))=\ind(\widehat{L}\oplus (\widehat{p}^0_{-\infty}\circ \widehat{r})). 
\]

First, we show
\[
\ind(L_{N^+}\oplus (p^0_{-\infty}\circ r))=\ind(\widehat{L}_{N^+}\oplus ((p^0_{-\infty} \oplus \varpi)\circ \hat{r})).
\]
We put 
\[
W( \partial N^+ ) := i\R^{b_0(\partial N^+)}\oplus d L^2_{k-1/2}(i\Lambda^0_{\partial N^+}) .
\]

We can apply the snake lemma to the following diagram:
\[
\begin{CD}
0  @. 0 \\ 
  @V VV  @VVV    \\
L^2_{k, \alpha}(i\Lambda^1_{N^+}\oplus S^+_{N^+})_{CC}@>{L_{N^+}\oplus p^0_{-\infty}\circ r}>>  L^2_{k-1, \alpha}(i\Lambda^+\oplus S^-_{N^+})\oplus V^0_{-\infty} \\ 
 @V{} VV  @V{}VV    \\
L^2_{k, \alpha}(i\Lambda^1_{N^+}\oplus S^+_{N^+}) @>{\widehat{L}_{N^+}\oplus ((p^0_{-\infty} \oplus \varpi)\circ \hat{r}) }>>    L^2_{k-1, \alpha}(i\Lambda^0_{N^+}\oplus i\Lambda^+_{N^+}\oplus S^-_{N^+})\oplus V^0_{-\infty}\oplus W( \partial N^+ ) \\ 
   @V{d^{*_\alpha}\oplus \varpi \circ \widehat{r}} VV  @VVV    \\
 L^2_{k, \alpha}(i\Lambda^0) \oplus W( \partial N^+ )@=   L^2_{k-1, \alpha}(i\Lambda^0_{N^+})\oplus W( \partial N^+ )   \\ 
 @V VV  @VVV    \\
 0  @. 0 \\ 
 \end{CD}
\]
 Thus, we obtain
 \[
 \ind(L_{N^+}\oplus (p^0_{-\infty}\circ r))=\ind(\widehat{L}_{N^+}\oplus ((p^0_{-\infty} \oplus \varpi)\circ \hat{r})).
 \]
 \par
Next, we show 
\[
\ind(\widehat{L}_{N^+}\oplus ((p^0_{-\infty} \oplus \varpi)\circ \hat{r}))=\ind(\widehat{L}_{N^+}\oplus (\widehat{p}^0_{-\infty}\circ \widehat{r})).
\]

Finally, we show
\[
\ind(\widehat{L}_{N^+}\oplus (\widehat{p}^0_{-\infty}\circ \widehat{r}))=
-d_3(Y, [\xi])-\frac{1}{2}+2n(-Y, g_Y, \s).
\]

Let $X$ be a $Spin^c$ bound of $(Y, \xi)$ and $X'$ be a $Spin^c$ bound of $(-Y, \s)$.
Then, by the excision property of index, 
\[
\ind_\R (\widehat{L}_{N^+}\oplus \widehat{p}^{0}_{-\infty} \circ \widehat{r})+\ind_\R (\widehat{L}_{X\cup_Y X'})\\
=\ind_\R(\widehat{L}_{X^+})+
\ind_\R (\widehat{L}_{X'}\oplus \widehat{p}^{0}_{-\infty} \circ \widehat{r})
\]
holds.
Thus, we have
\begin{align*}
&\ind_\R (\widehat{L}_{N^+}\oplus \widehat{p}^{0}_{-\infty} \circ \widehat{r})-
\ind_\R (\widehat{L}_{X'}\oplus \widehat{p}^{0}_{-\infty} \circ \widehat{r})\\
=&\ind_\R(\widehat{L}_{X^+})-\ind_\R (\widehat{L}_{X\cup_Y X'})\\
=&\langle e(S^+_X, \Psi_\xi), [X, \partial X]\rangle- \frac{c^2_1(S^+_{X})-2\chi(X)-3\sigma(X)}{4}-\frac{c^2_1(S^+_{X'})-2\chi(X')-3\sigma(X')}{4}\\
=&-d_3(Y, [\xi])-\frac{c^2_1(S^+_{X'})-2\chi(X')-3\sigma(X')}{4}, 
\end{align*}
here we use the computation result of the index of $\wh{L}_{X^+}$ given in \cite[Theorem 2.4]{KM97}. 
Therefore,
\[
\ind_\R (\widehat{L}_{N^+}\oplus \widehat{p}^{0}_{-\infty} \circ \widehat{r})=
\]
\[
-d_3(Y, [\xi])+\ind^{APS}_{\R}(d^{*_\alpha}+d^++D^+_{A_0})_{X'}-\frac{c^2_1(S^+_{X'})-2\chi(X')-3\sigma(X')}{4}
\]
Now, from the index formula (for example, see \cite[Section 3]{Khan15})
\[
\ind^{APS}_{\R}(d^{*_\alpha}+d^+)_{X'}=-\frac{\sigma(X')+\chi(X')}{2}-\frac{1}{2}
\]
and the definition  
\[
2n(\partial X', g, \s):=\ind^{APS}_{\R}(D^+_{A_0})_{X'}-\frac{c^2_1(S^+_{X'})-\sigma(X')}{4}, 
\]
we have
\begin{align*}
&\ind^{APS}_{\R}(d^{*_\alpha}+d^++D^+_{A_0})_{X'}-\frac{c^2_1(S^+_{X'})-2\chi(X')-3\sigma(X')}{4}\\
=&\left\{\ind^{APS}_{\R}(d^{*_\alpha}+d^+)_{X'}+\frac{  \sigma(X')+\chi(X') }{2}\right\}+\left\{\ind^{APS}_{\R}(D^+_{A_0})_{X'}-\frac{c^2_1(S^+_{X'})-\sigma(X')}{4}\right\}\\
=&-\frac{1}{2}+2n(\partial X', g, \s).
\end{align*}
Thus, we obtain 
\[
\ind_\R (\widehat{L}_{N^+}\oplus \widehat{p}^{0}_{-\infty} \circ \widehat{r})=-d_3(Y, [\xi])-\frac{1}{2}+2n(-Y, g_Y, \s).
\]

\end{proof}

\subsection{Uniform bound for energies}
In this subsection, we prove a certain boundedness of the solutions of the Seiberg-Witten equation on $N^+$. This is a main step to construct our Floer homotopy contact invariant.

We consider a half-cylinder $\R^{\leq 0}\times Y$ with the product metric and a $Spin^c$ structure, $A_0$ and $\Phi_0$ on it as translation invariant, which are the same as $\s|_{[0, \frac{1}{2} ] \times Y}$, $A_0|_{{[0, \frac{1}{2} ] \times Y}}$ and $\Phi_0|_{{Y \times [0, \frac{1}{2} ] }}=  0$.  
The notations $S^+_{ \R^{\leq 0}\times Y}$, $S^-_{\R^{\leq 0}\times Y}$ denote the spinor bundles. 

Our main result in this section is: 
\begin{thm}\label{bounded}
There exists $0<\al_1\leq \alpha_0$ depending only on $\theta$ and $J$ such that the following conclusion holds, where $\alpha_0$ is the constant appeared in \cref{Slice}. 
Let $\al$ be an element in $(0,\alpha_1]$. 
There exists a constant $R'>0$ independent of $\al$ such that the following result holds. 
Suppose that 
 \[
(x, y ) \in  \cU_{k, \alpha}  ({N^+}) \times L^2_{k} (i\Lambda^1_{\R^{\leq 0}\times Y }\oplus S^+_{\R^{\leq 0}\times Y}  )
\]
satisfies the following conditions: 
\begin{itemize}
\item[(i)] the element $x+ (A_0, \Phi_0)$ is a solution of \eqref{SW eq} on $N^+$, 
\item[(ii)]the element $y$ is a solution of Seiberg-Witten equation on $\R^{\leq 0}\times Y$,
\item[(iii)] $y$ is temporal gauge, $d^{*}b(t)=0$ for each $t$, where $y(t)= (b(t), \psi(t))$ and $y$ is finite type and 
\item[(iv)] $x|_{\partial N^+} = y (0)$. 
\end{itemize}
  Then 
  \[
  \| x\|_{L^2_{k, \alpha} } < R'  \text{ and } \|y(t)\|_{L^2_{k-\frac{1}{2}}} < R' \  (\forall t  \leq 0). 
  \]
\end{thm}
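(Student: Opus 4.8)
The plan is to establish the two bounds by a bootstrapping argument that combines the finite-type a priori bound on the cylindrical side with an elliptic estimate on the conical end. The first step is to control the spinor. On the half-cylinder $\R^{\leq 0}\times Y$, the finite-type hypothesis together with Manolescu's a priori estimate (the first bullet of the theorem quoted from \cite{Man03}) gives a uniform $C^0$, hence $L^2_{k-1/2}$, bound on $y(t)$ for all $t\leq 0$, independent of $\alpha$; in particular $\|\psi(0)\|_{L^2_{k-1/2}(Y)}$ is bounded. The matching condition (iv) then transfers this into a bound on $\Phi|_{\partial N^+}$. The key analytic input on $N^+$ is the pointwise bound on $|\Phi|$ coming from the Weitzenböck formula: on the conical end the curvature term $\tfrac{1}{2}F^+_{A^t_0}-\rho^{-1}(\Phi_0\Phi_0^*)_0$ is uniformly bounded (this is exactly the geometric content of the Kähler structure on $\R^{\geq 1}\times Y$, cf.\ \cite{KM97}), so the usual maximum-principle argument for the Seiberg-Witten equations — applied to $|\Phi|^2-|\Phi_0|^2$ — yields $|\Phi(x)|\leq C$ on all of $N^+$ with $C$ depending only on $\theta$, $J$, and the boundary value, which is already controlled. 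This is where the conical geometry (the $s^2 g_1$ factor forcing the scalar curvature and the perturbing $2$-form to behave well at infinity) is essential, and I expect this to be the main obstacle: one must check carefully that the maximum principle still applies at the conical end — i.e.\ that $|\Phi|$ cannot escape to $+\infty$ along $s\to\infty$ — which uses that $(A,\Phi)-(A_0,\Phi_0)\in L^2_{k,\alpha}$ with $\alpha>0$, so the difference decays, combined with the fact that $(A_0,\Phi_0)$ is the (bounded) Kähler solution.

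Once $\|\Phi\|_{C^0(N^+)}$ is bounded, the second step is to bound $x=(a,\phi)$ in $L^2_{k,\alpha}(N^+)$. Here I would use the Fredholm operator $L_{N^+}+p^0_{-\infty}\circ r$ of \cref{Fredholmind}: on the double Coulomb slice $\cU_{k,\alpha}(N^+)$ the nonlinear equation reads $L_{N^+}(a,\phi)+p^0_{-\infty}\circ r(a,\phi)= -C_{N^+}(a,\phi) - (0,D^+_{A_0}\Phi_0)|_{\text{glued}} + p^0_{-\infty}\circ r(a,\phi)$, and since $L_{N^+}+p^0_{-\infty}\circ r$ is Fredholm it admits an estimate $\|x\|_{L^2_{k,\alpha}}\leq C(\|(L_{N^+}+p^0_{-\infty}\circ r)x\|_{L^2_{k-1,\alpha}\oplus V^0_{-\infty}} + \|x\|_{L^2_{0,\alpha'}})$ for any $\alpha'<\alpha$, modulo the finite-dimensional kernel. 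The right-hand side is controlled: the quadratic term $C_{N^+}(a,\phi)=(-(\phi\phi^*)_0,\rho(a)\phi)$ is handled by the Sobolev multiplication of \cref{multiplication and compact embedding} (with the $-\varepsilon$ loss in the weight absorbed by shrinking $\alpha_1$), using the $C^0$-bound on $\Phi=\Phi_0+\phi$ to control the $|\phi|^2$ factor, and the boundary term $p^0_{-\infty}\circ r(a,\phi)$ is controlled by $\|y(0)\|_{L^2_{k-1/2}(Y)}$ via (iv); the weak-norm term is absorbed by the compact embedding. Care is needed because the multiplication estimate is quadratic, so one first obtains a bound of the form $\|x\|\leq C_1 + C_2\|x\|^2$ in a suitable weak norm, and then upgrades it: the $C^0$ bound on $\Phi$ already breaks the quadratic feedback in the spinor component, and a small-weight / local-slice argument (as in \cite{Khan15}) together with a standard continuity/connectedness argument along a path of solutions gives the genuine $L^2_{k,\alpha}$ bound $R'$ independent of $\alpha\in(0,\alpha_1]$.

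Finally I would package the cylindrical bound and the conical bound together: the constant $R'$ is the maximum of the Manolescu radius $R$ on the cylinder (which is already $\alpha$-independent) and the constant produced on $N^+$, and since every constant entering the $N^+$ estimate — the Weitzenböck curvature bound, the Fredholm estimate constant for $\alpha\leq\alpha_0$, the Sobolev multiplication constant — depends only on $\theta$ and $J$ (the parametrix on the end, hence $\alpha_0$ and the estimate constants, depends only on the cone geometry, exactly as in the proof of \cref{Slice}), the final $R'$ is independent of $\alpha$. The one genuinely delicate point, which I would single out as the crux of the argument, is the interplay between the positivity of the weight $\alpha$ (needed for Fredholmness and for the decay that makes the maximum principle work) and the requirement that $R'$ not depend on $\alpha$ as $\alpha\downarrow 0$; this forces one to do all estimates with constants governed by the $\alpha=0$ operator on the end and only invoke $\alpha>0$ qualitatively, to know the relevant function spaces are the right ones.
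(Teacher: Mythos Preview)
Your approach is genuinely different from the paper's, and it has a real gap at the point you yourself flag as ``the crux''.

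The central problem is the Fredholm--bootstrapping step. Writing the equation on the slice as $L_{N^+}x = -C_{N^+}(x) - (0,D^+_{A_0}\Phi_0)$ and inserting this into the elliptic estimate for $L_{N^+}+p^0_{-\infty}\circ r$ gives an inequality whose right-hand side contains $\|C_{N^+}(x)\|_{L^2_{k-1,\alpha}}$, and this term is \emph{not} controlled by a $C^0$ bound on $\Phi$ once $k\geq 4$. For instance, $\nabla^3(\phi\phi^*)$ contains cross-terms such as $\nabla^2\phi\cdot\nabla\phi$, whose $L^2_\alpha$ norm is bounded by $\|\phi\|_{L^2_{3,\alpha}}\|\phi\|_{L^2_{2,\alpha}}$ via Sobolev multiplication, not by $\|\phi\|_{C^0}\|\phi\|_{L^2_{3,\alpha}}$. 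So you genuinely land on $\|x\|\le C_1 + C_2\|x\|^2$, which gives nothing. Your proposed escape---``a continuity/connectedness argument along a path of solutions''---does not apply here: there is no family in the hypotheses, only a single solution, and no smallness to start a continuity method. A secondary issue is that your Step~1 is also not uniform: Manolescu's radius $R$ is for trajectories on all of $\R$; for a half-trajectory, ``finite type'' only asserts that $\sup_t|CSD(y(t))|$ is finite for \emph{this} $y$, not that it is bounded by a universal constant, so the resulting $L^2_{k-1/2}$ bound on $y(t)$ depends on $y$.

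The paper takes a different route that avoids the quadratic trap entirely. It glues $x$ and $y$ into a single finite-type trajectory $(A,\Phi)$ on $N^+_*=\R^{\le 0}\times Y\cup N^+$ (Step~1). On this glued object one first proves a \emph{uniform} $C^0$ spinor bound (\cref{2}) by the maximum principle together with the $L^2_{k,\alpha}$ decay on the cone and Manolescu's argument on the cylinder. Crucially, one then obtains a uniform Kronheimer--Mrowka energy bound on the cone end via a Stokes-type argument (\cref{3}), and from it the exponential decay of \cite{KM97} with \emph{uniform} constants, yielding a gauge $u^+$ with $\|(u^+)^*(A,\Phi)-(A_0,\Phi_0)\|_{L^2_k([s,s+1]\times Y)}\le ce^{-\alpha_1 s}$. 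The cone decay at the interface then bounds $CSD$ there, giving a uniform analytic-energy bound on the cylindrical part (\cref{4}), hence a gauge $u^-$ with uniform $L^2_k$ control on unit cylinders. Patching $u^\pm$ gives an after-gauge bound in $L^2_{k,\alpha_1}(N^+)$ and on each cylinder slice (Step~2). Finally the global slice diffeomorphism $\cU_{k,\alpha}\times\G_{k+1,\alpha}\to\Con_{k,\alpha}$ from \cref{Slice} transfers this to the double Coulomb slice, giving the stated bound on $x$; the bound on $y(t)$ follows from Step~2 on the cylinder (Step~3). The essential input you are missing is the uniform energy bound \cref{3} and the ensuing exponential decay in a suitable gauge---this is what replaces the hopeless quadratic elliptic estimate.
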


In order to prove \cref{bounded}, we use several corresponding notions used in \cite{Man03} and \cite{KM97}. 
\begin{defn}We consider a Riemannian manifold 
\[
N^+_* := \R^{\leq 0} \times Y \cup N^+
\]
obtained by gluing the half-cylinder $(\R^{\leq 0} \times Y, dt^2+ g_Y)$ and $N^+$ along their boundary. 
 The solutions $(A, \Phi)$ of the Seiberg-Witten equation on $N^+_*$ are called $N^+_*$-trajectories. If an $N^+_*$ trajectory $(A, \Phi)$ satisfies 
\[
\sup_{t \in \R^{\leq 0}} |CSD (A|_{\{t\} \times Y } ) |< \infty \text{ and } \|\Phi\|_{C^0( \R^{\leq 0} \times Y )} < \infty, 
\]
then $(A, \Phi)$ is called a {\it finite type $N^+_*$-trajectory}. 
\end{defn}
We also use a notion of the notion of energy introduced in \cite{KM97}: 
For an element $(A, \Phi) \in \mathcal{C}_{k, \al} (N^+_*)$, we regard $(A, \Phi)|_{\R^{\geq 1} \times Y}$ as an element 
\[
(a, \alpha , \beta ) \in i\Omega^1_{\R^{\geq 1} \times Y} \oplus \Om^{0,0}_{\R^{\geq 1} \times Y} \oplus  \Om^{0,2}_{\R^{\geq 1} \times Y}.
\]
 For this description and a suitable subset $U$ in $\R^{\geq 1} \times Y$, we define 
\begin{align}
E_{U} (A, \Phi)  := \int_U (1- |\alpha|^2-   |\beta|^2)^2 + |\beta|^2 + |\nabla_a \alpha| ^2 + | \widetilde{\nabla}_a\beta |^2 , 
\end{align}
where $\widetilde{\nabla}_a$ is the unique unitary connection in $\Lambda^{0,2}$ whose $(1,0)$-part is equal to $\partial_a$ under the identification $\Lambda^{1,0} \otimes \Lambda^{0,2} =\Lambda^{1,2}$. 
In order to prove \cref{bounded}, we need the following four propositions. We first see the exponential decay estimate for the energy. 
\begin{prop}[\cite{KM97}, Proposition 3.15 ]\label{1}For any constant $E_0>0$, there exists constant $\varepsilon_{E_0}>0$ and $C_{E_0}>0$, such that if $(A, \Phi)$ is a solution of $\eqref{SW eq}$ satisfying $E_{\R^{\geq 1} \times Y} (A, \Phi) \leq E_0$, then 
\[
E_{ [s, s+1]  \times Y } (A, \Phi) \leq C_{E_0}e^{-\varepsilon_{E_0} s}, 
\]
for any $s \geq 1$.
\end{prop}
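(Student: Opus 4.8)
Since the integrand of $E_U$ is pointwise nonnegative, the tail energy $\Gamma(s):=E_{[s,\infty)\times Y}(A,\Phi)$ is nonincreasing and $E_{[s,s+1]\times Y}(A,\Phi)\le\Gamma(s)$, so it suffices to prove that $\Gamma$ decays exponentially; and since $\Gamma(s)\le E_0$ for all $s\ge 1$, it even suffices to prove the bound only for $s\ge s_*$ for some threshold $s_*$ depending only on $E_0$, the range $1\le s\le s_*$ being absorbed into $C_{E_0}$. The plan rests on three inputs: (1) a uniform small-energy threshold $s_*=s_*(E_0)$; (2) that small energy on an annular region forces the solution to be $C^k$-close, modulo gauge, to the canonical pair $(A_0,\Phi_0)$; and (3) that near $(A_0,\Phi_0)$ the linearized equation carries a genuine mass term produced by $\Phi_0\ne 0$, which on the four-dimensional cone forces exponential decay in the radial variable $s$.

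For (1), the sets $[j,j+1]\times Y$ with $j\in\Z_{\ge 1}$ are disjoint and carry total energy $\le E_0$, so only boundedly many carry energy $\ge\epsilon_1$; what is not automatic is that these ``bad'' pieces lie in a bounded range, i.e.\ that energy cannot concentrate arbitrarily far out along the cone. I would rule this out by a rescaling argument analogous to the compactness theory for the Seiberg--Witten equations: energy concentrating as $s\to\infty$ would, after rescaling the cone metric, produce a nonzero finite-energy solution of a model equation (on $\R^4$ with the standard perturbation, or on $\R\times Y$ with its cylindrical metric), while the Weitzenb\"ock identity together with the a priori bound $|\Phi|\le 1$ on finite-energy solutions forces every such model solution to be reducible and energy-free, a contradiction. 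This is the step I expect to be the main obstacle; it is where the a priori pointwise estimates on the conical end from \cite{KM97} do the essential work.

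Granting $\Gamma(s)\le\epsilon_1$ for $s\ge s_*$, step (2) is elliptic bootstrapping on an annulus $[s-1,s+2]\times Y$: after a Coulomb gauge fixing the small $L^2$ energy upgrades to $C^k$ control, so on $\{s\ge s_*\}$ one may write $(A,\Phi)=(A_0,\Phi_0)+(a,\phi)$ with $(a,\phi)$ pointwise small, in particular $|\Phi|$ near $1$ and $\beta$ small. For step (3), eliminating $a$ through the curvature equation turns the linearization into a second-order elliptic system of schematic form
\[
\nabla_{A_0}^{*}\nabla_{A_0}\phi+\mathfrak{m}\,\phi+(\text{bounded lower order})+Q(a,\phi)=0,
\]
where $\mathfrak{m}\ge\mathfrak{m}_0>0$ is a mass coming from $|\Phi_0|^{2}\equiv 1$ and $Q$ is quadratic. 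On the cone $ds^{2}+s^{2}g_1$ a massive elliptic equation has decaying solutions behaving like $s^{-3/2}e^{-\sqrt{\mathfrak{m}_0}\,s}$; I would make this rigorous by a weighted Agmon-type integration by parts against $e^{2\eta s}$ with $0<\eta<\sqrt{\mathfrak{m}_0}$, absorbing $Q$ using the $C^0$-smallness from step (2), to get $\|(a,\phi)\|_{L^{2}_{k}([s,\infty)\times Y,\,e^{2\eta s})}\le C_{E_0}e^{-\eta s}$. Feeding this back into the definition of $E_U$, which near the canonical solution is quadratic in the deviation $(a,\phi)$ up to the exponentially small contribution of the canonical solution itself, gives $E_{[s,s+1]\times Y}(A,\Phi)\le C_{E_0}e^{-2\eta s}$ for $s\ge s_*$, and hence the proposition with $\varepsilon_{E_0}=2\eta$ after enlarging $C_{E_0}$ to cover $1\le s\le s_*$.
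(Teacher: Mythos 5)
Your overall architecture (reduce to small tail energy, then prove decay near the canonical configuration) is reasonable, but Step (1) is a genuine gap rather than a routine reduction, and it is exactly where the content of the proposition lies: the constants must depend only on $E_0$, so you must rule out energy sitting at arbitrarily large radius, and your blow-up argument is only asserted. On the scale-invariant cone $ds^2+s^2g_1$ with the canonical pair, zooming in at radius $R$ does not produce ``the standard perturbation on $\R^4$'' or a cylindrical $\R\times Y$ model (the latter cannot arise from a conical end at all); because the Seiberg--Witten equations are not scale invariant, the rescaled limit is a Taubes-type large-parameter/vortex limit, and you would need to prove convergence modulo gauge plus a Liouville-type theorem for that limit. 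Moreover the input you invoke, $|\Phi|\le 1$, is not what the a priori estimate provides: the available bound (\cref{2}, i.e.\ \cite{KM97}*{Lemma 3.14}) is $\sup|\Phi|^2\le\kappa$ with $\kappa$ controlled by scalar curvature, not by $1$. As written, the uniformity of $s_*$ in $E_0$ is therefore not established, and without it your Steps (2)--(3) only give decay from a solution-dependent radius, which is weaker than the statement. Secondary gaps: in Step (2) the Coulomb gauges on overlapping annuli must be patched to a single gauge on $\{s\ge s_*\}$ before you can write $(A,\Phi)=(A_0,\Phi_0)+(a,\phi)$ globally, and in Step (3) the ``eliminate $a$ through the curvature equation'' reduction to a single massive equation for $\phi$ is schematic; the usual treatment keeps the full linearized operator at the canonical solution and uses its uniform invertibility on cross-sections.

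For comparison: the paper does not reprove this statement at all --- it is quoted from \cite{KM97}*{Proposition 3.15} and the authors state the proof is identical. That argument is a direct energy argument on the almost-K\"ahler cone: the energy of a solution over $[s,\infty)\times Y$ is expressed/estimated through boundary integrals on the slice $\{s\}\times Y$ (the same mechanism this paper uses in the proof of \cref{3}), and together with the pointwise bound of \cref{2} and the smallness of $N_J$ and of the curvature terms beyond a radius determined only by $(\theta,J)$, this yields a differential inequality of the form $J(s)\le -C\,J'(s)$ for the tail energy $J(s)=E_{[s,\infty)\times Y}(A,\Phi)$, hence $J(s)\le E_0e^{-\varepsilon(s-s_0)}$ with constants uniform in the energy bound. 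That route gets the uniform constants for free and avoids both your compactness step and the linearized/Agmon analysis; if you want a self-contained proof, adapting that differential inequality is substantially shorter than repairing Step (1).
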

Note that our constants $C_{E_0}$ and $\varepsilon_{E_0}$  depend on $E_0$. Later, we will prove that $C_{E_0}$ and $\varepsilon_{E_0}$ do not depend on $E_0$
The proof is the completely same as Proposition 3.15 in \cite{KM97}. Next, we see a bound of spinors for finite type $N^+_*$-trajectories. 
\begin{prop}[\cite{KM97}, Lemma 3.14]\label{2}
There exists a constant $\kappa$ such that for all finite type $N^+_*$-trajectories $(A, \Phi)$, we have 
\[
\sup_{x \in N^+_*} | \Phi(x) | ^2 \leq \kappa . 
\]
\end{prop}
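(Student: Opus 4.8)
The plan is to adapt the Weitzenb\"ock-plus-maximum-principle argument of \cite{KM97} (Lemma~3.14) to the complete non-compact manifold $N^+_*$, which carries both the conical end of $N^+$ and the cylindrical end $\R^{\le 0}\times Y$. The first step is to record that $u:=|\Phi|^2$ is a bounded non-negative function on $N^+_*$: an $N^+_*$-trajectory is an element of $\mathcal{C}_{k,\alpha}(N^+_*)$, so $\Phi-\Phi_0\in L^2_{k,\alpha}$, and by Sobolev embedding with $k\ge 4$ (applied on the shells $W_n$ as in \cref{multiplication and compact embedding}) one gets $|\Phi|\to|\Phi_0|\equiv 1$ along the conical end, while the finite-type hypothesis gives $\|\Phi\|_{C^0(\R^{\le 0}\times Y)}<\infty$. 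The resulting bound may depend on the trajectory, but that is all that will be used.

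Next I would derive a pointwise differential inequality for $u$. Applying the Lichnerowicz--Weitzenb\"ock formula to the Dirac equation $D^+_A\Phi=0$ gives $\nabla_A^*\nabla_A\Phi+\tfrac{\mathrm{scal}}{4}\Phi+\tfrac12\rho(F^+_{A^t})\Phi=0$, where $\mathrm{scal}$ is the scalar curvature of $N^+_*$. The curvature part of \eqref{SW eq} reads $\tfrac12 F^+_{A^t}=\rho^{-1}(\Phi\Phi^*)_0+\tfrac12 F^+_{A^t_0}-\rho^{-1}(\Phi_0\Phi_0^*)_0$, and together with the algebraic identity $(\Phi\Phi^*)_0\Phi=\tfrac12|\Phi|^2\Phi$ this turns the Weitzenb\"ock identity into
\[
\nabla_A^*\nabla_A\Phi+\tfrac{\mathrm{scal}}{4}\Phi+\tfrac12|\Phi|^2\Phi+\tfrac12\rho(F^+_{A^t_0})\Phi-(\Phi_0\Phi_0^*)_0\Phi=0.
\]
Pairing with $\Phi$, taking real parts, and using $\operatorname{Re}\langle\nabla_A^*\nabla_A\Phi,\Phi\rangle=\tfrac12\Delta u+|\nabla_A\Phi|^2$ with $\Delta=d^*d\ge 0$, I obtain
\[
\Delta u+2|\nabla_A\Phi|^2+|\Phi|^4\le C_0\,u,
\]
where $C_0$ depends only on $\sup_{N^+_*}|\mathrm{scal}|$, $\sup_{N^+_*}|F^+_{A^t_0}|$ and $\sup_{N^+_*}|\Phi_0|^2$. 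These suprema are finite and independent of the trajectory, because the geometry of $N^+_*$ is the fixed conical-plus-cylindrical model and $(A_0,\Phi_0)$ is the fixed canonical data of \cref{contact} (on the cone $|\Phi_0|\equiv 1$ and $F^+_{A^t_0}$ is controlled by the cone metric, while on the cylinder everything is translation-invariant). Dropping the non-negative terms gives $\Delta u\le u\,(C_0-u)$.

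Finally I would invoke a maximum principle at infinity. The manifold $N^+_*$ is complete, and its Ricci curvature is bounded below: the product cylinder and the compact core present no problem, and for the cone $\{s\ge 1\}\times Y$ over the fixed compact $(Y,g_Y)$ the warped-product formula gives $\operatorname{Ric}\ge -Cg_0$ on $\{s\ge 1\}$ with $C$ depending only on $(Y,g_Y)$. Hence the Omori--Yau maximum principle applies to the bounded function $u$: there is a sequence $x_k$ with $u(x_k)\to\sup_{N^+_*}u$ and $\limsup_k(-\Delta u)(x_k)\le 0$, so $0\ge(\sup u)^2-C_0\sup u$, i.e. $\sup_{N^+_*}u\le C_0$. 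Setting $\kappa:=C_0$ then proves the proposition, with $\kappa$ depending only on $g_0,\theta,J$. Alternatively one argues concretely: $u\to 1$ at the conical end; by the convergence of finite-energy cylindrical trajectories to a critical point of $CSD$ (\cite{Man03}) together with the three-dimensional Weitzenb\"ock bound on $Y$, $u$ has bounded limit superior at the cylindrical end; and if the supremum is attained at an interior point $x_0$ then $\Delta u(x_0)\ge 0$ forces $u(x_0)\le C_0$, so in all cases $\sup u\le\max(C_0,1,\kappa_Y)$ for a constant $\kappa_Y$ depending only on $(Y,g_Y,\s)$.

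The genuinely new part, and the only real obstacle, is the non-compactness of $N^+_*$: establishing the a priori boundedness of $u$ (combining the conical-end decay of $\Phi-\Phi_0$ with the finite-type condition on the cylinder), verifying the completeness and Ricci-lower-bound hypotheses needed for the maximum principle at infinity, and keeping careful track of $C_0$ to confirm it is trajectory-independent. The local Weitzenb\"ock computation itself is identical to the compact case treated in \cite{KM97}.
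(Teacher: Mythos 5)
Your argument is correct, and its main line is genuinely different from the one in the paper. The paper proves \cref{2} by a case analysis on a sequence realizing $S=\sup|\Phi|$: if the supremum is attained, the standard pointwise maximum principle from the closed case applies; if the maximizing sequence escapes along the conical end, the decay $\|\Phi-\Phi_0\|_{C^0(\{s\}\times Y)}\to 0$ (from $\Phi-\Phi_0\in L^2_{k,\alpha}$ with $\alpha>0$) gives $S\le 1$; and if it escapes along the cylindrical end, the paper quotes the argument of Proposition 1 of \cite{Man03} to bound $S$ by the scalar curvature of $(Y,g_Y)$. Your ``alternative'' concrete argument at the end of the proposal is essentially this proof, while your primary route replaces the trichotomy by a single application of the Yau/Omori--Yau maximum principle at infinity to the bounded function $u=|\Phi|^2$ on the complete manifold $N^+_*$, after checking the Ricci lower bound on the cone $\{s\ge1\}\times Y$ and deriving the trajectory-independent inequality $\Delta u\le u(C_0-u)$ from the Weitzenb\"ock formula for the perturbed equation \eqref{SW eq}. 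What this buys is a uniform treatment of both ends: on the cylindrical side you only need the $C^0$ bound that is built into the definition of a finite-type $N^+_*$-trajectory, rather than Manolescu's convergence-to-critical-points argument; the price is importing a nontrivial theorem of Riemannian geometry and verifying its hypotheses, whereas the paper's proof stays within elementary maximum-principle and asymptotic arguments already available in \cite{KM97} and \cite{Man03}. A minor point in your favour: your constant $C_0$ correctly records the contributions of $F^+_{A_0^t}$ and $|\Phi_0|^2$ coming from the perturbation in \eqref{SW}, which the paper's case (1) statement ($S<\|\operatorname{Scal}\|_{C^0}$) elides; these terms are bounded on the cone, so both constants are trajectory-independent, as required.
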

\begin{proof}
Put $S:= \sup_{x \in N^+_*} | \Phi(x) |$.  We consider the following two cases: 
\begin{itemize}
\item[(1)] $S=  \max_{x \in N^+_*} | \Phi(x) |$
\item[(2)] There is no points satisfying $S= | \Phi(x) |$.
\end{itemize}
In the first case, by the standard argument in the case of closed 4-manifolds, we have 
\[
S < \| \operatorname{Scal} (N^+_*) \|_{C^0} . 
\]
Note that $\| \operatorname{Scal} (N^+_*) \|_{C^0}$ is bounded since we are considering product and cone metrics. 
In the second case, one can take a sequence of points $\{x_n\} \subset N^+_*$ such that $|\Phi(x_n)|  \to S$.
By taking a subsequence, we can reduce to the following two cases: 
\begin{itemize}
\item[(2)-(i)] $x_i \in \R^{\leq -i} \times Y$ and
\item[(2)-(ii)] $x_i \in \R^{\geq i } \times Y$. 
\end{itemize}
In the second case, since we have $\|\Phi-\Phi_0\|_{L^2_{3, \alpha} (\R^{\geq 1 } \times Y) } < \infty$, so $\|\Phi-\Phi_0\|_{C^0(\{s\} \times Y) } \to 0$ as $s\to \infty$ by Sobolev embedding theorem on $[i, i+1] \times Y$. Here we use $\al>0$.
Therefore, in this case, we have 
\[
S \leq 1. 
\]
In the first case, by the same discussion in the proof of \cite[Proposition 1]{Man03}, we have 
\[
S \leq \| \operatorname{Scal} (Y, g|_Y ) \|_{C^0} 
\]
This implies the conclusion. 
\end{proof}
As the third proposition, we consider a universal bound of  the energies of finite type trajectories. 
\begin{prop}[\cite{KM97}, Lemma 3.17, \cite{MR06}, Lemma 2.2.7]\label{3}
There exist a constant $\kappa$ and a positive integer $i_0$ such that 
for any finite type $N^+_*$-trajectory $(A, \Phi)$ of the Seiberg-Witten equation on $N^+_*$, we have 
\[
E_{\R^{\geq i_0 } \times Y } (A, \Phi ) \leq \kappa .
\]
\end{prop}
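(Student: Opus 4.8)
The plan is to follow the proof of \cite[Lemma~3.17]{KM97} (see also \cite[Lemma~2.2.7]{MR06}), transplanted to $N^+_*$. The cylindrical end $\R^{\leq 0}\times Y$ plays no role here, as $E_U$ is supported on the conical part $\R^{\geq 1}\times Y$, and the finite-type hypothesis on $(A,\Phi)$ is used only through the uniform pointwise bound on $|\Phi|$ provided by \cref{2}. So it suffices to estimate $E_{[s_1,s_2]\times Y}(A,\Phi)$ on the symplectic cone, uniformly in the trajectory, and then let $s_2\to\infty$.

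The input is the energy identity on the cone established in \cite[Section~3]{KM97} (cf.\ the proof of \cref{1}): writing $(A,\Phi)|_{\R^{\geq1}\times Y}$ in the coordinates $(a,\alpha,\beta)$ and using the normalization of $E_U$ above, for $1\le s_1\le s_2$ there is a telescoping relation
\[
E_{[s_1,s_2]\times Y}(A,\Phi) = \vartheta(s_1) - \vartheta(s_2) + \mathrm{err}(s_1,s_2),
\]
where $\vartheta(s)$ is the value on the slice $\{s\}\times Y$ of the (suitably rescaled) Chern--Simons--Dirac-type functional and the error satisfies $|\mathrm{err}(s_1,s_2)|\le\tfrac12 E_{[s_1,s_2]\times Y}(A,\Phi)+C_0 s_1^{-1}$, with $C_0$ depending only on $\theta$ and $J$ (the decaying factor $s_1^{-1}$ coming from the deviation of the cone geometry from the cylindrical model). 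The main step is to show $|\vartheta(s)|\le C$ for all $s\ge i_0$, with $C$ and the integer $i_0$ universal. The Dirac part of $\vartheta(s)$ is, after the appropriate rescaling of the spinor on the slice, dominated by $\|\Phi\|_{C^0([s-1,s]\times Y)}^2$, hence by \cref{2}. For the Chern--Simons part, the Seiberg--Witten equation expresses $F_A$ pointwise through $\Phi\Phi^*$ and $F_{A_0}$, so $\|F_A\|_{C^0([s-1,s]\times Y)}$ is universally bounded by \cref{2}; fixing the connection in a Coulomb gauge on the collar $[s-1,s]\times Y$ and discarding the harmonic part (using $b_1(Y)=0$) gives a universal $L^2_1$-bound for the connection difference there, and hence a universal bound for the Chern--Simons integral over $\{s\}\times Y$. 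With $|\vartheta(s)|\le C$ in hand, the telescoping relation gives, for any $s_2>s_1\ge i_0$ (note $E_{[s_1,s_2]\times Y}(A,\Phi)<\infty$ on the compact tube, so $\tfrac12 E$ may be absorbed),
\[
E_{[s_1,s_2]\times Y}(A,\Phi)\le 2\bigl(\vartheta(s_1)-\vartheta(s_2)\bigr)+2C_0 s_1^{-1}\le 4C+2C_0,
\]
and letting $s_2\to\infty$ yields $E_{\R^{\geq i_0}\times Y}(A,\Phi)\le 4C+2C_0=:\kappa$. (This universal bound is also what lets one take $E_0=\kappa$ in \cref{1}, removing the $E_0$-dependence of the constants there.)

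The main obstacle is the uniform control of $\vartheta(s)$. Because the slice metric $s^2 g_Y$ degenerates as $s\to\infty$, the bound survives only if the spinor is rescaled by the correct conformal weight on each slice, and it is precisely the pointwise bound of \cref{2} for the unrescaled $\Phi$ (measured in the cone metric) that is preserved under this rescaling; this bookkeeping must be checked carefully. The second delicate point is fixing the connection part of $\vartheta$ on the collars with constants independent of $s$, which is where the scaling behaviour of the collars in the cone metric enters; for both points I would follow \cite[Lemma~3.17]{KM97} and \cite[Lemma~2.2.7]{MR06} essentially verbatim.
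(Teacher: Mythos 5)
There is a genuine gap, and it sits exactly at the point your outline flags as "the main obstacle": the uniform bound on the slice functional $\vartheta(s)$, specifically its Chern--Simons part. Your justification is that "the Seiberg--Witten equation expresses $F_A$ pointwise through $\Phi\Phi^*$ and $F_{A_0}$, so $\|F_A\|_{C^0}$ is universally bounded by \cref{2}". This is false: the equation \eqref{SW eq} only determines the \emph{self-dual} part $F^+_{A^t}$ in terms of the spinor, so \cref{2} gives pointwise control of $d^+a$ only. The anti-self-dual part of the curvature of a solution is not pointwise controlled by the spinor at all --- bounding its contribution (equivalently, the Chern--Simons-type term) is precisely the content of this proposition, so the argument as written is circular at its key step. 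Consequently the claimed universal $L^2_1$ Coulomb-gauge bound for the connection on each collar, and hence the bound $|\vartheta(s)|\le C$, is not established, and the telescoping identity by itself gives nothing.

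The paper's proof avoids ever needing pointwise control of the full curvature. Following \cite{KM97}, it bounds $E_{\R^{\geq i_0+2}\times Y}$ by a constant plus a single boundary pairing $\tfrac14\int_{\{i_0+2\}\times Y} i a\wedge \om$; it then cuts off the connection form, $a'=\rho a$ with $a'$ vanishing on the cylindrical end, extends $\om$ by a cut-off, and uses Stokes to convert the boundary pairing into $-\tfrac14\int_{[0,i_0+2]\times Y} i\,da'\wedge\om$. After Peter--Paul, the term $\int |da'|^2$ is bounded by $2\int_{N^+_*}|d^+a'|^2$ via the global integration-by-parts identity $\int da'\wedge da'=0$ (valid because $a'$ is cut off), and only \emph{then} does the equation together with \cref{2} enter, through the self-dual part alone. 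So the anti-self-dual curvature is controlled only in $L^2$ and only through this global identity, never slice-by-slice. Note also that your opening remark that the cylindrical end "plays no role" is not quite right in the paper's argument: the cut-off supported there is what makes the Stokes step work without extra boundary contributions. To repair your write-up you would have to replace the slice-wise bound on $\vartheta(s)$ by an argument of this boundary-term-plus-cut-off type (or otherwise control $\int_{\{s\}\times Y} ia\wedge\om$ without pointwise bounds on $F_A^-$).
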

\begin{proof}
First, we follow the proof of Lemma 3.17. 
We fix a positive integer $i_0$ such that 
\[
|N_J|_{C^0 ( [i_0, i_0+1] \times Y )} \leq \frac{1}{32}
\]
 and 
 \[
 |F^\om_{\widetilde{\nabla} } |_{C^0 ( [i_0, i_0+1] \times Y )} \leq \frac{1}{8},
 \] 
where $N_J$ is the Nijenhuis tensor of $J$, $\widetilde{\nabla}$ is the unique unitary connection in $\Lambda^{0,2}$ whose $(1,0)$-part is equal to $\partial$ and $F^\om_{\widetilde{\nabla} }  =\frac{1}{2}\langle F_{\widetilde{\nabla} }  , \om\rangle$. Then, in \cite{KM97}, it is proved that 
\[
E_{\R^{\geq i_0+2 } \times Y } \leq {\kappa}' + \int_{ \partial (\R^{\geq i_0+2 } \times Y )  } \frac{1}{4} i a|_{\{i_0+2\} \times Y } \wedge \om|_{\{i_0+2\} \times Y }
\]
for some constant $\kappa'$.
Next, we consider a cut off of the connection $a$. Let $a'$ be a connection given by $\rho a$, where $\rho$ is a cut off function satisfying $\rho|_{ \R^{\leq 0} \times Y } =0$ and $\rho|_{ \R^{\geq 1}  \times Y } =1$. We also extend the closed form $\om$ by $\om:= \frac{1}{2} d (\rho  s^2 \theta ) $.
Then the integration 
\[
\int_{ \partial (\R^{\geq i_0+2 } \times Y )  } \frac{1}{4} i a|_{\{i_0+2\} \times Y } \wedge \om|_{\{i_0+2\} \times Y }
\]
 can be regarded as 
\[
-\int_{ [0,  i_0+2 ] \times Y   } \frac{1}{4} i da' \wedge \om 
\]
by the Stokes theorem. By the Peter-Paul inequality, we have 
\begin{align*}
\frac{1}{4} \left| \int_{ [0,  i_0+2 ] \times Y   }  da' \wedge \om \right| &\leq \frac{1}{8}\left( \int_{[0,  i_0+2 ] \times Y } |da'|^2 + \int_{[0,  i_0+2 ] \times Y } |\om |^2  \right) \\
&\leq \frac{1}{8}\left( \int_{N^+_*} |da'|^2 + \int_{[0,  i_0+2 ] \times Y } |\om |^2  \right) \\
&\leq \frac{1}{8}\left( 2 \int_{N^+_*} |d^+ a'|^2 + \int_{[0,  i_0+2 ] \times Y } |\om |^2  \right).  \\
\end{align*}
Then the Seiberg-Witten equation and \cref{2} imply 
\[
\frac{1}{8}\left( 2 \int_{N^+_*} |d^+ a'|^2 \right) < c' .
\]
Thus, we conclude that 
\[
E_{\R^{\geq i_0+2 } \times Y } \leq {\kappa}' + c' + \int_{[0,  i_0+2 ] \times Y } |\om |^2. 
\]
This completes the proof. 
\end{proof}
As the fourth proposition, we estimate analytic energies $\mathcal{E}^{an}$ for finite type trajectories. For the definition of $\mathcal{E}^{an}$, see \cite[Definition 4.5.4]{KM07}. 
 \begin{prop}\label{4}  Let $i_0$ be the positive integer appeared in \cref{3}. 
 There exists a positive integer $i_0$ such that the following result holds.  
 There exists a constant ${\kappa}'$ such that any finite type $N^+_*$-trajectory $(A, \Phi)$, we have 
\[
\mathcal{E}^{an}_{\R^{\leq i_0 +1}\times Y} (A, \Phi) \leq {\kappa}'. 
\]
 \end{prop}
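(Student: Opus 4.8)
The plan is to bound the analytic energy $\mathcal{E}^{an}_{\R^{\leq i_0+1}\times Y}(A,\Phi)$ of a finite type $N^+_*$-trajectory by relating it, via the standard energy identities for the Seiberg--Witten equations on a cylinder, to the value of the Chern--Simons--Dirac functional at the two ends of the truncated region plus a curvature/scalar-curvature error term which is controlled by the geometry. Recall from \cite[Chapter 4]{KM07} that on a cylinder $\R\times Y$ the analytic energy of a trajectory $\gamma$ over $[t_1,t_2]\times Y$ equals (up to universal constants depending only on $(Y,g_Y,\s)$ and the perturbation, which here is zero) the drop $CSD(\gamma(t_1))-CSD(\gamma(t_2))$ of the Chern--Simons--Dirac functional, together with lower-order terms. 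Over the conical piece $[1,i_0+1]\times Y$ the relevant identity is the one already used in the proof of \cref{3}, expressing a bulk integral of $|d^+a'|^2+|F^\om|^2$-type quantities as a boundary term $\int_{\{i_0+1\}\times Y}\tfrac14 ia\wedge\om$ plus the curvature correction; this is exactly the computation carried out there.

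First I would split $\R^{\leq i_0+1}\times Y$ into the half-cylinder-plus-collar part $\R^{\leq 1}\times Y$ and the conical annulus $[1,i_0+1]\times Y$, so that on the first part I may use the cylindrical energy identity of \cite[Section 4.5]{KM07} and on the second part the estimate from the proof of \cref{3}. Second, on $\R^{\leq 1}\times Y$, since $(A,\Phi)$ is finite type, $\sup_{t}|CSD(A|_{\{t\}\times Y})|<\infty$ by hypothesis; but more is true — by \cref{2} the spinor is uniformly $C^0$-bounded, and then the standard compactness/monotonicity argument for finite type trajectories (as in \cite[Proposition~1]{Man03} and \cite{KM07}) shows that $CSD$ along the trajectory is not merely finite but bounded by a \emph{universal} constant $\kappa$ depending only on $(Y,g_Y,\s)$: the trajectory is asymptotic as $t\to-\infty$ to a critical point lying in the compact critical set, whose $CSD$-values form a compact (hence bounded) set. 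Third, combining the cylindrical identity $\mathcal{E}^{an}_{[t,1]\times Y}\le 2(CSD(\gamma(t))-CSD(\gamma(1)))\le 4\kappa$ with the conical bound $\mathcal{E}^{an}_{[1,i_0+1]\times Y}\le\kappa'+c'+\int_{[0,i_0+2]\times Y}|\om|^2$ from the proof of \cref{3}, and using the uniform boundedness of the scalar curvature of $N^+_*$ (product and cone metrics) to absorb the cross terms between the two pieces, yields a single constant $\kappa'$ as claimed.

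The main obstacle will be the compatibility of the two different notions of energy near the interface $\{1\}\times Y$: the paper uses Kronheimer--Mrowka's $E_U$ on the conical end and Kronheimer--Mrowka's \emph{analytic} energy $\mathcal{E}^{an}$ globally, and one must check that on the overlap region the scalar-curvature-dependent difference between $\mathcal{E}^{an}$ and $E_U$ (plus the difference between $E_U$ and the $CSD$-drop) is bounded by a universal constant. This is where the hypotheses that the metric is a fixed product on $[0,\tfrac12]\times Y$ and a fixed cone on $\R^{\geq1}\times Y$ are used: all the pointwise geometric quantities ($N_J$, $F^\om_{\widetilde\nabla}$, $\mathrm{Scal}$, $|\om|$) entering the error terms are fixed and finite, independent of the particular trajectory, so the positive integer $i_0$ of \cref{3} suffices and no new choice is needed. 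I would also note that $\al>0$ is used, as in \cref{2}, to control the spinor on the conical end via the weighted norm and Sobolev embedding on unit-length annuli, ensuring $|\Phi|\to$ its constant value $1$ at infinity and hence that $E_{\R^{\geq i_0+1}\times Y}$ is finite — but for this proposition only the truncated region $\R^{\leq i_0+1}\times Y$ matters, so that input enters only through \cref{2} and \cref{3}.
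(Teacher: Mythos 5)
There is a genuine gap at the step where you claim that $CSD$ along the trajectory is bounded by a \emph{universal} constant. Compactness of the set of critical values only controls the limit as $t\to-\infty$; combined with monotonicity of $CSD$ along the downward flow on the cylindrical part this gives a uniform \emph{upper} bound for $CSD(\gamma(t))$, but what the energy identity actually requires -- and what is the only nontrivial point of \cref{4} -- is a uniform \emph{lower} bound for $CSD$ at the slice where the cylinder meets the cone (equivalently at $\{i_0+1\}\times Y$), since the energy is the drop $2CSD(-\infty)-2CSD(\text{slice})$. Finite-typeness makes this drop finite for each individual trajectory, but gives no bound independent of $(A,\Phi)$. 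The paper supplies exactly this missing ingredient: by Kronheimer--Mrowka's uniform exponential convergence on the almost K\"ahler cone (Theorem 5.1.1 of \cite{KM97}, available here thanks to the uniform energy bound of \cref{3}), there are constants $\kappa_k,\varepsilon>0$ independent of the trajectory and a gauge transformation $g_{(A,\Phi)}$ with $\|g_{(A,\Phi)}^*(A,\Phi)-(A_0,\Phi_0)\|_{L^2_k([s,s+1]\times Y)}\leq \kappa_k e^{-\varepsilon s}$; since $Y$ is a rational homology sphere, $CSD$ is gauge invariant, so $CSD$ of the restriction to $\{i_0+1\}\times Y$ is uniformly bounded. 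With that, the analytic energy over all of $\R^{\leq i_0+1}\times Y$ (cone piece included) is identified in one stroke with $2CSD_Y(B_{-\infty},\Psi_{-\infty})-2CSD_Y(A|_{\{i_0+1\}\times Y},\Phi|_{\{i_0+1\}\times Y})$, using that for solutions the analytic energy equals the topological energy, which is metric independent; no splitting at $\{1\}\times Y$ and no comparison of $\mathcal{E}^{an}$ with $E_U$ on an overlap region is needed.

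A second, related problem is your citation of \cref{3} for ``the conical bound $\mathcal{E}^{an}_{[1,i_0+1]\times Y}\leq \kappa'+c'+\int|\om|^2$'': \cref{3} bounds the Kronheimer--Mrowka energy $E$ on $\R^{\geq i_0}\times Y$, i.e. on the \emph{far} end of the cone, not on the annulus $[1,i_0+1]\times Y$. Indeed its proof relies on the smallness of $N_J$ and $F^{\om}_{\widetilde{\nabla}}$, which holds only for $s\geq i_0$, so it gives nothing on $[1,i_0+1]\times Y$ -- that region is precisely part of what \cref{4} must control. Consequently neither half of your proposed decomposition is actually bounded by the inputs you invoke, and the argument as written does not close without importing the exponential decay estimate that the paper uses.
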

 \begin{proof}
As in the proof of Proposition 1 in \cite{Man03}, we can suppose that there exists a solution of the 3-dimensional Seiberg-Witten equation  $(B_{-\infty}, \Psi_{-\infty})  $ such that 
\[
\| (A, \Phi)  - \pr^* (B_{-\infty}, \Psi_{-\infty})\|_{C^k ( [-i-1, -i ] \times Y ) } \to 0 \text{ as } i \to \infty , 
\]
where $\pr : \R \times Y \to Y$ is the projection.
This implies 
\[
\mathcal{E}^{an}_{\R^{\leq i_0 +1}\times Y} (A, \Phi)
\]
\[
 = 2CSD_Y (B_{-\infty}, \Psi_{-\infty}) -2 CSD_{Y}  (A|_{ \{i_0 +1\} \times Y }, \Phi|_{ \{i_0 +1\} \times Y } ). 
\]
Note that the set of critical values of CSD is uniformly bounded. (Here we use the condition that $Y$ is a rational homology $3$-sphere.)
Moreover, Kronheimer-Mrowka proved that there exist $ \kappa_k$ and $\varepsilon>0$ which is independent of $(A, \Phi)$ and  an $L^2_{k+1}$-gauge transformation $g_{(A, \Phi)}$ such that 
\begin{align}\label{estimate of A}
 \|g_{(A, \Phi)}^* (A, \Phi) - (A_0, \Phi_0) \|_{L^2_k ({ [s, s+1]  \times Y })} \leq \kappa_k e^{-\varepsilon s}. 
 \end{align}
 Actually, Kronheimer-Mrowka proved this result for $L^2_k$-solutions. Since our Sobolev space $L^2_{k, \al}$ is contained in $L^2_k$, we obtain the same result. Moreover, by the condition \eqref{estimate of A}, we can see that $g_{(A, \Phi)}$ is in $L^2_{k+1, \al}$.  
 In our case, since $Y$ is a rational homology 3-sphere, CSD is gauge invariant so 
 \[
 2 CSD_{Y}  (A|_{ \{i_0 +1\} \times Y }, \Phi|_{ \{i_0 +1\} \times Y } )
 \]
  is bounded.
 This completes the proof. 
 \end{proof}
We now give a proof \cref{bounded} by assuming \cref{3} and \cref{4}. 

\begin{proof}[Proof of \cref{bounded}]
Suppose that  \[
(x, y ) \in  \mathcal{U}_{k, \alpha} \oplus ( L^2_{k} (i\Lambda^1(\R^{\leq 0} \times Y )  \oplus L^2_k (S^+_{\R^{\leq 0}\times Y}  ) )
\]
satisfies the assumption of \cref{bounded}. 
We have three steps in this proof. 
{Step 1} 
First, we see that $(x, y )$ defines a finite type $N^+_*$-trajectory $(A , \Phi)$. 
\begin{proof}[Proof of Step 1]This is essentially the same as the proof of Corollary 2 in \cite{Khan15}.
\end{proof} 

{Step 2} 
Second, there exists $0<\alpha_1 \leq \al_0$ such that we obtain a gauge transformation $u$ on $N^+_*$  such that 
\[
\sup_{t \in \Z_{<0}}  \|u^* (A , \Phi)- (A_0, \Phi_0) \|_{L^2_{k } ([t, t+1] \times Y)   } + \|u^* (A , \Phi)- (A_0, \Phi_0) \|_{L^2_{k, \alpha_1} (N^+)   } \leq C, 
\]
where $C$ is independent of $(A , \Phi)$. Here $\al_0$ is the constant given in \cref{Slice}. 
\begin{proof}[Proof of Step 2]
Kronheimer-Mrowka (Theorem 5.1.1 in \cite{KM97}) proved that there exists a gauge transformation $u^+$ on $\R^{\geq i_0} \times Y$ such that for $s \geq i_0$, 
\[
\|(u^+)^*  (A , \Phi)- (A_0, \Phi_0) \|_{L^2_{k } ([s, s+1] \times Y )   } \leq c e^{-\al_1 s} 
\]
for some $\al_1>0$ and $c>0$. By \cite[Lemma 3.21, Lemma 3.22]{KM97} and \cref{3}, we know that $\al_1$ and $c$ depend only on $\theta$ and $J$.
 Therefore, we have 
\[
\|(u^+)^*  (A , \Phi)- (A_0, \Phi_0) \|_{L^2_{k, \alpha_1} (N^+)   } \leq  \kappa'
\]
for some $\kappa'$. 
On the other hand, we can take $u^-$ on $\R^{\leq i_0+1} \times Y$ such that, for $t+1 \leq i_0+1$,  
\[
\sup_{t \in \Z_{<0}}  \|(u^-)^* (A , \Phi)- (A_0, \Phi_0) \|_{L^2_{k } ([t, t+1] \times Y)   } \leq c' 
\]
for some constant $c'$ by using \cref{4}. Here we use the Coulomb slice and the standard bootstrapping argument.
Then, by the standard patching argument for $u^+$ and $u^-$, we obtain a gauge transformation $u $ on $N^+_*$ satisfying our conclusion. 
\end{proof}

{Step 3} 
Suppose $0<\al \leq \al_1$. 
In the third step, we show that the action 
\[
  \mathcal{U}_{k, \alpha}\times \G_{k+1, \alpha}(N^+)  \to \Con_{k, \alpha}(N^+) 
\]
given by 
\[
 u \cdot ( a,\phi  ) := (a- u^{-1} du + A_0, u \phi + u\Phi_0)
\]
is a $\G_{k+1, \alpha}(N^+) $-equivariant diffeomorphism. 
We also apply the Coulomb projection and obtain 
\[
\|x\|_{L^2_{k, \alpha}} < R' 
\]
and 
\[
\| y(t) \|_{L^2_{k-\frac{1}{2}}} < R'   \ \  \forall t \leq 0. 
\]
\begin{proof}[Proof of Step 3]
The first statement is a consequence of \eqref{Slice}. 
The second inequality 
\[
\| y(t) \|_{L^2_{k-\frac{1}{2}}} < R'   \ \  \forall t \leq 0
\]
is followed by Step 2. 
By Step 2, we have the bound
\[
\sup_{t \in \Z_{<0}}  \|u^* (A , \Phi)- (A_0, \Phi_0) \|_{L^2_{k } ([t, t+1] \times Y)   } + \|u^* (A , \Phi)- (A_0, \Phi_0) \|_{L^2_{k, \alpha_1} (N^+)   } \leq C
\]
for some gauge transformation $u$. 
Let $\al$ be a positive real number with $\al \leq \alpha_1$.
Then we consider the projection using the decomposition \cref{Slice}
\[
P : \mathcal{C}_{k, \alpha} (N^+) \to \mathcal{U}_{k, \alpha} (N^+). 
\]
Note that $P$ is not $L^2$, $L^2_{\alpha}$, or $L^2_{k, \alpha}$-orthogonal projection. 
Since $P$ is continuous, we see that 
\[
\| P u^*( A, \Phi)|_{N^+}   \|_{L^2_{k, \alpha}} \leq c\| u^*(A, \Phi)|_{N^+} -(A_0, \Phi_0) \|_{L^2_{k, \alpha}}
\]
for some constant $c$.
This inequality implies
\[
\| P u^*( A, \Phi)|_{N^+}   \|_{L^2_{k, \alpha}} \leq c c_1 
\]
for a constant $c_1$. Since $\mathcal{U}_{k, \alpha}\times \G_{k+1, \alpha}(N^+)  \to \Con_{k, \alpha}(N^+)$ is a diffeomorphism, we have 
\[
P u^* ( A, \Phi) = (A, \Phi) \text{ on } N^+. 
\]
This completes the proof.

\end{proof}

This completes the proof of \cref{bounded}.

\end{proof}

\subsection{Seiberg-Witten Floer homotopy contact invariant}\label{MMMM}
In this section, by the use of boundedness result \cref{bounded}, we construct a Seiberg-Witten Floer homotopy contact invariant. 
To carry out this, we consider a finite-dimensional approximation of the map 
\[
\mathcal{F}_{N^+}  : \mathcal{U}_{k, \alpha}
 \to  \mathcal{V}_{k-1, \alpha} \oplus V (\partial N_+) , 
 \]
 where $\mathcal{U}_{k, \alpha}= L^2_{k, \alpha}( i\Lambda_{N^+}^1 )_{CC} \oplus   L^2_{k, \alpha} (S^+_{N^+})$ and $\mathcal{V}_{k-1, \alpha} = L^2_{k-1, \alpha}( i\Lambda_{N^+}^0\oplus i\Lambda_{N^+}^+ ) \oplus L^2_{k-1, \alpha}(S^-_{N^+})$.
In this section, we fix a weight $\alpha \in (0, \infty)$ satisfying $\alpha \leq \al_1$, where $\al_1$ is the constant appeared in \cref{bounded}. 
  Take sequences of subspaces 
 \[
  \mathcal{V}_1 \subset  \mathcal{V}_2 \subset \cdots \subset  \mathcal{V}_{k-1, \alpha} \text{ and } V^{\lambda_1}_{-\lambda_1} \subset V^{\lambda_2}_{-\lambda_2} \subset \cdots   \subset V (\partial N_+)  
  \]
  such that 
  \begin{itemize}
  \item[(i)] $(\im L_{N^+} + p^{\lambda_n}_{-\lambda_n}\circ r)^{\mathcal{V}_{k-1, \alpha} \oplus V (\partial N_+) } \subset \mathcal{V}_n \oplus  V^{\lambda_n}_{-\lambda_n} (\partial N_+)  $ 
 \item[(ii)] the $L^2$-projection $P_n :  \mathcal{V}_{k-1, \al}  \oplus V (\partial N_+) \to \mathcal{V}_n \oplus  V^{\lambda_n}_{-\lambda_n} (\partial N_+)$ satisfies 
 \[
 \lim_{n \to \infty} P_n (v) =v 
 \]
 for any $ v \in \mathcal{V}_{k-1, \al}  \oplus V (\partial N_+)$.
  \end{itemize}
  Then we define a sequence of subspaces 
  \[
  \mathcal{U}_n :=  (L_{N^+}+ p^{\lambda_n}_{-\lambda_n}\circ r)^{-1} ( \mathcal{V}_n \oplus  V^{\lambda_n}_{-\lambda_n} ). 
  \]
 This gives a family of the approximated Seiberg-Witten map is given by 
  \[
  \{  \mathcal{F}_n : =  P_n (L_{N^+}+C_{N^+}, p^{\lambda_n}_{-\lambda_n} \circ r ) \colon   \mathcal{U}_n \to \mathcal{V}_n \oplus  V^{\lambda_n}_{-\lambda_n} (\partial N_+) \} . 
  \]
  In order to define a cohomotopy type invariant, we need to prove the following proposition: 
 \begin{prop}\label{cohomotopy1}For a large $n$ and a large positive real number $ R$, 
 there exists an index pair $(N_n,  L_n)$ of $V^{\lambda_n}_{-\lambda_n} (\partial N_+) $ and a sequence $\{\varepsilon_n\}$ of positive numbers such that 
 \begin{align}\label{cohomotopy}
 \overline{B}( \mathcal{U}_n ; R) / S( \mathcal{U}_n ; R)  \to ( \mathcal{V}_n/  \overline{B}(\mathcal{V}_n, \varepsilon_n)^c) \wedge (N_n/ L_n) 
 \end{align}
 is well-defined. 
 \end{prop}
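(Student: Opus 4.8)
The plan is to run Manolescu's finite-dimensional approximation scheme, adapted to manifolds with boundary as in \cite{Man03} and \cite{KLS18}, with the uniform bound of \cref{bounded} playing the role that Manolescu's boundedness theorem plays in the closed case. Fix a radius $R>R'$ with room to spare, say $R=2R'$, where $R'$ is the constant of \cref{bounded}, and keep the weight $\alpha\in(0,\alpha_1]$ as above. On the boundary $\partial N^+\cong Y$, Manolescu's theorem (see \cite{Man03}) guarantees that for all sufficiently large $n$ the ball $\overline B(2R;V^{\lambda_n}_{-\lambda_n}(\partial N^+))$, with the bump-function-truncated flow built from \eqref{flow} for $-Y$, is an isolating neighborhood; let $S_n$ be its maximal invariant set and fix an index pair $(N_n,L_n)$ for $S_n$ inside that ball. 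As $n\to\infty$ the spaces $N_n/L_n$, suitably desuspended, stabilize to $SWF(-Y,\s_\xi)$. The orientation reversal is what makes half-trajectories $\R^{\le 0}\times Y\to V(Y)$ --- the form in which the boundary restriction of a solution on $N^+$ presents itself --- correspond to points of $N_n/L_n$.

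With these choices in hand, the map \eqref{cohomotopy} is given by
\[
x\ \longmapsto\ \bigl[\,\mathrm{pr}_{\mathcal V_n}\mathcal F_n(x)\,\bigr]\ \wedge\ \Gamma_n\bigl(p^{\lambda_n}_{-\lambda_n}(x|_{\partial N^+})\bigr),
\]
where $[\,\cdot\,]$ is the tautological class in $\mathcal V_n/\overline B(\mathcal V_n,\varepsilon_n)^c$, equal to the basepoint once the norm exceeds $\varepsilon_n$, and $\Gamma_n\colon\overline B(2R;V^{\lambda_n}_{-\lambda_n})\to N_n/L_n$ is the standard Conley-theoretic map --- flow the point for a large fixed time and then collapse by the exit set, with the basepoint assigned to any point whose orbit leaves $\overline B(2R)$ --- used to present the relative Bauer--Furuta invariant in \cite{Man03,KLS18}. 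Recall that the second component of $\mathcal F_n$ is $p^{\lambda_n}_{-\lambda_n}\circ r$ and that $r$ is the boundary restriction. Continuity of the assignment on $\overline B(\mathcal U_n;R)$ is immediate from continuity of $\mathcal F_n$, of $P_n$ and of $\Gamma_n$; the only substantive point is that it descend to $\overline B(\mathcal U_n;R)/S(\mathcal U_n;R)$, i.e.\ that for a suitable $\varepsilon_n$ the sphere $S(\mathcal U_n;R)$ be sent entirely to the basepoint. This is where both the choice of $\varepsilon_n$ and the largeness of $n$ must be used.

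Suppose not. Then there are $n_j\to\infty$ and $x_j\in\mathcal U_{n_j}$ with $\|x_j\|=R$, with $\|\mathrm{pr}_{\mathcal V_{n_j}}\mathcal F_{n_j}(x_j)\|\to 0$, and with $p^{\lambda_{n_j}}_{-\lambda_{n_j}}(x_j|_{\partial N^+})$ whose backward $Y$-orbit (equivalently forward $-Y$-orbit) stays in $\overline B(2R;V^{\lambda_{n_j}}_{-\lambda_{n_j}})$. Since $C_{N^+}$ is compact (\cref{multiplication and compact embedding}) while $L_{N^+}\oplus(p^0_{-\infty}\circ r)$ is Fredholm (\cref{Fredholmind}) and $P_n\to\mathrm{id}$ uniformly on compact sets, a standard compactness argument extracts a subsequence with $x_j\to x$ in $\mathcal U_{k,\alpha}(N^+)$ and $x+(A_0,\Phi_0)$ solving the Seiberg--Witten equation \eqref{SW eq} on $N^+$; since on $\overline B(2R)$ the truncated flows agree with the Seiberg--Witten flow and converge to the one on $V(Y)$, the boundary half-trajectories through $x_j|_{\partial N^+}$ subconverge on compact time intervals to a Seiberg--Witten trajectory $y\colon\R^{\le 0}\times Y\to V(Y)$ staying in $\overline B(2R;V(Y))$ with $y(0)=x|_{\partial N^+}$. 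One checks that $y$ is of finite type: the spinor bound is immediate from $y\subset\overline B(2R)$, and the $CSD$-bound follows since $y$ limits as $t\to-\infty$ to a critical point and, $Y$ being a rational homology sphere, there are only finitely many critical values (cf.\ \cite{Man03} and the proof of \cref{4}). Thus $(x,y)$ is a finite-type $N^+_*$-trajectory in the sense of \cref{bounded}, so $\|x\|_{L^2_{k,\alpha}}<R'$, contradicting $\|x\|=\lim_j\|x_j\|=R>R'$. Replacing $\|x_j\|=R$ by $\|x_j\|\ge R-\delta$ for a small fixed $\delta$ in the same argument shows that, for all large $n$, $\|\mathrm{pr}_{\mathcal V_n}\mathcal F_n(x)\|$ is bounded below by a positive constant (independent of $n$) on the compact set of $x\in\overline B(\mathcal U_n;R)$ with $\|x\|\ge R-\delta$ whose boundary orbit stays in $\overline B(2R)$; half of that constant serves as $\varepsilon_n$. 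This proves the proposition.

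The main obstacle is precisely the compactness step of the last paragraph: extracting from a sequence of approximate solutions with vanishing four-dimensional Seiberg--Witten defect and uniformly bounded boundary data an honest finite-type $N^+_*$-trajectory, to which \cref{bounded} applies. It rests on the compactness of the quadratic term in the weighted norms, the Fredholm package of \cref{Fredholmind}, and a careful reconciliation of the weighted double-Coulomb description on $N^+$ with the temporal-gauge, $d^{*}$-free description on the half-cylinder, so that hypotheses (iii) and (iv) of \cref{bounded} survive in the limit; once these analytic inputs are secured, the statement follows by the scheme above.
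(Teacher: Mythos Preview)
Your proposal is essentially correct and follows the same strategy as the paper: a contradiction argument in which a hypothetical bad sequence of approximate solutions is shown, via the compactness step, to limit to an honest finite-type $N^+_*$-trajectory, to which \cref{bounded} applies. The paper packages the index-pair construction slightly differently, invoking \cite[Lemma~A.4]{Khan15} (equivalently \cite[Theorem~4]{Man03}) directly: one verifies two conditions, (a) that forward orbits of points in $\wt K_1$ which remain in the ball $A_n=\overline B(R')$ stay in its interior, and (b) that forward orbits of points in $\wt K_2$ eventually leave $A_n$; the contradiction for (a) is obtained on the \emph{boundary} side, namely $\|y(t_\infty)\|=R'$, rather than on the four-manifold side $\|x\|=R$ as you do. Both uses of \cref{bounded} are valid. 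Your flow-and-collapse map $\Gamma_n$ is a standard alternative to the explicit Lemma~A.4 construction, though your description of it is a bit loose (in particular, you implicitly assume the trace map has operator norm at most~$2$ so that restrictions land in $\overline B(2R)$; one should enlarge the isolating-neighborhood radius accordingly). The paper also singles out the compactness step --- which you correctly flag as the crux --- and proves it as a separate proposition (\cref{fin app conv}), whereas you defer it; otherwise the arguments coincide.
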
 
 \begin{proof}
 In order to prove this lemma, we follow a method used by Manolescu and Khandhawit. 
 We will use \cite[Lemma~A.4]{Khan15} (\cite[Theorem~4]{Man03}).  Set 
 \[
 \wt{K}_1(R,n) := \overline{B}( \mathcal{U}_n ; R) \cap \mathcal{F}_n^{-1}(\overset{\circ}{B}(\mathcal{V}_n, \varepsilon_n)^c)
 \]
 and  
  \[
  \wt{K}_2(R,n) := S( \mathcal{U}_n ; R) \cap \mathcal{F}_n^{-1}(\overset{\circ}{B}(\mathcal{V}_n, \varepsilon_n)^c) 
 \]
 for a fixed $R$.
  For these compact sets, we will prove that for a sufficiently large $n$, there is an isolating neighborhood $A_n= \overline{B}
  (R'; V^{\lambda_n}_{-\lambda_n} )$ satisfying the following conditions for some constant $R'$ which is independent of $n$: 
  \begin{itemize}
  \item[(a)] If $x \in \wt{K}_1(R,n)$ satisfies $t \cdot p^{\lambda_n}_{-\infty} \circ r (x) \in A_n$ for all $t \geq 0$, then for any $t \geq 0$, 
  \[
  t \cdot p^{\lambda_n}_{-\infty} \circ r (x) \notin  \partial A_n
  \]
  holds. 
  \item [(b)] If $x \in  \wt{K}_2(R,n)$, then there exists $t \geq 0$ such that $t \cdot p^{\lambda_n}_{-\infty} \circ r (x) \notin A_n$.
  \end{itemize} 
  If such an isolating neighborhood $A_n$ exists, we can apply \cite[Lemma A.4]{Khan15} and obtain an index pair $(N_n, L_n)$ such that 
  \[
 (\mathcal{F}_n (\wt{K}_1), \mathcal{F}_n (\wt{K}_2)  ) \subset  (N_n, L_n)  \text{ and } N_n \subset A_n. 
 \]
 This index pair gives our conclusion. The proof of the existence of $(N_n, L_n)$ is also similar to the proof given in \cite[Lemma A.4]{Khan15}. We give a sketch of proof. 
 We take a constant $R'$ as the constant appeared in \cref{bounded}. We also can suppose that $\overset{\circ}{B}(R') \subset V (\partial N_+)$ contains the critical set of the flow $l+c$. 
 We prove (a) by contraposition. 
  Suppose that there exists a sequence $x_n \in \wt{K}_1(R,n)$ satisfies $t \cdot p^{\lambda_n}_{-\infty} \circ r (x_n) \in A_n$ for all $t \geq 0$ and for some $t_n \geq 0$, 
  \[
  t_n \cdot p^{\lambda_n}_{-\infty} \circ r (x_n) \in  \partial A_n. 
  \]
  Here we take $R$ as a positive number with $R>R'$.
  Then, by the use of \cref{fin app conv} which we prove later, after taking a subsequence, one can assume that 
 $\{ (x_n, y_n)\}$ converges to an element 
 \[
(x, y ) \in  \cU_{k, \alpha}(N^+)\times  ( L^2_{k} (i\Lambda^1(Y\times \R^{\leq 0} )  \oplus L^2_k (S^+_{\R^{\leq 0}\times Y}  ) )
\]
 satisfying 
 \begin{itemize}
\item[(i)] the element $x+ (A_0, \Phi_0)$ is a solution of \eqref{SW eq}, 
\item[(ii)]the element $y$ is a solution of Seiberg-Witten equation on $\R^{\leq 0}\times Y$,
\item[(iii)] $y$ is temporal gauge, $d^{*_\alpha}b(t)=0$ for each $t$, where $y(t)= (b(t), \psi(t))$ and $y$ is finite type and 
\item[(iv)] $x|_{\partial N^+} = y (0)$. 
\end{itemize}
Thus, we use \cref{bounded} and obtain bounds 
  \[
  \| x\|_{L^2_{k, \alpha} } < R'  \text{ and } \|y(t)\|_{L^2_{k-\frac{1}{2}}} < R' (t  \leq 0). 
  \]
On the other hand, since we can suppose 
\[
\lim_{n\to \infty}  t_n = t_\infty \in [0, \infty ) \cup \{\infty\},
\]
 we conclude 
\[
\|y(t_\infty) \|_{L^2_{k-1}(Y)} =R'. 
\]
However, this contradicts to the choice of $R'$. The proof of (b) is similar to (a). For more detail, see \cite[Proposition 4.5]{Khan15}.

 \end{proof}
 In order to complete the proof of \cref{cohomotopy1}, we need to prove \cref{fin app conv} used in the proof of \cref{cohomotopy1}. 
\begin{prop}\label{fin app conv}
Let $\{x_n\}$ be a bounded sequence in $\mathcal{U}_{k, \alpha}$ such that 
\[
(L_{N^+}(x_n), p^{\lambda_n }_{-\infty} \circ r (x_n ) )\in \mathcal{V}_n \times V^{\lambda_n}_{-\lambda_n} 
\]
and 
\[
P_n (L_{N^+}+C_{N^+}) x_n \to 0 . 
\]
Let $y_n: [0, \infty) \to V^{\lambda_n}_{-\lambda_n}$ be a uniformly bounded sequence of trajectories such that 
\[
y_n(0) = p^{\lambda_n}_{-\infty} \circ r (x_n) . 
\]
Then, after taking a subsequence, $\{x_n\}$ converges a solution $x \in \mathcal{U}_{k, \alpha}$ (in the topology of $\mathcal{U}_{k, \alpha}$) and $\{y_n(t)\}$ converges $y(t) (\forall t\in [0,\infty))$ in $L^2_{k-\frac{1}{2}}$ which is a solution of the Seiberg-Witten equation on $\R^{\leq 0}\times Y$. 
\end{prop}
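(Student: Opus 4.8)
The plan is to handle the two pieces of the problem by separate compactness mechanisms---a Fredholm-plus-compact-perturbation argument for the part on $N^+$, and Manolescu's finite-dimensional-approximation compactness for the trajectory part on $\R^{\leq 0}\times Y$---and then to match them along $\partial N^+$.

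First I would deal with $\{x_n\}$. By hypothesis $L_{N^+}x_n\in\mathcal{V}_n$, so $P_n(L_{N^+}+C_{N^+})x_n\to 0$ reads $L_{N^+}x_n=-P_nC_{N^+}x_n+o(1)$ in $\mathcal{V}_{k-1,\alpha}$. The quadratic term $C_{N^+}\colon\mathcal{U}_{k,\alpha}\to\mathcal{V}_{k-1,\alpha}$ is compact by \cref{multiplication and compact embedding}: Sobolev multiplication puts a product in a space with weight $2\alpha-\varepsilon>\alpha$, which embeds compactly into $L^2_{k-1,\alpha}$, and this is where the restriction $\alpha>0$ enters. Passing to a subsequence, $C_{N^+}x_n\to v$ in $\mathcal{V}_{k-1,\alpha}$; since the $P_n$ are uniformly bounded (uniform boundedness principle, using hypothesis (ii)) and converge strongly to the identity, $P_nC_{N^+}x_n\to v$, hence $L_{N^+}x_n\to-v$.

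Next I would treat the trajectories. Each $y_n$ solves $\partial_t y_n=-(l+p^{\lambda_n}_{-\lambda_n}c)(y_n)$ on $[0,\infty)$ and is uniformly bounded in $L^2_{k-\frac12}$, so by Manolescu's compactness for finite-dimensional approximations of the Seiberg--Witten flow \cite{Man03}, after a further subsequence $y_n\to y$ uniformly on compact subsets of $[0,\infty)$ in the $L^2_{k-\frac12}$-topology, with $y$ a Seiberg--Witten trajectory on $\R^{\leq 0}\times Y$; as the equation is first order in $t$ this holds up to and including $t=0$, so $y_n(0)\to y(0)$ in $L^2_{k-\frac12}(\partial N^+)$, and passing the bound to the limit shows $y$ is of finite type (here $Y$ being a rational homology sphere makes $CSD$ continuous on $L^2_{\frac12}$). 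Now the hypothesis $p^{\lambda_n}_{-\infty}\circ r(x_n)\in V^{\lambda_n}_{-\lambda_n}$ forces $r(x_n)$ to carry no spectral component of $\widehat l$ below $-\lambda_n$, whence $p^0_{-\infty}\circ r(x_n)=p^0_{-\lambda_n}\circ r(x_n)=p^0_{-\lambda_n}(y_n(0))$; since $p^0_{-\lambda_n}\to p^0_{-\infty}$ strongly with uniformly bounded norms and $y_n(0)\to y(0)$, we get $p^0_{-\infty}\circ r(x_n)\to p^0_{-\infty}(y(0))$ in $L^2_{k-\frac12}$. Therefore $(L_{N^+}\oplus p^0_{-\infty}\circ r)(x_n)$ converges in $L^2_{k-1,\alpha}(i\Lambda^+_{N^+}\oplus S^-_{N^+})\oplus V^0_{-\infty}(\partial N^+)$. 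Since this operator is Fredholm by \cref{Fredholmind} and $\{x_n\}$ is bounded, a subsequence of $\{x_n\}$ converges in $\mathcal{U}_{k,\alpha}$: the component in the finite-dimensional kernel has a convergent subsequence, and on a complement the operator is a Banach-space isomorphism onto its closed range, so the remaining component converges as well. Call the limit $x$.

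It remains to pass to the limit in the equations. Continuity of $L_{N^+}$ and $C_{N^+}$ together with $P_n\to\mathrm{id}$ give $(L_{N^+}+C_{N^+})x=0$, i.e.\ $x+(A_0,\Phi_0)$ solves \eqref{SW eq}; and $r(x_n)\to r(x)$ in $L^2_{k-\frac12}$ by continuity of the restriction, so $y_n(0)=p^{\lambda_n}_{-\infty}\circ r(x_n)\to r(x)$, forcing $y(0)=x|_{\partial N^+}$---this matching and finite type of $y$ being precisely what \cref{cohomotopy1} needs. The step I expect to be the main obstacle is the reconciliation above: one must run the flow compactness in the strong norm and up to the boundary point $t=0$, and verify that the truncated boundary datum carried by the $y_n$ feeds correctly into the spectral boundary condition of the Fredholm operator on $N^+$. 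The argument is not circular, since the flow compactness uses only the uniform bound on $\{y_n\}$, but getting the order of the estimates right and controlling the spectral projections against the weighted Sobolev norms is where the real work lies, in parallel with the compactness arguments of \cite{Khan15} and \cite{KLS18}, \cite{KLS'18}.
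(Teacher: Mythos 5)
Your overall architecture---compactness of the quadratic part $C_{N^+}$, an elliptic/Fredholm estimate for $L_{N^+}\oplus p^0_{-\infty}\circ r$ from \cref{Fredholmind}, and matching of boundary values at the end---is the same as the paper's. The genuine gap is the step where you assert that Manolescu's compactness gives $y_n\to y$ uniformly on compact subsets of $[0,\infty)$ in $L^2_{k-\frac{1}{2}}$ ``up to and including $t=0$'' because ``the equation is first order in $t$''. That is precisely the difficulty the whole proof is organized around: the interior argument (Proposition 3 of \cite{Man03}) yields $L^2_{k-\frac{1}{2}}$ convergence only on compact subsets of $(0,\infty)$, while at the endpoint one only gets convergence of $y_n(0)$ in the weaker norm $L^2_{k-\frac{3}{2}}$, uniformly in $n$. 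The reason is spectral: the component of $y_n(0)$ in the non-positive eigenspaces of $l$ can be recovered from the trajectory at positive times (where strong convergence holds) together with the compact nonlinearity, and hence converges in $L^2_{k-\frac{1}{2}}$---this is exactly \cref{claim1} in the paper and requires Manolescu's argument, not just first-order-ness of the flow; the positive-spectrum component of $y_n(0)$ admits no such representation (it would require data at negative times, which do not exist for a trajectory defined on $[0,\infty)$), so its strong convergence cannot be extracted from the trajectories alone. In the paper, the convergence $y_n(0)\to y(0)$ in $L^2_{k-\frac{1}{2}}$ is the \emph{last} conclusion (\cref{claim2}(3)), available only after $x_n\to x$ in $\mathcal{U}_{k,\alpha}$ has been established and hence $r(x_n)\to r(x)$ in $L^2_{k-\frac{1}{2}}$; taking it as an input, as you do, is unjustified.

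The fix is a reordering rather than a new idea, because your Fredholm step actually uses only the non-positive part $p^0_{-\infty}y_n(0)$, which is the part that does converge a priori. Concretely: prove the convergence of $p^0_{-\infty}y_n(0)$ in $L^2_{k-\frac{1}{2}}$ by Manolescu's argument; use the identity $p^0_{-\infty}r(x_n)=p^0_{-\infty}y_n(0)$ (which you correctly derived from $p^{\lambda_n}_{-\infty}r(x_n)\in V^{\lambda_n}_{-\lambda_n}$) and a weak limit of $x_n$ to identify the limit with $p^0_{-\infty}r(x)$; then run your kernel-plus-complement (equivalently, the estimate $\|x\|_{L^2_{k,\alpha}}\leq C(\|L_{N^+}x\|_{L^2_{k-1,\alpha}}+\|p^0_{-\infty}r(x)\|_{L^2_{k-1/2}}+\|x\|_{L^2})$) argument to get $x_n\to x$ strongly, using \cref{multiplication and compact embedding} for $C_{N^+}$ and for the $L^2$ term; and only then conclude $r(x)=y(0)$ and the strong convergence of $y_n(0)$. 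With that change your proof coincides with the paper's.
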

  \begin{proof}
  By a similar discussion in Proposition 3 in \cite{Man03}, we can prove that for any compact set $I \subset  (0,\infty)$ and $y_n(t)$ uniformly converges to $y(t)$ in $L^2_{k-\frac{1}{2}}$ on $I$. 
  However, for a compact set in $[0,\infty)$, we can only say $y_n(t)$ uniformly converges to $y(t)$ in $L^2_{k-\frac{3}{2}}$. In order to improve this, we will prove the following two lemmas. 
  \begin{lem}\label{claim1}
   In $L^2_{k-\frac{1}{2}}$, we have 
  \[
  p^0_{-\infty} y_n(0) \to p^0_{-\infty} y(0). 
  \]
  \end{lem}
  The proof of this convergence is completely the same as the original proof in \cite{Man03}. So we omit the proof.

  Since $\{x_n\}$ is bounded, we know that $\{x_n\}$ has a weak convergent subseqeunce and a limit $x\in L^2_{k, \alpha}$. 

\begin{lem}\label{claim2}
We have the following convergences: 
\begin{enumerate}
\item
\[
p^0_{-\infty} y_n(0) \to p^0_{-\infty} r(x) \text{ in } L^2_{k-\frac{1}{2}},
\]
\item
\[
x_n \to x \text{ in } L^2_{k, \alpha}
\]
\item
\[
y_n(0) \to y(0) \text{ in } L^2_{k-\frac{1}{2}}. 
\]
\end{enumerate}
 \end{lem}
 {\bf Proof of \cref{claim2}}
 This is also similar to \cite{Man03}, \cite{Khan15}, however, we need to use some properties of our weighted Sobolev spaces in order to deal with cone-like ends.
 \par
 (1):
 Since $V^0_{-\infty}\subset V^{\lambda_n}_{-\infty}$, the assumption $p^{\lambda_n}_{-\infty}r(x_n)=y_n(0)$ implies $p^0_{-\infty}r(x_n)=p^0_{-\infty}y_n(0)$.
 
Since $x_n$ weakly converges to $x$ in $L^2_{k, \alpha}$, $p^0_{-\infty}r(x_n)$ weakly converges to $p^0_{-\infty}r(x)$ in $L^2_{k-1/2}$.
Thus, we have $p^0_{-\infty}r(x)=p^0_{-\infty}y(0)$ and then \cref{claim1} implies (1).
\par
(2):
Since $L_{N^+}+p^0_{-\infty}\circ r: \cU_{k, \alpha} \to L^2_{k-1, \alpha}( i\Lambda^+ _{N^+} \oplus S^-_{N^+})\oplus V^0_{-\infty}(\partial N^+)$ is Fredholm, there exists a constant $C$ such that for any $x \in \cU_{k,\al }  $, 
\[
\|x\|_{L^2_{k, \alpha}}\leq C(\|L_{N^+}x\|_{L^2_{k-1, \alpha}}+\|p^0_{-\infty}\circ r (x)\|_{L^2_{k-1/2}}+\|x\|_{L^2})
\]
holds.
 By \cref{multiplication and compact embedding},  $C_{N^+}(x_n)$ converges to $C_{N^+} (x)$ in $L^2_{k-1, \alpha}$.
Thus, we have
\begin{align*}
& \|x_n-x\|_{L^2_{k, \alpha}}  \\
&\leq C(\|L_{N^+}(x_n-x)\|_{L^2_{k-1, \alpha}}+\|p^0_{-\infty}\circ r (x_n-x)\|_{L^2_{k-1/2}}+\|x_n-x\|_{L^2})\\
&\leq C(\|(L_{N^+}+C_{N^+})(x_n)-(L_{N^+}+C_{N^+})(x)\|_{L^2_{k-1, \alpha}}+\|C_{N^+}(x)-C_{N^+}(x_n)\|_{L^2_{k-1, \alpha}}\\
&+\|p^0_{-\infty}\circ y_n(0)-p^0_{-\infty}\circ r(x)\|_{L^2_{k-1/2}}+\|x_n-x\|_{L^2})\\
\end{align*}
We claim that all of  the four terms in the last line converge to zero by taking subsequences.
The first term  converges to zero since
\begin{align*}
& \|(L_{N^+}+C_{N^+})(x_n)\|_{L^2_{k-1, \alpha}}\\
& \leq \|(L_{N^+}+P_nC_{N^+})(x_n)\|_{L^2_{k-1, \alpha}}+\|(1-P_n)C_{N^+} (x_n)\|_{L^2_{k-1, \alpha}}\to 0
\end{align*}
by the assumption $(L_{N^+}+P_nC_{N^+})(x_n)\to 0$ and our choice of $P_n$ and therefore we have
\[
(L_{N^+}+C_{N^+})(x_n)\to (L_{N^+}+C_{N^+})(x)=  0 \quad \text{in }L^2_{k-1, \alpha}.
\]
The second term converges to zero
because the product estimate and the compact embedding result in \cref{multiplication and compact embedding} implies that $C_{N^+}(x_n)$ converges to $C_{N^+}(x)$ in $L^2_{k-1, \alpha}$ by taking subsequences.
The third term converges to zero by (1). 
The fourth term converges to zero since
the compact embedding result in \cref{multiplication and compact embedding} implies we can assume that $x_n$ converges to $x$ in $L^2$ by taking a subsequence.
\par
(3):
(2) implies $r(x_n)$ converges to $r(x)$ in $L^2_{k-1/2}$.
Thus we have
\begin{align*}
\|y_n(0)-r(x)\|_{L^2_{k-1/2}}
&=\|p^{\lambda_n}_{-\infty}r(x_n)-r(x)\|_{L^2_{k-1/2}}\\
&\leq \|p^{\lambda_n}_{-\infty}(r(x_n)-r(x))\|_{L^2_{k-1/2}}+\|(1-p^{\lambda_n}_{-\infty})r(x)\|_{L^2_{k-1/2}}\\
&\to 0
\end{align*}
as $n \to \infty$.
Since $y_n(0)$ converges to $y(0)$ in $L^2_{k-3/2}$, we have $r(x)=y(0)$ and thus (3) holds.
  \end{proof}
 Thus, we obtain a family of the continuous maps \eqref{cohomotopy}. Now, by the definition of Fredholm index,  we have 
 \[
\ind_\R(L_{N^+}\oplus p^{\lambda_n}_{-\infty} \circ r)= \dim_\R \mathcal{U}_n - \dim_\R \mathcal{V}_n - \dim_\R V^{\lambda_n}_{-\lambda_n} . 
 \]
By \cref{Fredholmind}, we also know
\[
\ind_\R (L_{N^+}\oplus p^{\lambda_n}_{-\infty} \circ r)
=\ind_\R (L_{N^+}\oplus p^{0}_{-\infty} \circ r)-\dim_\R V^{\lambda_n}_0
\]
\[
=-d_3(Y, [\xi])-\frac{1}{2}+2n(-Y, g_Y, \s)-\dim_\R V^{\lambda_n}_0
\]
Thus we obtain
\[
 \dim_\R \mathcal{U}_n - \dim_\R \mathcal{V}_n - \dim_\R V^{\lambda_n}_{-\lambda_n}=-d_3(Y, [\xi])-\frac{1}{2}+2n(-Y, g_Y, \s)- \dim_\R V^{\lambda_n}_{0}
\]

Then, by applying the formal (de)suspension 
\[
\Sigma^{(\frac{1}{2}-d_3(-Y, [\xi])-2n(-Y, g_Y, \s))\R \oplus (-V^0_{-\lambda_n} ) \oplus (-\mathcal{V}_n) }
\]
 to 
\[
\overline{B}( \mathcal{U}_n ; R) / S( \mathcal{U}_n ; R)  \to ( \mathcal{V}_n/  \overset{\circ}{B}(\mathcal{V}_n, \varepsilon_n)^c) \wedge (N_n/ L_n) , 
\]
we obtain a map stably written by 
\[
\Psi (Y, \xi) : S^0 \to \Sigma^{(\frac{1}{2}-d_3(-Y, [\xi])-2n(Y, g_Y, \s)) \R \oplus (-V^0_{-\lambda_n} ) }(N_n/ L_n). 
\]
We check that the domain of $\Psi (Y, \xi)$ is $S^0$.
Note that the index formula 
\[
 \dim_\R \mathcal{U}_n - \dim_\R \mathcal{V}_n - \dim_\R V^{\lambda_n}_{-\lambda_n}=-d_3(Y, [\xi])-\frac{1}{2}+2n(-Y, g_Y, \s)- \dim_\R V^{\lambda_n}_{0}
\]
implies
\[
 \dim_\R \mathcal{U}_n - \dim_\R \mathcal{V}_n - \dim_\R V^{0}_{-\lambda_n}+d_3(Y, [\xi])+\frac{1}{2}-2n(-Y, g_Y, \s)=0
\]
and thus
\[
\Sigma^{(\frac{1}{2}-d_3(-Y, [\xi])-2n(-Y, g_Y, \s))\R \oplus (-V^0_{-\lambda_n} ) \oplus (-\mathcal{V}_n) } \overline{B}( \mathcal{U}_n ; R) / S( \mathcal{U}_n ; R)=S^0.
\]
Using the definition $SWF(-Y, \s) =  \Sigma^{-2n(-Y, \s, g) \R -V^0_\lambda}  I^\mu_\lambda$, and the fact that $d_3(-Y, [\xi])=-d_3(Y, [\xi])$,  we regard $\Psi (Y, \xi)$ as a map 
\[
\Psi (Y, \xi) : S^0 \to \Sigma^{ (\frac{1}{2}-d_3(-Y, [\xi]))\R}SWF(-Y, \s). 
\]
\begin{defn}
Finally, we have 
\begin{align}\label{homotopy}
\Psi (Y, \xi) : S^0 \to \Sigma^{  (\frac{1}{2}-d_3(-Y,[\xi] )) \R  }SWF(-Y, \s). 
\end{align}
The map \eqref{homotopy} is called {\it Seiberg-Witten Floer homotopy invariant} of $(Y, \xi)$. 
\end{defn}
\begin{prop}\label{invariance}If we take a weight $\alpha$ satisfying $0<\alpha<\alpha_1$, 
then the stable homotopy class of $\Psi (Y, \xi)$ depends only on $(Y, \xi)$, where the constant $\al_1$ is introduced in \cref{bounded}. 
\end{prop}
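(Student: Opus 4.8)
The plan is to follow the standard pattern for well-definedness of finite-dimensional approximation invariants (cf.\ Manolescu \cite{Man03} and Khandhawit \cite{Khan15}), establishing independence of each auxiliary choice in turn. The data entering the construction of $\Psi(Y,\xi)$ are: (1) the approximation data --- the increasing sequences $\cV_n$ and $V^{\lambda_n}_{-\lambda_n}$ and the level $n$; (2) the radius $R$ and the index pair $(N_n,L_n)$; (3) the weight $\al\in(0,\al_1)$; (4) the contact form $\theta$ and compatible complex structure $J$ on $\xi$ (equivalently, the conical metric $g_0$ and the induced $Spin^c$ data $(A_0,\Phi_0)$ on $N^+$); and (5) the compact $Spin^c$ bound $X$. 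Case (5) is topological: by Manolescu's construction \cite{Man03} the stable homotopy type $SWF(-Y,\s_\xi)$ and the integer $n(-Y,\s_\xi,g)$ change under a change of $X$ only via canonical identifications, while $d_3(Y,[\xi])$ and the target of \eqref{homotopy} are unchanged, so (5) is immediate once the remaining cases are settled.

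For (1) and (2), given two choices of approximation data one passes to a common larger pair of subspaces; the two approximated maps then differ by the obvious restriction/projection, and the uniform bound of \cref{bounded} --- whose constant $R'$ is independent of $n$ --- together with the convergence statement \cref{fin app conv} shows that the isolating neighborhoods $A_n$ and the index pairs $(N_n,L_n)$ may be chosen compatibly, exactly as in the proof of \cref{cohomotopy1} and in \cite[\S4]{Khan15}. Hence the maps \eqref{cohomotopy} for the two choices are related by the stable maps induced by inclusions of Conley indices, and after applying the formal (de)suspension dictated by \cref{Fredholmind} they represent the same stable class. Independence of $R$ follows from a linear homotopy in $R$ (every sufficiently large $R$ works, by the same bound), and independence of the index pair is the usual uniqueness of the Conley index up to canonical stable homotopy equivalence.

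The substantive cases are (3) and (4), and both are handled by a parametrized version of the construction. For (3), fix $\al',\al''\in(0,\al_1)$; the family $\{L_{N^+}+p^0_{-\infty}\circ r\}$, with $L_{N^+}$ acting on $\cU_{k,\al}$ for $\al$ in the interval joining $\al'$ and $\al''$, is a continuous family of Fredholm operators (\cref{Slice} and \cref{Fredholmind} hold uniformly on this interval), the Hodge decomposition of \cref{Slice} and hence the Coulomb projections vary continuously in $\al$, and the boundedness constant $R'$ of \cref{bounded} was arranged to be independent of $\al\le\al_1$. Running the finite-dimensional approximation of \cref{cohomotopy1}--\cref{fin app conv} over this parameter interval then produces a homotopy of approximated maps, hence a stable homotopy between the invariant computed with $\al'$ and with $\al''$. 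For (4), one uses that for a fixed contact structure $\xi$ the space of pairs $(\theta,J)$ is connected; choose a path $(\theta_t,J_t)$, $t\in[0,1]$, giving paths of conical metrics $g_0^t$ on $N^+$ and of $Spin^c$ data $(A_0^t,\Phi_0^t)$, and re-examine \cref{Slice} and \cref{bounded}. Since the thresholds $\al_0,\al_1$ and the energy/boundedness constants there depend on $(\theta,J)$ only through the operators on the conical end, after shrinking we may take a single weight $\al$ and a single uniform bound $R'$ valid for all $t\in[0,1]$; the whole elliptic package (Fredholmness, the exponential decay of \cref{1}, and the spinor/energy bounds \cref{2}--\cref{4}) then holds uniformly over this compact family. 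Feeding this into a finite-dimensional approximation scheme over $N^+\times[0,1]$ yields the desired homotopy $\Psi(Y,\xi;\theta_0,J_0)\simeq\Psi(Y,\xi;\theta_1,J_1)$.

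The main obstacle is precisely this uniformity in case (4): one must check that the global slice decomposition of \cref{Slice} and the estimates of \cref{1}--\cref{4} degrade continuously --- in particular that the weight threshold and all the boundedness constants can be chosen uniformly --- as $(\theta,J)$ moves along a compact path, and then assemble everything into a genuine parametrized finite-dimensional approximation, so that the output is an honest homotopy of pointed maps with the formal (de)suspensions of \cref{Fredholmind} identified coherently along the path. Once the constants are controlled, the remaining bookkeeping is formal and parallels the arguments already given for \cref{cohomotopy1} and \cref{bounded}; the other cases are purely consequences of the Conley-index machinery.
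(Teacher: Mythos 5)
Your proposal is correct and follows essentially the same route as the paper: the paper's proof also connects $(\theta_0,J_0)$ to $(\theta_1,J_1)$ by a path, takes a uniform weight threshold along the path, and runs a family version of \cref{cohomotopy1} to get a homotopy of approximated maps, with independence of $\al$ handled "in the same way." Your extra bookkeeping on the approximation data, $R$, the index pair, and the $Spin^c$ bound is standard Conley-index/Manolescu--Khandhawit material that the paper leaves implicit, so there is no substantive difference in method.
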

\begin{proof}
We choose two contact forms $\theta_0$ and $\theta_1$ of $\xi$ and two complex structures $J_0$ and $J_1$ of $\xi$. Then we take 1-parameter families $\theta_t$ and $J_t$ connecting $\theta_0$ and $\theta_1$ and $J_0$ and $J_1$. Then, for such a $1$-parameter family $\theta_t$, one can take 
a positive number $\alpha_1 (\theta_t )>0$ such that there exists a 1-parameter family of finite-dimensional approximations satisfying a family version of \cref{cohomotopy1}. Moreover, we can take such a 1-parameter family of finite-dimensional approximations which are independent of $t$. This gives a homotopy between $\Psi (Y, \theta_0, J_0)$ and $\Psi (Y, \theta_1, J_1)$. The proof of independence of $\Psi (Y, \xi)$ with respect to $\al$ is essentially the same. 
This gives our conclusion. 
\end{proof}
Note that \eqref{homotopy} is not an $S^1$-equivariant map. By using the duality map $\eta$, we often regard \eqref{homotopy} as 
\[
\Sigma^{-\frac{1}{2} - d_3(Y, [\xi]) } SWF(Y) \xrightarrow{\Psi (Y, \xi) \wedge \id } 
\]
\[
\Sigma^{ \frac{1}{2}-d_3(-Y, [\xi])}SWF(-Y, \s) \wedge \Sigma^{-\frac{1}{2} - d_3(Y, [\xi]) } SWF(Y) \xrightarrow{\eta} S^0 . 
\]
We write this composition by 
\[
\widecheck{\Psi} (Y, \xi) : \Sigma^{-\frac{1}{2} - d_3(Y, [\xi]) } SWF(Y) \to S^0. 
\]

\begin{ex}
 As a trivial example, we consider homotopy classes of contact structures on $S^3$ which are parametrized by its $d_3$-invariants
 \[
 d_3(S^3, \xi_k ) = k+ \frac{1}{2} . 
 \]
  The standard contact structure is represented by  $\xi_{-1} = \xi_{std}$. (In \cite{KM07}, the homotopy class of $\xi_{std}$
  is written by $\xi_+$.) Since $SWF(-S^3) = S^0$, we have a map 
  \[
  \Psi (S^3, \xi_k ) : S^0 \to S^{k+1} . 
  \]
  Therefore, we can regard $\Psi (S^3, \xi_k )$ as an element 
  \[
   \Psi (S^3, \xi_k ) \in \pi_{-k-1}^S, 
   \]
   where $\pi_{-k-1}^S$ is the stable homotopy group of the sphere. Therefore, if $\pi_{k+1}^S= 0$, then $\Psi (S^3, \xi_k )$ must vanish. 
  \end{ex}

\section{Gluing result}\label{Gluing result}
In this section, we will prove \cref{gluing}. The main idea which we use is contained in \cite{Man07} and \cite{KLS'18}. 
In particular, we follow the arguments given in  \cite{KLS'18}. First, we introduce notions which are used in the statement of \cref{gluing}.

Let $Y$ be a rational homology 3-sphere equipped with a contact structure $\xi$ and $X$ a compact oriented 4-manifold with  $b_1(X)=0$ and $\partial X=Y$.
Suppose that a relative $Spin^c$ structure $\s_{X, \xi}=(\s_X, \s_X|_{Y}\to \s_\xi) \in \text{Spin}^c(X, \xi)$ in the sense of \cite{KM97} is given.
Now,  the relative Bauer-Furuta invariant of $(X, \s)$ is an $S^1$-equivariant stable map 
\[
\Psi(X, \s_X): (\R^{-b^+(X)}\oplus \C^{\frac{c^2_1(\s_X)-\sigma(X)}{8}})^+\to SWF(Y, \s_\xi).
\]
If we forget the $S^1$-action, this map can be written as
\[
\Psi(X, \s_X): (\R^{-b^+(X)+\frac{c^2_1(\s_X)-\sigma(X)}{4}})^+\to SWF(Y, \s_\xi)
\]
or equivalently
\[
\Psi(X, \s_X): (\R^{1+\langle e(S^+_X, \Psi_\xi), [X, \partial X]\rangle})^+ \to \Sigma^{\frac{1}{2}-d_3(Y, [\xi])}SWF(Y, \s_\xi)
\]
since
\[
\begin{split}
d_3(Y, [\xi])
&=\frac{1}{4}(c^2_1(\s_X)-2\chi(X)-3\sigma(X))-\langle e(S^+_X, \Psi_\xi), [X, \partial X]\rangle\\
&=\frac{c^2_1(\s_\omega)-\sigma(X)}{4}-\frac{\chi(X)+\sigma(X)}{2}-\langle e(S^+_X, \Psi_\xi), [X, \partial X]\rangle\\
&=\frac{c^2_1(\s_\omega)-\sigma(X)}{4}-b^+(X)-\frac{1}{2}-\langle e(S^+_X, \Psi_\xi), [X, \partial X]\rangle, 
\end{split}
\]
where $\Psi_\xi$ is a section of $S^+_X|_{Y}$ with $|\Psi_\xi (y)|=1$ for all $y \in Y$ such that the isomorphism class of $(\s_\xi, \Psi_\xi)$ corresponds to $\xi$ under the correspondence given in Lemma 2.3 in \cite{KM97}.
On the other hand, the invariant for  $(X, \omega)$ constructed in \cite{I19} is defined as a non-equivariant stable map
\[
\Psi(X, \xi, \s_{X, \xi}): (\R^{\langle e(S^+_X, \Psi_\xi), [X, \partial X]\rangle})^+ \to S^0.
\]
 Later, we will explain the invariant $\Psi(X, \xi, \s_{X, \xi})$ defined in our situation. 
\par
Finally, our contact invariant is a non-equivariant stable map
\[
\Psi(Y, \xi): S^0\to \Sigma^{\frac{1}{2}-d_3(-Y, [\xi])}SWF(-Y, \s_\xi).
\]
Using Manolescu's duality morphism 
\[
\eta: SWF(Y, \s_\xi)\wedge SWF(-Y, \s_\xi)\to S^0, 
\]
we have a non-equivariant stable map
\[
\eta\circ (\Psi(X, \s_X)\wedge \Psi(Y, \xi)): (\R^{1+\langle e(S^+_X, \Psi_\xi), [X, \partial X]\rangle})\to (\R)^+. 
\]
(Note that $d_3(-Y, [\xi])=-d_3(Y, [\xi])$.)
Therefore, we can ask whether $\eta\circ (\Psi(X, \s_X)\wedge \Psi(Y, \xi))$ and $\Psi(X, \xi, \s_{X, \xi}) $ are stably homotopy equivalent.
\par
The following gluing result can be shown by a similar way as Theorem 1 of \cite{Man07}.
\begin{thm}\label{gluing theorem}
In the above setting, $\eta\circ (\Psi(X, \s_X)\wedge \Psi(Y, \xi))$ and $\Psi(X, \xi, \s_{X, \xi})$ are stably homotopy equivalent as non-equivariant pointed maps.
\end{thm}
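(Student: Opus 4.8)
The plan is to realize the three invariants $\Psi(X,\s_X)$, $\Psi(Y,\xi)$ and $\Psi(X,\xi,\s_{X,\xi})$ from a single family of finite-dimensional approximations on a neck-stretched model, and to use the fact that Manolescu's duality $\eta$ is, by its construction in \cite{Man07}, built from the Seiberg-Witten flow on the cylinder $\R\times Y$. Concretely, I would insert a cylinder of length $T$ and work with $X^+_T := X\cup_Y([0,T]\times Y)\cup_Y N^+$; this manifold carries, for every $T$, the same deformation class of metric and $Spin^c$ data as $X^+=X\cup_Y N^+$, so the weighted finite-dimensional approximation of the Seiberg-Witten map on $X^+_T$ represents the same stable map $\Psi(X,\xi,\s_{X,\xi})$, up to canonical suspension, for all $T$. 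The first step is a boundedness statement uniform in $T$: combining \cref{bounded} on the $N^+$-part, the analogue of the boundedness results of \cite{I19} and \cite{Man03} on the $X\cup(\R^{\geq0}\times Y)$-part, and gauge-invariance of $CSD$ along the neck (valid since $Y$ is a rational homology sphere, as used in \cref{4}), one gets a radius $R$, independent of $T$, containing all finite-type $N^+_*$-trajectories involved and all relevant approximate solutions on $X^+_T$.

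Next I would fix the finite-dimensional approximations compatibly. Choose a single spectral cutoff $\lambda$ for the operator $l$ on $Y$, giving the subspace $V^\lambda_{-\lambda}(Y)$ that enters the definitions of $SWF(Y,\s_\xi)$ and $SWF(-Y,\s_\xi)$, together with finite-dimensional subspaces of the $L^2$-targets of the Seiberg-Witten maps on $X\cup(\R^{\geq0}\times Y)$, on $N^+_*$, and on $X^+_T$, satisfying the projection and surjectivity properties used in \cref{cohomotopy1} and in \cite{I19}, \cite{Man03}. With such choices the approximated map on $X\cup(\R^{\geq0}\times Y)$ represents $\Psi(X,\s_X)$, the approximated map on $N^+_*$ represents $\Psi(Y,\xi)$, and the approximated map on $X^+_T$ represents $\Psi(X,\xi,\s_{X,\xi})$. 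The index identities needed so that the formal (de)suspensions match are exactly those of \cref{Fredholmind} and of \cite{I19}; in particular $d_3(-Y,[\xi])=-d_3(Y,[\xi])$ and the cancellation of the $n(\pm Y,g_Y,\s)$-shifts under $\eta$ make the domains and codomains line up with $(\R^{1+\langle e(S^+_X,\Psi_\xi),[X,\partial X]\rangle})^+$ and $(\R)^+$ as in the displays preceding the theorem; I would track this bookkeeping carefully.

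The heart of the argument is the neck-stretching step, carried out as in \cite{Man07} and \cite{KLS'18}. For $T$ sufficiently large, every approximate finite-energy Seiberg-Witten configuration on $X^+_T$ lying in the radius-$R$ ball is $C^0$-close on the neck $[0,T]\times Y$ to a finite-type trajectory of the three-dimensional flow on $\R\times Y$, and its restrictions to the two ends are approximate solutions on $X\cup(\R^{\geq0}\times Y)$ and on $N^+_*=(\R^{\leq0}\times Y)\cup N^+$ with matching boundary values on $\{T\}\times Y\cong\partial N^+$. Translated into Conley-index language --- which is where one follows \cite{KLS'18} most closely --- the approximated flow on the slice over $X^+_T$ deformation retracts onto the \emph{product} picture, in which the relevant index data is that of $I^\mu_\lambda(Y)$ together with $I^\mu_\lambda(-Y)$ linked by the pairing that defines $\eta$. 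Assembling the pieces, the approximated map on $X^+_T$ is stably homotopic to first applying $\Psi(X,\s_X)\wedge\Psi(Y,\xi)$, landing in $SWF(Y,\s_\xi)\wedge SWF(-Y,\s_\xi)$ up to suspension, and then $\eta$; this is the asserted identity.

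I expect the main obstacle to be precisely this gluing step: making the neck-stretching estimate quantitative enough to produce the required homotopies of finite-dimensional maps, uniformly in $T$, in the presence of the conical end $N^+$ with its weighted Sobolev norms and the double Coulomb boundary condition on $\partial N^+$. This requires interlacing the uniform energy and spinor bounds of \cref{bounded}, \cref{3} and \cref{4} on the $N^+$ side with the Khandhawit--Lin--Sasahira neck-stretching analysis on the cylinder side, and checking that the exponential weights $\alpha\leq\alpha_1$ do not disturb the exponential decay estimates along the neck. A secondary, bookkeeping-level difficulty is to verify that the finite-dimensional approximation data --- the subspaces, the eigenvalue cutoffs, the bump functions --- can be chosen simultaneously for all three invariants, so that no correction term appears when the composite is identified with $\Psi(X,\xi,\s_{X,\xi})$.
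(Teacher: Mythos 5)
Your strategy is not the one the paper uses, and as written it has a genuine gap at the decisive step. The paper never stretches a neck: it keeps the fixed decomposition $X^+=X\cup_Y N^+$ and, following \cite{Man07} and \cite{KLS'18}, first shows that $\eta\circ(\Psi(X,\s_X)\wedge\Psi(Y,\xi))$ is represented by the explicit finite-dimensional map \eqref{final1}, in which the two approximated Seiberg--Witten maps appear together with the difference $r x_1-r x_2$ of boundary restrictions; it then connects this model to the finite-dimensional approximation of $\widehat{\mathcal{F}}_{X^+}$ by a chain of stable c-homotopies of ``BF pairs'': gluing the weighted Sobolev spaces by a fiber-product lemma, deforming the gauge-fixing boundary conditions through the family $D_{H,t}$ (using the harmonic-function isomorphism of \cref{Neumann}), recovering the double Coulomb slice, and finally interpolating between the full matching condition and the spectral splitting via the bundles $W^{n,t}_{X,N^+}$ together with a boundedness statement in the style of \cite[Lemma 1]{Man07}.

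In your sketch all of this is compressed into the assertion that, for $T$ large, the approximated data on $X^+_T$ ``deformation retracts onto the product picture'' linked by the pairing defining $\eta$ --- but that assertion is essentially the statement of the theorem, and no mechanism for proving it is supplied. Two concrete problems. First, the finite-dimensional approximation spaces on $X^+_T$ are built from the linearized operator on $X^+_T$ and therefore move with $T$; even granting a uniform bound and a splitting statement for approximate solutions at large $T$, a correspondence of solution sets does not by itself yield a stable homotopy between maps defined on fixed finite-dimensional spheres and Conley indices, which is what must be produced. Converting the geometric splitting into such a homotopy is exactly what the $t$-parameter interpolation between the full matching condition $r x_1=r x_2$ and the spectral splitting accomplishes in \cite{Man07}, \cite{KLS'18} and in the paper's Step 7, and that ingredient is absent from your plan. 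Second, the compatibility of the double Coulomb condition on $\partial N^+$ with the weighted Coulomb slice on $X^+$ (and the removal of the extra $d^{*_\alpha}$-component) is not automatic; the paper devotes Steps 5 and 6, including \cref{Neumann}, precisely to this, and your proposal does not address it. As it stands, the proposal defers the entire difficulty to the steps it flags as ``expected obstacles'' but does not carry out.
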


\subsection{The relative Bauer-Furuta invariant}
In this subsection, we summarize the definition of the relative Bauer-Furuta invariant $\Psi(X, \s_X)$ following \cite{Man03}, \cite{Man07}, and \cite{Khan15}. 
Let $X$ be a compact oriented Riemannian 4-manifold with $\partial X=Y$ is a rational homology 3-sphere.
Assume the collar neighborhood of $\partial X$ is isometric to the product.
Let $\s_X$ be a $Spin^c$ structure on $X$ and give $Y$ the $Spin^c$ structure $\s$ obtained by restricting $\s_X$ to $Y$.
We denote the spinor bundles of $\s_X$ by $S_X=S^+_X\oplus S^-_X$ and the spinor bundle of $\s$ by $S$. 
For simplicity, assume $b_1(X)=0$
\par
Let $\Omega^1_{CC}(X)$ be the space of 1-forms $a$ on $X$ in double Coulomb gauge.
The relative Bauer-Furuta invariant $\Psi(X, \s_X)$ arises as the finite-dimensional approximation of the Seiberg-Witten map
\begin{align}
\mathcal{F}^\lambda_X: L^2_{k}(i\Lambda^1_{X})_{CC}\oplus L^2_k(S^+_X)\to& L^2_{k-1}(i\Lambda^+_{X}\oplus S^-_X)\oplus V^\lambda_{-\infty} (Y)\\
(a, \phi)\mapsto& (d^+ a-\rho^{-1}(\phi\phi^*)_0, D^+_{A_0}\phi+\rho(a)\phi, p^\lambda_{-\infty}\circ  r(a, \phi))
\end{align}
for $\lambda \in \R$.
We will denote
\[
\cU_X=L^2_{k}(i\Lambda^1_{X})_{CC}\oplus L^2_k(S^+_X)\text{ and}\quad \cV_X=L^2_{k-1}(i\Lambda^+_{X}\oplus S^-_X). 
\]
We will also sometimes denote the map to the first two factors by $L_X+C_X$, 
where $L_X=d^++D^+_{A_0}+p^\lambda_{-\infty}r$ and $C_X$ is compact.
The finite-dimensional approximation goes as follows.
Pick an increasing sequence $\lambda_n\to \infty$ and an increasing sequence of finite-dimensional subspaces $\cV_{X, n} \subset \cV_X$ with $\pr_{\cV_{X, n}}\to 1$ pointwise.
Let 
\[
\cU_{X, n}=(L_X+p^{\lambda_n}_{-\infty}r)^{-1}(\cV_{X, n} \times V^{\lambda_n}_{-\lambda_n}) \subset \cU_X,
\]
 and 
\[
\cF_{X, n}:=    P_n \circ \cF^{\lambda_n}_X: \cU_{X, n}\to \cV_{X, n}\oplus V^{\lambda_n}_{-\lambda_n}, 
\]
where $P_n := \pr_{\cV_{X, n}} \times \pr_{V^{\lambda_n}_{-\lambda_n}} $.
Let 
\[
\wt{K}^1_{X, n}= (\cF_{X, n})^{-1}(\overline{B}(\cV_{X, n}; \varepsilon_n)\times  V^{\lambda_n}_{-\lambda_n}) \cap \overline{B}(\cU_{X, n}, R),
\]
\[
 \quad \wt{K}^2_{X, n}=(\cF_{X, n})^{-1}(\overline{B}(\cV_{X, n}; \varepsilon_n)\times  V^{\lambda_n}_{-\lambda_n})\cap S(\cU_{X, n}, R)
\]
\[
{K}^1_{X, n}=\pr_{V^{\lambda_n}_{-\lambda_n}}\circ \cF_{X, n}(\wt{K}^1_{X, n}), \quad K^2_{X, n}=\pr_{V^{\lambda_n}_{-\lambda_n}}\circ \cF_{X, n}(\wt{K}^2_{X, n})
\]
for some $R>0$.
One can find an index pair $(N_X, L_X)$ which represents the Conley index for 
$V^{\lambda_n}_{-\lambda_n}$ in the form $N_X/L_X$ such that $K^1_{X, n}\subset N_X$ and $K^2_{X, n}\subset L_X$.
\par
Now, for a sufficiently large $n$, we have a map
\[
\mathcal{F}_{X, n}: \overline{B}(\cU_{X, n}, R) /S(\cU_{X, n}, R)\to  (\cV_{X, n}/(\overline{B}(\cV_{X, n}, \varepsilon)^c)) \wedge (N_X/L_X).
\]
This gives the relative Bauer-Furuta invariant $\Psi(X, \s_X)$ constructed by Manolescu(\cite{Man03}) and Khandhawit(\cite {Khan15}).
\subsection{The Bauer-Furuta version of Kronheimer-Mrowka's invariant}
In this subsection, we summarize the definition of the Bauer-Furuta version of Kronheimer-Mrowka's invariant $\Psi(X, \xi , \s_{X, \xi})$ following \cite{I19}, though
the weighted Sobolev spaces we use here are different from those used in \cite{I19}.
\par
Let $X$ be a compact oriented 4-manifold with nonempty boundary.
We assume $H^1(X, \partial X; \R)=0$, in particular, $Y=\partial X$ is connected.
Let $\xi$ be a contact structure on $Y=\partial X$ compatible with the boundary orientation.
As in the construction of $N^+$, we will construct a complete Riemannian manifold $(X^+, g_0)$ by attaching an almost K\"ahler conical end.
As a manifold, 
\[
X^+=X\cup_Y ([0, 1]\times Y) \cup_Y [1, \infty)\times Y=X\cup_Y  N^+.
\]
Pick a contact 1-form $\theta$ on $Y$ and a complex structure $J$ of $\xi$ compatible with the orientation.
There is now a unique Riemannian metric $g_1$ on $Y$ such that $\theta$ satisfies that $|\theta|=1$, $d\theta=2*\theta$, and $J$ is an isometry for $g|_\xi$ .
\par
Define a symplectic form $\omega_0$ on $[1, \infty)\times Y$ by the formula
\begin{align}
\omega_0&=\frac{1}{2}d(s^2\theta)\\
&=sds\wedge \theta+\frac{1}{2}s^2 d\theta
\end{align}
and a metric $g_0$ by
\[
g_0=ds^2+s^2g_1.
\]
Pick a smooth extension of $g_0$ to all of $X^+$ which is a product metric on $[0, 1/2]\times Y$.
\par
On $X^+\setminus X$, we have a canonical $Spin^c$ structure $\s_0$, a canonical $Spin^c$ connection $A_0$, a canonical positive spinor $\Phi_0$ as before.
Fix a $Spin^c$ structure $\s_X $ on $X^+$ equipped with an isomorphism $\s_X \to \s_0$ on $X^+\setminus X$.
We denote such a pair by $\s_{X, \xi}$.
Fix a smooth extension of $(A_0, \Phi_0)$ such that $\Phi_0$ is zero on $X \cup ([0, 1]\times Y)$ and $A_0$ is product on $[0, 1/2]\times Y$.
We also fix a nowhere zero proper extension $\sigma$ of $s \in [1, \infty)$ coordinate to all of $X^+$ which is $1$ on $X \cup ([0, 1]\times Y)$.
(This implies that for a section supported in $X$, its weighted Sobolev norms are equal to its unweighted Sobolev norms.)

On $X^+$, weighted Sobolev spaces 
\[
\widehat{\cU}_{X^+}=L^2_{k, \alpha}(i \Lambda^1_{X^+}\oplus S^+_{X^+})
\]
\[
\widehat{\cV}_{X^+}=L^2_{k-1, \alpha}(i\Lambda^0_{X^+}\oplus i\Lambda^+_{X^+}\oplus S^-_{X^+})
\]
are defined as before using $\sigma$ for a positive real number $\al \in \R$ and $k \geq 4$, where $S^+_{X^+}$ and $S^-_{X^+}$ are positive and negative spinor bundles. 
\par
The invariant $\Psi(X, \xi , \s_{X, \xi})$(\cite{I19}) is obtained as a finite-dimensional approximation of the Seiberg-Witten map
\begin{align}\label{FX+}
\begin{split}
&\widehat{\mathcal{F}}_{X^+}:  \widehat{\cU}_{X^+}\to \widehat{\cV}_{X^+}\\
&(a, \phi)\mapsto (d^{*_\alpha}a, d^+a-\rho^{-1}(\phi\Phi^*_0+\Phi_0\phi^*)_0-\rho^{-1}(\phi\phi^*)_0, D^+_{A_0}\phi+\rho(a)\Phi_0+\rho(a)\phi)
\end{split}
\end{align}
\par
The finite-dimensional approximation goes as follows.
We decompose $\widehat{\cF}_{X^+}$ as $\widehat{L}_{X^+}+\widehat{C}_{X^+}$ where 
\[
 \widehat{L}_{X^+}(a, \phi)=(d^{*_\alpha}a, d^+a-\rho^{-1}(\phi\Phi^*_0+\Phi_0\phi^*)_0, D^+_{A_0}\phi+ \rho(a)\Phi_0) 
 \]
 and 
 \[
  \widehat{C}_{X^+}(a, \phi) = (0, -\rho^{-1}(\phi\phi^*)_0, \rho(a)\phi). 
  \]
For $0<\alpha\leq \alpha_1$, $\widehat{L}$ is Fredholm.
In this section, we fix a weight $\alpha \in (0, \infty)$ satisfying $\alpha \leq \al_1$, where $\al_1$ is the constant appeared in \cref{bounded}. 
Then $\widehat{L}_{X^+}$ is linear Fredholm and  $\widehat{C}_{X^+}$ is quadratic, compact.
\par
Then pick an increasing sequence $\lambda_n\to \infty$ and an increasing sequence of finite-dimensional subspaces $\widehat{\cV}_{X^+, n} \subset \cV_{X^+}$ such that $\pr_{\widehat{\cV}_{X^+, n}} \to 1$ pointwise.
Let 
\[
\widehat{\cU}_{X^+, n}=\wh{L}^{-1}(\widehat{\cV}_{X^+, n})\subset \wh{\cU}_{X^+},
\]
 and 
\[
\mathcal{F}_{X^+, n}:=  \pr_{\widehat{\cV}_{X^+, n}}   \circ \mathcal{F}_{X^+}: \widehat{\cU}_{X^+, n}\to \widehat{\cV}_{X^+, n}. 
\]
We can show that for a large $R>0$, a small $\varepsilon$ and a large $n$, we have a well-defined map
\[
\mathcal{F}_{X^+, n}: B(\widehat{\cU}_{X^+, n}, R)/S(\widehat{\cU}_{X^+, n}, R)\to B(\widehat{\cV}_{X^+, n}, \varepsilon)/S(\widehat{\cV}_{X^+, n}, \varepsilon).
\]
This gives the Bauer-Furuta version of Kronheimer-Mrowka's invariant 
\[
\Psi(X, \xi , \s_{X, \xi}) \in \pi^S_{\langle e(S^+_X, \Phi_0), [(X, \partial X)]\rangle} 
\]
 defined in \cite{I19}. The following result is proved in \cite{I19}: 
\begin{thm}
For $\alpha \in (0, \infty)$ satisfying $\alpha \leq \al_1$,  the stable homotopy class of $\Psi(X, \xi , \s_{X, \xi}) \in \pi^S_{\langle e(S^+_X, \Phi_0), [(X, \partial X)]\rangle} $ depends only on $(X, \xi , \s_{X, \xi})$, where $\pi_i^S$ be the $i$-th stable homotopy group of the sphere. Moreover, in the case of   
\[
\langle e(S^+_X, \Phi_0), [(X, \partial X)]\rangle=0,
\]
 the mapping degree of $\Psi(X, \xi , \s_{X, \xi})$ recovers the Kronheimer-Mrowka's invariant of $(X, \xi , \s_{X, \xi})$ up to sign. 
\end{thm}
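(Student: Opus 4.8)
The plan is to mirror the construction in \cite{I19}, observing that the only structural difference is the choice of weighted Sobolev completions: here the weight $e^{2\alpha\sigma}$ enters through the radius function $\sigma$, and the gauge-fixing equation is $d^{*_\alpha}a=0$. All of the analytic ingredients needed in this new setting have in fact already been set up in the preceding sections (for $N^+$, with the obvious simplifications for the closed manifold $X^+$), so the argument splits into: establishing Fredholmness and a uniform bound for $\widehat{\mathcal{F}}_{X^+}$; running the finite-dimensional approximation; verifying independence of all auxiliary choices; and, when the relative Euler number vanishes, identifying the mapping degree with a signed count of Seiberg--Witten solutions, i.e. with $\mathfrak{m}(X,\s_{X,\xi},\xi)$.

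\emph{Fredholmness and uniform bound.} The linearization $\widehat{L}_{X^+}$ is Fredholm for $0<\alpha\leq\alpha_1$ --- this is the no-boundary analogue of \cref{fredd}, proved by the same parametrix-patching argument on the conical end using \cite[Theorem~3.3]{KM97} --- and $\widehat{C}_{X^+}$ is quadratic and compact by \cref{multiplication and compact embedding}. The crucial point is a uniform bound: there is $R>0$, depending only on $(\theta,J)$, with $\|(a,\phi)\|_{L^2_{k,\alpha}}<R$ for every solution $(a,\phi)$ of $\widehat{\mathcal{F}}_{X^+}(a,\phi)=0$. Using the $X^+$-analogue of the slice statement contained in \cref{Slice} (invertibility of $\Delta_\alpha$ on $X^+$), this reduces to bounding a gauge representative; and Kronheimer--Mrowka's exponential-decay gauge fixing (\cite[Theorem~5.1.1]{KM97}, with \cite[Lemmas~3.21, 3.22]{KM97} and \cref{3}) puts any solution, after a gauge transformation, within $\kappa_k e^{-\varepsilon s}$ of $(A_0,\Phi_0)$ on $[s,s+1]\times Y$, with $\varepsilon,\kappa_k$ depending only on $(\theta,J)$. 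Choosing $\alpha_1<\varepsilon$ converts this into the desired $L^2_{k,\alpha}$-bound; this is exactly where $\alpha>0$ is used, and it is the closed-manifold counterpart of \cref{bounded}.

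\emph{Finite-dimensional approximation and invariance.} Given $R$, the Bauer--Furuta/Manolescu scheme produces, for $n\gg0$ and a small $\varepsilon>0$, a well-defined pointed map
\[
\mathcal{F}_{X^+,n}\colon B(\wh{\cU}_{X^+,n},R)/S(\wh{\cU}_{X^+,n},R)\to B(\wh{\cV}_{X^+,n},\varepsilon)/S(\wh{\cV}_{X^+,n},\varepsilon),
\]
the only obstruction being that an element of $B(\wh{\cU}_{X^+,n},R)$ with $\|\mathcal{F}_{X^+,n}\|<\varepsilon$ cannot lie on the boundary sphere: such an element would converge, by the $X^+$-version of \cref{fin app conv} (a consequence of \cref{multiplication and compact embedding} and the Fredholm estimate for $\wh{L}_{X^+}$), to an exact solution, which lies in the open ball by the bound above. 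Its stable homotopy class is $\Psi(X,\xi,\s_{X,\xi})$, and it lies in $\pi^S_{\langle e(S^+_X,\Phi_0),[(X,\partial X)]\rangle}$ because $\dim_\R\wh{\cU}_{X^+,n}-\dim_\R\wh{\cV}_{X^+,n}=\ind_\R\wh{L}_{X^+}=\langle e(S^+_X,\Phi_0),[(X,\partial X)]\rangle$ by the index computation \cite[Theorem~2.4]{KM97} (the closed analogue of \cref{Fredholmind}). Independence of $R$, $\varepsilon$, $n$ and of the approximating subspaces is the usual cofinality argument of \cite{Man03,Man07}; independence of $\alpha\in(0,\alpha_1]$ and of $(\theta,J)$ follows by choosing a path $(\alpha_t,\theta_t,J_t)$ and noting that $\alpha_1$, depending only on the geometry of the conical end, can be taken uniform along the path, so the preceding steps run with $t$-uniform constants and give a homotopy of pointed maps --- exactly the argument of \cref{invariance}. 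This proves the first assertion.

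\emph{Recovery of $\mathfrak{m}$, and the main obstacle.} When $\langle e(S^+_X,\Phi_0),[(X,\partial X)]\rangle=0$ we have $\dim_\R\wh{\cU}_{X^+,n}=\dim_\R\wh{\cV}_{X^+,n}=:m$, so $\mathcal{F}_{X^+,n}$ is a self-map of $S^m$ and $\Psi(X,\xi,\s_{X,\xi})\in\pi_0^S\cong\Z$ is its degree. Perturb $\widehat{\mathcal{F}}_{X^+}$ by a generic compactly supported term so that the Seiberg--Witten moduli space $\mathcal{M}$ on $X^+$, a finite point set by Kronheimer--Mrowka's compactness, is cut out transversally; this changes neither the stable homotopy class nor $\mathfrak{m}(X,\s_{X,\xi},\xi)=\sum_{\mathfrak{a}\in\mathcal{M}}\sign\det(\text{linearization at }\mathfrak{a})$. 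For $n\gg0$ the zeros of $\mathcal{F}_{X^+,n}$ in $B(\wh{\cU}_{X^+,n},R)$ biject with $\mathcal{M}$, and at each zero the linearization of $\mathcal{F}_{X^+,n}$ is the compression of the invertible full linearization to $\wh{\cU}_{X^+,n}$, hence has determinant of the same sign once $n$ is large; summing local degrees gives $\deg\mathcal{F}_{X^+,n}=\pm\mathfrak{m}(X,\s_{X,\xi},\xi)$, the residual sign being the usual one from the orientations of $\wh{\cU}_{X^+,n}$ and $\wh{\cV}_{X^+,n}$, consistent with the $\Z/\{\pm1\}$-valued statement. The main obstacle throughout is the uniform bound in the second paragraph: one must verify that Kronheimer--Mrowka's decay estimate is compatible with the weight $\alpha$ (forcing $\alpha_1<\varepsilon$) and that passing to weighted Coulomb gauge does not spoil the bound; granting this, the remainder is the formal finite-dimensional-approximation package, identical in structure to \cite{I19}.
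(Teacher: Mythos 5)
Your proposal is correct and follows essentially the same route as the paper, which does not reprove this theorem but defers it to \cite{I19} after summarizing exactly the ingredients you use: Fredholmness of $\widehat{L}_{X^+}$ together with compactness of the quadratic part for $0<\alpha\le\alpha_1$, an a priori bound on solutions coming from Kronheimer--Mrowka's exponential decay (this is where $\alpha>0$ and the choice of $\alpha_1$ enter, as in the paper's boundedness theorem for $N^+$), the finite-dimensional approximation with index $\langle e(S^+_X,\Phi_0),[(X,\partial X)]\rangle$ via \cite{KM97}, and the identification of the mapping degree with the signed count giving $\mathfrak{m}(X,\s_{X,\xi},\xi)$ up to sign. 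The only cosmetic overstatement is the claim that zeros of the approximated map ``biject'' with the perturbed moduli space; the standard argument instead localizes the zeros near the genuine solutions and compares local degrees, which is what you in effect use.
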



\subsection{Deforming the duality pairing}
First, we deform the duality pairing. We consider a counterpart of \cite[Proposition~6.10]{KLS'18}.  Although in the situation of \cite{KLS'18}, $X_0$ and $ X_1$ are compact, these facts are not essential in the proof of \cite[Proposition~6.10]{KLS'18}. Therefore, in our situation, we have a similar result: 
\begin{prop}
The morphism $\eta \circ (\Psi (X, \xi, \s_{X, \xi} ) \wedge \Psi(Y, \xi) )$ can be represented by a suitable desuspension of the map 
\begin{align}\label{final1}
\frac{\overline{B}(\cU_{X, n} , R_1)}{S(\cU_{X, n} , R_1)} \wedge \frac{\overline{B}(\cU_{N^+, n} , R_2)}{S(\cU_{N^+, n} , R_2)} \to
\frac{\overline{B}(\cV_{X, n} , \varepsilon)}{S(\cV_{X, n} , {\varepsilon} )}  \wedge \frac{\overline{B}(\cV_{N^+, n} , {\varepsilon})}{S(\cV_{N^+, n} , {\varepsilon} )}\wedge \frac{\overline{B}(V_{-\lambda_n}^{\lambda_n} ,  \overline{\varepsilon})}{S(V_{-\lambda_n}^{\lambda_n}  ,  \overline{\varepsilon} )} 
\end{align} 
\[
(x_1, x_2 ) \mapsto
\]
\[
 \begin{cases} 
(\mathcal{F}_{X, n} (x_1),\mathcal{F}_{N^+, n} (x_2), r x_1 - r x_2) & \text{  if  } \| \mathcal{F}_{X, n} (x_1) \| \leq \varepsilon \text{ and }\| \mathcal{F}_{N^+, n} (x_2) \| \leq \varepsilon \\ 
* & \text{  otherwise  }
\end{cases}
\]
for large numbers $R_1$, $R_2$ and small positive numbers $\varepsilon$, $\overline{\varepsilon} $, where the maps $r$ are coming from the restrictions. 
\end{prop}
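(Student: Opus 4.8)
The plan is to run the deformation argument of \cite[Proposition~6.10]{KLS'18}, where two \emph{compact} pieces $X_0,X_1$ are glued along a rational homology sphere. As remarked above, compactness of $X_0,X_1$ enters that argument only to supply uniform a priori bounds; so the argument will go through with $X_1$ replaced by $N^+$ once we feed in the analytic input of \cref{SWFHC} --- principally the uniform bound of \cref{bounded}, together with the Hodge decomposition of \cref{Slice}, the convergence statement \cref{fin app conv}, and the index computation of \cref{Fredholmind}.

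First I would unfold the three ingredients at the level of finite-dimensional approximations. By the constructions recalled in the two preceding subsections, $\Psi(X,\s_X)$ is represented by $\mathcal{F}_{X,n}$, whose component into the Conley-index factor $N_X/L_X$ records the boundary datum $p^{\lambda_n}_{-\infty}\circ r(x_1)$ flowed by $\beta(l+p^{\lambda_n}_{-\lambda_n}c)$ on $V^{\lambda_n}_{-\lambda_n}(Y)$; and by \cref{cohomotopy1} the contact invariant $\Psi(Y,\xi)$ is represented by $\mathcal{F}_{N^+,n}$, whose Conley-index component records $p^{\lambda_n}_{-\infty}\circ r(x_2)$ flowed, into the index pair $(N_n,L_n)$ of \cref{cohomotopy1}, after the formal (de)suspension that turns it into a map out of $S^0$. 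Under the smash product the linear codomain factors $\cV_{X,n}$ and $\cV_{N^+,n}$ survive unchanged and become the first two smash factors of \eqref{final1}, while the pair $(N_X/L_X)\wedge(N_n/L_n)$ is fed into the finite-dimensional model of $\eta$, which will produce the third smash factor.

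The heart of the matter is the identification of $\eta$ on $(N_X/L_X)\wedge(N_n/L_n)$ with the \emph{difference of boundary restrictions}. Here I would use that, in Manolescu's construction, the Conley index for $-Y$ is the Spanier--Whitehead dual of the Conley index for $Y$, realized inside the common ambient space $V^{\lambda_n}_{-\lambda_n}(Y)=V^{\lambda_n}_{-\lambda_n}(-Y)$ with the reversed flow, and $\eta$ is induced by the classical duality pairing of index pairs. On representatives this pairing is, up to a long flow time $T$, the map $(a,b)\mapsto a-b$, which takes the basepoint value unless $a-b$ lies in a small ball $\overline{B}(V^{\lambda_n}_{-\lambda_n},\overline{\varepsilon})$. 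Composing with $\mathcal{F}_{X,n}$ and $\mathcal{F}_{N^+,n}$ and tracking the two boundary restrictions (projected to $V^{\lambda_n}_{-\lambda_n}$) gives precisely the formula of the statement, once the flow time $T$ is deformed down to $0$. The admissibility of that deformation is where \cref{bounded} is used: it forces every coincidence configuration $(x_1,x_2)$ --- namely those with $\mathcal{F}_{X,n}(x_1)$ and $\mathcal{F}_{N^+,n}(x_2)$ small and $r(x_1)$ close to $r(x_2)$ --- to obey a uniform bound independent of $n$ and of the flow time, so that along the homotopy nothing leaves the isolating regions $\overline{B}(\cU_{X,n},R_1)$, $\overline{B}(\cU_{N^+,n},R_2)$, $\overline{B}(V^{\lambda_n}_{-\lambda_n},R')$; and the continuity of the double-Coulomb trace onto $\partial N^+$, where the radius function is identically $1$ so that weighted and unweighted norms agree (cf.\ \cref{Slice}, \cref{multiplication and compact embedding}), keeps the $V^{\lambda_n}_{-\lambda_n}$-component controlled throughout.

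Finally I would carry out the suspension bookkeeping: the formal desuspensions defining $SWF(Y,\s_\xi)$ and $SWF(-Y,\s_\xi)$, the degree shift built into $\eta$, and the index identity of \cref{Fredholmind} together show that the total formal (de)suspension applied to the unfolded composite is exactly the ``suitable desuspension'' of \eqref{final1} asserted in the statement. The step I expect to be the main obstacle is the third one --- verifying that Manolescu's abstract duality morphism is realized by the concrete difference map through an explicit chain of homotopies that remains valid on the non-compact manifold $N^+$, i.e.\ checking that the conical end contributes nothing to the gluing. The uniform bound of \cref{bounded} together with the Fredholm and Hodge-theoretic results of \cref{SWFHC} are exactly what replaces the compactness used in \cite{KLS'18}.
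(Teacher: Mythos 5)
Your proposal follows essentially the same route as the paper: the paper's entire argument is the remark that this is a counterpart of \cite[Proposition~6.10]{KLS'18}, with the compactness of the glued pieces there being inessential and replaced in the conical-end setting by the boundedness and Fredholm results of \cref{SWFHC} (notably \cref{bounded}, \cref{Slice}, \cref{fin app conv}). Your description of the duality morphism as the flow-deformed difference map on index pairs and your suspension bookkeeping simply spell out the details the paper leaves to \cite{KLS'18} and \cite{Man07}, and they are consistent with those sources.
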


\subsection{Proof of the gluing theorem}
In this subsection, we give a proof of \cref{gluing}. 
We follow the methods given by Khandhawit-Sasahira-Lin  \cite{KLS'18}. We use the following notations: 
 \begin{itemize}
 \item $\widehat{\cU}_{X}=L^2_k( i\Lambda^1\oplus S^+_X)$, $\cU_{X}=L^2_k( i\Lambda^1\oplus S^+_X)_{CC}$; 
 \item $\widehat{\cU}_{N^+}=L^2_{k, \al} ( i\Lambda^1\oplus S^+_{N^+})$, ${\cU}_{N^+}=L^2_{k, \al} ( i\Lambda^1\oplus S^+_{N^+})_{CC}$;
 \item $\widehat{\cU}_{X^+}=L^2_{k, \al} (X^+; i\Lambda^1\oplus S^+_{X^+})$, ${\cU}_{X^+}=i\ker d^{*_\alpha}\oplus L^2_{k, \al} (X^+; S^+_{X^+}) \subset L^2_{k, \al} (X^+; i\Lambda^1\oplus S^+_{X^+} )$; 
 \item $ \widehat{\cV}_{X}=L^2_{k-1}( i\Lambda^0_X\oplus i\Lambda^+_X\oplus S^-_{X})$, ${\cV}_{X}=L^2_{k-1}( i\Lambda^+_X \oplus S^-_X)$; 
\item $ \widehat{\cV}_{N^+}=L^2_{k-1, \alpha}( i\Lambda^0_{N^+} \oplus i\Lambda^+_{N^+} \oplus S^-_{N^+})$, ${\cV}_{N^+}=L^2_{k-1, \alpha}(  i\Lambda^+_{N^+} \oplus S^-_{N^+} )$
 \end{itemize}
 for a real number $\al \in \R$ and $k \geq 4$. In this subsection, we fix a weight $\alpha \in (0, \infty)$ satisfying $\alpha \leq \al_1$, where $\al_1$ is the constant appeared in \cref{bounded}.  
 Before proving the gluing theorem, we introduce a notion of BF pair which is a counterpart of SWC triple in \cite{KLS'18}. 
 Let $H_1$, $H_2$ be separable Hilbert spaces. 
 \begin{defn} Let $(L, C)$ be a pair of bounded continuous maps from $H_1$ to $H_2$. Suppose $L$ is a Fredholm linear map and $C$ extends to a continuous map $\overline{H}_1 \to H_2$, where $\overline{H}_1$ is a completion of $H_1$ with respect to a weaker norm. We impose that $H_1 \to \overline{H}_1$ is compact.  Moreover, we assume 
 \[
 (L+C)^{-1}(0) \subset \overset{\circ}{B}(H_1, M') 
 \]
 for some $M'>0$.
 Then $(L, C)$ is called a {\it BF pair}. 
\end{defn}
As in the case of SWC triples, we also have a notion of c-homotopic. 
\begin{defn}Two BF pairs $(L_i, C_i) (i=1, 2)$ are {\it c-homotopic} if there is a homotopy between them through a continuous family of BF pairs with a uniform constant $M'$.
 Two BF pairs $(L_i, C_i) (i=1, 2)$ are {\it stably c-homotopic} if there exist Hilbert spaces $H_3$, $H_4$ such that 
$(L_1\oplus  id_{H_3} , C_1 \oplus 0 )$ is c-homotopic to $(L_2\oplus id_{H_4} , C_2 \oplus 0)$. 
\end{defn}
Similar to the case of SWC triples, for a given BF pair $(L, C)$, we can define a stable cohomotopy invariant
\[
\Psi(L, C) \in \{ S^{\ind (L)} , S^0 \}. 
\]

In the proof, we have seven steps as in \cite{KLS'18}. 
\begin{itemize}
\item[Step 1] In \cite{KLS'18}, we move the gauge fixing condition $d^{*_\alpha}=0$ from the domain to the maps. In our case, we do not need to do anything because \eqref{FX+} contains $d^{*_\alpha}$ as a component. 
\item[Step 2] We glue the Sobolev spaces of the domains. 
\item[Step 3, 4]We glue the Sobolev spaces of the targets. 
\item[Step 5] We focus on deforming the boundary conditions for gauge fixing. 
\item[Step 6] We change the action of harmonic gauge transformations with different boundary conditions. However, in our case, these symmetries are trivial. Moreover, we recover double Coulomb gauge conditions. 
\item[Step 7] We make the final homotopy between \eqref{final1} and  \eqref{fiber5}. 
\end{itemize}
We do not need Step 1, so we start with Step 2. 
\subsubsection{Step 2}  
We can prove the following lemma of gluing Sobolev spaces by the same argument as Lemma 3 in \cite{Man07}.  Since the proof is essentially the same, we omit the proof. 
\begin{lem}\label{gluing Sobolev}
Regard $X^+=X\cup_{\{0\}\times Y} N^+$. 
We can assume that $X$ also has cylindrical end near the boundary, and denote by $s$ the variable in the direction normal to $\{0\}\times Y$.
Let $E$ be an admissible vector bundle over $X^+$ and assume that the $L^2_{k, \al}$-Sobolev completion of the space of smooth, compactly supported sections of $E$ on $X^+$, $N^+$ are defined by a fixed connection and a fixed pointwise norm.
Then, for  $k\geq \Z^{\geq 1}$ and $\al \in \R$, there is a natural identification
\[
L^2_{k, \al}(X^+; E)=L^2_k(X; E)\times_{\prod^{k-1}_{m=0}L^2_{m+1/2}(Y; E)}L^2_{k, \al}(N^+; E), 
\]
where the right-hand side is the fiber product of $L^2_k(X; E)$ and $L^2_{k, \al}(N^+; E)$
with respect to the maps
\[
r^{k}_1: L^2_k(X)\to \prod^{k-1}_{m=0}L^2_{k-\frac{1}{2} - m}(Y; E),
\]
\[
r^k_1(u)=\left(u|_{Y}, \frac{\partial u}{\partial s}|_{Y}, \frac{\partial^2 u}{\partial s^2}|_{Y},  \cdots, \frac{\partial^{k-1} u}{\partial s^{k-1}}|_{Y}\right),
\]
\[
r^k_2: L^2_{k, \al}(N^+; E)\to \prod^{k-1}_{m=0}L^2_{k-\frac{1}{2} - m}(Y; E),
\]
\[
r^k_2(u)=\left(u|_{Y}, \frac{\partial u}{\partial s}|_{Y}, \frac{\partial^2 u}{\partial s^2}|_{Y},  \cdots, \frac{\partial^{k-1} u}{\partial s^{k-1}}|_{Y}\right).
\]
\qed
\end{lem}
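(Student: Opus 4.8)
The plan is to exhibit the restriction map
\[
\Phi\colon L^2_{k,\al}(X^+;E)\longrightarrow L^2_k(X;E)\times_{\prod_{m=0}^{k-1}L^2_{k-\frac12-m}(Y;E)}L^2_{k,\al}(N^+;E),\qquad u\mapsto(u|_X,\,u|_{N^+}),
\]
and to prove that it is an isomorphism of Hilbert spaces. The key geometric input I would use throughout is that the gluing locus $\{0\}\times Y$ lies in a region where $X$ (by hypothesis) and $N^+$ both carry a genuine product structure and where the radius function $\sigma$ is identically $1$, so that the weight $e^{2\al\sigma}$ contributes nothing near $Y$. First I would check that $\Phi$ is well defined and bounded: restricting a section to a codimension-$0$ submanifold-with-boundary does not increase the relevant norms (for the $N^+$-factor one uses $\sigma\ge 1$, so the weight only helps), the jet maps $r^k_1$ and $r^k_2$ of the statement are bounded by the standard trace theorem on the product collars, and since $\sigma\equiv 1$ near $Y$ the map $r^k_2$ on the weighted space coincides with the ordinary trace; hence $r^k_1(u|_X)=r^k_2(u|_{N^+})$ automatically, so $\Phi$ indeed lands in the fibre product.

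Injectivity of $\Phi$ is immediate from $X\cup N^+=X^+$, so the substance is surjectivity with a bounded inverse, which I would establish by localising near $Y$. Fix a collar $(-\eps,\eps)\times Y\hookrightarrow X^+$ with $(-\eps,0]\times Y\subset X$ and $[0,\eps)\times Y\subset N^+$, on which $E$ is identified (by admissibility) with the pullback of a bundle over $Y$ carrying a product connection and a product norm, and on which $\sigma\equiv 1$; choose a partition of unity $\chi_0+\chi_1+\chi_2=1$ on $X^+$ with $\chi_0$ supported in the collar, $\chi_1$ in $\mathrm{int}\,X$, and $\chi_2$ in $\mathrm{int}\,N^+$. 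Given $(v,w)$ in the fibre product, let $u$ be the section equal to $v$ on $X$ and $w$ on $N^+$ (they agree along $Y$). Then $\chi_1 u=\chi_1 v$ and $\chi_2 u=\chi_2 w$ lie in the appropriate Sobolev spaces with controlled norm because their supports avoid $Y$, so everything reduces to showing $\chi_0 u\in L^2_k\bigl((-\eps,\eps)\times Y;E\bigr)$, with norm bounded by $\|v\|_{L^2_k(X)}+\|w\|_{L^2_{k,\al}(N^+)}$.

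For that last point I would invoke the local hypersurface-gluing fact for Sobolev spaces: on a product $(-\eps,\eps)\times Y$ a section $f$ belongs to $L^2_k$ if and only if its restrictions $f_{\pm}$ to the two halves belong to $L^2_k$ and the normal derivatives $\partial_s^m f_+|_{s=0}$ and $\partial_s^m f_-|_{s=0}$ agree in $L^2_{k-\frac12-m}(Y;E)$ for $m=0,\dots,k-1$, with equivalence of norms. This is \cite[Lemma~3]{Man07}; one can also see it by expanding $f$ in eigensections of a Laplacian on $Y$ and reducing to the one-variable statement that $g\in L^2_k(-\eps,\eps)$ iff the two one-sided pieces are $L^2_k$ with matching derivatives of order $0,\dots,k-1$ at the origin, equivalently via a reflection-type extension operator. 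Applied to $f=\chi_0 u$, whose one-sided pieces are $\chi_0 v$ and $\chi_0 w$ and whose jet-matching along $Y$ is exactly the defining condition $r^k_1(v)=r^k_2(w)$ of the fibre product, this gives $\chi_0 u\in L^2_k$ with the desired bound; summing the three pieces yields $u\in L^2_{k,\al}(X^+;E)$ and inverts $\Phi$ continuously. The only genuine obstacle is this hypersurface-gluing fact, and it is entirely insensitive to the conical end and to the weight (trivial near $Y$), which is why the argument reduces to that of \cite[Lemma~3]{Man07}.
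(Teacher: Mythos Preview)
Your proposal is correct and is precisely the argument the paper has in mind: the paper gives no proof at all beyond the sentence ``the same argument as Lemma~3 in \cite{Man07}'', and you have spelled out exactly that reduction, making explicit the one new observation needed here---that the weight $e^{2\al\sigma}$ is identically $1$ on the product collar around $\{0\}\times Y$, so the gluing question is purely local and unweighted. There is nothing to add.
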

By the use of \cref{gluing Sobolev}, we glue configurations. 
Before gluing, we introduce a family of linear maps: 
\[
D^{(\leq k)} :  \wh{\cU}_{N^+}\times \wh{\cU}_{X}  \to \bigoplus_{m=0}^k \widehat{V}_{k-m-\frac{1}{2}} 
\]
defined by 
\[
D^{(\leq k)}  (x_1, x_2 ) := r^k_1(x_1)- r^k_2(x_2)
\]
for any non-negative integer $k$. The following statement is followed by \cref{gluing Sobolev}. 
\begin{lem}For any $k\in \Z_{\geq 0}$,  
the map $D^{(\leq k)}$ is surjective and the kernel can be identified with $\widehat{\cU}_{X^+}$. 
 \end{lem}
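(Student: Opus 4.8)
The plan is to read off both assertions directly from \cref{gluing Sobolev}, which already packages exactly the decomposition of weighted Sobolev spaces on $X^+ = X\cup_{\{0\}\times Y}N^+$ that we need. First I would apply that lemma to the admissible bundle $E = i\Lambda^1\oplus S^+$ over $X^+$, whose restrictions to $X$ and to $N^+$ are $i\Lambda^1\oplus S^+_X$ and $i\Lambda^1\oplus S^+_{N^+}$, with all $L^2$-completions formed from the fixed reference connection and pointwise metric used throughout. \cref{gluing Sobolev} then identifies $\widehat{\cU}_{X^+}=L^2_{k,\al}(X^+;E)$ with the fibre product of $\widehat{\cU}_X$ and $\widehat{\cU}_{N^+}$ over the common space of boundary jets $\bigoplus_m\widehat{V}_{k-m-\frac12}$ along the restriction maps $r^k_1, r^k_2$ of that lemma. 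By the very definition of the fibre product, a point of it is a pair $(x_1,x_2)$ with $r^k_1(x_1)=r^k_2(x_2)$, i.e.\ with $D^{(\leq k)}(x_1,x_2)=0$; hence $\ker D^{(\leq k)}\cong\widehat{\cU}_{X^+}$ as sets, and as topological vector spaces because the identification in \cref{gluing Sobolev} is by construction a bounded isomorphism (the $L^2_{k,\al}$-norm of a glued section is equivalent to the sum of the norms of its two halves).

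For surjectivity of $D^{(\leq k)}$ I would use that each single restriction map $r^k_1$ (and likewise $r^k_2$) is already onto $\bigoplus_m\widehat{V}_{k-m-\frac12}$. This is the standard Sobolev trace/extension theorem: given jet data $(v_0,\dots,v_{k-1})$ on $Y$ of the stated regularities, a bounded right inverse is produced on a collar $[0,1]\times Y$ of the gluing hypersurface by the cutoff Taylor ansatz $\sum_m \frac{s^m}{m!}\,\chi(s)\,\tilde v_m$, where $\chi$ is a cutoff and $\tilde v_m$ extends $v_m$ into the collar; this section is supported near $\{0\}\times Y$, hence away from the conical end of $N^+$, so the weight $e^{2\al\sigma}$ contributes only a bounded factor and the extension genuinely lies in $\widehat{\cU}_{N^+}$ (and symmetrically in $\widehat{\cU}_X$). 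Given any target $\mathbf v$ one then takes $x_2=0$ and $x_1$ equal to such an extension (with the appropriate sign), so that $D^{(\leq k)}(x_1,x_2)=\mathbf v$; in fact this exhibits $D^{(\leq k)}$ as split surjective.

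The one point that needs genuine care — the closest thing to an obstacle here, since the statement is otherwise a formal corollary — is the mismatch between the weighted norm on $\widehat{\cU}_{N^+}$ and the unweighted norm on $\widehat{\cU}_X$: one must check that the two restriction maps really do take values in, and are surjective onto, the \emph{same} target $\bigoplus_m\widehat{V}_{k-m-\frac12}$, and that the gluing identification respects this. Because the radius function $\sigma$ is identically $1$ on a neighbourhood of $\{0\}\times Y$ inside $N^+$, the weighted and unweighted $L^2_k$-norms agree on sections supported there, so the boundary-trace data and the collar extension operators are literally the same on both sides; this is precisely the compatibility that makes \cref{gluing Sobolev}, and therefore the lemma at hand, go through, and no further estimates are required.
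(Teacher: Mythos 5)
Your proposal is correct and follows essentially the same route as the paper, which simply observes that the statement is an immediate consequence of \cref{gluing Sobolev}: the kernel identification is the fibre-product description, and surjectivity is the standard Sobolev trace/extension argument (harmless here because the weight function $\sigma$ equals $1$ near the gluing hypersurface, as you note).
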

 Now, we glue the configuration spaces. 
\begin{lem}\label{fiber prod}The pair 
\begin{align}\label{fiber1}
( ( \operatorname{pj} (\wh{L}_{X} \times \wh{L}_{N^+}) , D^{(\leq k)} ) , \operatorname{pj}(\wh{C}_{X} \times \wh{C}_{ N^+ })  ) 
\end{align}
 is a BF pair from $ \wh{\cU}_{N^+} \times \wh{\cU}_{X}  $ to $ \wh{\cV}_{X^+} \times   \bigoplus_{m=0}^k \widehat{V}_{k-m} $, where $\operatorname{pj}$ is the projection from $\wh{\cV}_{N^+} \times \wh{\cV}_{X}$ to $\wh{\cV}_{X^+}$. (Here we regard $\wh{\cV}_{X^+}$ as the kernel of $D^{(\leq m)}$. ) 
 
   Moreover, $( (\operatorname{pj} (\wh{L}_{X} \times \wh{L}_{N^+}) , D^{(\leq k)} ) , \operatorname{pj}(\wh{C}_{X} \times \wh{C}_{ N^+ })  ) $ is stably c-homotopic to $( \wh{L}_{X^+}, \wh{C}_{X^+} )$. 
\end{lem}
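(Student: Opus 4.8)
The plan is to verify the two assertions separately, treating first the claim that \eqref{fiber1} is a BF pair and then the stable c-homotopy to $(\wh{L}_{X^+}, \wh{C}_{X^+})$. For the BF-pair claim, I would check the three defining conditions of a BF pair in order. The linear operator $(\operatorname{pj}(\wh{L}_X \times \wh{L}_{N^+}), D^{(\leq k)})$ is Fredholm: its domain $\wh{\cU}_{N^+} \times \wh{\cU}_X$ is a fibered sum along the restriction maps, and $D^{(\leq k)}$ is surjective with kernel canonically $\wh{\cU}_{X^+}$ by the preceding lemma, so by a standard short-exact-sequence/snake-lemma argument the cokernel and kernel of the combined operator match those of $\wh{L}_{X^+}$, which is Fredholm by \cref{fredd} (together with the index computation in \cref{Fredholmind}). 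The compactness condition holds because $\wh{C}_X$ and $\wh{C}_{N^+}$ are each quadratic and compact — this is exactly the content of \cref{multiplication and compact embedding}, applied on $X$ (unweighted) and on $N^+$ (weighted); I would note that the multiplicative loss of $\varepsilon$ in the weight is harmless since $\alpha$ can be taken below $\alpha_1$ throughout. Finally, the boundedness of the zero set, $(L+C)^{-1}(0) \subset \overset{\circ}{B}(H_1, M')$, is where the real input enters: a zero of \eqref{fiber1} is precisely a pair $(x_1, x_2)$ that glues (via $D^{(\leq k)}(x_1,x_2)=0$) to a configuration on $X^+$ solving the Seiberg-Witten equations there, and the uniform bound follows from \cref{bounded} (or rather its $X^+$ analogue — the same energy estimates of \cref{1,2,3,4} apply verbatim to $N^+_* $ replaced by $X^+$ since $X$ is compact), yielding a constant $M'$ independent of the point.

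For the second assertion, the stable c-homotopy to $(\wh{L}_{X^+}, \wh{C}_{X^+})$, I would follow the template of \cite[Lemma 3]{Man07} and the corresponding step in \cite{KLS'18}. The idea is that the kernel of $D^{(\leq k)}$ is identified with $\wh{\cU}_{X^+}$, and on that kernel the pair \eqref{fiber1} restricts to $(\wh{L}_{X^+}, \wh{C}_{X^+})$; to promote this to a stable c-homotopy of the full pairs one uses that $D^{(\leq k)}$ is surjective and deforms the "transverse" directions linearly, absorbing the complement of the kernel into a trivial summand $H_3$ (of dimension equal to the rank of the target $\bigoplus_{m=0}^k \widehat{V}_{k-m}$, which is infinite-dimensional but split off as an identity block, matching the framework's allowance for stabilization by $\operatorname{id}_{H_3}$). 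Concretely one writes a one-parameter family interpolating between $(\operatorname{pj}(\wh{L}_X\times\wh{L}_{N^+}), D^{(\leq k)})\oplus \operatorname{id}$ and $\wh{L}_{X^+}\oplus\operatorname{id}$ by scaling the off-diagonal coupling, and checks the uniform $M'$ bound is preserved along the family — again a consequence of \cref{bounded}-type estimates, which are uniform in the relevant parameters because the underlying geometry ($\theta$, $J$, and the metric near the end) is fixed.

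The main obstacle I anticipate is the uniformity of the zero-set bound along the c-homotopy: one must ensure that the constant $M'$ can be chosen independently of the homotopy parameter. This reduces to showing that the energy and exponential-decay estimates (\cref{1,2,3,4}) hold with constants depending only on $\theta$ and $J$ and not on the particular deformation — which is plausible since all these estimates are local on the conical end and were already stated with that independence in mind (see the remark after \cref{1} and the use of \cite[Lemma 3.21, Lemma 3.22]{KM97} and \cref{3} in Step 2 of the proof of \cref{bounded}) — but it requires care to phrase the deformed equations so that the Kronheimer--Mrowka a priori estimates still apply. Everything else (Fredholmness, compactness of the quadratic part, identification of the kernel) is formal given the results already established in \cref{Slice,fredd,Fredholmind,multiplication and compact embedding}.
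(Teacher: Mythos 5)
Your argument is essentially the paper's: the paper's entire proof consists of invoking \cref{snake} (Manolescu's Observation~1 in \cite{Man07}, Lemma~6.13 of \cite{KLS'18}) with $g=D^{(\leq k)}$, which is surjective with kernel identified with $\wh{\cU}_{X^+}$, so that the restriction of the pair to $\ker g$ is exactly $(\wh{L}_{X^+},\wh{C}_{X^+})$; your hands-on verification of the BF conditions and your linear deformation of the transverse directions is just an unpackaged form of that lemma. The one point to correct is the obstacle you flag at the end: writing $H_1=\ker D^{(\leq k)}\oplus W$ and using the standard interpolation $t\mapsto\bigl((L+C)(u+tw),\,D^{(\leq k)}(w)\bigr)$, the equation $D^{(\leq k)}(w)=0$ forces $w=0$, so the zero set is literally independent of $t$ and consists precisely of the glued Seiberg--Witten solutions on $X^+$ (as you correctly identified for the BF-pair claim). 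Hence no a priori estimates for ``deformed equations'' are needed along the homotopy; the only analytic input is the boundedness of genuine solutions on $X^+$, which is the Kronheimer--Mrowka/\cite{I19} estimate already used to define $\Psi(X,\xi,\s_{X,\xi})$ (rather than \cref{bounded}, which concerns $N^+$ with a half-cylinder attached). With that observation your proposal goes through and coincides with the paper's proof.
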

\begin{proof}
In the proof, we use the following lemma. This is an easier version of Lemma 6.13 in \cite{KLS'18} and originally proved in Observation 1 in \cite{Man07}. 
\begin{lem}\label{snake} 
Let $(L, C)$ be pair of continuous maps from $H_1$ to $H_2$. Suppose $L$ is bounded linear and $C$ extends to $\overline{H}_1\to H_2$ for a weak norm of $H_1$. Let $g$ be a surjective linear map $H_1 \to H_3$.
Then the following conditions are equivalent; 
\begin{itemize}
\item $(L \oplus g, C\oplus 0 )$ is a BF pair, and 
\item $(L|_{\ker g} , C|_{\ker g} )$ is a BF pair. 
\end{itemize}
Moreover,  $(L \oplus g, C\oplus 0 )$ is c-homotopic to $(L|_{\ker g} , C|_{\ker g} )$. 
\end{lem}
\cref{fiber prod} is followed by using \cref{snake}.

\end{proof}

\subsubsection{Step 3, 4} 
For any positive integer $k$, we define 
\[
E^{(\leq k-1)} : \wh{\cV}_{X} \times \wh{\cV}_{N^+ }\to \bigoplus_{m=0}^{k-1} \widehat{V}_{k-m-\frac{1}{2}}
\] 
by 
\[
E^{(\leq k-1)} (y_1, y_2 )= r^{k-1}_1(y_1) - r^{k-1}_2(y_2 ). 
\]
The following lemma is a counterpart of Proposition 6.17 in \cite{KLS'18}. 
\begin{lem}The pair
\begin{align}\label{fiber2} \begin{split}
(  ( \operatorname{pj} \circ (\wh{L}_{X} \times \wh{L}_{N^+}), E^{(\leq k-1)}\circ (\wh{L}_{X} \times \wh{L}_{N^+}), D^{(\leq 0)}), 
\\
(   \operatorname{pj} \circ (\wh{C}_{X} \times \wh{C}_{N^+}), E^{(\leq k-1)}\circ (\wh{C}_{X} \times \wh{C}_{N^+}) , 0 )  )
\end{split}
\end{align} 
is a BF pair from $\wh{\cU}_{X} \times \wh{\cU}_{N^+ } $ to $\wh{\cV}_{X^+} \times (\bigoplus_{m=0}^{k-1} \widehat{V}_{k-m-\frac{1}{2}} ) \times  \widehat{V}_{k-\frac{1}{2}} $. Moreover, this BF pair is stably c-homotopic to \eqref{fiber1}. 

 \end{lem}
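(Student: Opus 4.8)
The plan is to run the same argument that produced \cref{fiber prod}, now applied to the target spaces rather than the domains, using \cref{snake} as the workhorse. First I would observe that the map $E^{(\leq k-1)}\colon \wh{\cV}_X\times\wh{\cV}_{N^+}\to\bigoplus_{m=0}^{k-1}\widehat V_{k-m-\frac12}$ is the analogue of $D^{(\leq k)}$ one dimension lower in Sobolev regularity, and that by the $\wh{\cV}$-version of \cref{gluing Sobolev} (applied to the admissible bundle $i\Lambda^0\oplus i\Lambda^+\oplus S^-$, which is exactly the bundle whose sections make up $\wh{\cV}$) the map $E^{(\leq k-1)}$ is surjective with kernel canonically identified with $\wh{\cV}_{X^+}$. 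The projection $\operatorname{pj}$ in the statement is just the composition of this identification with the inclusion, so the triple of linear maps $(\operatorname{pj}\circ(\wh L_X\times\wh L_{N^+}),\ E^{(\leq k-1)}\circ(\wh L_X\times\wh L_{N^+}),\ D^{(\leq 0)})$ is, up to the obvious rearrangement, nothing but $(\wh L_X\times\wh L_{N^+})$ followed by the surjection $E^{(\leq k-1)}$ recorded together with the restriction data $D^{(\leq 0)}$.

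Next I would check the hypotheses of the BF-pair definition for \eqref{fiber2}. Linearity and Fredholmness of the linear part follow from Fredholmness of $\wh L_X$, $\wh L_{N^+}$ (the latter by \cref{fredd}) together with the fact that adjoining a surjective bounded linear map onto a finite-rank-cokernel complement does not destroy Fredholmness; compactness of the quadratic part follows from \cref{multiplication and compact embedding} exactly as for \eqref{fiber1}, since $E^{(\leq k-1)}$ and $D^{(\leq 0)}$ are linear and hence contribute nothing to the compact part. The boundedness of the zero set, $(L+C)^{-1}(0)\subset\overset{\circ}{B}(H_1,M')$ with a uniform $M'$, is inherited from the corresponding bound for \eqref{fiber1}, because imposing the extra equations $E^{(\leq k-1)}(\cdots)=0$ only shrinks the zero set.

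For the stable c-homotopy between \eqref{fiber2} and \eqref{fiber1} I would apply \cref{snake} in the form: adding to a BF pair a block consisting of a surjective linear map $g$ (here the component $E^{(\leq k-1)}\circ(\wh L_X\times\wh L_{N^+})$ viewed as a map landing in $\bigoplus_{m=0}^{k-1}\widehat V_{k-m-\frac12}$) paired with $0$ produces a BF pair c-homotopic to the restriction of the original pair to $\ker g$; but the restriction to the locus where all the higher-order jets of $\wh L_X\times\wh L_{N^+}$ agree across $Y$ is, by the kernel description above, precisely \eqref{fiber1}. After an obvious reordering of the target factors and, if necessary, a stabilization to match the ambient Hilbert spaces on the two sides, this yields the stable c-homotopy.

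The step I expect to be the main obstacle is verifying that the linear part of \eqref{fiber2} really is Fredholm and that \cref{snake} applies cleanly: one must make sure that $g=E^{(\leq k-1)}\circ(\wh L_X\times\wh L_{N^+})$, restricted to the already-constrained domain $\ker D^{(\leq k)}$ appearing implicitly through \eqref{fiber1}, is still \emph{surjective} — i.e.\ that imposing agreement of lower-order jets of the \emph{images} is compatible with having prescribed the agreement of the jets of the \emph{configurations}. This is a bookkeeping point about which derivatives of $\wh L_X$ one controls on $Y$ from $L^2_k$ data; it is handled exactly as in the proof of Proposition 6.17 in \cite{KLS'18}, and since nothing here is affected by the conical end (the gluing region is the compact collar $\{0\}\times Y$), the argument there transfers verbatim. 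I would therefore state the lemma, give the identification of $\ker E^{(\leq k-1)}$ with $\wh{\cV}_{X^+}$, and then write ``the rest follows from \cref{snake} exactly as in \cite[Proposition~6.17]{KLS'18},'' omitting the routine jet-counting.
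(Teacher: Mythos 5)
There is a genuine gap at the central step, namely the stable c-homotopy between \eqref{fiber1} and \eqref{fiber2}. Your plan is to apply \cref{snake} with $g=E^{(\leq k-1)}\circ(\wh{L}_{X}\times\wh{L}_{N^+})$, viewing \eqref{fiber2} as a pair with a block $(g,0)$ adjoined and \eqref{fiber1} as the restriction to $\ker g$. Neither identification is correct. In \eqref{fiber2} the extra component is not paired with $0$: its compact part is $E^{(\leq k-1)}\circ(\wh{C}_{X}\times\wh{C}_{N^+})$, and this nonlinear matching term is essential — it is what makes the zero set of \eqref{fiber2} agree with that of \eqref{fiber1}, since by \cref{codomain} the vanishing of the $\operatorname{pj}$- and $E^{(\leq k-1)}$-components forces $(\wh{L}_{X}+\wh{C}_{X})(x_1)=0$ and $(\wh{L}_{N^+}+\wh{C}_{N^+})(x_2)=0$, and then $D^{(\leq 0)}=0$ together with elliptic regularity recovers the matching of all higher jets. (For the same reason your remark that the bound on the zero set is ``inherited because the extra equations only shrink the zero set'' is not a logical containment; it needs exactly this regularity argument, as the components $D^{(\leq k)}$ and $D^{(\leq 0)}$ differ.) On the other side, \eqref{fiber1} is not a restriction to $\ker g$: it is defined on all of $\wh{\cU}_{X}\times\wh{\cU}_{N^+}$ with $D^{(\leq k)}$ occurring as a component of the map, and $\ker\bigl(E^{(\leq k-1)}\circ(\wh{L}_{X}\times\wh{L}_{N^+})\bigr)$ — matching of jets of the linearized outputs — is not the locus where the configuration jets match. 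So \cref{snake} cannot carry the weight you assign to it here.

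What this step actually requires, and what the paper relies on by simply citing \cite[Proposition~6.17]{KLS'18}, is a deformation of boundary conditions rather than a kernel/extension reduction: near $Y$ the operator $\wh{L}$ has cylindrical Dirac form, so normal derivatives of a configuration can be traded for boundary traces of its image under $\wh{L}$ (and, along the homotopy, of the full nonlinear map) together with tangential derivatives; one then interpolates between matching of configuration jets up to order $k$ and matching of the zeroth trace plus jets of the Seiberg--Witten outputs, verifying a uniform bound $M'$ on the zero sets of the whole family, using that on the zero sets the two sets of conditions impose the same constraints. Your preliminary observations — the identification of $\ker(\operatorname{pj},E^{(\leq k-1)})$-type data with $\wh{\cV}_{X^+}$ via \cref{gluing Sobolev}/\cref{codomain}, the compactness of $E^{(\leq k-1)}\circ(\wh{C}_{X}\times\wh{C}_{N^+})$ — are fine, but the homotopy itself is the content of the lemma, and that is precisely the part your sketch replaces with an inapplicable use of \cref{snake}.
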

The proof is essentially the same as \cite[Proposition~6.17]{KLS'18}. Thus, we omit the proof.

The following lemma is a counterpart of Lemma 6.19 in \cite{KLS'18}. The only difference is that we have no constant functions in $\wh{\cV}_{X^+}$.
 \begin{lem} \label{codomain} The map 
 \[
 ( \operatorname{pj} , E^{\leq k-1} ) : \wh{\cV}_{X} \times \wh{\cV}_{N^+ } \to \wh{\cV}_{X^+}\times  \bigoplus_{m=0}^{k-1} \widehat{V}_{k-m-\frac{1}{2}}
 \]
is an isomorphism. 
 \qed
 \end{lem}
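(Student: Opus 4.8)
The plan is to show that the map
\[
(\operatorname{pj}, E^{(\leq k-1)}) : \wh{\cV}_{X} \times \wh{\cV}_{N^+} \to \wh{\cV}_{X^+} \times \bigoplus_{m=0}^{k-1} \widehat{V}_{k-m-\frac{1}{2}}
\]
is a bounded linear bijection between Hilbert spaces, so that the open mapping theorem gives the conclusion. Boundedness is clear since $\operatorname{pj}$ is a coordinate projection and $E^{(\leq k-1)}$ is built from the bounded restriction-and-derivative trace maps $r^{k-1}_1, r^{k-1}_2$ of \cref{gluing Sobolev}. So the content is bijectivity, and the natural strategy is to reduce it to \cref{gluing Sobolev} applied to the codomain bundle rather than the domain bundle.

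First I would unwind the definitions: an element of $\wh{\cV}_{X} \times \wh{\cV}_{N^+}$ is a pair $(y_1, y_2)$ with $y_1 \in L^2_{k-1}(X; i\Lambda^0 \oplus i\Lambda^+ \oplus S^-)$ and $y_2 \in L^2_{k-1, \alpha}(N^+; i\Lambda^0 \oplus i\Lambda^+ \oplus S^-)$. The map $\operatorname{pj}$ records the component of $(y_1, y_2)$ lying in $\wh{\cV}_{X^+}$, which by the identification in \cref{fiber prod} is precisely $\ker D^{(\leq 0)}$, i.e.\ the pairs whose $0$-th order traces agree; equivalently $\operatorname{pj}$ composes $(y_1, y_2) \mapsto (y_1, y_2)$ with the orthogonal projection onto that subspace, but it is cleaner to note that $(y_1,y_2)$ is determined by its image under $\operatorname{pj}$ together with the jump $E^{(\leq k-1)}(y_1,y_2) = r^{k-1}_1(y_1) - r^{k-1}_2(y_2)$ of all traces up to order $k-1$. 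Concretely I would invoke \cref{gluing Sobolev} with the vector bundle $E = i\Lambda^0 \oplus i\Lambda^+ \oplus S^-$ and Sobolev exponent $k-1$ in place of $k$: it gives
\[
L^2_{k-1,\alpha}(X^+; E) = L^2_{k-1}(X; E) \times_{\prod_{m=0}^{k-2} L^2_{k-\frac32-m}(Y;E)} L^2_{k-1,\alpha}(N^+;E).
\]
Since $\wh{\cV}_{X^+}$ (as used in \cref{fiber prod}) is identified with $\ker D^{(\leq 0)}$, which matches the fiber product with agreement of only the $0$-th trace, I need to be slightly careful: the genuine gluing of $L^2_{k-1}$-spaces matches traces of orders $0$ through $k-2$, whereas $\operatorname{pj}$ only quotients by the order-$0$ jump. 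So the remaining jumps in orders $1, \dots, k-1$ are exactly the data recorded by the higher components of $E^{(\leq k-1)}$, and the ranges $\bigoplus_{m=0}^{k-1}\widehat{V}_{k-m-\frac12}$ are designed to receive precisely those. Assembling this: given a target $(\bar y, v_0, v_1, \dots, v_{k-1})$ with $\bar y \in \wh{\cV}_{X^+}$ and $v_m \in \widehat{V}_{k-m-\frac12}$, one reconstructs $(y_1, y_2)$ by first realizing $\bar y$ as a matched pair, then correcting $y_2$ (or $y_1$) by a section supported near $Y$ whose traces of orders $0$ through $k-1$ are the prescribed $v_m$, which exists because the combined trace map $r^{k-1}_2 : L^2_{k-1,\alpha}(N^+;E) \to \prod_{m=0}^{k-1} L^2_{k-\frac32-m}(Y;E)$ is surjective (the standard boundary-value/trace surjectivity). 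Injectivity is immediate: if $\operatorname{pj}(y_1,y_2) = 0$ and $E^{(\leq k-1)}(y_1,y_2) = 0$ then all traces of $y_1, y_2$ up to order $k-1$ vanish and the matched pair is zero, so $y_1 = y_2 = 0$.

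The main obstacle I anticipate is the bookkeeping around which Sobolev regularities and trace orders appear, together with the absence of the "$i\Lambda^0$ constant functions'' subtlety that \cite{KLS'18} had to handle — here, as the statement notes, there are no constant functions in $\wh{\cV}_{X^+}$, so that complication disappears and the map is a genuine isomorphism with no finite-dimensional discrepancy. The one technical point worth double-checking is that the weighted norm on $N^+$ does not interfere: because $\sigma \equiv 1$ near $Y$, the weighted and unweighted Sobolev structures agree on a collar of the gluing region, so all the trace and extension arguments are purely local near $Y$ and identical to the unweighted case of \cite{Man07, KLS'18}. Given that, the proof is a direct citation of \cref{gluing Sobolev} plus the open mapping theorem, and I would present it in two or three sentences rather than spelling out the reconstruction map in full.
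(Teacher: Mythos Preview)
Your approach is correct and matches the paper's intent: the paper gives no argument beyond the \qed\ and the remark that this is Lemma~6.19 of \cite{KLS'18} without the constant-function correction, and your reduction to \cref{gluing Sobolev} (applied to the codomain bundle at regularity $k-1$) together with trace surjectivity and the open mapping theorem is exactly the content of that reference. One small point to tighten: your surjectivity step, as phrased, corrects $y_2$ by a section with prescribed traces, but such a correction will generally alter $\operatorname{pj}(y_1,y_2)$ as well; the clean way is to observe that $\ker E^{(\leq k-1)} = \wh{\cV}_{X^+}$ and $E^{(\leq k-1)}$ is surjective, so $(\operatorname{pj}, E^{(\leq k-1)})$ is block upper-triangular with respect to the orthogonal splitting $\wh{\cV}_{X^+} \oplus (\wh{\cV}_{X^+})^\perp$ and invertible on each block. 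Similarly, in your injectivity sentence the traces of $y_1, y_2$ \emph{agree} rather than vanish, which places $(y_1,y_2)$ in $\wh{\cV}_{X^+}$ where $\operatorname{pj}$ is the identity.
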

%
Then one can prove the main result of Step 3 and 4. 
\begin{lem}\label{fiber33}
The pair in \eqref{fiber2} can be identified with the pair 
\begin{align} \label{fiber3}
( (  (\wh{L}_{X}\times \wh{L}_{N^+}) , D^{(\leq 0)}  ),  (\wh{C}_{X}\times \wh{C}_{N^+}, 0) )
\end{align}
from $\wh{\cU}_{X} \times \wh{\cU}_{N^+ } $ to $\wh{\cV}_{X} \times \wh{\cV}_{N^+ }  \times \wh{V}_{k-\frac{1}{2}} $ via the isomorphism given in \eqref{codomain}. 
\end{lem}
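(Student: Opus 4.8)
The plan is to deduce \cref{fiber33} as a formal consequence of the isomorphism in \cref{codomain} together with the definition of c-homotopy, exactly mirroring the proof of the corresponding statement in \cite{KLS'18}. First I would observe that the pair \eqref{fiber2} has codomain $\wh{\cV}_{X^+} \times (\bigoplus_{m=0}^{k-1} \widehat{V}_{k-m-\frac{1}{2}} ) \times \widehat{V}_{k-\frac{1}{2}}$, and that the last factor $\widehat{V}_{k-\frac{1}{2}}$ carries the component $D^{(\leq 0)}$, which is untouched by the discussion. So the content is entirely about the first two factors, and what must be checked is that applying the isomorphism
\[
(\operatorname{pj}, E^{(\leq k-1)}) : \wh{\cV}_{X} \times \wh{\cV}_{N^+} \xrightarrow{\ \cong\ } \wh{\cV}_{X^+}\times  \bigoplus_{m=0}^{k-1} \widehat{V}_{k-m-\frac{1}{2}}
\]
of \cref{codomain} to the target intertwines the map $(\operatorname{pj}\circ(\wh{L}_{X}\times\wh{L}_{N^+}),\, E^{(\leq k-1)}\circ(\wh{L}_{X}\times\wh{L}_{N^+}))$ with the plain map $\wh{L}_{X}\times\wh{L}_{N^+}$, and similarly for the compact parts. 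But this is immediate: by construction $(\operatorname{pj}, E^{(\leq k-1)})\circ(\wh{L}_{X}\times\wh{L}_{N^+}) = (\operatorname{pj}\circ(\wh{L}_{X}\times\wh{L}_{N^+}),\, E^{(\leq k-1)}\circ(\wh{L}_{X}\times\wh{L}_{N^+}))$, so post-composing \eqref{fiber2} with the inverse isomorphism $(\operatorname{pj},E^{(\leq k-1)})^{-1}$ on the first two factors literally returns \eqref{fiber3}, and likewise on the quadratic parts since $(\operatorname{pj},E^{(\leq k-1)})$ is linear.

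The remaining point to spell out is that this identification respects the BF-pair structure, i.e. that it is an honest identification of BF pairs and not merely of maps. This follows because precomposition by an isomorphism of Hilbert spaces (here the identity on $\wh{\cU}_{X}\times\wh{\cU}_{N^+}$) and postcomposition by an isomorphism of Hilbert spaces (here $(\operatorname{pj},E^{(\leq k-1)})^{-1}\times\id$) preserves: Fredholmness of the linear part with the same index, the factorization of the compact part through the weak completion (using that $(\operatorname{pj},E^{(\leq k-1)})$ is bounded with bounded inverse, so it extends to the weak completions compatibly), and the uniform bound $(L+C)^{-1}(0)\subset \overset{\circ}{B}(H_1,M')$ on the zero set since the domain is unchanged. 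Hence \eqref{fiber2} and \eqref{fiber3} are the same BF pair up to the isomorphism of \cref{codomain}, which is what the lemma asserts.

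I do not expect any genuine obstacle here; the lemma is a bookkeeping step, and its only role is to rewrite \eqref{fiber2} in a form where the two factors $\wh{\cU}_X$ and $\wh{\cU}_{N^+}$ are mapped by the individual Seiberg--Witten maps $\wh{L}_X\times\wh{L}_{N^+}$ (and their compact parts), with the gluing information now carried entirely by the single fiber-product constraint $D^{(\leq 0)}$ rather than being entangled with the codomain. The mild care needed is purely notational: keeping track of which of the $k$ boundary jets appear in the domain constraint $D^{(\leq k)}$ versus the target constraint $E^{(\leq k-1)}$, and noting that \cref{codomain} already absorbs the $E^{(\leq k-1)}$ factor. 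Accordingly I would present the proof as a two-sentence argument: \emph{Apply the isomorphism of \cref{codomain} to the last two factors of the codomain of \eqref{fiber2}; since that isomorphism is precisely $(\operatorname{pj}, E^{(\leq k-1)})$ and the relevant components of \eqref{fiber2} are $(\operatorname{pj}, E^{(\leq k-1)})$ applied to $\wh{L}_X\times\wh{L}_{N^+}$ and to $\wh{C}_X\times\wh{C}_{N^+}$, the pair \eqref{fiber2} is carried to \eqref{fiber3}. As composition with a Hilbert space isomorphism preserves the BF-pair conditions, this is an identification of BF pairs.} If a reference is wanted, this mirrors \cite[proof after Lemma~6.19]{KLS'18}.
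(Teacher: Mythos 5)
Your argument is correct and is essentially the paper's own proof: the paper disposes of this lemma in one line as a corollary of the isomorphism $(\operatorname{pj}, E^{(\leq k-1)})$ from the preceding lemma (citing Lemma 6.20 of Khandhawit--Lin--Sasahira), which is exactly the bookkeeping identification you spell out. Your additional check that post-composition with a Hilbert-space isomorphism preserves the BF-pair conditions (Fredholmness, compactness through the weak completion, and the bound on the zero set, which lives in the unchanged domain) is the implicit content the paper leaves to the reader.
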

This is a counterpart of Lemma 6.20 in \cite{KLS'18}. This is a corollary of \cref{codomain}. 

\subsubsection{Step 5} 
This step contains the non-trivial argument which appears in our situation. 
We sometimes omit spinors from expressions in this step.
Let us consider an operator 
\[
\overline{d} : L^2_{k-\frac{1}{2}} ( i\Lambda^0_Y )_0 \to d L^2_{k+\frac{1}{2}} ( i\Lambda^0_Y )_0 
\]
defined in \cite[Step 5]{KLS'18}. We denote by $\overline{d}^*$ its formal adjoint. Then we have a family of maps 
\[
D_{H,t} : \wh{\cU}_{X}\times \wh{\cU}_{N^+ }\to d L^2_{k+\frac{1}{2}} ( i\Lambda^0_Y ) \oplus L^2_{k-\frac{1}{2}} ( i\Lambda^0_Y )
\]
given by 
\begin{align*}
&D_{H,t} ( a_1, a_2) := 
\\
& ( \operatorname{pr}_{\im d} (a_1|_Y-a_2|_Y), t \overline{d}^*(\operatorname{pr}_{\im d} (a_1|_{Y}+a_2|_Y)) + (1-t) \operatorname{pr}_{L^2_{k+\frac{1}{2}} ( i\Lambda^0_Y ) } ( a_1|_Y-a_2|_Y )
\end{align*}
parametrized  by $t \in [0,1]$. 
The next proposition is a counterpart of Proposition 6.22. 
\begin{prop}
For any $t \in [0,1]$, the pair 
\begin{align}\label{fiber44}
\left(  \left(\wh{L}_{X},  \wh{L}_{N^+} , D_Y \oplus D_{H,t} \right) , \left(\wh{C}_{X}\times \wh{C}_{N^+}, 0 \right) \right)
\end{align}
is a BF pair from $\wh{\cU}_{X} \times \wh{\cU}_{N^+ } $ to $\wh{\cV}_{X} \times \wh{\cV}_{N^+ }  \times \wh{V}_{k-\frac{1}{2}} $
In particular, 
\begin{align}\label{fiber4}
\left(  \left(\wh{L}_{X},  \wh{L}_{N^+} , D_Y \oplus D_{H,1} \right) , \left(\wh{C}_{X}\times \wh{C}_{N^+}, 0 \right) \right)
\end{align}
is stably c-homotopic to \eqref{fiber4}. 
\end{prop}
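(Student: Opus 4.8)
The plan is to imitate the proof of \cite[Proposition~6.22]{KLS'18} (which is itself modelled on Observation~1 of \cite{Man07}), verifying the three defining properties of a BF pair for \emph{every} $t\in[0,1]$ and with all constants uniform in $t$. Once this is done, the ``stably c-homotopic'' assertion is automatic: by construction $t\mapsto\eqref{fiber44}$ is a continuous family of BF pairs with one common bound $M'$, so the endpoint $t=1$, which is \eqref{fiber4}, is c-homotopic to the endpoint $t=0$, which is exactly \eqref{fiber3}. Thus the three things to check are: (A) the linear part is Fredholm for each $t$; (B) the quadratic part is compact (extends continuously from a weaker-norm completion); (C) the zero set of $L+C$ lies in a fixed ball, uniformly in $t$.

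\emph{Step A (Fredholmness of the linear part).} By \cref{fiber prod} and \cref{fiber33}, the operator $(\wh{L}_{X}\times\wh{L}_{N^+},\,D^{(\leq 0)})$ is Fredholm, being stably c-homotopic to $\wh{L}_{X^+}$, which is Fredholm by \cref{Slice} and \cref{fredd}. The operator $D_Y\oplus D_{H,t}$ differs from $D^{(\leq 0)}$ only in the $\operatorname{pr}_{\im d}$-block on $Y$: the target component $\operatorname{pr}_{L^2_{k+\frac{1}{2}}(i\Lambda^0_Y)}(a_1|_Y-a_2|_Y)$ is replaced by $t\,\overline{d}^{*}\bigl(\operatorname{pr}_{\im d}(a_1|_Y+a_2|_Y)\bigr)+(1-t)\operatorname{pr}_{L^2_{k+\frac{1}{2}}}(a_1|_Y-a_2|_Y)$. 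Since $Y$ is a closed rational homology sphere, the Laplacian $\Delta_{\partial N^+}$ is invertible, so both $\overline{d}$ and $\overline{d}^{*}$ are (up to $\Delta_{\partial N^+}$) identifications of $\im d$ on $Y$ with $L^2(i\Lambda^0_Y)_0$; hence for every $t$ the block is the $t=0$ block post-composed with a continuous family of isomorphisms. Therefore $(\wh{L}_{X}\times\wh{L}_{N^+},\,D_Y\oplus D_{H,t})$ is Fredholm for all $t$, with index independent of $t$.

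\emph{Step B (compactness of the quadratic part) and Step C (uniform bound).} For Step B, $\wh{C}_{X}\times\wh{C}_{N^+}$ is quadratic and, by the Sobolev multiplication and compact-embedding statements of \cref{multiplication and compact embedding} (and the usual Sobolev facts on the compact piece $X$), extends continuously to the weaker-norm completions $L^2_{k-1,\alpha'}$ with $\alpha'<\alpha$, the inclusions being compact; the added component is $0$. This is verbatim the verification made for all the earlier BF pairs and is $t$-independent. For Step C, let $(x_1,x_2)$ be a zero of $\eqref{fiber44}$ together with $(\wh{C}_X\times\wh{C}_{N^+},0)$. The $D_Y$-part of the matching glues $x_1$ and $x_2$ into an $L^2_{k,\alpha}$ configuration on $X^+=X\cup_Y N^+$ solving the Seiberg--Witten equations on $X^+$ in a ($t$-dependent, hybrid) gauge; in particular its gauge class is an honest solution of \eqref{SW eq} on $X^+$. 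Adjoining the half-cylinder to form $N^+_*$, such a configuration is a finite-type $N^+_*$-trajectory, so the a priori estimates of Kronheimer--Mrowka apply: the pointwise spinor bound (\cref{2}), the conical-end energy bound (\cref{3}), the analytic-energy bound (\cref{4}), and hence the weighted Sobolev bound of \cref{bounded} — all with constants depending only on $\theta$, $J$ and the metric of $X$, not on $t$. Pushing back through the global slice decomposition of \cref{Slice}, a diffeomorphism whose modulus of continuity is again $t$-independent, yields a constant $M'$ with $(L+C)^{-1}(0)\subset\overset{\circ}{B}(\wh{\cU}_X\times\wh{\cU}_{N^+},M')$ for all $t$. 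This establishes the BF property at every $t$ and exhibits the family as a c-homotopy.

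\emph{Main obstacle.} The delicate point is Step C. Because $b_1(X)=0$ there is no harmonic gauge freedom, so one cannot ``rotate'' the $t=0$ slice into the $t=1$ slice by a gauge transformation as on a closed manifold; instead one must argue directly that every zero of the hybrid-gauge-fixed equation $\eqref{fiber44}$ represents a genuine Seiberg--Witten solution on $X^+$ — so that the Kronheimer--Mrowka estimates, together with \cref{bounded}, become available — and that the slice projection from \cref{Slice} is bounded uniformly in $t$. A secondary and routine check is that Fredholmness is not lost at interior values of $t$; this follows from the invertibility of $\overline{d}^{*}$ (equivalently of $\Delta_{\partial N^+}$) on the closed $3$-manifold $Y$, as in Step A.
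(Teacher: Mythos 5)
There is a genuine gap, and it sits exactly at the two places your own ``Main obstacle'' paragraph flags. First, Step A is incorrect as stated: $D_{H,t}$ is \emph{not} the $t=0$ block post-composed with a continuous family of isomorphisms of the target. At $t=0$ the second component is a functional of the difference $a_1|_Y-a_2|_Y$, while at $t=1$ it is $\overline{d}^{*}\bigl(\operatorname{pr}_{\im d}(a_1|_Y+a_2|_Y)\bigr)$, a functional of the \emph{sum}; these have different kernels as subspaces of the boundary-value space, so no post-composition with target isomorphisms carries one family member to another, and invertibility of $\Delta_{\partial N^+}$ alone does not give Fredholmness at interior $t$. Second, and more seriously, Step C asserts that a zero $(x_1,x_2)$ of \eqref{fiber44} at interior $t$ glues to an $L^2_{k,\alpha}$ Seiberg--Witten solution on $X^+$ ``in a hybrid gauge.'' It does not: the matching imposed by $D_Y\oplus D_{H,t}$ leaves the exact-form and $\Lambda^0$ (gauge-direction) parts of the boundary restrictions unmatched for $0<t\le 1$, so the pair is not a Sobolev configuration on $X^+$ and the Kronheimer--Mrowka estimates and \cref{bounded} cannot be invoked directly. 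Hence neither the BF property at interior $t$ nor the uniform constant $M'$ is established by your argument.

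The missing ingredient is precisely the harmonic-function lemma the paper proves first (\cref{Neumann}, the analogue of the Neumann-type lemma behind Proposition 6.22 of \cite{KLS'18}): for every $t\in[0,1]$ the map $\rho_t$ on pairs of harmonic functions $(f_1,f_2)$ on $X$ and $N^+$ with $f_1|_Y=f_2|_Y$ and $f_1(\hat{o})=0$ is an isomorphism onto $L^2_{k-\frac{1}{2}}(\Lambda^0_Y)_0$. This supplies, for each element of the kernel of \eqref{fiber44}, a \emph{unique} gauge transformation $e^{f}$ (continuous across $Y$ because $f_1$ and $f_2$ agree there, and preserving the interior equations because $f$ is harmonic) carrying it to an element of the kernel of the $t=0$ pair, which is a BF pair by \cref{fiber33}; finite-dimensionality of the kernels, the Fredholm property, and the uniform bound $M'$ are then transferred from $t=0$ to all $t$. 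Note that your ``Main obstacle'' paragraph explicitly disclaims this mechanism (``one cannot rotate the $t=0$ slice into the $t=1$ slice by a gauge transformation''), but the relevant gauge transformations are exponentials of harmonic $0$-forms, which exist in abundance; $b_1(X)=0$ only excludes harmonic $1$-forms and is irrelevant here. Without \cref{Neumann} and the gauge-correspondence of kernels, the proposed proof does not close.
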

\begin{proof}
As in the proof of Proposition 6.22 in \cite{KLS'18}, we first prove the following result; 
\begin{prop}\label{Neumann}
Let $W \subset L^2_{k+1} (X; \R) \times L^2_{k+1, \al} (N^+; \R)$ be the subspace containing all functions $(f_1, f_2)$ satisfying the following conditions; 
\begin{itemize}
\item[(i)]  $\Delta f_i= 0 $
\item[(ii)]  $f_1 ( \hat {o} ) = 0$, and
\item[(iii)]  $f_1 |_Y = f_2  |_Y$, 
\end{itemize}
where $\hat {o}$ is a fixed point in $Y$.
Then the map 
$\rho_t : W \to L^2_{k-\frac{1}{2}} (  \Lambda^0_Y )_0$ defined by 
\[
\rho_t (f_1, f_2) := 2t \overline{d}^* d (f_1|_{Y}) + (1-t ) ( \partial_{\vec{n} } f_1|_Y - \partial_{\vec{n} }f_2|_Y ) 
\]
is an isomorphism, where 
\[
L^2_{k-\frac{1}{2}} (  \Lambda^0_Y )_0 := \{ f \in L^2_{k-\frac{1}{2}} (i \Lambda^0_Y ) |  \int_Y f d\operatorname{vol}_Y =0 \} . 
\]
\end{prop}
\begin{proof}[Proof of \cref{Neumann}] 
When $t=1$, we can use a similar argument in Proposition 2.2 in \cite{Khan15} since we have \cref{Slice}. 
For $t<1$, we can use the same argument given in Proposition 6.22 in \cite{KLS'18}. 
\end{proof}
When $t=0$, \eqref{fiber44} is a BF pair by \cref{fiber33}. 
For each element in the kernel of \eqref{fiber44}, there is a unique gauge transformation to an element in the kernel of 
\[
\left(  \left(\wh{L}_{X},  \wh{L}_{N^+} , D_Y \oplus D_{H,0} \right) , \left(\wh{C}_{X}\times \wh{C}_{N^+}, 0 \right) \right). 
\]
This proves that the kernel of \eqref{fiber44} is finite dimensional for any $t$. The remaining part is the same as the proof of Proposition 6.22.


\end{proof}

\subsubsection{Step 6} 
In this step, we see counterparts of Lemma 6.24 and Corollary 6.25 in \cite{KLS'18}.
\begin{lem}
The operator 
\[
( d^{*}_X, d^{*_\alpha}_{N^+} ,  D_{H, 1} ) : 
\]
\[
\wh{\cU}_{X}\oplus \wh{\cU}_{N^+} \to L^2_{k-1} ( i\Lambda^0(X)) \oplus L^2_{k-1, \alpha} ( i\Lambda^0(N^+)) 
\oplus  d L^2_{k-\frac{1}{2}} ( i\Lambda^0_Y ) \oplus L^2_{k-\frac{1}{2}}  ( i\Lambda^0_Y )_0
\]
is surjective and its kernel can be written as 
\[
(L^2_{k} ( i\Lambda^1_{X})_{CC} \oplus L^2_{k} (S^+_X ) ) \times  ( L^2_{k, \alpha} ( i\Lambda^1(X^+ ))_{CC} \oplus L^2_{k, \alpha} (S^+_{N^+}  ) ) .  
\]
\end{lem}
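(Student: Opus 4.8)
The plan is to identify the stated operator as the ``double Coulomb gauge-fixing'' operator for the glued problem and to deduce its surjectivity and the description of its kernel directly from the decompositions already established in \cref{Slice} together with the isomorphism in \cref{Neumann}. First I would observe that the three components of the operator are independent in the following sense: $d^{*}_X$ and $d^{*_\alpha}_{N^+}$ only involve the interior gauge-fixing on $X$ and on $N^+$ respectively, while $D_{H,1}$ records the matching of the restrictions along $Y$ (namely $\operatorname{pr}_{\im d}(a_1|_Y - a_2|_Y)$ together with $2\,\overline{d}^{*}(\operatorname{pr}_{\im d}(a_1|_Y))$ after the identifications used in Step 5). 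Since spinors are annihilated by every component, they split off as a direct summand, and it suffices to work with the $1$-form parts $a_1 \in L^2_k(i\Lambda^1_X)$ and $a_2 \in L^2_{k,\alpha}(i\Lambda^1_{N^+})$.

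Next I would prove surjectivity. Given a target datum $(g_1, g_2, \beta, \gamma)$ with $\beta \in dL^2_{k-\frac12}(i\Lambda^0_Y)$ and $\gamma \in L^2_{k-\frac12}(i\Lambda^0_Y)_0$, I first use the solvability of the Neumann-type Laplace problems on $X$ and on $N^+$ (closed range / invertibility of $\Delta_\alpha$ as in \cref{Slice}, and the classical theory on the compact manifold $X$) to reduce to the case $g_1 = 0$, $g_2 = 0$ by subtracting an exact correction $df_i$; this does not disturb the structure because $d^{*}_X d f_1 = g_1$ etc. and the change in the boundary terms is recorded by the $\rho_t$-map at $t=1$. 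Then, with the interior parts fixed, solving the remaining boundary matching $(\beta, \gamma)$ is exactly the statement that $\rho_1 : W \to L^2_{k-\frac12}(\Lambda^0_Y)_0$ from \cref{Neumann} is an isomorphism — applied to the harmonic representatives — together with the elementary surjectivity of $\operatorname{pr}_{\im d}$ onto $dL^2_{k-\frac12}(i\Lambda^0_Y)$; adding the resulting exact $1$-forms $df_i$ (now harmonic $f_i$) corrects the boundary data without reintroducing nonzero $d^{*}$-components. For the kernel, an element $(a_1, a_2)$ in the kernel satisfies $d^{*}_X a_1 = 0$, $d^{*_\alpha}_{N^+} a_2 = 0$, and the $Y$-matching conditions; the condition $\operatorname{pr}_{\im d}(a_1|_Y - a_2|_Y) = 0$ together with $d^{*}a_i|_Y$ information forces $d^{*}\mathbf{t}a_i = 0$ in the appropriate sense on each side, i.e.\ $a_1 \in L^2_k(i\Lambda^1_X)_{CC}$, $a_2 \in L^2_{k,\alpha}(i\Lambda^1_{N^+})_{CC}$; the remaining $t=1$ component of $D_{H,1}$ vanishes automatically on such pairs because $\overline{d}^{*}$ kills $\operatorname{pr}_{\im d}$ of double-Coulomb forms. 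Conversely any such pair lies in the kernel, which gives the claimed description $\big(L^2_k(i\Lambda^1_X)_{CC}\oplus L^2_k(S^+_X)\big)\times\big(L^2_{k,\alpha}(i\Lambda^1_{N^+})_{CC}\oplus L^2_{k,\alpha}(S^+_{N^+})\big)$ (writing $X^+$ there is a typo for $N^+$).

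The main obstacle I anticipate is the bookkeeping at the boundary: one has to be careful about which projections ($\operatorname{pr}_{\im d}$ versus $\operatorname{pr}_{L^2_{k+1/2}}$) and which adjoints ($d^{*}$ versus $d^{*_\alpha}$, and $\overline{d}^{*}$) appear, and to verify that the normal-derivative terms $\partial_{\vec n} f_i$ coming from the harmonic extensions match up with the $d^{*}$-components after the identifications made in Step 5. Once \cref{Neumann} and the global slice decomposition of \cref{Slice} are in hand, however, the argument is a direct assembly: reduce to harmonic representatives, invoke the isomorphism $\rho_1$, and read off the kernel as the double Coulomb subspace on each piece. I would present this as a short deduction rather than re-proving any elliptic estimates, since all the analytic input (closed range, Fredholmness, invertibility of $\Delta_\alpha$, the isomorphism $\rho_t$) has already been established.
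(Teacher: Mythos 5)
Your surjectivity outline is sound and, since the paper disposes of this lemma with a single sentence (``integration by parts''), your more explicit assembly — reduce to $g_1=g_2=0$ by solving the interior Laplace problems, then hit the boundary factors using harmonic extensions and the isomorphism $\rho_1$ of \cref{Neumann} — is a legitimate way to fill in what the paper leaves implicit. One small citation slip: \cref{Slice}(iii) is invertibility of $\Delta_\alpha$ on the closed-up manifold $X^+$; what you actually need on $N^+$ is the solvability of the boundary-value problem $\Delta_\al(N^+,\partial)$, which is established inside the proof of \cref{Slice}(iv). Your reading of the kernel statement ($X^+$ being a typo for $N^+$) is also correct.

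There is, however, a genuine flaw in your identification of the kernel in the forward direction. You claim that $\operatorname{pr}_{\im d}(a_1|_Y-a_2|_Y)=0$ ``together with $d^{*}a_i|_Y$ information'' already forces $d^{*}\mathbf{t}a_i=0$ on each side, and that the second ($t=1$) component of $D_{H,1}$ then ``vanishes automatically.'' This is backwards: the interior conditions $d^{*}_X a_1=0$, $d^{*_\alpha}_{N^+}a_2=0$ give no boundary information, and the first component of $D_{H,1}$ only forces $\operatorname{pr}_{\im d}\mathbf{t}a_1=\operatorname{pr}_{\im d}\mathbf{t}a_2$, not that each vanishes — there are interior-Coulomb pairs whose tangential coexact parts agree but are nonzero, and these are excluded precisely by the second component. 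The correct argument uses both components: since $\overline{d}^{*}$ is injective on exact forms ($\overline{d}^{*}df=\Delta f=0$ forces $df=0$), the condition $\overline{d}^{*}\operatorname{pr}_{\im d}(a_1|_Y+a_2|_Y)=0$ gives $\operatorname{pr}_{\im d}\mathbf{t}(a_1+a_2)=0$, and combined with $\operatorname{pr}_{\im d}\mathbf{t}(a_1-a_2)=0$ this yields $d^{*}\mathbf{t}a_1=d^{*}\mathbf{t}a_2=0$, hence membership in the two double Coulomb slices; the converse inclusion is then the trivial direction you already noted. The fix is two lines, but as written your forward inclusion does not follow, and treating the second component of $D_{H,1}$ as redundant would also distort the index bookkeeping that Step 6 feeds into \cref{snake}.
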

\begin{proof}
This is obtained by using integration by parts. 
\end{proof}
\begin{cor}
The BF pair \eqref{fiber4} is c-stable homotopic to a BF pair 
\begin{align}\label{fiber5}
 ( ( L_X, L_{N^+} , D^{(\leq 0)} ),  (C_X, C_{N^+} , 0 )  ) 
 \end{align}
 from $\cU_X \times \cU_{N^+} $ to $\cV_X \times \cV_{N^+} \times V(Y)$. 
\end{cor}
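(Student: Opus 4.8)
The plan is to obtain this as an immediate consequence of the surjectivity lemma just proved together with \cref{snake}, followed by one bookkeeping step; it is the exact counterpart of \cite[Corollary~6.25]{KLS'18}. First I would isolate, inside the linear part of the operator underlying \eqref{fiber4}, the ``gauge-fixing'' operator
\[
g := \bigl(d^{*}_X,\ d^{*_\alpha}_{N^+},\ D_{H,1}\bigr)\colon \wh{\cU}_X \times \wh{\cU}_{N^+}\longrightarrow L^2_{k-1}(i\Lambda^0_X)\oplus L^2_{k-1,\alpha}(i\Lambda^0_{N^+})\oplus dL^2_{k-1/2}(i\Lambda^0_Y)\oplus L^2_{k-1/2}(i\Lambda^0_Y)_0 ,
\]
whose components are literally the $i\Lambda^0$-components of $\wh{L}_X$ and $\wh{L}_{N^+}$ together with the summand $D_{H,1}$ of $D_Y\oplus D_{H,1}$. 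After reindexing the (pairwise orthogonal) target factors, which does not change the stable c-homotopy class, the operator underlying \eqref{fiber4} acquires the shape $L'\oplus g$, where $L'$ collects the surviving components: the $(d^+,D^+_{A_0})$-parts of $\wh{L}_X$ and of $\wh{L}_{N^+}$ (with the $\Phi_0$-corrections on the $N^+$ side) together with $D_Y$. Since the quadratic part $(\wh{C}_X\times\wh{C}_{N^+},0)$ has vanishing $g$-components, it has the form $C'\oplus 0$.

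Next I would apply \cref{snake} with $H_1 = \wh{\cU}_X\times\wh{\cU}_{N^+}$, the surjection $g$ (surjective by the preceding lemma), and the pair $(L',C')$. Because \eqref{fiber4} is already known to be a BF pair, \cref{snake} gives that $(L'|_{\ker g},C'|_{\ker g})$ is a BF pair and is c-homotopic, hence stably c-homotopic, to \eqref{fiber4}.

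It then remains to identify $(L'|_{\ker g},C'|_{\ker g})$ with \eqref{fiber5}. By the preceding lemma $\ker g = \cU_X\times\cU_{N^+}$, the product of the double Coulomb slices. On these slices the quadratic maps $\wh{C}_X$, $\wh{C}_{N^+}$ are exactly the Seiberg--Witten quadratic terms $C_X$, $C_{N^+}$, and the $d^{*}$-component of $\wh{L}_X$ and the $d^{*_\alpha}$-component of $\wh{L}_{N^+}$ vanish identically, so the restrictions of the $(d^+,D^+_{A_0})$-parts are precisely $L_X$ and $L_{N^+}$. For the boundary term I would use that on the Coulomb slices $\mathbf{t}(a_1-a_2)|_Y$ is coclosed (from $d^{*}\mathbf{t}a_i=0$), so $\operatorname{pr}_{\im d}$ of the boundary difference vanishes; hence $D_{H,1}$ contributes nothing on $\ker g$, and $D_Y$ alone reassembles into the full restriction difference $r^0_1-r^0_2 = D^{(\leq 0)}$, now landing in $V(Y)$, Manolescu's Coulomb slice $\ker(d^{*}|_{i\Lambda^1_Y})\oplus L^2(S)$ for $Y$. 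This yields $(L'|_{\ker g},C'|_{\ker g}) = \bigl((L_X,L_{N^+},D^{(\leq 0)}),(C_X,C_{N^+},0)\bigr)$, i.e.\ \eqref{fiber5}.

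The only step with genuine content is this last identification: checking that the endpoint $t=1$ of the deformation $D_{H,t}$ is arranged precisely so that, after restriction to the Coulomb slices, the pair of matching-type components $(D_Y,D_{H,1})$ collapses to $D^{(\leq 0)}$ with no missing or leftover directions. I expect this to be the main (and essentially only) obstacle; it is handled exactly as in \cite[Corollary~6.25]{KLS'18}, via the Hodge decomposition of \cref{Slice} and the computation behind \cref{Neumann}.
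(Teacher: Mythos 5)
Your proposal is correct and follows the paper's own route: the paper proves this corollary by invoking \cref{snake} together with the preceding surjectivity/kernel lemma, which is exactly your argument of splitting off the gauge-fixing surjection $g=(d^{*}_X, d^{*_\alpha}_{N^+}, D_{H,1})$, restricting to its kernel (the product of double Coulomb slices), and identifying the restricted pair with \eqref{fiber5}. Your extra verification that $\operatorname{pr}_{\im d}$ of the boundary difference vanishes on the Coulomb slices, so that $D_Y$ restricts to the $V(Y)$-valued matching map $D^{(\leq 0)}$, is the small check the paper leaves implicit.
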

\begin{proof}
This is a corollary of \cref{snake}. 
\end{proof}

\subsubsection{Step 7} 
We choose finite dimensional vector spaces $\cV_{X, n }$ and $\cV_{N^+ , n }$ in $L^2_{k-1} (i\Lambda^+_X\oplus S^-_X)$ and $L^2_{k-1, \alpha } (i\Lambda^+_{N^+} \oplus S^-_{N^+}) $. Note that we denote by $V^{\lambda_n}_{-\lambda_n}$ a family of finite dimensional approximation of $V(Y)$.
We introduce a family of subbundles :
\begin{align*}
W^{n, t}_{X, N^+} := \{ (x_1, x_2) \in \cU_X \times \cU_{N^+}  | L_X(x_1) \in \cV_{X, n }, \ L_{N^+}(x_2) \in \cV_{N^+, n } \\ 
p^{\infty} _{\lambda_n } r_2 (x_2) = t p^{\infty} _{\lambda_n }  r_2 (x_1) \\
p^{-\lambda_n} _{-\infty } r_2 (x_1) = t p^{-\lambda_n }_{-\infty}  r_2 (x_2) \} . 
\end{align*}
This gives a vector bundle $\bigcup_{t \in [0,1] }  W^{n, t}_{X, N^+}  \to [0,1]$. 
A point is that the operators 
\[
\wh{p}^{\infty}_{0}\circ \wh{r}  : \{ x \in  \wh{\cU}_X | \wh{L}_X (x) =0 \}   \to \wh{V}^{\infty}_{0} (Y)
\]
\[
\wh{p}^{0}_{-\infty} \circ \wh{r} : \{ x \in  \wh{\cU}_X | \wh{L}_{N^+} (x) =0 \}   \to \wh{V}^{0}_{-\infty}(Y) 
\]
are compact. The first fact is proved in \cite[Theorem~17.1.3. (ii)]{KM07}. By the same proof, we can prove that the second operator is also compact. Moreover, 
\[
\wh{p}^{-\lambda_n}_{-\infty} \circ \wh{r}  : \{ x \in  \wh{\cU}_X | \wh{L}_X (x) =0 \}   \to \wh{V}^{-\lambda_n}_{-\infty}  (Y)
\]
\[
\wh{p}^{\infty}_{\lambda_n} \circ \wh{r} : \{ x \in  \wh{\cU}_X | \wh{L}_{N^+} (x) =0 \}   \to \wh{V}^{\infty}_{\lambda_n}(Y) 
\]
are surjective for a sufficient large $n$. This is a corollary of the unique continuation property. These two facts enable us to see that the rank of $W^{n, t}_{X, N^+} $ is constant. 
Finally, we see the following boundedness result which is a counterpart of \cite[Lemma~6.26]{KLS'18}. 
\begin{prop} 
For any $R>0$, there exist $N$, $\varepsilon_0$ with the following significance: For any $n>N$, $t \in [0,1], (x_1, x_2 )\in \overset{\circ}{B}(W^{n, t}_{X, N^+}, R) $ and $\gamma_i : (-\infty, 0] \to \overset{\circ}{B} (V^{\lambda_n}_{-\lambda_n} , R)$ for $ i=1, 2$ satisfying 
\begin{itemize}
\item[(i)]  $\| p^{\lambda_n}_{-\lambda_n}  (r_2 (x_1) - r_2 (x_2) ) \|_{L^2_{k-\frac{1}{2}}}  \leq \varepsilon$
\item[(ii)]  $\|\pr_{ \cV_{X, n }}  \circ (  \wh{L}_{X} + \wh{C}_{X} ) (x_1) \|_{L^2_{k-1}} \leq \varepsilon$, $\|\pr_{ \cV_{N^+, n }} \circ (  \wh{L}_{N^+} + \wh{C}_{N^+} ) (x_2) \|_{L^2_{k-1, \alpha}} \leq \varepsilon$, 
\item [(iii)] $\gamma_i$ is approximated trajectory with $\gamma_i(0) = p^{\lambda_n}_{-\lambda_n} \circ \wh{r}_i (x_i)$ for $i=1$ and $2$, 
\end{itemize}
we have $\|x_1\|_{L^2_{k+1}} \leq R+ 1$, $\|x_2\|_{L^2_{k+1, \al}} \leq R+ 1$, $\|\gamma_i (t)\|_{L^2_{k-\frac{1}{2}} } \leq R +1$ for $i=1$ and $2$.
\end{prop}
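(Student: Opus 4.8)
The plan is to establish the three uniform bounds by a compactness/contradiction argument, reducing everything to the boundedness theorem \cref{bounded} combined with the finite-dimensional-approximation convergence machinery developed in \cref{fin app conv}. Suppose the conclusion fails. Then there is an $R>0$, sequences $n_j\to\infty$, $t_j\in[0,1]$, configurations $(x_1^{(j)},x_2^{(j)})\in\overset{\circ}{B}(W^{n_j,t_j}_{X,N^+},R)$ and approximated trajectories $\gamma_i^{(j)}$ satisfying (i)--(iii) with $\varepsilon=\varepsilon_j\to 0$, but for which at least one of $\|x_1^{(j)}\|_{L^2_{k+1}}$, $\|x_2^{(j)}\|_{L^2_{k+1,\alpha}}$, $\|\gamma_i^{(j)}(t)\|_{L^2_{k-\frac12}}$ exceeds $R+1$ for some $t\le 0$. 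After passing to a subsequence we may assume $t_j\to t_\infty\in[0,1]$. Since the $x_i^{(j)}$ lie in a fixed ball in $L^2_{k}$ (resp. $L^2_{k,\alpha}$), they have weakly convergent subsequences; the matching conditions defining $W^{n_j,t_j}_{X,N^+}$ together with (i) force the boundary restrictions to match in the limit, so the limit $(x_1,x_2)$ glues, via \cref{gluing Sobolev} and the fiber-product description, to a configuration on $X^+$ (when $t_\infty$ interpolates the boundary gauge conditions, one first applies the unique gauge transformation, as in Step 5, to put the limit in double Coulomb gauge).

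Next I would feed this into the convergence proposition. Condition (ii) says $\mathcal{F}_{X,n_j}(x_1^{(j)})\to 0$ and $\mathcal{F}_{N^+,n_j}(x_2^{(j)})\to 0$, which is exactly the hypothesis of \cref{fin app conv} (applied separately to the $N^+$-piece, with the cylinder trajectory $\gamma_i^{(j)}$ playing the role of $y_n$). Hence, after a further subsequence, $x_2^{(j)}\to x_2$ strongly in $L^2_{k,\alpha}(N^+)$ and $\gamma_i^{(j)}(t)\to y(t)$ strongly in $L^2_{k-\frac12}$ for each $t\le 0$, where the limit $(x_2+(A_0,\Phi_0),y)$ satisfies conditions (i)--(iv) of \cref{bounded}; likewise $x_1^{(j)}\to x_1$ strongly by the Fredholm estimate for $L_X$ together with compactness of $C_X$, and $x_1$ solves the Seiberg--Witten equation on $X$ with matching boundary value. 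Interior elliptic bootstrapping then upgrades these limits to $L^2_{k+1}$ (resp. $L^2_{k+1,\alpha}$), which is why the target norms in the statement are one order higher. Now \cref{bounded} gives $\|x_2\|_{L^2_{k,\alpha}}<R'$ and $\|y(t)\|_{L^2_{k-\frac12}}<R'$ with $R'$ independent of $\alpha$ and of the data, and the corresponding bound on $x_1$ comes from the compact-4-manifold case (exactly as in \cite{Khan15}, \cite{Man07}); combined with the bootstrapping this bounds the $L^2_{k+1}$-norms by some fixed $R''$. Choosing $R$ larger than $R''$ from the outset, strong convergence forces $\|x_i^{(j)}\|\le R''+\tfrac12<R+1$ and $\|\gamma_i^{(j)}(t)\|\le R''+\tfrac12<R+1$ for all large $j$, contradicting the choice of the sequence. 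This yields the desired $N$ and $\varepsilon_0$.

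The main obstacle I anticipate is the uniformity in $t\in[0,1]$ and the behavior at the boundary $Y$: for $t<1$ the gauge-fixing boundary condition $D_{H,t}$ mixes the tangential and normal parts of $a_1-a_2$ in a $t$-dependent way, so one must check that the limiting configuration still glues to a genuine $X^+$-solution and that \cref{Neumann} (which guarantees the relevant harmonic-gauge map $\rho_t$ is an isomorphism for every $t$) lets us transfer to the honest double Coulomb picture before invoking \cref{bounded}; the constant produced must not degenerate as $t\to 1$. A secondary technical point is that \cref{fin app conv} as stated is for the $N^+$-side with a single cylindrical end, so one must run the argument on the two pieces $X$ and $N^+$ separately and only then glue, using \cref{gluing Sobolev} to control the Sobolev norm of the glued configuration in terms of the pieces and their common boundary restrictions. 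Everything else is a routine repetition of the Khandhawit--Lin--Sasahira scheme, so I would keep the write-up short and cite \cite[Lemma~6.26]{KLS'18} for the parts that are formally identical.
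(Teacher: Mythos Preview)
Your proposal is correct and follows essentially the same approach as the paper: the paper's own proof consists of a single sentence deferring to \cite[Lemma~1]{Man07}, and your compactness/contradiction argument combining \cref{fin app conv} with \cref{bounded} is precisely the adaptation of that lemma to the conical-end setting. The only difference is cosmetic---you would cite \cite[Lemma~6.26]{KLS'18} for the formally identical parts, whereas the paper cites \cite[Lemma~1]{Man07}; both references encode the same scheme.
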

\begin{proof}
The proof is essentially identical with \cite[Lemma 1]{Man07}.  
\end{proof}
Then the restricting family on $W^{n, t}_{X, N^+}$ defines a homotopy between \eqref{final1} and  \eqref{fiber5}
This completes the proof of the gluing theorem. 

At the end of this subsection, we see the following corollary of \cref{gluing}. 
\begin{cor}\label{right}
Let $Y$ be a rational homology $3$-sphere equipped with a contact structure $\xi$. If $\xi$ has a symplectic filling with $b_1=0$, then \eqref{ourinv} has a non-equivariant stable homotopy left inverse. 
  In particular, \eqref{ourinv} is not stably null-homotopic. 
Moreover, a left inverse is given by (the dual of) the relative Bauer-Furuta invariant for the filling. 
\end{cor}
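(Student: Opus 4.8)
The plan is to read the statement off the gluing formula \cref{gluing} (equivalently \cref{gluing theorem}), once we know that for a symplectic filling the Bauer--Furuta refinement $\Psi(X,\s_{X,\xi},\xi)$ of Kronheimer--Mrowka's invariant is a stable self-equivalence of $S^0$. By \cite{OO99} we may assume the filling is strong, so that the hypotheses of \cref{gluing} apply verbatim to $X$ (which has $b_1(X)=0$ by assumption, with boundary restriction of $\s_\om$ compatible with $\s_\xi$ by construction).

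First I would establish $\Psi(X,\s_{X,\xi},\xi)\simeq\pm\id_{S^0}$. A symplectic filling $(X,\om)$ carries an $\om$-compatible almost complex structure with $\xi=JTY\cap TY$ on the boundary, so the induced $Spin^c$ structure $\s_X=\s_\om$ is the canonical one and the constant positive spinor $(1,0)\in\Om^{0,0}_X\oplus\Om^{0,2}_X=\Gamma(S^+_X)$ is a nowhere-vanishing extension of $\Phi_0$; as recorded just before \cref{Fredholmind} this forces $\langle e(S^+_X,\Phi_0),[(X,\partial X)]\rangle=0$, so $\Psi(X,\s_{X,\xi},\xi)$ is a stable map $S^0\to S^0$. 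By \cite{I19} its mapping degree is $\pm\mathfrak{m}(X,\s_{X,\xi},\xi)$, and by Kronheimer--Mrowka's computation for symplectic fillings \cite{KM97} (the contact analogue of Taubes' nonvanishing theorem, obtained by perturbing the Seiberg--Witten equations by a large multiple of $\om$) this integer equals $\pm1$. Since $\pi^S_0=\Z$, a degree $\pm1$ stable self-map of $S^0$ is a stable homotopy equivalence, giving $\Psi(X,\s_{X,\xi},\xi)\simeq\pm\id_{S^0}$.

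Next I would substitute this into \cref{gluing}:
\[
\eta\circ(\Psi(X,\s_X)\wedge\Psi(Y,\xi))=\Psi(X,\s_{X,\xi},\xi)=\pm\id_{S^0}.
\]
Factoring $\Psi(X,\s_X)\wedge\Psi(Y,\xi)=(\Psi(X,\s_X)\wedge\id)\circ(\id\wedge\Psi(Y,\xi))$ and setting $g:=\eta\circ(\Psi(X,\s_X)\wedge\id)$ — which is (a suspension of) the Spanier--Whitehead dual of the relative Bauer--Furuta invariant of the filling paired through $\eta$ — the displayed equality becomes $g\circ\Psi(Y,\xi)=\pm\id_{S^0}$ after matching up the suspension shifts. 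Tracking these shifts (the relation $d_3(-Y,[\xi])=-d_3(Y,[\xi])$, the index terms $n(\pm Y,\s,g)$, and the $\R^{-b^+(X)}$ occurring in the domain of $\Psi(X,\s_X)$) is forced by \cref{Fredholmind} and the definition of $\eta$, and shows that $g$ desuspends to a map $\Sigma^{\frac12-d_3(-Y,[\xi])}SWF(-Y,\s_\xi)\to S^0$. Thus $g$ is a non-equivariant stable homotopy left inverse of \eqref{ourinv}; in particular, if \eqref{ourinv} were stably null-homotopic then $g\circ\Psi(Y,\xi)$ would be null, contradicting $\pm\id_{S^0}\neq0$, so \eqref{ourinv} is not stably null-homotopic.

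The one genuine obstacle is the input $\Psi(X,\s_{X,\xi},\xi)\simeq\pm\id_{S^0}$: one must verify that the \emph{Bauer--Furuta} refinement from \cite{I19}, and not merely the numerical invariant $\mathfrak{m}$, is what appears on the right-hand side of \cref{gluing}, and that the relative Euler number genuinely vanishes in the weighted-Sobolev model used here so the source is exactly $S^0$. Everything else — factoring the smash product, identifying $g$ with the dual via $\eta$, and tracking the suspensions so that $g$ lands in $S^0$ after desuspension — is formal once \cite{I19} and the Kronheimer--Mrowka $\pm1$ theorem are in hand.
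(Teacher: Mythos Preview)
Your proposal is correct and follows essentially the same approach as the paper: both apply the gluing theorem \cref{gluing theorem} and invoke, via \cite{I19} and \cite{KM97}, that $\Psi(X,\s_{\om},\xi)$ is a stable homotopy equivalence for a symplectic filling, then read off the left inverse as the dual (through $\eta$) of the relative Bauer--Furuta invariant $\Psi(X,\s_X)$. Your write-up is in fact more explicit than the paper's about why the relative Euler number vanishes and how the factorization $g\circ\Psi(Y,\xi)=\pm\id_{S^0}$ arises, whereas the paper compresses this into a diagram and a one-line appeal to \cite{I19}.
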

\begin{proof}
Let $(X, \om)$ be such a symplectic filling of $(Y, \xi)$.
We see the following maps: 
\begin{align}\label{zusiki}
 \begin{CD}
 S^{m  + 2n +\frac{c^2_1(\s_X)-\sigma(X)}{4}} @>{\Psi_{(X, \s_X)}}>>   \Sigma^{m+b^+(X) + 2n} SWF(Y)    \\
  @.    @V{\Psi(Y, \xi)\wedge id }VV    \\
  @.  \Sigma^{\frac{1}{2} - d_3(-Y, [\xi]) } SWF(-Y) \wedge  \Sigma^{m+b^+(X) + 2n} SWF(Y)  \\
  @. @\vert \\ 
@.   \Sigma^{\frac{1}{2} - d_3(-Y, [\xi])  + m + b^+(X) + 2n } SWF(-Y)  \wedge SWF(Y)  \\
@.    @V{ id \wedge \eta  }VV    \\
@. S^{\frac{1}{2} - d_3(-Y, [\xi])  + m + b^+(X) + 2n} \\
 \end{CD}.
\end{align}

The gluing theorem implies that 
$(id \wedge \eta) \circ  (\Psi(Y, \xi)\wedge id) \circ  {\Psi_{(X, \s_X)}}$ and $\Psi_{(X, \s_{\om}, \xi)}$ are stably homotopic. 
Since $(X, \om)$ is a symplectic filling, $\Psi_{(X, \s_{\om}, \xi)} $ is a homotopy equivalence (\cite{I19}).  
Note that by the definition of $d_3(Y, [\xi])$, we can see 
\[
m  + 2n +\frac{c^2_1(\s_X)-\sigma(X)}{4}=\frac{1}{2} - d_3(-Y, [\xi])  + m + b^+(X) + 2n.
\]
So the dimension of the spheres of the domain and the codomain are equal. 
This implies the conclusion. 

\end{proof}

\begin{rem}\cref{right} implies that, under the same assumption as \cref{right},  
the dual map 
\[
 \widecheck{\Psi} (Y, \xi) :  \Sigma^{-\frac{1}{2}- d_3(Y)}SWF(Y, \s)\to S^0 
 \]
 has a non-equivariant stable homotopy right inverse. 
\end{rem}

%

\subsection{Calculations via gluing theorem}  
In this subsection, we give several calculations of SWF homotopy contact invariants by the use of the gluing theorem. 
\begin{ex}
We consider the standard contact structure $\xi_{std}$ on $S^3$.  Our invariant lies in 
  \[
   \Psi (S^3, \xi_{std} ) \in \pi_{0}^S \cong \Z. 
   \]
Since $(S^3, \xi_{std})$ has a standard symplectic filling $(D^4, \om_{std})$, we have 
\[
\eta \circ (\Psi(D^4, \s_{\om_{std}}): S^0 \to S^0) \wedge  (\Psi (S^3, \xi_{std} ): S^0 \to S^0)  
\]
\[
= \Psi (D^4, \s_{\om_{std}},  \xi_{std} ): S^0 \to S^0. 
\]
Since $\Psi (D^4, \s_{\om_{std}},  \xi_{std} ): S^0 \to S^0$ is a $\pm 1$ map by \cite{I19}, we conclude that 
  \[
   \Psi (S^3, \xi_{std} ) \in \pi_{0}^S \cong \Z. 
   \]
   is a generator. 
  \end{ex}
We also give several calculations of our invariants for $\Sigma(2,3,r)$. The following calculations of Seiberg-Witten Floer homotopy types were given in \cite{Man03}, \cite{Ma14} using the result of \cite{MOY97}. The $d_3$-invariants can be computed from the results \cite{OS03}, \cite{GV16} and \cite{Bu20}. Here we use a relation between the $\Q$-grading of Heegaard Floer homology and $d_3$ given in \cite[Proposition~ 4.6]{OS05}. 

\begin{table}[htb]
\tiny
  \begin{tabular}{|l|c|r|r|r|} \hline
 & $SWF(Y, \s)$ & non-equivariant  & $d_3(Y)$ & $\Sigma^{-\frac{1}{2}-d_3(Y)}SWF(Y, \s)$    \\ \hline 
             $\Sigma(2,3, 12n+5)$&  $\Sigma^{\frac{1}{2} \mathbb{H} } (S^0 \vee \vee_n \Sigma^{-1} G_+) $ & $S^2 \vee \vee_{2n} (S^2 \vee S^1)$   &$\frac{3}{2} $  &  $S^0 \vee \vee_{2n} (S^0 \vee S^{-1})$  \\ \hline
$\Sigma(2,3, 12n-1)$  & $ \wt{G} \vee \vee_{n-1} \Sigma G_+$ &$S^2 \vee \vee_{2n-1} (S^2 \vee S^1)$  & $\frac{3}{2} $ &$S^0 \vee \vee_{2n-1} (S^0 \vee S^{-1})$   \\ \hline
     $\Sigma(2,3, 12n-5)$ & $ \Sigma^{-\frac{1}{2} \mathbb{H} } ( \wt{G} \vee \vee_{n-1} \Sigma G_+ )$  & $S^0 \vee \vee_{2n} (S^0 \vee S^{-1})$ & $-\frac{1}{2} $  & $S^0 \vee \vee_{2n} (S^0 \vee S^{-1})$  \\ \hline
        $\Sigma(2,3, 12n+1)$& $ S^0 \vee \vee_{n} \Sigma^{-1} G_+$ & $S^0 \vee \vee_{2n} (S^0 \vee S^{-1})$&  $-\frac{1}{2} $  &$S^0 \vee \vee_{2n} (S^0 \vee S^{-1})$   \\ \hline
             $-\Sigma(2,3, 12n+5)$ &$\Sigma^{-\frac{1}{2} \mathbb{H} } (S^0 \vee \vee_n  G_+) $  &$S^{-2} \vee \vee_{2n} (S^{-2} \vee S^{-1})$   & $-\frac{3}{2} $ &$S^{-1} \vee \vee_{2n} (S^0 \vee S^{-1})$   \\ \hline
       $-\Sigma(2,3, 12n-1)$  &$\Sigma^{-\mathbb{H}} ( \wt{T} \vee \vee_{n-1} \Sigma^2 G_+ )$  &$S^{-2} \vee \vee_{2n-1} (S^{-2} \vee S^{-1})$   &  $-\frac{3}{2} $ &$S^{-1} \vee \vee_{2n-1} (S^0 \vee S^{-1})$ \\ \hline
     $-\Sigma(2,3, 12n-5)$ & $\Sigma^{-\frac{1}{2} \mathbb{H} } ( \wt{T} \vee \vee_{n-1} \Sigma^2 G_+ )$  &$S^0 \vee \vee_{2n} (S^0 \vee S^{1})$  &$\frac{1}{2} $  & $S^{-1} \vee \vee_{2n} (S^0 \vee S^{-1})$ \\ \hline
    $-\Sigma(2,3, 12n+1)$ & $ S^0 \vee \vee_{n} G_+$  & $S^0 \vee \vee_{2n} (S^0 \vee S^{1})$ &$\frac{1}{2} $ & $S^{-1} \vee \vee_{2n} (S^0 \vee S^{-1})$ \\ \hline
  \end{tabular}
\end{table}
Here $G_+$ , $\wt{G}$ and $\wt{T} $ are the same notation given in \cite{Ma14}. We remark that the value of $d_3$ in the table is only for contact structures which can have a symplectic filling. 
\begin{ex}
For example, we can detect the dual of our invariant for the fillable contact structure $\xi_{std}$ on $Y=\Sigma(2,3,5)$ as a homotopy equivalence
\[
\widecheck{\Psi} (Y, \xi) :  \Sigma^{-\frac{1}{2}- d_3(Y)}SWF(Y, \s)=S^0  \to S^0, 
\]
where $\widecheck{\Psi} (Y, \xi)$ is the dual map of ${\Psi} (Y, \xi)$ introduced in the end of \cref{MMMM}. Moreover, we can similarly determine our invariant for a fillable contact structure of $-\Sigma(2,3,11)$.  
\end{ex}

\section{Applications to symplectic fillings}\label{Applications to symplectic fillings} 
In this section,  by the use of the gluing theorem and K or KO theory, we give several constraints of spin symplectic fillings. Moreover, at the end of this section, we also treat the extension property of a positive scalar curvature metric on a $4$-manifold with boundary.   

\par
Seiberg-Witten Floer homotopy type is a "formal desuspension" of a certain space, so in order to apply KO theory, we have to do some suspension in order to cancel the "formal desuspension".
More explicitly, for a $Spin^c$ cobordism $(W, \s_W): (Y, \s)\to (Y', \s')$ with $b_1(W)=b_1(Y)=b_1(Y')=0$, the relative Bauer-Furuta invariant is a morphism
\[
BF(W, \s_W): \Sigma^{\R^{-b^+(W)}\oplus \C^{\frac{c^2_1(\s_W)-\sigma(W)}{8}}}SWF(Y, \s)\to SWF(Y', \s')
\]
and in order to apply $KO$-theory, we have to choose $(m, n) \in \Z\times \Q$ large enough such that $n+\frac{c^2_1(\s_W)-\sigma(W)}{8}$ is an integer.
Then, we obtain an $S^1$ equivariant continuous map
\[
\Sigma^{\R^{m-b^+(W)}\oplus \C^{n+\frac{c^2_1(\s_W)-\sigma(W)}{8}}}SWF(Y, \s)\to SWF(Y', \s'). 
\]
Similarly, when $\s_W$ is spin, 
the relative Bauer-Furuta invariant is a morphism
\[
BF(W, \s_W): \Sigma^{\tilde{\R}^{-b^+(W)}\oplus \H^{-\sigma(W)/16}}SWF(Y, \s)\to SWF(Y', \s')
\]
and in order to apply $KO$-theory, we have to choose $(m, n) \in \Z\times \Q$ large enough such that $n-\frac{\sigma(W)}{16}$ is an integer and obtain a $Pin(2)$ equivariant continuous map  
\[
\Sigma^{\tilde{\R}^{m-b^+(W)}\oplus \H^{n-\sigma(W)/16}}SWF(Y, \s)\to \Sigma^{\tilde{\R}^{m}\oplus \H^{n}}SWF(Y', \s').
\]

\par
For example, 
$\R P^3=L(2, 1)=S^3_{-2}(\text{unknot})$ oriented as quotient $S^3/\Z_2$ has two isomorphism classes of $Spin^c$ structures $\s_0, \s_1$ , where $\s_0, \s_1$ are determined as follows:
On the cobordism   $W: S^3\to \R P^3$ obtained by a 2-handle attachment,  
we have $Spin^c$ structures $\hat{\s}_0, \hat{\s}_1$ such that
\[
\langle c_1(\hat{\s}_i), h \rangle =2i-2\quad i=0, 1.
\]
$\s_0, \s_1$ are the restriction of $\hat{\s}_0, \hat{\s}_1$.
In particular, 
\[
c_1(\hat{\s}_1)=0
\]
and thus $\hat{\s}_1,  \s_1$ is spin. We can check that $\s_0$ is also spin.
The intersection form of $W$ is $[-2]$ and thus its signature is $\sigma(W)=-1$.

It is proved in \cite{Man03} and \cite{Man07} that the Seiberg-Witten Floer homotopy type of $(\R P^3, \s_0)$ is given by
\[
SWF(\R P^3, \s_0)=(\C^{-1/8})^+=(\H^{-1/16})^+.
\]
The Seiberg-Witten Floer homotopy type of $(\R P^3, \s_1)$ is given by
\[
SWF(\R P^3, \s_1)=(\C^{1/8})^+=(\H^{1/16})^+. 
\]

The relative Bauer-Furuta invariant of $(W, \hat{\s}_0)$ is
\[
 BF(W, \hat{\s}_0): (\R^{-b^+(W)}\oplus \C^{\frac{c^2_1( \hat{\s}_0)-\sigma(W)}{8}})^+= (\R^{-1}\oplus \C^{-1/8})^+
 \]
 \[
 \to SWF(\R P^3, \s_0)=(\C^{-1/8})^+.
\]
Here we use the fact that
\[
c^2_1( \hat{\s}_i)=-\frac{(2i-2)^2}{2}.
\]
In order to take $KO$-theory, we have to choose $(m, n) \in \Z\times \Q$ large enough such that $n-\frac{1}{8}$ is an integer and obtain a continuous map
\[
(\R^{m-1}\oplus \C^{n-1/8})^+\to (\R^{m}\oplus \C^{n-1/8})^+
\]
\par
$(W, \hat{\s}_1)$ gives an example of spin cases.
The relative Bauer-Furuta invariant of $(W, \hat{\s}_1)$ is
\[
 BF(W, \hat{\s}_1): (\tilde{\R}^{-b^+(W)}\oplus \H^{-\sigma(W)/16})^+= (\tilde{\R}^{-1}\oplus \H^{1/16})^+
 \]
 \[
 \to SWF(\R P^3, \s_1)=(\H^{1/16})^+
\]
and 
in order to take $Pin(2)$ equivariant $KO$-theory, we have to choose $(m, n) \in \Z\times \Q$ large enough such that $n+\frac{1}{16}$ is an integer.

\subsection{Two constraints for symplectic fillings of homotopy L-spaces}
In this subsection, we will give a constraint for symplectic fillings. 
However, all results in this subsection also can be proved by using monopole Floer homology or Heegaard Floer homology. 
In order to introduce our theorem in this section, we introduce a notion of (Seiberg-Witten) homotopy L-spaces. 
\begin{defn}
A $Spin^c$ rational homology $3$-sphere $(Y, \s)$ is a {\it homotopy L-space} if  
\begin{align}\label{L-sp}
SWF(Y, \s) = \C^\delta
\end{align}
for some rational number $\delta$.
A rational homology $3$-sphere $Y$ is a {\it homotopy L-space} if, for any $Spin^c$ structures on $Y$, $(Y, \s)$ is a homotopy L-space. 
\end{defn} 
Note that $\delta$ coincides with the Fr\o yshov invariant $\delta(Y, \s)$. 
We compare homotopy L-spaces with L-spaces. Usually, L-space is defined using the Heegaard Floer homology. For example, one definition is $\widehat{HF}(Y)$ is free and \[
\rank (\widehat{HF}(Y):= \bigoplus_{\s }\widehat{HF}(Y, \s)= |H^2(Y; \Z) | .
\] 
In the work of Kutluhan, Lee, and Taubes \cite{KLTI}, \cite{KLTII}, \cite{KLTIII}, \cite{KLTIV}, \cite{KLTV} , alternatively, the work of Colin, Ghiggini, and Honda \cite{CGHI} \cite{CGHII} \cite{CGHIII}  and Taubes \cite{Ta10}, \cite{TII10}, \cite{TIII10}, \cite{TIV10}, \cite{TV10}, it is proved that 
\begin{align}\label{isom1}
\widehat{HF}_*(Y)    \cong   \wt{HM}_*(Y) :=  \bigoplus_{\s }\wt{HM}_*(Y, \s). 
\end{align}
 Note that Lidman-Manolescu's(\cite[Corollary 1.2.2]{LM18}) constructed an isomorphism 
 \begin{align}\label{isom2}
\wt{HM}_*(Y, \s) \cong \wt{H}_* (SWF(Y, \s)). 
\end{align}
 By combining these two isomorphisms, one can confirm that any homotopy L-space is an L-space. 
 \begin{ques} Is there an L-space which is not a homotopy L-space? 
 \end{ques}
 However, the authors do not know whether the converse is true or not. Of course, spherical 3-manifolds are homotopy L-spaces. Moreover, F.Lin and Lipnowski (\cite{LL18}) provided hyperbolic examples of homotopy L-spaces.

The proofs of \cref{KO} are similar to the proof of \cref{non-spin}. 
\begin{thm}\label{non-spin}Let $(Y, \xi)$ be a contact rational homology $3$-sphere. Suppose that $(Y,\s_\xi)$ is a homotopy L-space.  
Then, for any symplectic filling $(X, \om)$ of $(Y, \xi)$, the following two facts hold: 
\begin{itemize}
\item[(i)] $b^+(X) =0$ and 
\item[(ii)] $\displaystyle \frac{c_1(\s_\om)^2+ b_2(X)}{8} = \delta (Y, \s_\xi) $. 
\end{itemize}
\end{thm}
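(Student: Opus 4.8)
The strategy is to combine the gluing theorem \cref{gluing} with the rigidity of Bauer--Furuta invariants of symplectic fillings and with the very simple $S^1$-equivariant homotopy type forced by the homotopy L-space hypothesis. We may assume $b_1(X)=0$ (this is the only case needed; in general one reduces to it as in \cref{KO}). Since $(X,\om)$ is a symplectic filling, $\s_\om$ comes from an almost complex structure with $\xi=JTY\cap TY$ at the boundary, so the canonical spinor $\Phi_0$ extends to a nowhere-vanishing section of $S^+_X$ over all of $X$; hence $\langle e(S^+_X,\Phi_0),[X,\partial X]\rangle=0$, and by the theorem of \cref{Gluing result} relating its degree to Kronheimer--Mrowka's invariant, $\Psi(X,\s_{X,\xi},\xi)\colon S^0\to S^0$ has degree $\pm1$ (this degree equals $\mathfrak m(X,\s_{X,\xi},\xi)$ up to sign, and $\mathfrak m=\pm1$ for symplectic fillings). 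The gluing theorem reads $\Psi(X,\s_{X,\xi},\xi)=\eta\circ(\Psi(X,\s_\om)\wedge\Psi(Y,\xi))$, so the $S^1$-equivariant stable map
\[
\Psi(X,\s_\om)\colon (\R^{-b^+(X)}\oplus\C^{\,n_X})^+\longrightarrow SWF(Y,\s_\xi),\qquad n_X:=\tfrac{c_1(\s_\om)^2-\sigma(X)}{8},
\]
has a non-equivariant stable left inverse; in particular it is non-equivariantly a split monomorphism.

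Now the homotopy L-space hypothesis gives $SWF(Y,\s_\xi)=(\C^\delta)^+$, which forgetting the $S^1$-action is the formal sphere $S^{2\delta}$. A split monomorphism between formal spheres forces equal dimensions and non-equivariant degree $\pm1$, so $-b^+(X)+2n_X=2\delta$. Granting (i), i.e.\ $b^+(X)=0$, this becomes $2n_X=2\delta$; and $b^+(X)=0$ also gives $\sigma(X)=-b^-(X)=-b_2(X)$, whence $\tfrac{c_1(\s_\om)^2+b_2(X)}{8}=\delta(Y,\s_\xi)$, which is (ii). So the whole statement reduces to proving $b^+(X)=0$.

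For (i) I would argue by contradiction using naturality of the localization theorem in $S^1$-equivariant cohomology — the point at which the homotopy L-space hypothesis is indispensable, and the $S^1$-equivariant analogue of the $KO$-theoretic argument behind \cref{KO}. Suspending by a large representation, one may take all spaces to be honest finite $S^1$-CW complexes. Because $SWF(Y,\s_\xi)=(\C^\delta)^+$, both $\widetilde H^*_{S^1}(SWF(Y,\s_\xi);\Q)$ and $\widetilde H^*_{S^1}((\R^{-b^+(X)}\oplus\C^{\,n_X})^+;\Q)$ are free of rank one over $H^*_{S^1}(\mathrm{pt};\Q)=\Q[x]$, generated by equivariant Thom classes, which by the dimension count of the previous paragraph lie in the same degree $2\delta$. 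Hence $\Psi(X,\s_\om)^*$ is multiplication by a scalar in $\Q$ on these modules; comparing equivariant and ordinary Thom classes through the augmentation $\widetilde H^*_{S^1}(-;\Q)\to\widetilde H^*(-;\Q)$ identifies that scalar with the non-equivariant degree $\pm1$, so $\Psi(X,\s_\om)^*$ is an isomorphism on reduced $S^1$-Borel cohomology. By naturality of localization, after inverting $x$ this isomorphism is carried to $(\Psi(X,\s_\om)^{S^1})^*[x^{-1}]$, where $\Psi(X,\s_\om)^{S^1}\colon S^{-b^+(X)}\to SWF(Y,\s_\xi)^{S^1}=S^0$ is the map on $S^1$-fixed points. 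If $b^+(X)>0$ then $\Psi(X,\s_\om)^{S^1}\in[S^{-b^+(X)},S^0]=\pi^S_{-b^+(X)}=0$ is stably null, so it induces the zero map on reduced $S^1$-Borel cohomology, hence after inverting $x$ as well; but that target is $\Q[x,x^{-1}]\neq0$, so a zero map cannot be an isomorphism. This contradiction gives $b^+(X)=0$.

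The main obstacle is exactly step (i): the equivariant bookkeeping must be done with care, in particular identifying the $S^1$-fixed part of $\Psi(X,\s_\om)$ with the ``reducible/abelian'' relative Bauer--Furuta map (of Fredholm index $-b^+(X)$), so that the domain's fixed-point set is indeed $S^{-b^+(X)}$, and checking $SWF(Y,\s_\xi)^{S^1}=S^0$. The same argument also runs in $S^1$-equivariant $K$-theory: for $SWF(Y,\s_\xi)=(\C^\delta)^+$ the restriction $K_{S^1}(SWF(Y,\s_\xi))\to K_{S^1}(SWF(Y,\s_\xi)^{S^1})=R(S^1)$ is injective (multiplication by an equivariant Euler class in the domain $R(S^1)=\Z[t,t^{-1}]$), which is the homotopy L-space instance of the injectivity hypothesis in \cref{KO}; combined with the functoriality used there and the gluing theorem this again forces $b^+(X)=0$ ($\le\mathfrak e(0)$). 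The remaining inputs, the reduction to $b_1(X)=0$ and the fact that $\mathfrak m=\pm1$ for symplectic fillings, are routine.
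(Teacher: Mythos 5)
Your proposal is correct and follows essentially the same route as the paper: the gluing theorem together with the fact that $\Psi(X,\s_\om,\xi)$ is a stable homotopy equivalence for symplectic fillings, the resulting dimension count $-b^+(X)+\tfrac{c_1^2(\s_\om)-\sigma(X)}{4}=2\delta(Y,\s_\xi)$, and an $S^1$-equivariance obstruction ruling out $b^+(X)>0$. The only difference is that where the paper simply cites \cite[Proof of 1.3]{BF04} for this last step, you prove it directly by the Borel-localization argument (degree $\pm1$ forces an isomorphism on localized $S^1$-equivariant cohomology, while a fixed-point map $S^{m-b^+(X)}\to S^m$ is null for $b^+(X)>0$), which — together with the equivariant $K$-theoretic variant you sketch — is precisely the mechanism behind the cited proof.
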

Note that the second equality can be regarded as the "opposite direction" of Fr\o yshov's inequality(\cite{Fr10}), which is a generalization of Donaldson's diagonalization theorem to negative definite 4-manifolds with boundary. Philosophically, F. Lin's result(\cite{L20}) can be seen as constraints corresponding to Donaldson's Theorem B and C.

The result (i) was proved in the case of 3-manifolds admitting a positive scalar curvature metric \cite{Li98}, L-spaces \cite{OS04} and \cite{E20}. The result (ii) follows from the fact that Kronheimer-Mrowka-Ozsv\'ath-Szab\'o' s contact invariant $\psi(Y, \xi)$ is contained in the kernel of the $U$-map and non-zero for strongly fillable contact structure, so it belongs to the bottom of the $U$-tower. 
We give a homotopy theoretic proof of \cref{non-spin}. 
\begin{proof}[Proof of \cref{non-spin}] 
As in the proof of \cref{right}, we obtain the following diagram
\begin{align}\label{zusiki}
 \begin{CD}
 S^{m  + 2n +\frac{c^2_1(\s_X)-\sigma(X)}{4}} @>{\Psi_{(X, \s_X)}}>>   \Sigma^{m+b^+(X) + 2n} SWF(Y)    \\
  @.    @V{\Psi(Y, \xi)\wedge id }VV    \\
  @.  \Sigma^{\frac{1}{2} - d_3(-Y, [\xi]) } SWF(-Y) \wedge  \Sigma^{m+b^+(X) + 2n} SWF(Y)  \\
  @. @\vert \\ 
@.   \Sigma^{\frac{1}{2} - d_3(-Y, [\xi])  + m + b^+(X) + 2n } SWF(-Y)  \wedge SWF(Y)  \\
@.    @V{ id \wedge \eta  }VV    \\
@. S^{\frac{1}{2} - d_3(-Y, [\xi])  + m + b^+(X) + 2n} \\
 \end{CD}, 
\end{align}
where $(m,n ) \in \Z \times \Q$ such that $n +\frac{c^2_1(\s_X)-\sigma(X)}{8} \in \Z$.
This diagram commutes up to stable homotopy. The gluing theorem implies that 
$(id \wedge \eta) \circ  (\Psi(Y, \xi)\wedge id) \circ  {\Psi_{(X, \s_X)}}$ and $\Psi_{(X, \s_{\om}, \xi)}$ are stably homotopic. 
Since $(X, \om)$ is a symplectic filling, $\Psi_{(X, \s_{\om}, \xi)} $ is a homotopy equivalence. 
This implies 
\[
\frac{c^2_1(\s_X)-\sigma(X)}{4} = b^+(X) + {2} \delta (Y, \s_\xi) . 
\]
Moreover, the mapping degree of $\Psi_{(X, \s_X)} $ is $\pm 1$.
Apply \cite[Proof of 1.3]{BF04} to the $S^1$-equivariant Bauer-Furuta invariant 
\[
\Psi_{(X, \s_X)} : ( \R^m \oplus \C^{\frac{c^2_1(\s_X)-\sigma(X)}{8} +n})^+  \to   (\R^{m+b^+(X)} +  \C^{n+\delta} )^+ ,
\]
we have $b^+(X)=0$. Thus, we have $\displaystyle \frac{c_1(\s_\om)^2+ b_2(X)}{8} = \delta (Y, \s_\xi) $. 
\end{proof}
\subsection{Equivariant KO theory} 
In this subsection, we will use KO theory. We treat a class of symplectic fillings $(X, \om)$ whose $\s_\om$ are spin. In the same spirit of (i) in \cref{non-spin}, we give upper bounds of $ b^+$ for spin symplectic fillings.

In particular, we give a proof of \cref{KO}.  
We will use the relative Bauer-Furuta invariant of "upside-down" $X_\dagger$ of $X$  in this section, which is a morphism
\[
\Psi(X_\dagger, \s_\omega): SWF(-Y, \s_\xi)\to (\tilde{\R}^{b^+(X)}\oplus \mathbb{H}^{\frac{\sigma(X)}{16}})^+.
\]
and in order to apply $KO$-theory, we need to take a suspension
\[
\Psi(X_\dagger, \s_\omega): \Sigma^{\tilde{\R}^m\oplus \mathbb{H}^n}SWF(-Y, \s_\xi)\to (\tilde{\R}^{b^+(X)+m}\oplus \mathbb{H}^{\frac{\sigma(X)}{16}+n})^+.
\]
such that $n+\frac{\sigma(X)}{16} \in \Z$.
Notice that the sign $+\frac{\sigma(X)}{16}$ is different from the one we mentioned at the beginning of this section.

\subsubsection{Proof of \cref{KO}}
We focus on the proof of \cref{KO}. 
We write $G=Pin(2)$ in this section.
The following periodicity is known: 
\begin{lem}\cite[Section 2.2]{Lin15}Let $k$ and $l$ be non-negative integers. Then we have isomorphisms
\[
\wt{KO}_G((\tilde{\R}^{k}\oplus \mathbb{H}^{l})^+)
\cong \wt{KO}_G((\tilde{\R}^{k+8}\oplus \mathbb{H}^{l})^+)
\]
\[
\cong\wt{KO}_G((\tilde{\R}^{k+4}\oplus \mathbb{H}^{l+1})^+)\cong \wt{KO}_G((\tilde{\R}^{k}\oplus \mathbb{H}^{l+2})^+). 
\]
 \end{lem}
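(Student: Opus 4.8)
The plan is to deduce all three isomorphisms in the lemma from one input, an equivariant Bott periodicity statement: \emph{if $V$ is a real $G$-representation carrying a $G$-equivariant $\mathrm{Spin}$ structure (i.e.\ the homomorphism $G\to SO(V)$ lifts to $\mathrm{Spin}(V)$) and $\dim_{\R}V\equiv 0\pmod 8$, then $\wt{KO}_G(\Sigma^{V}Z)\cong \wt{KO}_G(Z)$ naturally in the pointed $G$-space $Z$}. This is the Atiyah--Bott--Shapiro Thom isomorphism $\wt{KO}_G(\Sigma^{V}Z)\cong\wt{KO}_G^{-\dim V}(Z)$ for an equivariantly spin $V$, combined with the $8$-fold periodicity $\wt{KO}_G^{-8}(Z)\cong\wt{KO}_G(Z)$ coming from the Bott class of the trivial representation $\R^{8}$; this is classical and is exactly the form of equivariant Bott periodicity used throughout the finite-dimensional-approximation literature, so I would treat it as a black box.

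Granting this, the lemma reduces to putting $G$-equivariant $\mathrm{Spin}$ structures on the three $8$-dimensional representations $\wt{\R}^{8}$, $\wt{\R}^{4}\oplus\H$ and $\H^{2}$. First I would record two building blocks, using the model $\mathrm{Spin}(4)=Sp(1)_+\times Sp(1)_-$ acting on $\H$ by $(q_+,q_-)\cdot x=q_+x\bar q_-$. On $\H$ the group $Pin(2)\subset Sp(1)$ acts by left multiplication, i.e.\ through $Sp(1)_+\times\{1\}\subset\mathrm{Spin}(4)$, so $\H$ is automatically $G$-equivariantly $\mathrm{Spin}$. On $\wt{\R}^{4}$ the circle acts trivially and $j$ acts by $-I_4\in SO(4)$; since $-I_4$ is the image of the order-two central element $(-1,1)\in\mathrm{Spin}(4)$, the assignment $S^1\mapsto(1,1)$, $j\mapsto(-1,1)$ is a well-defined homomorphism $Pin(2)\to\mathrm{Spin}(4)$ lifting the action, hence a $G$-equivariant $\mathrm{Spin}$ structure on $\wt{\R}^{4}$. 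Taking orthogonal direct sums through $\mathrm{Spin}(4)\times\mathrm{Spin}(4)\to\mathrm{Spin}(8)$ then equips $\wt{\R}^{8}=\wt{\R}^{4}\oplus\wt{\R}^{4}$, $\wt{\R}^{4}\oplus\H$ and $\H^{2}=\H\oplus\H$ with $G$-equivariant $\mathrm{Spin}$ structures, each of real dimension $8$.

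Finally I would apply the periodicity statement with $Z=(\wt{\R}^{k}\oplus\H^{l})^{+}$, using $\Sigma^{V}(\wt{\R}^{k}\oplus\H^{l})^{+}=(V\oplus\wt{\R}^{k}\oplus\H^{l})^{+}$ and taking $V$ to be $\wt{\R}^{8}$, $\wt{\R}^{4}\oplus\H$, and $\H^{2}$ in turn, to obtain
\begin{align*}
\wt{KO}_G\big((\wt{\R}^{k+8}\oplus\H^{l})^{+}\big)
&\cong \wt{KO}_G\big((\wt{\R}^{k+4}\oplus\H^{l+1})^{+}\big)\\
&\cong \wt{KO}_G\big((\wt{\R}^{k}\oplus\H^{l+2})^{+}\big)
\cong \wt{KO}_G\big((\wt{\R}^{k}\oplus\H^{l})^{+}\big),
\end{align*}
which is precisely the asserted chain of isomorphisms. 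The representation-theoretic bookkeeping of the second paragraph is routine; the only genuinely non-formal point — and hence the main thing to get right — is the first paragraph, namely setting up the equivariant $KO$-theoretic Thom isomorphism, checking its compatibility with the trivial-$\R^{8}$ Bott class (so that it really yields the displayed isomorphisms and not merely abstract ones), and invoking $KO^{-8}(\pt)\cong\Z$ with invertible Bott generator. I do not expect essential difficulty there, only the need to cite the equivariant refinement of Atiyah--Bott--Shapiro carefully.
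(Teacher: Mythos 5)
Your proposal is correct, and it is essentially the argument behind the result: the paper itself offers no proof, simply citing \cite[Section 2.2]{Lin15}, and Lin's periodicities are obtained exactly as you do, by exhibiting $Pin(2)$-equivariant $\mathrm{Spin}$ structures on the $8$-dimensional representations $\wt{\R}^{8}$, $\wt{\R}^{4}\oplus\H$ and $\H^{2}$ (your explicit lifts $Pin(2)\to Spin(4)$ for $\wt{\R}^4$ and $\H$ are right) and invoking equivariant Bott periodicity for $KO_G$ of $\mathrm{Spin}$ representations of dimension divisible by $8$. The only thing to add is a careful citation for that black-box periodicity (equivariant Atiyah--Bott--Shapiro/Bott, as used in \cite{Lin15} and \cite{Sc03}), as you yourself note.
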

 Let $(Y, \s)$ be a spin rational homology $3$-sphere. 
We consider an equivalence relation $\sim_{KO}$ on 
\[
 \Z \times \left\{\  l \in  \frac{1}{16} \Z\  \middle| l\  +  \frac{\sigma (X ) } {16} \in \Z  \ \right\} 
 \]
  by the following way; 
\begin{itemize}
\item 
$(k , l ) \sim_{KO} (k+8, l ) $, 
\item $(k , l ) \sim_{KO} (k+4, l+1 ) $, and
\item $(k , l ) \sim_{KO} (k, l+2 ) $, 
\end{itemize}
where $\sigma (X)$ is the signature of a spin bounding of $(Y, \s)$. 
The notion $J_{KO}(Y, \s ) $ denotes the quotient set $ \Z \times \left\{ l \in  \frac{1}{16} \Z \middle| l +  \frac{\sigma (X ) } {16} \in \Z  \right\} $ divided by $\sim_{KO}$. 
We consider representatives of $J_{KO}(Y, \s) $ as 
\[
\left\{ [(0, l_0)], [(1, l_1)] , [(2, l_2 )],  [(3, l_3 )]  \middle|  l_i \in \left\{0,  \frac{1}{16}, \cdots, \frac{31}{16} \right\} ,~  l_i + \frac{\sigma (X ) } {16} \in \Z   \right\}. 
\]
 
\begin{defn}
For a rational homology $3$-sphere $Y$ with a spin structure $\s$ and $[(m, n)] \in J_{KO}$ with $n + \frac{\sigma(X)}{16} \in \Z$, we have two groups 
\[
KOM^{-m,-n}_{G}  (Y, \s ) :=   \wt{KO}_{G} (\Sigma^{  m\wt{\R} \oplus n {\H}   } SWF(Y, \s)  ) 
\]
and its reducible part 
\[
\overline{KOM}^{-m}_{G} (Y, \s) :=   \wt{KO}_{G} ((\Sigma^{ m \wt{\R} } SWF(Y, \s) )^{S^1} ) . 
\]
We call $ KOM^{-m,-n}_{G}  (Y, \s )$ {\it Seiberg-Witten Floer KO-homology}. 
\end{defn}
We associate a homomorphism 
\[
i^*_{m,n}:KOM^{-m,-n}_{G}  (-Y, \s_\xi ) \to \overline{KOM}^{-m}_{G} (-Y, \s_\xi )
\]
and 
\[
\varphi_{m} : \overline{KOM}^{-m}_{G} (-Y, \s_\xi ) \to \Z
\]
where $i$ is the inclusion map $(\Sigma^{m\wt{\R} } SWF(-Y))^{S^1} \to \Sigma^{m\wt{\R} \oplus n {\H}  } SWF(-Y)$ and the map $\varphi_{m}$ is introduced by Jianfeng Lin \cite[Definition~5.1]{Lin15}. 
\par
In this section, we prove the following theorem stated in the introduction: 
\begin{thm}Let $(Y, \s)$ be a spin rational homology 3-sphere and $(m,n )$ be a representative of an element in $J_{KO} (Y, \s) $. 
When 
\[
-d_3(Y, [\xi])-\frac{1}{2}+m+4n\equiv 0,  4\quad\text{ mod }8, 
\] 
suppose also that the following induced map from $\varphi_{m} \circ i^*_{m,n}$
\[
 (KOM^{-m,-n}_{G}  (-Y, \s_\xi )/\operatorname{Torsion}) \otimes \Z_2 
  \to  \Z_2
\]
is injective. 
When 
\[
-d_3(Y, [\xi])-\frac{1}{2}+m+4n\equiv 1,  2\quad\text{ mod }8, 
\] 
suppose also that the following induced map from $\varphi_{m} \circ i^*_{m,n}$
\[
 KOM^{-m,-n}_{G}  (-Y, \s_\xi ) \otimes \Z_2 
  \to  \Z_2
\]
is injective. 

Then any symplectic filling $(X, \om)$ of $(Y, \xi)$ satisfying $ \s_{\om}$ is spin and $b_1(X)=0$ satisfies   
\[
b^+ (X) \leq \mathfrak{e} (m), 
\]
where 
\[
 \mathfrak{e} (m)=
 \begin{cases}
 0&m\equiv 0, 1, 2, 4\text{ mod } 8\\
 1&m\equiv 3, 7\text{ mod } 8\\
 2&m\equiv 6\text{ mod } 8\\
 3&m\equiv 5\text{ mod } 8.
 \end{cases}
\]
In particular, 
\[
b^+ (X) \leq 3. 
\]
\end{thm}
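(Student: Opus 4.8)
The plan is to feed the gluing theorem (\cref{gluing theorem}) together with the known structure of the relative Bauer-Furuta invariant for a symplectic filling into $Pin(2)$-equivariant $KO$-theory, and then extract an $8/16$-type inequality on $b^+$ from J.~Lin's framework. First I would set up, exactly as in the proof of \cref{non-spin}, the composition
\[
\eta\circ(\Psi(X,\s_\om)\wedge\Psi(Y,\xi))\simeq\Psi(X,\s_{X,\xi},\xi),
\]
but now using the ``upside-down'' filling $X_\dagger$, so that one obtains a $Pin(2)$-equivariant stable map
\[
\Psi(X_\dagger,\s_\om):\Sigma^{\wt\R^m\oplus\H^n}SWF(-Y,\s_\xi)\to(\wt\R^{b^+(X)+m}\oplus\H^{\frac{\sigma(X)}{16}+n})^+,
\]
which — because $(X,\om)$ is a symplectic filling, so $\Psi(X,\s_{X,\xi},\xi)$ is a stable homotopy equivalence by \cite{I19} — admits a stable homotopy section given by $\Psi(Y,\xi)$ composed with duality. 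The identity $m+2n+\tfrac{c_1^2(\s_X)-\sigma(X)}{4}=\tfrac12-d_3(-Y,[\xi])+m+b^+(X)+2n$ from the proof of \cref{right}, rewritten in quaternionic terms, records that the relevant index equals $-d_3(Y,[\xi])-\tfrac12+m+4n$.

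Next I would apply $\wt{KO}_{Pin(2)}$ to this diagram. The key point is that $\Psi(Y,\xi)$ (after the suspensions) gives a $Pin(2)$-equivariant map whose reduction realizes the class detected by $\varphi_m\circ i^*_{m,n}$ on $KOM^{-m,-n}_{G}(-Y,\s_\xi)$; the injectivity hypotheses in (i) and (ii) guarantee that this class is non-trivial in $KO_{Pin(2)}$ (respectively, in the free quotient tensored with $\Z_2$, or in the full group tensored with $\Z_2$, according to the mod-$8$ value of the index). Then I would invoke the $Pin(2)$-$KO$-theoretic obstruction of J.~Lin \cite{Lin15}: a $Pin(2)$-equivariant map $\Sigma^{\wt\R^m\oplus\H^n}SWF(-Y)\to(\wt\R^{b^+(X)+m}\oplus\H^{\cdots})^+$ whose $\varphi_m\circ i^*_{m,n}$-image is non-zero can exist only if $b^+(X)$ is small relative to $m\bmod 8$ — this is precisely the source of the function $\mathfrak e(m)$. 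Concretely, the relevant $\wt{KO}_{Pin(2)}$-groups of $(\wt\R^{k}\oplus\H^{l})^+$ are computed via the Bott periodicity lemma cited just above, and the non-vanishing of the reduced class forces $b^+(X)\le\mathfrak e(m)$; running over the representatives $[(m,n)]\in J_{KO}(Y,\s)$ and using $\min_m\mathfrak e(m)$-type bookkeeping then yields $b^+(X)\le 3$ unconditionally, and the sharper bound under the stated hypothesis for the appropriate $(m,n)$.

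The main obstacle I expect is bookkeeping the two layers of suspension — the ``formal desuspension'' inside $SWF$ and the artificial suspension needed to make $n+\tfrac{\sigma}{16}\in\Z$ — so that the index $-d_3(Y,[\xi])-\tfrac12+m+4n$ lands in the correct residue class mod $8$ and the map $\varphi_m\circ i^*_{m,n}$ is the genuinely relevant invariant rather than something shifted. A second, more technical, point is verifying that the stable section produced by the gluing theorem is compatible with the reducible (i.e.\ $S^1$-fixed-point) part, so that $i^*_{m,n}$ can be applied; this requires knowing that $\Psi(Y,\xi)$ restricted to reducibles behaves well, which should follow from the construction in \cref{MMMM} together with the fact that on reducibles the Seiberg-Witten map degenerates to the linear model. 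Once these identifications are in place, the inequality $b^+(X)\le\mathfrak e(m)$ is a direct citation of \cite{Lin15}, and the $-\Sigma(2,3,11)$ example follows by plugging in the $SWF$ computation and $d_3$ value recorded in the table in \cref{Gluing result}.
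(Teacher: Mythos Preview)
Your outline has the right ingredients but the mechanism is inverted at the crucial step. You write that ``$\Psi(Y,\xi)$ (after the suspensions) gives a $Pin(2)$-equivariant map whose reduction realizes the class detected by $\varphi_m\circ i^*_{m,n}$.'' This is not so: the contact invariant $\Psi(Y,\xi)$ is explicitly \emph{non}-equivariant (the paper stresses this repeatedly), and it produces no class in $KOM^{-m,-n}_{G}(-Y,\s_\xi)$. The equivariant class $x$ in the paper's argument is instead obtained by \emph{pulling back} a specific equivariant generator $e_k\in\wt{KO}_G((\wt\R^{b^+(X)+m}\oplus\H^{\sigma/16+n})^+)$ along the genuinely $Pin(2)$-equivariant map $\Psi(X_\dagger,\s_\omega)$. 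The role of the non-equivariant $\Psi(Y,\xi)$ is different: via the forgetful maps $r,r'$ to non-equivariant $KO$ and the gluing theorem, one sees that $\Psi(Y,\xi)^*\circ r(x)=\pm 1$ in $\wt{KO}(S^{-d_3-\frac12+m+4n})\cong\Z$ or $\Z/2$ (according to the residue mod $8$), and \emph{this} is what forces $x$ to be non-zero in the appropriate $\Z_2$-reduction of $KOM^{-m,-n}_G$. The injectivity hypothesis is then applied to transport this mod-$2$ non-vanishing through $\varphi_m\circ i^*_{m,n}$, and finally the bottom square of the commutative diagram --- where $\varphi_m\circ i^*_{m,n}(x)$ is shown to be divisible by $2^{\alpha_{m+1}+\cdots+\alpha_{m+b^+(X)}}$ --- yields the contradiction when $b^+(X)>\mathfrak e(m)$.

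Consequently, your worry about whether $\Psi(Y,\xi)$ ``restricted to reducibles behaves well'' is a red herring: since the contact invariant enters only through non-equivariant $KO$, no compatibility with the $S^1$-fixed locus is needed. The actual work is building the large commutative diagram with forgetful maps linking equivariant and non-equivariant $KO$, choosing $e_k$ so that $r'(e_k)$ is a generator, and then reading off the parity contradiction from Lin's $\alpha_i$-bookkeeping; your sketch does not yet contain this structure.
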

\begin{proof}
Note that for any spin filling $(X, \om)$ with $b_1(X)=0$, 
\[
b^+(X)+m+\frac{\sigma(X)}{4}+4n=-d_3(Y, [\xi])-\frac{1}{2}+m+4n
\]
by the definition of $d_3$. Let $m$ be a sufficiently large integer and $n$ be a sufficiently large rational number such that $n+\frac{\sigma(X)}{16}$ is an integer. Denote by $X_\dagger$ the '' upside-down'' cobordism $-Y\to \emptyset$ obtained from $X$, which is the same as $X$ as an oriented manifold (not orientation reversed one), and consider its relative Bauer-Furuta invariant
\[
\Psi(X_\dagger, \s_\omega): \Sigma^{\tilde{\R}^m\oplus \mathbb{H}^n}SWF(-Y, \s_\xi)\to (\tilde{\R}^{b^+(X)+m}\oplus \mathbb{H}^{\frac{\sigma(X)}{16}+n})^+.
\]
We denote our contact invariant by
\[
\Psi(Y, \xi): (\R^{b^+(X)+m+\frac{\sigma(X)}{4}+4n})^+\to \Sigma^{\tilde{\R}^m\oplus \mathbb{H}^n}SWF(-Y, \s_\xi), 
\]
which is a non-equivariant map.
Consider the following commutative diagram:
\begin{align}\label{comm}
 \begin{CD}
  @. \wt{KO}((\R^{b^+(X)+m+\frac{\sigma(X)}{4}+4n})^+) \\ 
@. @A{\Psi(Y, \xi)^*}AA    \\
\wt{KO}((\tilde{\R}^{b^+(X)+m}\oplus \mathbb{H}^{\frac{\sigma(X)}{16}+n})^+)  @>{\Psi(X_\dagger, \s_\omega)^*}>>  \wt{KO}(\Sigma^{\tilde{\R}^m\oplus \mathbb{H}^n}SWF(-Y, \s_\xi)) \\ 
@A{r'}AA  @A{r}AA    \\
\wt{KO}_G((\tilde{\R}^{b^+(X)+m}\oplus \mathbb{H}^{\frac{\sigma(X)}{16}+n})^+)  @>{\Psi(X_\dagger, \s_\omega)^*}>>  \wt{KO}_G(\Sigma^{\tilde{\R}^m\oplus \mathbb{H}^n}SWF(-Y, \s_\xi)) \\ 
  @V{} VV  @V{i^*_{m, n}}VV    \\
\wt{KO}_G((\tilde{\R}^{b^+(X)+m})^+)@>{ (\Psi(X_\dagger, \s_\omega)^{S^1})^* }>>   \widetilde{KO}_{G} ((\tilde{\R}^m)^+ )  \\ 
   @V{ \varphi_{b^+(X)+m}} VV  @V{\varphi_{m}}VV    \\
     \Z @>{ 2^{\alpha_{m+1}+\cdots+ \alpha_{m+b^+(X)}}   }>>   \Z  \\ 
 \end{CD}, 
\end{align}
where $r$ and $r'$ are the forgetful maps, $\varphi_k$ is defined in \cite[Definition 5.1]{Lin15} and
\[
\alpha_i=
\begin{cases}
1&i\equiv 1, 2, 3, 5\quad \text{ mod }8\\
0&\text{otherwise}
\end{cases}
\]
as in \cite[Definition 5.2]{Lin15}.
\par
When
\[
b^+(X)+m+\frac{\sigma(X)}{4}+4n=-d_3(Y, [\xi])-\frac{1}{2}+m+4n\equiv 0, 1, 2,  4\quad\text{mod }8, 
\] 
 the forgetful map
\[
r' :  \wt{KO}_G((\tilde{\R}^{b^+(X)+m}\oplus \mathbb{H}^{\frac{\sigma(X)}{16}+n})^+)\to \wt{KO}((\tilde{\R}^{b^+(X)+m}\oplus \mathbb{H}^{\frac{\sigma(X)}{16}+n})^+)
\]
can be regarded as
\[
\wt{KO}_G(S^0)\to \wt{KO}(S^0)\cong \Z\quad \text{when} -d_3(Y, [\xi])-\frac{1}{2}+m+4n\equiv 0\text{ mod }8,
\]
\[
\wt{KO}_G(\tilde{\R}^+)\to \wt{KO}(\tilde{\R}^+) \cong \Z/2\quad \text{when} -d_3(Y, [\xi])-\frac{1}{2}+m+4n\equiv 1\text{ mod }8,
\]
\[
\wt{KO}_G(\tilde{\R}^{2+})\to \wt{KO}(\tilde{\R}^{2+})\cong \Z/2\quad \text{when} -d_3(Y, [\xi])-\frac{1}{2}+m+4n\equiv 2\text{ mod }8,
\]
\[
\wt{KO}_G(\tilde{\R}^{4+})\to \wt{KO}(\tilde{\R}^{4+})\cong \Z\quad \text{when} -d_3(Y, [\xi])-\frac{1}{2}+m+4n\equiv 4\text{ mod }8,
\]
respectively via Bott periodicity
\[
\wt{KO}_G((\tilde{\R}^{k}\oplus \mathbb{H}^{l})^+)
\cong \wt{KO}_G((\tilde{\R}^{k+8}\oplus \mathbb{H}^{l})^+)
\]
\[
\cong\wt{KO}_G((\tilde{\R}^{k+4}\oplus \mathbb{H}^{l+1})^+)\cong \wt{KO}_G((\tilde{\R}^{k}\oplus \mathbb{H}^{l+2})^+).
\]
For $k\equiv 0, 1, 2, 4\text{ mod } 8$, fix a generator  $e_k \in \wt{KO}_G(\tilde{\R}^{k+})$ as follows:
\begin{itemize}
\item
In the case $k\equiv 0 \text{ mod } 8$,  $e_0$ corresponds to $1\in RO(G) \cong \wt{KO}_G(S^0)$.
\item
In the case $k\equiv 1 \text{ mod } 8$, 
$\wt{KO}_G(\tilde{\R}^+)\cong \Z$ and $e_1$ be either of the generators.
\item
In the case $k\equiv 2 \text{ mod } 8$, 
$\wt{KO}_G( (\tilde{\R}^{2})^+ )\cong \Z\oplus \oplus_{m \geq 0}\Z/2$
and the generators are $\eta(D)^2$ and $\gamma(D)^2A^m c$, where the notation is explained in \cite[Proposition 5.5]{Sc03}.
The element $e_3$ is the generator corresponding to $\eta(D)^2$.
\item
In the case $k\equiv 4 \text{ mod } 8$, 
$\wt{KO}_G((\tilde{\R}^{4})^+)$ is freely generated by 
\[
\lambda(D), D\lambda(D), A^n \lambda(D)\text{ and }A^mc, 
\]
where the notation is explained in \cite[Proposition 5.5]{Sc03}.
The element $e_4$ is the generator corresponding to $\lambda(D)$.
\end{itemize}
For the above description of $\wt{KO}_G((\tilde{\R}^{k})^+)$, see  \cite[Proposition 5.5]{Sc03} and \cite[Theorem 2.13]{Lin15} ).
We can check that the image of $e_k$ under the forgetful map is a generator of $\wt{KO}((\tilde{\R}^{k})^+)$.
In each case of $k$, we set 
\[
x:=\Psi(X_\dagger, \s_\omega)^*e_k \in \wt{KO}_G(\Sigma^{\tilde{\R}^m\oplus \mathbb{H}^n}SWF(-Y, \s_\xi)).
\]
 \cref{gluing theorem} implies that the composition
\[
 (\R^{b^+(X)+m+\frac{\sigma(X)}{4}+4n})^+\xrightarrow{\Psi(Y, \xi)}
    \Sigma^{\tilde{\R}^m\oplus \mathbb{H}^n}SWF(-Y, \s_\xi)
    \]
    \[
    \xrightarrow{\Psi(X_\dagger, \s_\omega)}(\tilde{\R}^{b^+(X)+m}\oplus \mathbb{H}^{\frac{\sigma(X)}{16}+n})^+
\]
is homotopic to the Bauer-Furuta version of Kronheimer-Mrowka's invariant of $(X, \s_\omega)$.
The facts that the mapping degree of the Bauer-Furuta version $\Psi(X, \xi , \s_\om )$ of Kronheimer-Mrowka's invariant equals Kronheimer-Mrowka's invariant up to sign and the non-vanishing theorem of Kronheimer-Mrowka's invariant for weak symplectic fillings (Theorem1.1 in \cite{KM97}), which imply that this map is a homotopy equivalence.
Thus, the composition
\[ 
\wt{KO}((\tilde{\R}^{b^+(X)+m}\oplus \mathbb{H}^{\frac{\sigma(X)}{16}+n})^+)  \xrightarrow{\Psi(X_\dagger, \s_\omega)^*}
\]
\[
  \wt{KO}(\Sigma^{\tilde{\R}^m\oplus \mathbb{H}^n}SWF(-Y, \s_\xi)) \xrightarrow{\Psi(Y, \xi)^*} \wt{KO}((\R^{b^+(X)+m+\frac{\sigma(X)}{4}+4n})^+) 
\]
is an isomorphism, so the image of $r'(e_k)$ under this map is \[
\pm 1 \in \wt{KO}((\R^{b^+(X)+m+\frac{\sigma(X)}{4}+4n})^+) \cong \Z \text{ or } \Z/2.
\]
\begin{itemize}
\item[(i)] 
When
\[
b^+(X)+m+\frac{\sigma(X)}{4}+4n=-d_3(Y, [\xi])-\frac{1}{2}+m+4n\equiv 0,  4\quad\text{ mod }8, 
\] 
commutativity of the diagram implies that
\[
x\neq 0  \in (\wt{KO}_G(\Sigma^{\tilde{\R}^m\oplus \mathbb{H}^n}SWF(-Y, \s_\xi)) /\operatorname{Torsion}) \otimes \Z/2 .
\]
Indeed, if $x$ were written as $x= 2x'+(\text{torsion})$ for some $x' \in \wt{KO}_G(\Sigma^{\tilde{\R}^m\oplus \mathbb{H}^n}SWF(-Y, \s_\xi))$, 
\[
\pm 1=\Psi(Y, \xi)^*\circ r(x)=2\Psi(Y, \xi)^*\circ r(x') \in \wt{KO}((\R^{b^+(X)+m+\frac{\sigma(X)}{4}+4n})^+)\cong \Z, 
\]
which is a contradiction.
\item[(ii)] 
When 
\[
b^+(X)+m+\frac{\sigma(X)}{4}+4n=-d_3(Y, [\xi])-\frac{1}{2}+m+4n\equiv 1,  2\quad\text{ mod }8, 
\] 
commutativity of the diagram implies that 
\[
x\neq 0 \in \wt{KO}(\Sigma^{\tilde{\R}^m\oplus \mathbb{H}^n}SWF(-Y, \s_\xi)) \otimes \Z/2 .
\]
Indeed, if $x$ were written as $x= 2x'$ for some $x' \in \wt{KO}_G(\Sigma^{\tilde{\R}^m\oplus \mathbb{H}^n}SWF(-Y, \s_\xi))$, 
\[
\pm 1=\Psi(Y, \xi)^*\circ r(x)=2\Psi(Y, \xi)^*\circ r(x') =0 \in \wt{KO}((\R^{b^+(X)+m+\frac{\sigma(X)}{4}+4n})^+)\cong \Z/2, 
\]
which is a contradiction.
\end{itemize}
The injectivity hypothesis of the theorem implies that $\varphi_m \circ i^*_{m, n}(x) \in \Z$ is not even.
\par
Now, suppose to the contrary that $b^+(X)>\mathfrak{e}(m)$.
Since
\[
\mathfrak{e}(m)=\min\{b \in \Z^{\geq 1} |\alpha_{m+1}+\cdots+\alpha_{m+b}\geq 1\}-1, 
\]
we have
\[
\alpha_{m+1}+\cdots+\alpha_{m+b^+(X)}\geq 1.
\]
and thus $2^{\alpha_{m+1}+\cdots+\alpha_{m+b^+(X)}}$ is even.
Commutativity of the lower part of the diagram implies $\varphi_m \circ i^*_{m, n}(x)$ is even, contradicting the above argument.
\end{proof}
\subsubsection{Examples}
The following result is contained in the F.Lin's argument of \cite{L20}, but we give an alternative
proof using \cref{KO}.
\begin{prop}
Let $(X, \omega)$ be a symplectic filling of some contact structure of $-\Sigma(2, 3, 11)$ such that $\s_\omega$ is spin and $b_1(X)=0$. 
Then $b^+(X)=1$.
\end{prop}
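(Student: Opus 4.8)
The plan is to pin $b^{+}(X)$ between $1$ and $1$. Throughout set $Y=-\Sigma(2,3,11)$, so that $-Y=\Sigma(2,3,11)$; since $Y$ is an integral homology sphere it has a unique $\mathrm{spin}^{c}$ structure, which is spin, and by the table preceding the statement (computed from \cite{MOY97}, \cite{OS05}) every contact structure $\xi$ on $Y$ admitting a symplectic filling has $d_{3}(Y,[\xi])=-\tfrac32$. The Floer-homotopy input we need is that $SWF(-Y,\s_{\xi})=SWF(\Sigma(2,3,11),\s)\simeq\wt G$ in the notation of \cite{Ma14}, a concrete $Pin(2)$-space whose equivariant $KO$-theory is accessible by the machinery of \cite{Lin15}.

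For the lower bound, observe that the filling $(X,\om)$ is itself an almost complex bounding of $(Y,[\xi])$, so Gompf's formula in the convention of \cref{mainthm1} applies: $d_{3}(Y,[\xi])=\tfrac14\bigl(c_{1}(\s_{\om})^{2}-2\chi(X)-3\sigma(X)\bigr)$. Because $\s_{\om}$ is spin and $b_{1}(X)=0$ we have $c_{1}(\s_{\om})^{2}=0$; substituting $\chi(X)=1+b^{+}(X)+b^{-}(X)$ and $\sigma(X)=b^{+}(X)-b^{-}(X)$ and using $d_{3}(Y,[\xi])=-\tfrac32$ gives $5b^{+}(X)-b^{-}(X)=4$. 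In particular $b^{+}(X)\ge 1$, since $b^{+}(X)=0$ would force $b^{-}(X)=-4$.

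For the upper bound I would invoke \cref{KO} with $m$ a large integer $\equiv 3\pmod 8$ and $n$ a large integer. The Milnor fibre of $\Sigma(2,3,11)$ is a spin $4$-manifold of signature $-16$, so every spin bounding $W$ of $(Y,\s)$ has $\sigma(W)\equiv 0\pmod{16}$, hence $n+\tfrac{\sigma(W)}{16}\in\Z$ and the weight $(m,n)$ is admissible; moreover $-d_{3}(Y,[\xi])-\tfrac12+m+4n=1+m+4n\equiv 4\pmod 8$, which puts us in case (i) of \cref{KO}. Granting the hypothesis of that case, namely that
\[
\bigl(KOM^{-m,-n}_{G}(-Y,\s_{\xi})/\operatorname{Torsion}\bigr)\otimes\Z_{2}\longrightarrow\Z_{2}
\]
induced by $\varphi_{m}\circ i^{*}_{m,n}$ is injective, \cref{KO} yields $b^{+}(X)\le\mathfrak{e}(m)=1$; combined with the lower bound, $b^{+}(X)=1$.

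The single nonformal point, and the main obstacle, is the verification of that injectivity hypothesis --- equivalently the assertion, made in the discussion above, that $-\Sigma(2,3,11)$ satisfies the hypotheses of \cref{KO}. I would establish it by computing $KOM^{-m,-n}_{G}(-Y,\s_{\xi})=\wt{KO}_{Pin(2)}\bigl(\Sigma^{m\wt{\R}\oplus n\H}\wt G\bigr)$ together with its reducible part and the maps $i^{*}_{m,n}$, $\varphi_{m}$, following \cite[Theorem~3]{L20} (which performs the analogous $Pin(2)$-monopole Floer computation for $\Sigma(2,3,11)$) and \cite[\S5]{Lin15}: one checks that a chosen $\wt{KO}_{Pin(2)}$-generator of the fixed-point sphere pulls back under $(\Psi(X_{\dagger},\s_{\om}))^{*}$ to a class not divisible by $2$ modulo torsion. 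The remaining ingredients --- the choice of $m$, the congruence check, and the lower bound --- are routine.
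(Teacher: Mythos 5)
Your overall architecture is the same as the paper's: the value $d_3(Y,[\xi])=-\tfrac32$ for fillable contact structures on $-\Sigma(2,3,11)$ forces $b^+(X)\geq 1$ (your derivation $b^-(X)=5b^+(X)-4\geq 0$ is a harmless variant of the paper's parity argument via the vanishing Rokhlin invariant), and the upper bound is to come from \cref{KO} with $\mathfrak{e}(m)=1$. The genuine gap is exactly at the point you yourself flag as ``the main obstacle'': the injectivity hypothesis of \cref{KO} is never verified, and your sketch of how to verify it is aimed at the wrong object. That hypothesis is a statement about $(-Y,\s_\xi)$ alone: one must compute $KOM^{-m,-n}_{Pin(2)}(-Y,\s_\xi)=\wt{KO}_{Pin(2)}(\Sigma^{m\wt\R\oplus n\H}\wt{G})$, its restriction $i^*_{m,n}$ to the $S^1$-fixed part, and Lin's map $\varphi_m$, with no reference to any filling, and show that $\varphi_m\circ i^*_{m,n}$ is injective on $(KOM/\operatorname{Torsion})\otimes\Z_2$. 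What you propose to check instead --- that a generator of the fixed-point sphere pulls back under $\Psi(X_\dagger,\s_\om)^*$ to a class not divisible by $2$ modulo torsion --- is a step \emph{inside} the proof of \cref{KO} (there it is deduced from the gluing theorem together with the hypothesis), so taking it as the verification is circular, and it would in any case make the ``hypothesis'' depend on the filling one is trying to constrain.

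Moreover, your choice $m\equiv 3\pmod 8$ makes the missing verification strictly harder than the one the paper carries out. The paper takes $m\equiv 7\pmod 8$ and $n$ even, and verifies the hypothesis from the exact sequence of the pair $(\Sigma^{m\wt\R}\wt{G},(\Sigma^{m\wt\R}\wt{G})^{S^1})$: the incoming term $\wt{KO}_{Pin(2)}((\wt\R^{m+1})^+)\cong RO(Pin(2))$ surjects onto $\wt{KO}(S^{m+1})\cong\Z$ via the augmentation precisely because $m+1\equiv 0\pmod 8$, and the outgoing term $\wt{KO}(S^{m})$ vanishes because $m\equiv 7\pmod 8$, so $i^*_{m,n}$ is an isomorphism and $\varphi_m$ is projection onto the $\Z$-summand of $\wt{KO}_{Pin(2)}((\wt\R^{m})^+)$, giving injectivity. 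For $m\equiv 3\pmod 8$ none of these inputs is available off the shelf: you would need the structure of $\wt{KO}_{Pin(2)}((\wt\R^{m})^+)$ and of $\varphi_m$ in degree $3$ mod $8$, and surjectivity of the corresponding map out of $\wt{KO}_{Pin(2)}((\wt\R^{m+1})^+)$ in degree $4$ mod $8$; it is not even clear a priori that the hypothesis holds in that degree. (Two small points in the same direction: you need $n$ even both to get the residue $4$ mod $8$ you claim and to strip off $\Sigma^{n\H}$ by Bott periodicity, and this should be said.) As written, the upper bound $b^+(X)\leq 1$ is therefore not established; either supply the equivariant $KO$ computation for your $m\equiv 3$, or switch to $m\equiv 7\pmod 8$ and run the exact-sequence argument above.
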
 
\begin{proof}
In \cite{GV16}, it is showed that every tight (in particular fillable) contact structure $\xi$ on
$-\Sigma(2, 3, 11)$ have
\[
d_3(-\Sigma(2, 3, 11), \xi)=-\frac{3}{2}.
\]
Since 
\[
d_3(-\Sigma(2, 3, 11), \xi)=-\frac{\sigma(X)}{4}-b^+(X)-\frac{1}{2}
\]
and Rokhlin invariant of $-\Sigma(2, 3, 11)$ is zero, 
\[
b^+(X)=-\frac{\sigma(X)}{4}+1
\]
must be odd.
Thus,  it is enough to show $b^+(X)\leq 1$.
Manolescu showed in \cite{Man07} and \cite{Ma14} that
\[
SWF(\Sigma(2, 3, 11))=\tilde{G}, 
\]
i.e.  the unreduced suspension of $Pin(2)$.
Take sufficiently large $m\equiv -1\text{ mod } 8$, $n\equiv 0 \text{ mod } 2$, 
so that
\[
-d_3(-\Sigma(2, 3, 11), \xi)-\frac{1}{2}+m+4n\equiv 0\text{ mod } 8
\]
holds.
As in section 8.1 in  \cite{Lin15}, the exact sequence for pair for $(\Sigma^{\tilde{\R}^m}\tilde{G}, (\Sigma^{\tilde{\R}^m}\tilde{G})^{S^1})$ yields
\[
\cdots \to \wt{KO}_G((\tilde{\R}^{m+1})^+) \xrightarrow{A} \wt{KO}(S^{m+1})
\]
\[
 \to \wt{KO}_G(\Sigma^{\tilde{\R}^m}\tilde{G})\to \wt{KO}_G((\tilde{\R}^m)^+)\to \wt{KO}(S^{m})\to \cdots.
\]
Here the map $A$ can be regarded as the augmentation map $RO(G)\to \Z$, which is surjective.
Note also that $\wt{KO}_G((\tilde{\R}^m)^+)=0$ for $m\equiv -1\text{ mod } 8$.
Thus the exact sequence implies that $\wt{KO}_G(\Sigma^{\tilde{\R}^m}\tilde{G})\to \wt{KO}_G((\tilde{\R}^m)^+)$ is isomorphism and so is $i^*_{m, n}$.
Since the map
\[
\varphi_m: \wt{KO}_G((\tilde{\R}^m)^+)\to \Z
\]
 is given by the projection to the $\Z$-summand under the isomorphism
\[
\wt{KO}_G((\tilde{\R}^m)^+)\cong \Z\oplus \oplus_{n \geq 1}\Z/2
\]
as described in Theorem 2.13, Definition 5.1 in \cite{Lin15}, the hypothesis of the theorem is satisfied and we can conclude $b^+(X)\leq 1$. Here we use $m \equiv 7 \operatorname{mod} 8$.
\end{proof}


\bibliographystyle{plain}
\bibliography{tex}

\end{document}